\theoremstyle{plain}
\newtheorem{Theorem}{Theorem}[section]
\newtheorem{Lemma}[Theorem]{Lemma}
\newtheorem{lemma}[Theorem]{Lemma}
\newtheorem{Proposition}[Theorem]{Proposition}
\newtheorem{Corollary}[Theorem]{Corollary}
\newtheorem{Question}[Theorem]{Question}
\newtheorem{Claim}{Claim}
\theoremstyle{definition}
\newtheorem{Definition}[Theorem]{Definition}
\newtheorem{Remark}[Theorem]{Remark}
\newtheorem{example}[Theorem]{Example}
\numberwithin{equation}{section}
\newcommand{\Erdos}{Erd\H{o}s}
\newcommand{\Folner}{F\o{}lner}
\newcommand{\Szemeredi}{Szemer\'{e}di}
\newcommand{\Turan}{Tur{\'a}n}
\newcommand{\omg}{\omega}
\newcommand{\N}{\mathbb{N}}
\newcommand{\Z}{\mathbb{Z}}
\newcommand{\R}{\mathbb{R}}
\newcommand{\C}{\mathbb{C}}
\newcommand{\Q}{\mathbb{Q}}
\newcommand{\T}{\mathbb{T}}
\newcommand{\inv}{\mathcal{I}}
\newcommand{\cont}{C}
\newcommand{\ultra}[1]{\mathsf{#1}}
\newcommand{\lp}{L}
\newcommand{\orbit}{\mathcal{O}}
\newcommand{\meas}{\mathcal{M}}
\newcommand{\ball}{\mathsf{B}}
\newcommand{\define}[1]{{\itshape #1}}
\renewcommand{\leq}{\leqslant}
\renewcommand{\geq}{\geqslant}
\renewcommand{\setminus}{\backslash}
\newcommand{\E}{\mathbb{E}}
\newcommand{\one}{\boldsymbol{1}}
\newcommand{\condex}{\mathbb{E}}
\newcommand{\gen}{{\mathsf{gen}}}
\newcommand{\supp}{{\mathsf{supp}}}
\newcommand{\kone}{{\llbracket 1 \rrbracket}}
\newcommand{\two}{{\llbracket 2 \rrbracket}}
\newcommand{\three}{{\llbracket 3 \rrbracket}}
\renewcommand{\k}{{\llbracket k \rrbracket}}
\newcommand{\kup}{{\llbracket k+1 \rrbracket}}
\newcommand{\kdown}{{\llbracket k-1 \rrbracket}}
\renewcommand{\d}{\,\mathsf{d}}
\newcommand{\intd}{\,\mathsf{d}}
\newcommand{\ghk}{|\!|\!|}
\newcommand{\cube}{\mathsf{Q}}
\newcommand{\erd}{\mathsf{E}}
\newcommand{\nilc}{\mathsf{N}}
\author[B. Kra]{Bryna Kra}
\address{Northwestern University, Evanston, Illinois, USA}
\thanks{B. Kra  acknowledges National Science Foundation grant DMS-205464}
\author[J. Moreira]{Joel Moreira}
\address{University of Warwick, Coventry, UK}
\author[F. K. Richter]{Florian K. Richter}
\address{EPFL, Lausanne, Switzerland}
\author[D. Robertson]{Donald Robertson}
\address{University of Manchester, Manchester, UK}
\thanks{D. Robertson acknowledges EPSRC grant V050362}
\title{Infinite Sumsets in Sets with Positive Density}
\begin{document}

\begin{abstract} 
Motivated by questions asked by \Erdos{}, we prove that any set $A\subset\N$ with positive upper density contains, for any $k\in\N$, a sumset $B_1+\cdots+B_k$, where $B_1,\dots,B_k\subset\N$ are infinite.
Our proof uses ergodic theory and relies on structural results for measure preserving systems. Our techniques are new, even for the previously known case of $k=2$.
\end{abstract}

\maketitle

\tableofcontents

\section{Introduction}
\label{sec_intro}

\Szemeredi{}~\cite{S}, settling a longstanding conjecture of \Erdos{} and \Turan{}, proved that every set of natural numbers with positive upper density contains arbitrarily long arithmetic progressions.
Since that work, a variety of generalizations have appeared, guaranteeing the existence of various finite patterns in sufficiently large sets of integers. 
Amongst the notable achievements are the multidimensional generalization of Furstenberg and Katznelson~\cite{Furstenberg-Katznelson}, the polynomial generalization of Bergelson and Leibman~\cite{Bergelson-Leibman}, and the groundbreaking theorem of Green and Tao~\cite{Green_Tao08} on arithmetic progressions in primes.

\Erdos{}~\cite{EG} conjectured that sets of natural numbers with positive upper density contain not only finite arithmetic configurations, but various infinite ones as well.
In particular, one of his conjectures stated that any set with positive upper density contains a sumset $B_1+B_2=\{b_1+b_2: b_1\in B_1,~b_2\in B_2\}$ of two infinite sets $B_1,B_2\subset\N$. Under the stronger assumption that the density exceeds $1/2$, this conjecture was proven in~\cite{DGJLLM15}. In full generality, it was resolved in~\cite{MRR}, using 
a combination of combinatorial analysis, nonstandard analysis, and an ergodic structural result.
A more streamlined and purely ergodic proof was subsequently given by Host in~\cite{Host}.

We are concerned here with a natural generalization of this result to higher orders.
To be precise, we focus on the question of whether sets with positive upper density must contain for each $k \in \N$ a sumset
\[
B_1 + \cdots + B_k = \{ b_1 + \cdots + b_k : b_1 \in B_1,\dots,b_k \in B_k \}
\]
where $B_1,\dots,B_k \subset \N$ are infinite.
Our main result gives a positive answer to this question under the more general assumption of positive upper Banach density, which we phrase in terms of \Folner{} sequences.
Recall that a \define{\Folner{} sequence} in $\N$ is any sequence $\Phi=(\Phi_N)_{N \in \N}$ of finite subsets of $\N$ 
satisfying 
\begin{equation}
\label{eqn:folner}
\lim_{N \to \infty} \dfrac{|(\Phi_N - t) \cap \Phi_N|}{|\Phi_N|} = 1
\end{equation}
for all $t \in \N$, where $\Phi_N-t$ denotes the shift $\{x\in\N:x+t\in\Phi_N\}$.
For example
\[
\Phi_N = \{ L_N, L_N + 1,\dots, M_N - 1 \}
\]
defines a \Folner{} sequence whenever $M_N - L_N \to \infty$ as $N\to\infty$.
A set $A \subset \N$ has \define{positive upper Banach density} if there is a \Folner{} sequence $\Phi$ such that
\begin{equation}
\label{eqn:main_hyp}
\lim_{N \to \infty} \dfrac{|A \cap \Phi_N|}{|\Phi_N|} > 0.
\end{equation}
With this definition, we are ready to state our main theorem, which answers \cite[Question 6.3]{MRR}.

\begin{Theorem}
\label{thm:main_theorem}
If $A \subset \N$ has positive upper Banach density then for every $k \in \N$ there are infinite sets $B_1,\dots,B_k \subset \N$ with $B_1 + \cdots + B_k \subset A$.
\end{Theorem}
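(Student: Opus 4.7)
The plan is to prove Theorem \ref{thm:main_theorem} by induction on $k$, using Furstenberg's correspondence principle to reduce to a measure-theoretic problem and then building on the ergodic approach of Host \cite{Host}. The base cases $k=1$ (take $B_1=A$) and $k=2$ (from \cite{MRR,Host}) are already available. After invoking the correspondence principle, it suffices to produce, for any measure preserving system $\xbmt$, any $E \in \mathcal{B}$ with $\mu(E) > 0$, and any generic point $x_0 \in X$, infinite sets $B_1,\dots,B_k \subset \N$ such that $T^{b_1+\cdots+b_k} x_0 \in E$ for all $(b_1,\dots,b_k) \in B_1 \times \cdots \times B_k$. Crucially, I would formulate the stronger inductive statement that one can also produce a positive-measure set $F \in \mathcal{B}$ with $T^{b_1+\cdots+b_k} F \subset E$ (modulo null sets) for all such tuples, since this $F$ will play the role of the new ``target set'' in the next inductive step; a naive iteration of the $k=2$ case fails because the infinite sets it produces need not have positive density.

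For the inductive step, I would extract the outermost sequence $B_k = \{b_k^{(1)} < b_k^{(2)} < \cdots\}$ greedily, choosing each $b_k^{(n)}$ so that the intersection $F := \bigcap_{n \geq 1} T^{-b_k^{(n)}} E$ retains positive measure. For $k=2$ this is essentially achieved by projecting $\one_E$ onto the Kronecker factor and selecting return times into a small neighborhood of the identity in the associated compact group, giving almost-periodic control of the shifts of $E$. For general $k$ the natural framework is to project onto a higher-order structured factor, a Host--Kra nilfactor or a suitable substitute, and to exploit equidistribution of polynomial orbits on nilmanifolds to select $B_k$ so that every finite intersection, and hence $F$, has positive measure. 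Once $F$ is in hand, applying the inductive hypothesis to $F$ in place of $E$ yields infinite $B_1,\dots,B_{k-1}$ and a positive-measure $F' \subset F$ with $T^{b_1+\cdots+b_{k-1}} F' \subset F$, and composing the two inclusions gives $T^{b_1+\cdots+b_k} F' \subset E$, closing the induction.

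The main obstacle is the simultaneous control required in the selection of $B_k$: one needs an infinite sequence along which \emph{all} finite intersections $E \cap T^{-b_k^{(1)}} E \cap \cdots \cap T^{-b_k^{(n)}} E$ remain of uniformly positive measure, while at the same time the resulting $F$ must remain structured enough that the inductive hypothesis can be re-applied to it. For $k=2$ the Kronecker factor is rich enough to carry this through, but for $k \geq 3$ selection on that factor alone is insufficient, and I would expect the bulk of the paper to be devoted to (i) identifying the correct higher-order factor on which to perform this selection, (ii) establishing a structural theorem showing that $\one_E$ is well-approximated on that factor and that the poorly-behaved complementary part contributes negligibly, and (iii) formulating the strengthened inductive hypothesis in a way that explicitly tracks the factor structure inherited by $F$, so that the recursion can be iterated $k$ times without loss of positivity of measure.
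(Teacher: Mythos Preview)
Your strengthened inductive hypothesis is false, and this is a genuine gap rather than a technicality.  Take $(X,\mu,T)$ to be the Bernoulli shift on $\{0,1\}^{\Z}$ with the $(1/2,1/2)$ product measure and $E=\{x:x_0=1\}$.  For any infinite set $B\subset\N$ one has
\[
\mu\Bigl(\bigcap_{b\in B} T^{-b}E\Bigr)=\prod_{b\in B}\tfrac12=0,
\]
so there is no positive-measure $F$ with $T^{b}F\subset E$ for all $b\in B$.  Thus already at the base of your induction (the ``$k=1$'' strengthened statement) the hypothesis fails, and the greedy extraction of $B_k$ you describe cannot produce an $F$ of positive measure in general.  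The difficulty is intrinsic: the structured (pronil) part of $\one_E$ does permit such control, but the weakly mixing part destroys the infinite intersection, and there is no way to separate the two while keeping a fixed generic point $x_0$ and a fixed Borel target set.  This is exactly the obstacle the paper flags in the introduction when it says the intersectivity-lemma technique behind \cite{MRR,Host} does not extend to $k\ge3$.

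The paper does something quite different.  Rather than peel off one summand at a time, it introduces for each $k$ the notion of a $k$-dimensional \Erdos{} cube: a point $x\in X^{\k}$ such that each upper face lies in the $\omega$-limit set of the opposite lower face under $T^{\kdown}$.  The combinatorial deduction (\cref{sec:building_sumsets}) is that any \Erdos{} cube with $x_{\vec 0}=a$ and $x_{\vec 1}\in E$ yields infinite $B_1,\dots,B_k$ with $B_1+\cdots+B_k\subset A$; the sets $B_i$ are built simultaneously by a diagonal argument, not one summand at a time.  The heart of the paper is then the existence of such cubes (\cref{thm:cubes_exist}).  This is proved by studying the Host--Kra cubic measures $\mu^{\k}$ on $X^{\k}$: their permutation symmetries imply that $\mu^{\k}$-almost every point is an \Erdos{} cube, and the hard work is constructing, for the specific generic point $a$, a \emph{continuous} disintegration $t\mapsto\sigma_t^{\k}$ of $\mu^{\k}$ over the first coordinate, defined at \emph{every} $t\in X$ (not just almost every $t$), inheriting the same symmetries.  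This requires first passing to an extension with continuous factor maps to all pronilfactors, then exhibiting a continuous ergodic decomposition of $\mu^{\k}$ via Leibman's theorem on orbit closures in nilmanifolds.  No inductive reduction from $k$ to $k-1$ of the kind you propose appears.
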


We believe that this result and its antecedents \cite{Nathanson80,DGJLLM15,MRR,Host}, are the first to guarantee infinite arithmetic configurations in all sets of positive upper Banach density.
The latter papers \cite{DGJLLM15,MRR,Host} all relied on a method introduced in~\cite{DGJLLM15} (see~\cite[Proof of Theorem 3.2]{DGJLLM15} and \cite[Proposition 2.5]{MRR}) to construct two-fold sumsets $B_1+B_2$ within positive density sets using a variant of an  \emph{intersectivity lemma} of Bergelson \cite[Theorem 1.1]{bergelson}.
This technique played a particularly important role in \cite{MRR,Host}, where it was used to transfer the combinatorial problem of finding sumsets in sets of positive density into an ergodic theoretic problem of set recurrence in measure preserving dynamical systems.
Although this approach works well when dealing with two-fold sumsets $B_1+B_2$, it is not clear how to extend it to find $B_1 + \cdots + B_k$ in sets of positive density when
$k\geq 3$.
For this reason, it is not clear how to generalize the proofs in \cite{MRR,Host} to produce a proof of \cref{thm:main_theorem}.
We overcome this obstacle by introducing a new approach that is distinct from existing methods. The remainder of this introduction is dedicated to outlining this approach and summarizing its key elements.

The central idea behind our proof of \cref{thm:main_theorem} is to connect $k$-fold sumsets in the integers with points in certain $2^k$-fold joinings of measure preserving systems that have highly structured recurrence properties. 
These joinings -- known as cube systems -- were introduced in~\cite{HK-05} to give an algebraic characterization of the limiting behavior of the multiple ergodic averages introduced by Furstenberg~\cite{Furstenberg-1977} in his proof of \Szemeredi's{} theorem via ergodic theory. Moreover, they are used to define the dynamical counterpart of the Gowers uniformity norms, making them indispensable tools for studying higher order recurrence in dynamical systems.

The density assumption~\eqref{eqn:main_hyp} provides, via a classical construction of Furstenberg, a dynamical system $(X,\mu,T)$, a clopen set $E\subset X$ satisfying $
\mu(E)>0$, and a point $a\in X$ generic for $\mu$ such that 
\begin{equation}
\label{eqn_fc_1}
A=\{n\in\N: T^na\in E\}.
\end{equation}
Writing $\k = \{0,1\}^k$, we consider the $2^k$-fold self-product $X^\k$ equipped with the $2^k$-fold product transformation $T^\k$.
We connect $k$-fold sumsets to certain special points in $X^\k$. 
We refer to such points as \define{$k$-dimensional \Erdos{} cubes}, and they form the bridge between additive combinatorics and ergodic theory in our work.
In the case $k=2$ the definition is relatively easy to state.

\begin{Definition}
\label{def_erdos_cubes_2d}
A point $(x_{00},x_{01},x_{10},x_{11})\in X^\two$ is a \define{$2$-dimensional \Erdos{} cube} if, with respect to the transformation $T\times T$, the forward orbit of $(x_{00},x_{01})$ visits every neighborhood of $(x_{10},x_{11})$ infinitely often, and the forward orbit of $(x_{00},x_{10})$ visits every neighborhood of $(x_{01},x_{11})$ infinitely often.
\end{Definition}

For $k\geq 3$ the definition becomes more technical, requiring additional notation, and is deferred to \cref{def:erdos_cubes} in \cref{sec:erdos-cubes}. 
If $(x_{00},x_{01},x_{10},x_{11})$ is a $2$-dimensional \Erdos{} cube, it can be shown that for any neighborhood $U$ of $x_{11}$ the set $\{n\in\N: T^nx_{00}\in U\}$ contains $B_1+B_2$ for infinite sets $B_1,B_2\subset\N$.
Thus, to prove \cref{thm:main_theorem} for $k=2$ it suffices to find a $2$-dimensional \Erdos{} cube $(x_{00},x_{01},x_{10},x_{11})$ with $x_{00}=a$ and $x_{11}\in E$, where $a$ and $E$ are as in the above construction of Furstenberg,
as then $A$ contains $B_1+B_2$ for infinite sets $B_1,B_2\subset\N$ by \eqref{eqn_fc_1}.
This approach leads to a new proof of \Erdos{}'s conjecture, different from those given  either in~\cite{MRR} or in~\cite{Host}.

For larger $k$, the story is the same: if $(x_{\vec 0},\ldots,x_{\vec 1})\in X^\k$ is a $k$-dimensional \Erdos{} cube then for any neighborhood $U$ of $x_{\vec 1}$ the set $\{n\in\N: T^nx_{\vec 0}\in U\}$ contains $B_1+\cdots+B_k$ for infinite sets $B_1,\ldots,B_k\subset\N$.
The main idea behind our proof of \cref{thm:main_theorem} is to build for every $k \in \N$ a $k$-dimensional \Erdos{} cube with prescribed first and last coordinates.
In realizing it, the theory of cube systems plays an important role.

Cube systems, which we review in \cref{sec:cubism},  enjoy several auspicious properties we exploit in our proof. 
Foremost, it provides for each $k \in \N$ a $2^k$-fold joining $\mu^\k$ of $\mu$ called the \textit{$k$-dimensional cubic measure}, which possesses exceptional symmetry: there is a natural group of symmetries acting on $X^\k$ that fixes the first coordinate of $X^\k$, commutes with $T^\k$, and preserve the measure $\mu^\k$.
Using these symmetries it is not hard to prove that $\mu^\k$-almost every point is an \Erdos{} cube.
To additionally prescribe the first and last coordinates, we use a disintegration $t\mapsto \sigma^\k_t$ of the cubic measure $\mu^\k$ over the projection from $X^\k$ to its first coordinate.
From this perspective, if $\sigma^\k_a$-almost every point is an \Erdos{} cube then we have an abundance of \Erdos{} cubes with  first coordinate equal to  $a$, and this would suffice for proving  \cref{thm:main_theorem}. 

Unfortunately, the general theory of disintegrations only provides a measure $\sigma^\k_t$ for $\mu$-almost every point $t \in X$, and so a priori there is no guarantee that $\sigma_a^\k$ is well defined for a given point $a\in X$.
By passing to an extension of the measure preserving system $(X, \mu, T)$ we can assume that the factor maps defining certain structural factors, the \define{pronilfactors}, are not just measurable, but are actually continuous.
This allows us to make use of topological properties of these factors.  
The bulk of our technical work is showing that this additional topological input implies the existence of an ergodic decomposition $x\mapsto \lambda_x^\k$ of the cubic measure $\mu^\k$ that is not just measurable but actually continuous. 
We use this decomposition to define $\sigma^\k_t$ for \textit{every} point $t \in X$ in such a way that $\sigma^\k_t$ is invariant under the above-mentioned symmetries of $\mu^\k$ and pushes forward to $\mu$ on the last coordinate.
This allows us to prove that $\sigma^\k_a$-almost every point is an \Erdos{} cube whenever $a$ is generic for $\mu$.
The existence of a $k$-dimensional \Erdos{} cube $(x_{\vec 0},\ldots,x_{\vec 1})$ with $x_{\vec 0}=a$ and $x_{\vec 1}\in E$ now follows readily, yielding a proof of \cref{thm:main_theorem}.

The relevance of cube systems to finding sumsets in positive density sets highlights some surprising parallels between dynamical approaches to \Szemeredi{}'s theorem and our approach to proving \cref{thm:main_theorem}: finding $(k+1)$-term arithmetic progressions and finding $k$-fold sumsets in a set of positive upper density rely on the same algebraic structures provided by the $k$-step pronilfactors that arise in the structure theory of measure preserving systems.

Section~\ref{sec:preliminaries} is a preparatory section that covers preliminaries from ergodic theory needed in the proof of \cref{thm:main_theorem}. Depending on the reader's background in ergodic theory, this section may be skimmed through or skipped.

In Section~\ref{sec_B+C}, we carry out our proof of \cref{thm:main_theorem} in the special case $k = 2$. We treat this case separately to illustrate our method for the approach to the general case. 
The remainder of the paper is devoted to the generalization of this proof to higher orders.

In \cref{sec:erdos-cubes} we formulate our main technical result, \cref{thm:cubes_exist}, which states that $k$-dimensional \Erdos{} cubes with first coordinate prescribed and last coordinate in any fixed set of positive measure exist.
The deduction of \cref{thm:main_theorem} from \cref{thm:cubes_exist} is carried out in \cref{sec:building_sumsets}.

In \cref{sec:cubism} we recall the theory of cubic measures and pronilfactors, and prove every measure preserving system has an extension with continuous factor maps to its pronilfactors.
We also state \cref{thm:cubes_exist_pronil} and show how it implies \cref{thm:cubes_exist}.
In \cref{subsec:comparing_cubes}, we also explain the relation between our new notion of \Erdos{} cubes and the already established notion of dynamical cubes introduced in~\cite{HK-05, HKM}.

The proof of \cref{thm:cubes_exist_pronil} is carried out in Sections~\ref{sec:cont_erg_decomp} and \ref{sec:proof_cubes_exist_pronil}. 
The first step is to show the existence -- using crucially the continuous factor maps to pronilfactors in the hypothesis of \cref{thm:cubes_exist_pronil} --  of an ergodic decomposition $x\mapsto \lambda_x^\k$ of the measure $\mu^\k$ that is continuous, and this is carried out in \cref{thm:muk_cont_erg_decomp}.
This decomposition is in turn used in \cref{sec:sigmaka} to construct a disintegration of $\mu^\k$ over the projection to the first coordinate.
In \cref{sec:cubes_exist}, these components are assembled to complete the proof of \cref{thm:cubes_exist_pronil}.

The use of cube systems to find infinite patterns in sets of positive upper Banach density opens new avenues of research, and we discuss some open questions and potential extensions of our work in \cref{sec:questions}. 

For reference, we include a summary of the chain of implications used in the proof of \cref{thm:main_theorem}.
\[
\begin{tikzcd}[column sep=large]
\text{\cref{thm:sigmas_live_on_cubes}} \arrow[Rightarrow, r, "\text{Sec. 7.1}"]
&
\text{\cref{thm:cubes_exist_pronil}} \arrow[Rightarrow, r, "\text{Sec. 5.2}"]
&
\text{\cref{thm:cubes_exist}} \arrow[Rightarrow, r, "\text{Sec. 4.3}"]
&
\text{\cref{thm:main_theorem}}
\end{tikzcd}
\]
\cref{thm:sigmas_live_on_cubes} is proved in \cref{sec:proof_cubes_exist_pronil}.

We thank Dimitrios Charamaras, Andreas Mountakis, Song Shao, Terence Tao and Xiangdong Ye for helpful comments on a preliminary version of this paper, and we are grateful to the anonymous referees for their detailed reports.

\section{Preliminaries}
\label{sec:preliminaries}

In this section we collect preliminary definitions and results from ergodic theory that we use throughout the paper.

\paragraph{Topological systems.}

By a \define{topological system}, we mean a pair $(X,T)$ where $X$ is a compact metric space and $T \colon X \to X$ is a homeomorphism.
We write  $\orbit(x,T) = \{T^n x : n \in \Z \}$ for the \define{orbit} of $x\in X$ under $T$ and
\[
\omg(x,T) = \bigcap_{n\in\N}\overline{ \{ T^j(x) : j \ge n \} }
\]
for the set of forward limit points of $x$ under $T$.
In other words,  $y\in\omega(x,T)$ means that  there is a strictly increasing sequence $c\colon\N\to\N$ with $T^{c(n)}(x)\to y$ as $n\to\infty$.  Note that $x$ need not belong to $\omg(x,T)$.  
The topological system $(X,T)$ is \define{transitive} when there is a point $a \in X$, called a \define{transitive point}, whose orbit $\orbit(a,T)$ is dense in $X$.
A topological system $(X,T)$ is \define{minimal} if every point is transitive, and the system is \define{distal} if
\[
\inf \{ d(T^n x, T^n y) : n \in \N \} > 0
\]
for all $x \ne y \in X$, where $d$ is the metric on $X$.

\paragraph{Measure preserving systems.} 
By a \define{system}, we mean a triple $(X, \mu, T)$ where $(X,T)$ is a topological system and $\mu$ is a $T$-invariant Borel probability measure on $X$.
This means $\mu(T^{-1} D) = \mu(D)$ for all Borel sets $D \subset X$.
We stress that all our measure preserving systems have an underlying topological structure.
Whenever $f\colon  X \to Y$ is measurable and $\mu$ is a Borel measure on $X$ the \define{push forward} of $\mu$ under $f$ is the measure $f \mu$ on $Y$ defined by $(f \mu)(D) = \mu(f^{-1} D)$.

\paragraph{Factor maps.}

We make use of two different types of factor maps between systems.

\begin{Definition}[Measurable factor map]
\label{def:meas_factor_k2}
For a system $(X,\mu,T)$ we say that the system $(Y,\nu,S)$ is a \define{measurable factor} of $(X,\mu,T)$ if there is a measurable map $\pi \colon  X \to Y$, the \define{measurable factor map}, such that $\pi\mu = \nu$ and $(S \circ \pi)(x) = (\pi \circ T)(x)$ for $\mu$-almost every $x \in X$.
\end{Definition}

\begin{Definition}[Continuous factor map]
\label{def:cont_factor_k2}
A system $(Y,\mu,S)$ is a continuous factor of a system $(X,\mu,T)$ if there is a continuous and surjective map $\pi \colon  X \to Y$, the \define{continuous factor map}, with $\pi\mu = \nu$ and $(S \circ \pi)(x) = (\pi \circ T)(x)$ for all $x \in X$.
\end{Definition}

We caution the reader that the distinction between 
Definitions~\ref{def:meas_factor_k2} and~\ref{def:cont_factor_k2} is important throughout the sequel,
and reassure the reader that we  always 
specify measurable or continuous, as appropriate.

\paragraph{Generic points and support of a measure.}
We write $C(X)$ for the space of complex valued continuous functions on the compact metric space $X$, and write $\meas(X)$ for the space of Borel probability measures on $X$ equipped with the weak* topology.
The \define{support} of a Borel probability measure $\mu$ on a compact metric space $X$ is the smallest closed, full measure subset $\supp(\mu)$ of $X$.

Recall that a \define{\Folner{} sequence} is a sequence $\Phi=(\Phi_N)_{N\in\N}$ of finite subsets of $\N$ satisfying \eqref{eqn:folner}.
We stress that all \Folner{} sequences used in this paper are in $\N$, and not in $\Z$, despite the fact that all our systems are invertible.

\begin{Definition}[Generic points]
\label{def:generic_points}
For a system $(X,\mu, T)$ and a \Folner{} sequence $\Phi$, the point $x\in X$ is \emph{generic for $\mu$ with respect to $\Phi$}, written $x\in \gen(\mu,\Phi)$, if
\[
\lim_{N \to \infty} \frac{1}{|\Phi_N|} \sum_{n \in \Phi_N} f(T^nx) = \int_X f \intd\mu
\]
for every $f\in \cont(X)$.
We write $x \in \gen(\mu)$ and say $x$ is \define{generic for $\mu$} if
\[
\lim_{N \to \infty} \frac{1}{N} \sum_{n =1}^Nf(T^nx) = \int_X f \intd\mu
\]
for all $f \in \cont(X)$.
\end{Definition}

\begin{Lemma}
\label{lem:gen_approximates_support}
Fix a system  $(X,\mu, T)$ and a \Folner{} sequence $\Phi$.
If $x \in \gen(\mu,\Phi)$ and $y \in \supp(\mu)$, then $y \in \omg(x,T)$.
\end{Lemma}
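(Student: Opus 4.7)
The plan is to exploit genericity to show that the orbit of $x$ visits every neighborhood of $y$ infinitely often, and then diagonalize over a shrinking sequence of neighborhoods.

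First I would fix a decreasing sequence of open balls $(U_k)_{k\in\N}$ centered at $y$ with diameters tending to $0$. Because $y\in\supp(\mu)$, each $U_k$ has $\mu(U_k)>0$. By Urysohn's lemma (applied in the compact metric space $X$), for each $k$ I can choose a continuous $f_k\colon X\to[0,1]$ supported in $U_k$ with $\int f_k\intd\mu > 0$; concretely, take $f_k$ to be a continuous function that is $1$ on a small closed ball around $y$ contained in $U_k$ and $0$ outside $U_k$.

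Next, since $x\in\gen(\mu,\Phi)$ and $f_k\in\cont(X)$, I have
\[
\lim_{N\to\infty}\frac{1}{|\Phi_N|}\sum_{n\in\Phi_N}f_k(T^n x)=\int_X f_k\intd\mu > 0.
\]
Using $f_k\leq \one_{U_k}$ this gives, for all $N$ sufficiently large,
\[
|\{n\in\Phi_N : T^n x\in U_k\}|\;\geq\;\tfrac{1}{2}\Big(\int f_k\intd\mu\Big)\,|\Phi_N|.
\]
The Følner property forces $|\Phi_N|\to\infty$ (a bounded Følner sequence in $\N$ would have a constant subsequence $\Phi_N=F$ satisfying $|(F-t)\cap F|=|F|$ for every $t\in\N$, which is impossible for a finite nonempty $F\subset\N$). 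Hence the set $\{n\in\N:T^n x\in U_k\}$ is infinite for each $k$.

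Finally I would diagonalize: choose inductively $n_1<n_2<\cdots$ in $\N$ with $T^{n_k}x\in U_k$; this is possible because each $\{n:T^nx\in U_k\}$ is infinite. Since $\mathrm{diam}(U_k)\to 0$, we get $T^{n_k}x\to y$ with $n_k\to\infty$, which is exactly the statement $y\in\omg(x,T)$. I do not anticipate a serious obstacle; the only subtlety is the standing assumption that Følner sequences have $|\Phi_N|\to\infty$, which I would verify explicitly to keep the argument self-contained.
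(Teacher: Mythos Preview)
Your proof is correct and follows essentially the same approach as the paper: use a continuous bump function supported in a ball around $y$ to see, via genericity, that the forward orbit of $x$ visits every ball around $y$ infinitely often. Your version is slightly more explicit (you spell out the diagonalization and verify $|\Phi_N|\to\infty$), but the core idea is identical.
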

\begin{proof}
Fix a compatible metric on $X$ and write $\ball(y,r)$ for the open ball centered at $y$ of radius $r$. For every $\varepsilon > 0$, there exists a non-negative $f \in \cont(X)$ with $f = 1$ on the ball $\ball(y,\varepsilon/2)$ and $f = 0$ outside $\ball(y,\varepsilon)$.  
Now $y \in \supp(\mu)$ implies
\[
\int_X f \intd \mu > 0
\]
and since $x \in \gen(\mu,\Phi)$ there are infinitely many $n \in \N$ with $T^n x \in \ball(y,\varepsilon)$.
\end{proof}

\paragraph{Conditional expectation.}
Fix a system $(X,\mu,T)$.
Whenever $\mathcal{A}$ is a sub-$\sigma$-algebra of the Borel $\sigma$-algebra on $X$, every $f \in \lp^2(X,\mu)$ has a \define{conditional expection} on $\mathcal{A}$ which we write $\E(f \mid \mathcal{A})$ and define to be the orthogonal projection in $\lp^2(X,\mu)$ of $f$ on the closed subspace of $\mathcal{A}$-measurable functions.
We most often apply this when $\mathcal{A}$ is the $\sigma$-algebra of $T$-invariant sets, which we denote $\inv$.

We also need to condition on a factor.
Let $(Y,\nu,S)$ be a measurable factor of $(X,\mu,T)$ via a measurable factor map $\pi$.
Put $\mathcal{A} = \{ \pi^{-1}(F) : F \subset Y \textup{ Borel} \}$.
Given $f \in \lp^2(X,\mu)$ we write $\E(f \mid Y)$ for the conditional expectation $\E(f \mid \mathcal{A})$.
and with the usual identification we may think of $\E(f \mid Y)$ as an element of $\lp^2(Y,\nu)$.
The following properties of the conditional expectation are standard and are made use of occasionally. 
\begin{itemize}
\item
$\vphantom{\displaystyle\int}\E (Tf \mid Y) = \E(f \mid Y) \circ T$ for every $f \in \lp^2(X,\mu)$.
\item
$\displaystyle \int_X \E(f \mid Y) \cdot \E(g \mid Y) \intd \mu = \int_X \E(f \mid Y) \cdot g \intd \mu$ for all $f,g \in \lp^2(X,\mu)$.
\end{itemize}
We refer to~\cite[Chapter 5]{EW11} for further background.
\paragraph{Disintegrations.}

We make significant use of disintegrations of measures over factors.
Fix a system $(X,\mu,T)$ and a probability space $(\Omega,\nu)$.
Whenever we have a map $\omega \mapsto \mu_\omega$ from $\Omega$ to $\meas(X)$ with the properties
\begin{itemize}
\item
$\vphantom{\displaystyle\int}\omega \mapsto \mu_\omega(F)$ is measurable for every Borel $F \subset X$
\item
$\displaystyle \int_X f \intd \mu = \int_\Omega \int_X f \intd \mu_\omega  \intd \nu(\omega)$ for all measurable and bounded $f\colon X \to \C$
\end{itemize}
we call $\omega \mapsto \mu_\omega$ a \define{disintegration} of $\mu$.
The following result says every measurable factor map gives rise to a disintegration that agrees with the conditional expectation on the factor.

\begin{Theorem}[Disintegrations over factor maps, see for example {\cite[Theorem 5.14]{EW11}}]
\label{thm:disintegration}
If $\pi \colon (X,\mu,T) \to (Y,\nu,S)$ is a measurable factor map, then there exists a disintegration $y \mapsto \mu_y$ of $\mu$ defined on $(Y,\nu)$ such that
\[
\E(f \mid Y)(y) = \int_X f \intd \mu_y
\]
for $\nu$-almost every $y\in Y$, whenever $f\colon X \to \C$ is measurable and bounded.
Furthermore, if $y\mapsto\eta_y$ is another such disintegration then $\eta_y=\mu_y$ for $\nu$-almost every $y\in Y$.
\end{Theorem}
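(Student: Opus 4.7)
The plan is to build the disintegration from conditional expectations on a countable dense set of continuous functions, promote it to a bona fide map $y \mapsto \mu_y$ into $\meas(X)$ via the Riesz representation theorem, and then verify measurability and the integration identity by monotone class.

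First I would fix a countable, $\Q$-linear, uniformly dense subalgebra $\F \subset \cont(X)$ containing the constants, exploiting that $X$ is compact metric and hence $\cont(X)$ is separable. For each $f \in \F$ I would choose a specific Borel version $L_f \colon Y \to \C$ of $\E(f \mid Y)$. On a countable intersection of conull sets (one for each combination of $\Q$-linearity, positivity, $L_1 = 1$, and the bound $|L_f| \le \|f\|_\infty$), I would obtain a conull $Y_0 \subset Y$ such that for every $y \in Y_0$ the map $f \mapsto L_f(y)$ is a positive $\Q$-linear functional of norm $1$ on $\F$. By uniform density this extends uniquely to a positive linear functional on $\cont(X)$ of norm $1$, and the Riesz representation theorem then yields a unique Borel probability measure $\mu_y$ on $X$ with $\int f \d\mu_y = L_f(y)$ for every $f \in \F$, hence for every $f \in \cont(X)$. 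For $y \notin Y_0$ I would simply set $\mu_y$ to a fixed reference probability measure.

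Next I would verify that $y \mapsto \mu_y$ is a disintegration. Measurability of $y \mapsto \mu_y(F)$ for continuous $f \in \cont(X)$ via $\int f \d\mu_y$ is immediate because this coincides on $Y_0$ with a version of $\E(f \mid Y)$. To pass from $\cont(X)$ to arbitrary bounded Borel $F$, I would note that indicators of open sets are monotone limits of continuous functions (using a compatible metric, take $f_n(x) = \min(1, n \cdot \mathrm{dist}(x, X \setminus U))$), so measurability transfers to open sets by monotone convergence, then to the full Borel $\sigma$-algebra by a Dynkin/monotone class argument. The integration identity $\int_X f \d\mu = \int_Y \int_X f \d\mu_y \d\nu(y)$ holds by construction for $f \in \cont(X)$ via the defining property of conditional expectation, and extends to bounded Borel $f$ by the same monotone class reasoning.

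Finally, uniqueness follows cleanly: if $y \mapsto \eta_y$ is another such disintegration then for each $f \in \F$ both $y \mapsto \int f \d\mu_y$ and $y \mapsto \int f \d\eta_y$ are versions of $\E(f \mid Y)$, hence agree off a $\nu$-null set $N_f$. Taking the countable union $N = \bigcup_{f \in \F} N_f$, for every $y \notin N$ the measures $\mu_y$ and $\eta_y$ agree on the dense family $\F \subset \cont(X)$, and therefore as Borel measures. The main subtlety, as always in this argument, is the step where a countable ``almost surely'' is upgraded to a genuine pointwise statement on $Y_0$; this is what forces the restriction to a countable dense $\F$ rather than working directly with all of $\cont(X)$ or with indicators of Borel sets. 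The compactness/Polish structure of $X$ is essential precisely to guarantee separability of $\cont(X)$ and the applicability of Riesz representation.
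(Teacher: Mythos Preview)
The paper does not prove this theorem; it is stated in the preliminaries section as a standard result with a citation to \cite[Theorem~5.14]{EW11}, so there is no proof in the paper to compare against. Your sketch is the standard textbook argument (essentially the one in the cited reference): build the measures from conditional expectations on a countable dense family in $\cont(X)$, invoke Riesz, and extend by monotone class, with uniqueness following from agreement on the countable dense family. It is correct as written.
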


\paragraph{Ergodicity and ergodic decompositions.}

Given a topological system $(X,T)$ we say that a $T$-invariant Borel probability measure $\mu$ on $X$ is \define{ergodic} if every  Borel set $D$ satisfying $T^{-1}(D) = D$ has $\mu(D) \in \{0,1\}$.
A system $(X,\mu,T)$ is \define{ergodic} if $\mu$ is ergodic for the topological system $(X,T)$.

We make use of two versions of the pointwise ergodic theorem.
Assume $(X,\mu,T)$ is a system.
Write $\inv$ for the $\sigma$-algebra of $T$-invariant Borel sets.
For every $f$ in $L^1(X,\mu)$ the convergence
\[
\lim_{N \to \infty} \dfrac{1}{N} \sum_{n=1}^N f(T^n x)
=
\E(f \mid \inv)(x)
\]
holds for $\mu$-almost every $x \in X$.
Moreover (see, for example \cite[Section~1]{lindenstrauss}), 
every \Folner{} sequence has a subsequence $\Phi$ with the following property: for every $f$ in $\lp^1(X,\mu)$ the convergence
\[
\lim_{N \to \infty} \dfrac{1}{|\Phi_N|} \sum_{n \in \Phi_N} f(T^n x) = \E(f \mid \inv)(x)
\]
holds for $\mu$-almost every $x\in X$.
We make use of the following standard corollary of the pointwise ergodic theorem. 

\begin{Corollary}
\label{cor:ae_generic}
Assume $(X,\mu,T)$ is an ergodic system.
Every \Folner{} sequence has a subsequence $\Phi$ such that $\mu(\gen(\mu,\Phi)) = 1$.
\end{Corollary}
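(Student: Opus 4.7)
The plan is to combine the Lindenstrauss-type pointwise ergodic theorem recalled just above the corollary with a standard separability argument to exchange the order of the quantifiers ``for every $f$'' and ``for $\mu$-almost every $x$.''

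First, given the Følner sequence, I would extract a subsequence $\Phi$ with the property stated in the excerpt: for every $f \in \lp^1(X,\mu)$, the averages $\frac{1}{|\Phi_N|}\sum_{n \in \Phi_N} f(T^n x)$ converge to $\E(f \mid \inv)(x)$ for $\mu$-almost every $x$. Since $\mu$ is ergodic, $\inv$ is $\mu$-trivial, so $\E(f\mid\inv)$ is $\mu$-almost everywhere equal to the constant $\int_X f\intd\mu$. Thus for each fixed $f \in \cont(X) \subset \lp^1(X,\mu)$ there is a set $X_f$ with $\mu(X_f)=1$ on which the averages along $\Phi$ converge to $\int_X f\intd\mu$.

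The subtlety is that the null set on which convergence can fail depends a priori on $f$, while being generic requires simultaneous convergence for all $f\in\cont(X)$. Here I would use that $X$ is a compact metric space, so $\cont(X)$ is separable: fix a countable dense subset $\{f_k : k \in \N\} \subset \cont(X)$ and set $X_0 = \bigcap_{k\in\N} X_{f_k}$. Then $\mu(X_0) = 1$, and for every $x \in X_0$ the averages along $\Phi$ converge to $\int_X f_k \intd\mu$ for all $k$.

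Finally, I would promote this to all continuous functions by a standard $3\varepsilon$ argument: given $f \in \cont(X)$ and $\varepsilon > 0$, pick $k$ with $\|f - f_k\|_\infty < \varepsilon$. Since both the empirical average $\frac{1}{|\Phi_N|}\sum_{n\in\Phi_N} g(T^n x)$ and the integral $\int_X g\intd\mu$ are bounded in absolute value by $\|g\|_\infty$, replacing $f$ by $f_k$ changes each by at most $\varepsilon$, so convergence for $f_k$ at $x \in X_0$ yields convergence for $f$ as well. This shows $X_0 \subset \gen(\mu,\Phi)$, and hence $\mu(\gen(\mu,\Phi)) = 1$. No step here poses a real obstacle; the only point that requires a moment of care is the quantifier swap, which is handled exactly as just described.
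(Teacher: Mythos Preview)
Your argument is correct and is exactly the standard deduction the paper has in mind: the paper does not write out a proof but simply states this as a ``standard corollary of the pointwise ergodic theorem,'' and your combination of the Lindenstrauss subsequence, ergodicity, separability of $\cont(X)$, and the $3\varepsilon$ approximation is precisely that standard argument.
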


There is a sense in which every system can be decomposed into ergodic systems.

\begin{Definition}[Ergodic decomposition]
\label{def_ergdec}
Let $(X,\mu,T)$ be a system. An \emph{ergodic decomposition} of $\mu$ is a disintegration $x\mapsto\mu_x$ of $\mu$ defined on $(X,\mu)$ such that for every measurable and bounded $f\colon X \to \C$,
\[
\int_X f \intd \mu_x = \E(f \mid \inv)(x)
\]
holds for $\mu$-almost every $x\in X$.
\end{Definition}

\begin{Theorem}[Existence of ergodic decompositions, see for example {\cite[Theorem 6.2]{EW11}}]
\label{thm:erg_decomp}
Every system $(X,\mu,T)$ has an ergodic decomposition $x \mapsto \mu_x$, satisfying the following properties.
\begin{itemize}
\item
If $x\mapsto \nu_x$ is another ergodic decomposition of $(X,\mu,T)$, then $\mu_x= \nu_x$ for $\mu$-almost every $x\in X$.
\item
We have
\[
\int_X f \intd \mu = \int_X \left( \int_X f \intd \mu_x \right) \intd \mu(x)
\]
for every bounded and measurable $f\colon X\to\C$.  
\item
For $\mu$-almost every $x\in X$, the measure $\mu_x$ is $T$-invariant and ergodic.
\end{itemize}
\end{Theorem}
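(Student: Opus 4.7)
My plan is to obtain the ergodic decomposition $x\mapsto\mu_x$ as a disintegration of $\mu$ with respect to the $\sigma$-algebra $\inv$ of $T$-invariant Borel sets, then derive the three listed properties from standard facts about conditional expectations and the already-established \cref{thm:disintegration}.

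The first step is to cast $\inv$ into a form where \cref{thm:disintegration} applies. Since $X$ is a compact metric space, $\lp^2(X,\mu)$ is separable, and so is its closed subspace of $\inv$-measurable functions; in particular $\inv$ is countably generated modulo $\mu$-null sets. I would choose a countable family $(f_k)$ dense in $\cont(X)$ and set $g_k=\E(f_k\mid\inv)$, modified on a $\mu$-null set so that each $g_k$ is Borel everywhere and bounded by $\|f_k\|_\infty$. The map $\pi(x)=(g_k(x))_{k\in\N}$ then takes values in a compact subset $Y$ of $\R^\N$; letting $S=\mathrm{id}_Y$ and $\nu=\pi\mu$, this produces a measurable factor map $\pi\colon(X,\mu,T)\to(Y,\nu,S)$ whose pullback $\sigma$-algebra coincides with $\inv$ modulo $\mu$-null sets. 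Applying \cref{thm:disintegration} yields a disintegration $y\mapsto\mu_y$ on $Y$, and setting $\mu_x:=\mu_{\pi(x)}$ furnishes both the integration identity (the second bullet of the conclusion) and the defining property $\int f\intd\mu_x=\E(f\mid\inv)(x)$ for $\mu$-almost every $x$ and every bounded Borel $f$.

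The remaining work is to verify $T$-invariance and ergodicity of $\mu_x$ for $\mu$-almost every $x$. Invariance follows from $T$-invariance of $\inv$: the identity $\E(f\circ T\mid\inv)=\E(f\mid\inv)$ holds $\mu$-a.e., which gives $\int f\intd(T\mu_x)=\int f\intd\mu_x$ $\mu$-a.e.\ for each fixed $f$, and taking the union of exceptional sets over the countable dense family $(f_k)$ shows $T\mu_x=\mu_x$ for $\mu$-a.e.\ $x$. For ergodicity, every $T$-invariant Borel set $A$ lies in $\inv$, so $\one_A$ is $\inv$-measurable and $\mu_x(A)=\E(\one_A\mid\inv)(x)=\one_A(x)\in\{0,1\}$ for $\mu$-a.e.\ $x$. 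Applying this to a fixed countable algebra generating $\inv$ modulo $\mu$-null sets, and then using a monotone-class argument, one concludes that $\mu_x(A)\in\{0,1\}$ for all $A\in\inv$ simultaneously, on a full-$\mu$-measure set of $x$; hence $\mu_x$ is ergodic there.

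Uniqueness is immediate from the uniqueness clause in \cref{thm:disintegration}, since any other ergodic decomposition $x\mapsto\nu_x$ automatically satisfies the characterization $\int f\intd\nu_x=\E(f\mid\inv)(x)$ by \cref{def_ergdec}. The main technical point is the careful handling of nested `almost-everywhere' quantifiers: various $\mu$-null exceptional sets (one per test function in $(f_k)$, one per generator of $\inv$) must be combined into a single $\mu$-null set outside which all desired properties hold simultaneously. The separability of $\lp^2(X,\mu)$ makes these countable intersections manageable, which is the key reason the whole construction goes through in the generality of compact metric systems.
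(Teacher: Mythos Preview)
The paper does not give its own proof of this theorem; it is quoted from the literature with a reference to \cite[Theorem~6.2]{EW11}. Your outline follows the standard route (disintegrate over the invariant $\sigma$-algebra, then verify the properties), and the construction of the factor $\pi\colon X\to Y$ together with the appeal to \cref{thm:disintegration} is fine.

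There is, however, a genuine gap in your ergodicity argument. Your monotone-class step shows that, for $\mu$-almost every $x$, one has $\mu_x(A)\in\{0,1\}$ for all $A$ in the $\sigma$-algebra $\sigma(\mathcal{A})$ generated by your countable algebra $\mathcal{A}$. But $\sigma(\mathcal{A})$ coincides with $\inv$ only modulo $\mu$-null sets, and in general $\sigma(\mathcal{A})\subsetneq\inv$ as honest $\sigma$-algebras (for instance, when $T$ is an irrational circle rotation the orbit equivalence relation is not smooth, so $\inv$ is not countably generated). For an arbitrary $A\in\inv$ you only get some $A'\in\sigma(\mathcal{A})$ with $\mu(A\mathbin{\triangle}A')=0$, which yields $\mu_x(A\mathbin{\triangle}A')=0$ merely for $\mu$-almost every $x$, with the exceptional set depending on $A$. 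So you cannot conclude $\mu_x(A)\in\{0,1\}$ for \emph{all} $A\in\inv$ on a single full-measure set of $x$.

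The standard repair stays within your framework. First show that for $\mu$-almost every $x$ one has $\mu_y=\mu_x$ for $\mu_x$-almost every $y$: with $F_k(x)=\int f_k\intd\mu_x=\E(f_k\mid\inv)(x)$, each $F_k$ is $\inv$-measurable, so $\int F_k\intd\mu_x=F_k(x)$ and $\int F_k^2\intd\mu_x=F_k(x)^2$ for almost every $x$, forcing $F_k(y)=F_k(x)$ for $\mu_x$-almost every $y$; intersect over $k$. Combined with the fact (from the pointwise ergodic theorem) that $\mu$-almost every $y$ is generic for $\mu_y$, you get that for almost every $x$, $\mu_x$-almost every $y$ is generic for $\mu_x$. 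Any $T$-invariant probability measure for which almost every point is generic is ergodic, since the Birkhoff averages of every $f\in\cont(X)$ are then almost surely constant, whence $\E_{\mu_x}(f\mid\inv)$ is constant for all $f$, and hence $\mu_x(A)\in\{0,1\}$ for every $A\in\inv$.
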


The following corollary is an immediate consequence of the pointwise ergodic theorem and \cref{thm:erg_decomp}.

\begin{Corollary}
\label{lem:cont_erg_generics}
Fix an ergodic decomposition $x \mapsto \mu_x$ of a system $(X,\mu,T)$.
Then $\mu$-almost every $x\in X$ is generic for $\mu_x$.
\end{Corollary}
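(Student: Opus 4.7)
The plan is to combine the pointwise ergodic theorem with the defining property of the ergodic decomposition, and then use the separability of $C(X)$ to exchange the order of quantifiers ``for every continuous $f$'' and ``for $\mu$-almost every $x$''.

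First I would fix a countable dense subset $\{f_k : k \in \N\} \subset C(X)$; such a subset exists because $X$ is a compact metric space and hence $C(X)$ is separable in the supremum norm. For each $k \in \N$, the pointwise ergodic theorem supplies a full-measure Borel set $X_k \subset X$ on which
\[
\lim_{N \to \infty} \frac{1}{N} \sum_{n=1}^{N} f_k(T^n x) = \E(f_k \mid \inv)(x),
\]
and the definition of ergodic decomposition (\cref{def_ergdec}) supplies a full-measure Borel set $Y_k \subset X$ on which
\[
\E(f_k \mid \inv)(x) = \int_X f_k \intd \mu_x.
\]
Setting $X^\ast = \bigcap_{k \in \N} (X_k \cap Y_k)$ yields a full-measure Borel set on which the averages of $f_k$ along the orbit of $x$ converge to $\int_X f_k \intd\mu_x$ simultaneously for all $k \in \N$.

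The next step is to upgrade this to all $f \in C(X)$ by a standard density argument. Fix $x \in X^\ast$ and $f \in C(X)$, and let $\varepsilon > 0$. Choose $k$ with $\|f - f_k\|_\infty < \varepsilon/3$. Since $\mu_x$ is a probability measure, $|\int_X f \intd\mu_x - \int_X f_k \intd\mu_x| \leq \|f - f_k\|_\infty < \varepsilon/3$; since the $\ell^\infty$-norm of any Birkhoff average is bounded by $\|\cdot\|_\infty$, the same estimate holds between the averages of $f$ and of $f_k$. Combined with the convergence of the averages of $f_k$ to $\int_X f_k \intd\mu_x$, this shows that for all sufficiently large $N$,
\[
\left| \frac{1}{N} \sum_{n=1}^{N} f(T^n x) - \int_X f \intd\mu_x \right| < \varepsilon,
\]
so $x$ is generic for $\mu_x$.

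There is no real obstacle here: the result is essentially a bookkeeping consequence of the pointwise ergodic theorem together with the compatibility of the ergodic decomposition with conditional expectation on $\inv$. The only subtlety to be careful about is the quantifier switch, which is handled by the separability of $C(X)$.
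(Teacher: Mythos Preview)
Your proof is correct and follows exactly the approach the paper indicates: the corollary is stated as an immediate consequence of the pointwise ergodic theorem and \cref{thm:erg_decomp}, and your argument spells out precisely this deduction, handling the quantifier exchange via the separability of $C(X)$.
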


\paragraph{Furstenberg's correspondence principle.}
The proof of our main theorem uses a general method for transferring combinatorial problems into dynamical ones known as \define{Furstenberg's correspondence principle}. 
It first appeared in Furstenberg's ergodic proof of \Szemeredi{}'s theorem~\cite{Furstenberg-1977,Furstenberg-book}, and has since formed the basis for numerous applications of ergodic theory to combinatorics and number theory. For our purposes we need a variant of Furstenberg's original method, which we state and prove (see, for example,~\cite[Proposition 3.1]{BHK05}).
\begin{Theorem}
\label{thm_fcp}
Let $A\subset\N$ and suppose $\Phi$ is a \Folner{} sequence in $\N$ such that the limit
\begin{equation}
\label{eqn:main_hyp_02}
\delta=\lim_{N \to \infty} \dfrac{|A \cap \Phi_N|}{|\Phi_N|}
\end{equation}
exists.
There exist an ergodic system $(X,\mu,T)$, a clopen set $E\subset X$, a \Folner{} sequence $\Psi$ in $\N$, and a point $a\in\gen(\mu,\Psi)$ such that $\mu(E)\geq \delta$ and 
\begin{equation}
\label{eqn_fcp_1}
A=\{n\in\N: T^n a\in E\}.
\end{equation}
\end{Theorem}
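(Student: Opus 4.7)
The plan is to realize $A$ as a return-time set for a symbolic system built on $A$ itself. Take $X = \{0,1\}^{\Z}$ with the product topology and the left shift $T$, let $E = \{x \in X : x(0) = 1\}$, which is clopen, and define the point $a \in X$ by $a(n) = \mathbf{1}_A(n)$ for $n \geq 1$ and $a(n) = 0$ otherwise. With these choices the identity $A = \{n \in \N : T^n a \in E\}$ is automatic and stable under any subsequent choice of invariant measure on $X$.

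Next I would produce a $T$-invariant measure via weak-$\ast$ compactness. Form the empirical measures $\mu_N = |\Phi_N|^{-1} \sum_{n \in \Phi_N} \delta_{T^n a}$ and extract, by compactness of $\meas(X)$, a subsequence $\Psi^{(0)}$ of $\Phi$ along which $\mu_N \to \mu^{(0)}$ weak-$\ast$. The \Folner{} condition \eqref{eqn:folner} forces $\mu^{(0)}$ to be $T$-invariant, since for each $f \in \cont(X)$ the difference $\int f \circ T\,\intd\mu_N - \int f\,\intd\mu_N$ is controlled by the symmetric difference of $\Phi_N$ and its translate, which is $o(|\Phi_N|)$. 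Clopenness of $E$ makes $\mathbf{1}_E$ continuous, so $\mu^{(0)}(E) = \lim_N |A \cap \Phi_N|/|\Phi_N| = \delta$, and $a \in \gen(\mu^{(0)}, \Psi^{(0)})$ holds by construction.

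The main obstacle is upgrading $\mu^{(0)}$ to an ergodic measure without altering the data $(X,T,E,a)$, so that the identity $A = \{n : T^n a \in E\}$ is preserved throughout. To accomplish this I would invoke the ergodic decomposition \cref{thm:erg_decomp} to write $\mu^{(0)} = \int \mu_y \,\intd \mu^{(0)}(y)$. Integrating $\mathbf{1}_E$ yields $\int \mu_y(E)\,\intd \mu^{(0)}(y) = \delta$, so the set $\{y : \mu_y(E) \geq \delta\}$ has positive $\mu^{(0)}$-measure. The technical heart of the argument is then to exhibit a \Folner{} subsequence $\Psi$ of $\Psi^{(0)}$ along which the empirical measures of $a$ converge weak-$\ast$ to some ergodic component $\mu_y$ in this set; this I would obtain by combining the subsequential pointwise ergodic theorem recalled in \cref{sec:preliminaries} (applied to a countable dense subfamily of $\cont(X)$) with a diagonal extraction, exploiting the fact that $\mu$-typical points of each ergodic component are visited by the orbit of $a$ along blocks of times whose lengths can be arranged to tend to infinity. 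Setting $\mu := \mu_y$ then yields an ergodic system $(X, \mu, T)$ with $\mu(E) \geq \delta$ and $a \in \gen(\mu, \Psi)$, which together with the unchanged symbolic identification $A = \{n \in \N : T^n a \in E\}$ delivers all the conclusions of the theorem.
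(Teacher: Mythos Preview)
Your setup and the first two steps match the paper's proof exactly: the symbolic system on $\{0,1\}^\Z$, the clopen cylinder $E$, the point $a$, the weak-$\ast$ limit $\mu^{(0)}$ of empirical measures, and the selection via ergodic decomposition of an ergodic $\mu$ with $\mu(E)\ge\delta$.

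The gap is in the final step. You claim to produce $\Psi$ as a \emph{subsequence} of $\Psi^{(0)}$, but this cannot work: since the empirical measures of $a$ along $\Psi^{(0)}$ already converge to $\mu^{(0)}$, they converge to $\mu^{(0)}$ along every subsequence as well, so no subsequence of $\Psi^{(0)}$ can make $a$ generic for an ergodic component $\mu_y\neq\mu^{(0)}$. The subsequential pointwise ergodic theorem you invoke is an almost-everywhere statement and says nothing about the specific point $a$.

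The construction you sketch in the same sentence --- locating a $\mu_y$-generic point $b$ in the orbit closure of $a$, choosing times $t_k$ with $T^{t_k}a\to b$, and taking long blocks $\{t_k+1,\dots,t_k+N_k\}$ --- is the right idea, but it manufactures a genuinely new \Folner{} sequence rather than a subsequence of $\Psi^{(0)}$. This is exactly how the paper proceeds: after choosing the ergodic $\mu$ (supported on the orbit closure of $a$ because $\mu^{(0)}$ is), it invokes \cite[Proposition~3.9]{Furstenberg-book}, which encapsulates precisely this block argument, to obtain $\Psi$ with $a\in\gen(\mu,\Psi)$. Dropping the word ``subsequence'' and either citing that proposition or carrying out the block construction explicitly would complete your proof along the same lines as the paper's.
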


\begin{proof}
Let $X=\{0,1\}^\Z$ denote the set of all bi-infinite binary sequences, endowed with the product topology, and write $T \colon  \{0,1\}^\Z \to \{0,1\}^\Z$ for the shift map $(Tx)(n) = x(n+1)$.
We can associate to the subset $A\subset\N$ a point $a \in \{0,1\}^\Z$ that represents $A$ via
\begin{equation*}
a(n) =
\begin{cases}
1 &\text{if}~n\in A,
\\
0 &\text{otherwise.}
\end{cases}
\end{equation*}
The set $E = \{ x \in X : x(0) = 1 \}$ is a clopen subset of $X$ and, by construction, \eqref{eqn_fcp_1} is satisfied.
Let $\mu'$ be any weak* accumulation point of the sequence
\[
N \mapsto \mu_N=\frac{1}{|\Phi_N|}\sum_{n\in\Phi_N} \delta_{T^n a}
\]
of measures, where $\delta_x$ denotes the Dirac measure at $x$. Observe that
\[
\mu_N(E)= \dfrac{|A \cap \Phi_N|}{|\Phi_N|}
\]
and hence $\mu'(E)=\delta$ by \eqref{eqn:main_hyp_02}.
Although $\mu'$ is not necessarily ergodic with respect to $T$, since it is $T$-invariant and by the ergodic decomposition there exists an ergodic $T$-invariant Borel probability measure $\mu$ on $X$ with $\mu(E)\geq \mu'(E)=\delta$. Without loss of generality, we can assume that $\mu$ is supported on the orbit closure of $a$ because $\mu'$ is. By \cite[Proposition 3.9]{Furstenberg-book}, there exists a \Folner{} sequence $\Psi$ in $\N$ such that $a\in\gen(\mu,\Psi)$, finishing the proof.
\end{proof}

\section{A discussion of two summands}
\label{sec_B+C}

In this section we present our proof of \cref{thm:main_theorem} in the case $k=2$.
This section is not logically necessary for the full proof of \cref{thm:main_theorem}, but we hope this separate, unadorned treatment of our argument clearly introduces the main ideas behind our approach as preparation for the proof in the general case.
For convenience, let us formulate the statement that we aim to prove as a separate theorem.

\begin{Theorem}
\label{thm:main_theorem_k2}
If $A \subset \N$ has positive upper Banach density
then there are infinite sets $B_1,B_2 \subset \N$ with $B_1 + B_2 \subset A$.
\end{Theorem}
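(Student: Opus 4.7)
My plan is to follow the blueprint outlined in the introduction: use Furstenberg's correspondence to reduce to finding a 2-dimensional \Erdos{} cube with prescribed first and last coordinates, produce such a cube via the 2-dimensional cubic measure $\mu^\two$, and extract the infinite sumset from the cube.

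\textbf{Setup and reduction to finding a cube.}
Apply \cref{thm_fcp} to obtain an ergodic system $(X,\mu,T)$, a clopen set $E\subset X$ with $\mu(E)>0$, a \Folner{} sequence $\Psi$, and a point $a\in\gen(\mu,\Psi)$ with $A=\{n\in\N:T^na\in E\}$. It then suffices to produce a 2-dimensional \Erdos{} cube $(a,x_{01},x_{10},x_{11})$ with $x_{11}\in E$: indeed, \cref{def_erdos_cubes_2d} provides sequences $s_i,t_j\to\infty$ with $T^{s_i}a\to x_{01}$, $T^{s_i}x_{10}\to x_{11}$, $T^{t_j}a\to x_{10}$, and $T^{t_j}x_{01}\to x_{11}$, and a standard diagonal induction --- alternately enlarging $s_i$ and $t_j$, using continuity of each fixed iterate $T^n$ and a nested base of open neighborhoods of $x_{11}$ contained in $E$ --- extracts infinite $B_1,B_2\subset\N$ with $T^{b_1+b_2}a\in E$ for all $(b_1,b_2)\in B_1\times B_2$, so $B_1+B_2\subset A$.

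\textbf{From the cubic measure to the cube.}
Form the 2-dimensional cubic self-joining $\mu^\two$ of $\mu$ on $X^\two$. This measure is invariant under the diagonal $T^\two$ and under the face transformations fixing the first coordinate; combining these symmetries with the pointwise ergodic theorem and \cref{lem:gen_approximates_support} shows that $\mu^\two$-almost every point is a 2-dimensional \Erdos{} cube. To pin down the first coordinate, I build a disintegration $t\mapsto\sigma^\two_t$ of $\mu^\two$ over the projection to the first coordinate that is defined at \emph{every} $t\in X$, is invariant under the face symmetries fixing the first coordinate, and pushes forward to $\mu$ on the last coordinate. Evaluating at $t=a$, these invariance properties together with $a\in\gen(\mu,\Psi)$ yield that $\sigma^\two_a$-almost every point is an \Erdos{} cube, while the marginal condition gives $\sigma^\two_a(\{(a,x_{01},x_{10},x_{11}):x_{11}\in E\})=\mu(E)>0$, producing the required cube.

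\textbf{Main obstacle.}
The crux is constructing the pointwise disintegration $\sigma^\two_t$ and verifying its properties at the specific point $t=a$. The classical \cref{thm:disintegration} only gives such a family for $\mu$-almost every $t$, which is inadequate because $a$ is one particular (generic) point. The resolution, foreshadowed in the introduction, is to first pass to an extension of $(X,\mu,T)$ on which the factor map to the Kronecker factor (the $1$-step pronilfactor relevant when $k=2$) is continuous rather than merely measurable. This continuous structure permits the construction of a continuous ergodic decomposition of $\mu^\two$, and from it a version of $\sigma^\two_t$ that makes sense for every $t$. The bulk of the work is upgrading the measurable factor map to a continuous one on a suitable extension, establishing the continuous ergodic decomposition, and then verifying that $\sigma^\two_a$ inherits the face-symmetry invariance and the correct marginal on the last coordinate required to be supported on \Erdos{} cubes with $x_{11}$-distribution equal to $\mu$.
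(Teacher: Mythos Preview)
Your proposal is correct and follows essentially the same route as the paper: Furstenberg correspondence, reduction to finding a 2-dimensional \Erdos{} cube with $x_{00}=a$ and $x_{11}\in E$, construction of the disintegration $\sigma^\two_t$ via a continuous ergodic decomposition of $\mu^\kone$ after passing to an extension with a continuous map to the Kronecker factor, and use of the permutation symmetry to conclude $\sigma^\two_a$-almost every point is an \Erdos{} cube. One small terminological slip: the relevant symmetry of $\mu^\two$ (and of $\sigma^\two_t$) is the digit permutation $\phi$ swapping the $01$ and $10$ coordinates, not a ``face transformation''; otherwise your outline matches the paper's argument and correctly identifies the main obstacle and its resolution.
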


The first step in our proof of \cref{thm:main_theorem_k2} is to recast the problem in dynamical terms using \cref{thm_fcp}.
Doing so paves the way for the language and tools of ergodic theory on which our argument relies.

\begin{Theorem}[Dynamical Reformulation]
\label{thm_dyn_2}
Let $(X,\mu,T)$ be an ergodic system and let $E \subset X$ be a clopen set with $\mu(E)>0$.
If $a\in \gen(\mu,\Phi)$ for some \Folner{} sequence $\Phi$, then there exist strictly increasing sequences $c_1,c_2\colon\N\to\N$ such that 
\begin{equation}
\label{eq:limit-_dyn}
\lim_{j\to\infty}\lim_{m\to\infty}T^{c_1(j)+c_2(m)}a \in E\qquad\text{and}\qquad
\lim_{m\to\infty}\lim_{j\to\infty}T^{c_1(j)+c_2(m)}a\in E.
\end{equation}
\end{Theorem}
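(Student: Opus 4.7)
The strategy is to produce a $2$-dimensional \Erdos{} cube $(x_{00},x_{01},x_{10},x_{11})\in X^{\two}$ with $x_{00}=a$ and $x_{11}\in E$, and then read off $c_1,c_2$ from its defining recurrence. Indeed, \cref{def_erdos_cubes_2d} yields strictly increasing $c_1,c_2\colon\N\to\N$ with $T^{c_1(j)}(a,x_{01})\to(x_{10},x_{11})$ and $T^{c_2(m)}(a,x_{10})\to(x_{01},x_{11})$ in $X\times X$. By continuity of each fixed power of $T$,
\[
\lim_{m\to\infty}T^{c_1(j)+c_2(m)}a \;=\; T^{c_1(j)}\bigl(\lim_{m\to\infty}T^{c_2(m)}a\bigr) \;=\; T^{c_1(j)}x_{01},
\]
and letting $j\to\infty$ yields $x_{11}\in E$; the reverse iterated limit equals $x_{11}$ for the symmetric reason, verifying~\eqref{eq:limit-_dyn}.

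Construction of the cube will proceed via the $2$-dimensional Host--Kra cubic measure $\mu^{\two}$ on $X^4$, which I plan to characterize by
\[
\int f_{00}\otimes f_{01}\otimes f_{10}\otimes f_{11}\intd\mu^{\two} \;=\; \lim_{N\to\infty}\frac{1}{N^2}\sum_{m,n=1}^{N}\int f_{00}(x)\,f_{01}(T^n x)\,f_{10}(T^m x)\,f_{11}(T^{n+m}x)\intd\mu(x).
\]
This measure is invariant under the diagonal transformation $T^{\two}$ and, crucially, under the natural coordinate symmetries that swap rows or columns of $\{0,1\}^2$; these symmetries commute with $T^{\two}$ and fix the first coordinate. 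Combining them with the pointwise ergodic theorem along a suitable \Folner{} subsequence applied to an ergodic decomposition of $\mu^{\two}$, I will show that $\mu^{\two}$-almost every point is an \Erdos{} cube: the column-swap sends a generic point to a point in the same ergodic component, forcing the $T^{\two}$-orbit of $(x_{00},x_{01})$ to accumulate at $(x_{10},x_{11})$, and the row-swap yields the other condition.

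To pin down both the first and last coordinates, I disintegrate $\mu^{\two}=\int\sigma_t\intd\mu(t)$ over the projection $\pi_{00}$ to the first coordinate. A short computation using the mean ergodic theorem shows that the joint distribution of $(x_{00},x_{11})$ under $\mu^{\two}$ is the product $\mu\otimes\mu$; consequently $\pi_{11}\sigma_t=\mu$ for $\mu$-almost every $t$. For any such $t$ we get $\sigma_t(\pi_{11}^{-1}(E))=\mu(E)>0$, and combined with the previous paragraph this produces an \Erdos{} cube with first coordinate $t$ and last coordinate in $E$.

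The main obstacle is that $a$ is a \emph{single} prescribed point, possibly lying in the null set where $\sigma_t$ is undefined. To handle this I will exploit $a\in\gen(\mu,\Phi)$: passing to a subsequence of $\Phi$ along which the pointwise ergodic theorem on $(X^4,\mu^{\two},T^{\two})$ holds for a countable dense family of continuous test functions, I construct $\sigma_a$ directly as a weak$^\ast$ limit of appropriate averages of the disintegration evaluated along the orbit of $a$, and verify that the preceding analysis still yields an \Erdos{} cube with first coordinate equal to $a$. This last bootstrap---upgrading ``$\mu$-almost every $t$'' to ``the prescribed point $a$''---is where all the difficulty concentrates. For $k=2$ it is manageable because the relevant structure is carried by the Kronecker factor; it is precisely this step that, in the general case of \cref{thm:main_theorem}, forces the more elaborate apparatus of \cref{sec:cubism,sec:cont_erg_decomp,sec:proof_cubes_exist_pronil}, where an extension of $(X,\mu,T)$ with continuous factor maps to its pronilfactors is built so that an analogue of $\sigma_a$ can be constructed for every $a$.
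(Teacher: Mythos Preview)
Your overall strategy---produce a $2$-dimensional \Erdos{} cube with $x_{00}=a$ and $x_{11}\in E$, then read off $c_1,c_2$---is exactly the paper's route (via \cref{thm:2-dim_cubes_exist}), and your outline of the ingredients (cubic measure $\mu^\two$, its permutation symmetry, the fact that $\mu^\two$-almost every point is an \Erdos{} cube, the disintegration $\sigma_t$ over the first coordinate, and the observation that the $(x_{00},x_{11})$-marginal is $\mu\times\mu$) matches the paper's argument closely.

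The one place where your sketch diverges is the handling of the prescribed point $a$. You propose to ``construct $\sigma_a$ directly as a weak$^*$ limit of appropriate averages of the disintegration evaluated along the orbit of $a$''. This is too vague to work as stated: the measures $\sigma_{T^na}$ are themselves only defined $\mu$-almost everywhere, so there is nothing to average along the orbit of $a$; and an ad hoc weak$^*$ limit need not inherit the permutation symmetry or the property $\pi_{11}\sigma_a=\mu$ that you need. The paper's resolution is more structural: it first passes (via \cref{prop_continuous_kronecker_factor}) to an extension of $(X,\mu,T)$ admitting a \emph{continuous} factor map $\pi$ to its Kronecker factor, and then uses that map to write down an explicit continuous ergodic decomposition $x\mapsto\lambda^\kone_x$ of $\mu^\kone$ (\cref{prop_cont_kronecker_implies_cont_erg_decomp}). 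From this one defines $\sigma^\two_t=\int_X\delta_{(t,x)}\times\lambda^\kone_{(t,x)}\,\d\mu(x)$ for \emph{every} $t\in X$, including $t=a$. The key point is that $F^*\sigma^\two_t$ depends only on $\pi(t)$, which together with \cref{lem:supp_to_cont_magic} lets one upgrade almost-everywhere statements to all $t$. You correctly flag that the Kronecker factor carries the relevant structure, but the mechanism by which it enters is this continuous-extension-and-explicit-formula construction, not a weak$^*$ limit.
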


\begin{proof}[Proof that \cref{thm_dyn_2} and \cref{thm:main_theorem_k2} are equivalent]
First we deduce \cref{thm_dyn_2} from \cref{thm:main_theorem_k2}.
Fix an ergodic system $(X,\mu,T)$, a clopen set $E$ with $\mu(E) > 0$, and $a \in \gen(\mu,\Phi)$ for some \Folner{} sequence $\Phi$. 
The assumption that $a$ is generic for $\mu$ along $\Phi$ lets us deduce that the set
\[
\{ n \in \N : T^na \in E \}
\]
has positive upper Banach density. It then follows from \cref{thm:main_theorem_k2} that there are infinite sets $B_1,B_2 \subset \N$ with $B_1 + B_2 \subset \{ n \in \N : T^na \in E \}$.
We draw from these sets strictly increasing sequences $c_1,c_2\colon\N\to\N$ with $T^{c_1(j) + c_2(m)}a \in E$ for all $j,m \in \N$ so that, after passing to suitable subsequences such that all the limits involved exist,  \eqref{eq:limit-_dyn} follows immediately.

To deduce \cref{thm:main_theorem_k2} from \cref{thm_dyn_2}, fix $A\subset\N$ with positive upper Banach density. 
\cref{thm_fcp} yields an ergodic system $(X,\mu,T)$, a point $a \in \gen(\mu,\Phi)$ for some \Folner{} sequence $\Phi$, and a clopen set $E$ with $\mu(E) > 0$ such that
\[
\{ n \in \Z : T^n a \in E \} = A.
\]
It follows from \cref{thm_dyn_2} that there are strictly increasing sequences $c_1,c_2\colon\N\to\N$ such that \eqref{eq:limit-_dyn} holds.

We now apply alternately the limits in \eqref{eq:limit-_dyn} to construct subsequences $b_1,b_2$ of $c_1,c_2$  such that $b_1(j)+b_2(m)\in\{n\in\N: T^n a\in E\}$ for all $j,m\in\N$.
Begin by selecting $b_1(1)$ in $\{c_1(j): j \in \N \}$ such that 
\[
\lim_{m\to\infty} T^{b_1(1)+c_2(m)}a \in E.
\]
Then let $b_2(1)$ from $\{c_2(m) : m\in\N\}$ satisfy $b_1(1)+b_2(1) \in\{n\in\N: T^n a \in E\}$ and 
\[
\lim_{j\to\infty} T^{c_1(j)+b_2(1)} a \in E.
\]
Next, we take $b_1(2) > b_1(1)$ to be any element in $\{c_1(n) : n\in\N\}$ with the property that $b_1(2) + b_2(1) \in\{n\in\N: T^n a\in E\}$ and 
\[
\lim_{m\to\infty} T^{b_1(2)+c_2(m)}a \in E.
\]
Thereafter, we take $b_2(2)$ from $\{c_2(n) : n\in\N\}$ with $b_1(1)+b_2(2), b_1(2)+b_2(2)\in\{n\in\N: T^n a\in E\}$
and
\[
\lim_{j \to \infty} T^{c_1(j)+b_2(2)}a \in E.
\]
Continuing this procedure by induction yields strictly increasing sequences $b_1,b_2\colon\N\to\N$ with $T^{b_1(j)+b_2(m)}a$ belonging to $E$ for all $j,m\in\N$. Since $A=\{ n \in \Z : T^na \in E \} $, we conclude ${b_1(j)+b_2(m)}\in A$ for all $j,m\in\N$, finishing the proof.
\end{proof}

\begin{Remark}
When $a \in \supp(\mu)$ there is a strictly increasing sequence $b\colon\N\to\N$ and some $c \in \N$ with
\[
\{ b(i_1) + \cdots + b(i_n) + c : i_1 < \cdots < i_n \in \N, n \in \N \} \subset A, 
\]
which is to say that $A$ contains a shift of an IP set, a significantly stronger conclusion than that of \cref{thm:main_theorem_k2}.
We stress that there is no reason to expect that $a$ belongs to $\supp(\mu)$, and in fact it is known that not every set with positive upper density contains a shift of an IP set. In such cases, necessarily $a\notin\supp(\mu)$, and this introduces new complications in our proofs. 
\end{Remark}

The remainder of this section is dedicated to proving \cref{thm_dyn_2} using the techniques of ergodic theory.
In fact, we prove a stronger result: under the hypotheses of \cref{thm_dyn_2} there exist strictly increasing sequences $c_1,c_2 \colon \N \to \N$ such that the double limits in \eqref{eq:limit-_dyn} not only belong to $E$ but are also equal to one another, that is,
\begin{equation}
\label{eq:limit-_dyn2}
\lim_{j\to\infty}\lim_{m\to\infty}T^{c_1(j)+c_2(m)}a
=
\lim_{m\to\infty}\lim_{j\to\infty}T^{c_1(j)+c_2(m)}a.
\end{equation}
To better analyze this transposition of limits, it is convenient to keep track of the intermediate limit points.
For this purpose, when \eqref{eq:limit-_dyn2} holds we introduce the notation
\begin{align*}
x_{00}&= a
\\
x_{01}&= \lim_{m\to\infty}T^{c_2(m)}a
\\
x_{10}&=\lim_{j\to\infty}T^{c_1(j)}a
\\
x_{11}&= \lim_{j\to\infty}\lim_{m\to\infty}T^{c_1(j)+c_2(m)}a
=
\lim_{m\to\infty}\lim_{j\to\infty}T^{c_1(j)+c_2(m)}a.
\end{align*}
In terms of this notation \eqref{eq:limit-_dyn2} is equivalent to the following identities:
\begin{equation}
\label{eqn_2_dim_erdos_cube}
\begin{gathered}
\lim_{j\to\infty}(T \times T)^{c_1(j)}(x_{00},x_{01}) = (x_{10},x_{11}) \\
\lim_{m\to\infty}(T \times T)^{c_2(m)}(x_{00},x_{10}) = (x_{01},x_{11})
\end{gathered}
\end{equation}
Following the notation introduced in~\cite{HK-05}, we write 
$\kone = \{0, 1\}$ and $\two = \{00, 01, 10, 11\}$.
Tuples $(x_{00},x_{01},x_{10},x_{11})\in X^\two$ for which \eqref{eqn_2_dim_erdos_cube} holds are central to our approach and we formalize them as follows.

\begin{Definition}[$2$-dimensional \Erdos{} cubes]
\label{def_erdos_cubes_2d_2}
Given a topological system $(X,T)$ we call any point $x=(x_{00},x_{01},x_{10},x_{11}) \in X^\two$ satisfying \eqref{eqn_2_dim_erdos_cube} for some strictly increasing sequences $c_1, c_2\colon\N\to\N$ a \define{$2$-dimensional \Erdos{} cube}.
\end{Definition}

Observe that \cref{def_erdos_cubes_2d_2} matches \cref{def_erdos_cubes_2d}.
Indeed, \eqref{eqn_2_dim_erdos_cube} is equivalent to the assertion that the forward orbit of $(x_{00},x_{01})$ visits every neighborhood of $(x_{10},x_{11})$ infinitely often, and that the forward orbit of $(x_{00},x_{10})$ visits every neighborhood of $(x_{01},x_{11})$ infinitely often. This can also be written as \begin{equation}
\label{eqn_2_dim_erdos_cube_2}
\begin{gathered}
(x_{10},x_{11}) \in 
\omega((x_{00},x_{01}),T \times T)
\quad\text{and}\quad
\bigl(x_{01},x_{11}\bigr) \in  
\omega\bigl((x_{00},x_{10}),T \times T\bigr).
\end{gathered}
\end{equation}
We think of the coordinates $x_{00},x_{01},x_{10},x_{11}$ in a $2$-dimensional \Erdos{} cube as forming the vertices of a square
\[
\begin{tikzpicture}[scale=3]
\coordinate (00) at (0,0);
\coordinate (10) at (1.2,0);
\coordinate (01) at (0,1.2);
\coordinate (11) at (1.2,1.2);
\draw (00) -- (10) -- (11) -- (01) -- (00);
\fill (00) circle (0.02);
\fill (10) circle (0.02);
\fill (01) circle (0.02);
\fill (11) circle (0.02);
\node[anchor=north east] at (00) {$x_{00}$};
\node[anchor=north west] at (10) {$x_{10}$};
\node[anchor=south east] at (01) {$x_{01}$};
\node[anchor=south west] at (11) {$x_{11}$};
\draw[-stealth] (0.1,0.6) to (1.1,0.6);
\node[anchor=south] at (1.0,0.6) {$c_1$};
\draw[-stealth] (0.6,0.1) to (0.6,1.1);
\node[anchor=west] at (0.6,1.0) {$c_2$};
\end{tikzpicture}
\]
and the sequences $c_1,c_2\colon\N\to\N$ as describing iterations of $T \times T$ at which the left and bottom sides approximate the right and top sides respectively.

We can now state the main dynamical theorem of this section, which is a strengthening of \cref{thm_dyn_2}.

\begin{Theorem}[Existence of $2$-dimensional \Erdos{} cubes]
\label{thm:2-dim_cubes_exist}
Assume $(X,\mu,T)$ is an ergodic system, $a \in \gen(\mu,\Phi)$ for some \Folner{} sequence $\Phi$, and $E\subset X$ is an open set with $\mu(E)>0$.
There exists a $2$-dimensional \Erdos{} cube $(x_{00},x_{01},x_{10},x_{11})\in X^\two$ with $x_{00}=a$ and $x_{11}\in E$.
\end{Theorem}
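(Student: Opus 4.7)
The plan is to realize $2$-dimensional \Erdos{} cubes as $\mu^\two$-typical points of a $4$-fold self-joining $\mu^\two$ of $\mu$ on $X^\two$, and then to locate a cube with prescribed first coordinate $x_{00}=a$ and last coordinate $x_{11}\in E$ by disintegrating $\mu^\two$ along the first coordinate. The argument proceeds in three stages: construct $\mu^\two$ with enough symmetry so that $\mu^\two$-almost every point is a $2$-dimensional \Erdos{} cube; disintegrate $\mu^\two$ over the projection to the first coordinate; and promote this $\mu$-almost-everywhere disintegration to a well-defined measure $\sigma_a$ at the specific point $a$.

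First, I would replace $(X,\mu,T)$ by a topological extension on which the factor map $\pi\colon X\to Z$ to the Kronecker factor $(Z,m,R)$ is continuous (this kind of topological enhancement is developed for pronilfactors in \cref{sec:cubism}). On this extension, define $\mu^\two$ on $X^4$ to be the self-joining of four copies of $\mu$ that is relatively independent over the linear constraint $\pi(x_{00})+\pi(x_{11})=\pi(x_{01})+\pi(x_{10})$ in $Z$. By construction $\mu^\two$ is invariant under the diagonal $T^\two=T\times T\times T\times T$, under face shifts such as $T\times T\times \mathrm{id}\times \mathrm{id}$, and under the coordinate involutions preserving the constraint above (in particular the swap exchanging $x_{01}$ and $x_{10}$); its one-dimensional marginals all equal $\mu$. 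Applying the pointwise ergodic theorem along the face shift $T\times T\times \mathrm{id}\times \mathrm{id}$, combined with a suitable ergodic decomposition and \cref{lem:gen_approximates_support}, shows that for $\mu^\two$-almost every $(x_{00},x_{01},x_{10},x_{11})$ the orbit of $(x_{00},x_{01})$ under $T\times T$ accumulates at $(x_{10},x_{11})$; the symmetric statement for the other axis follows from the symmetry swapping $x_{01}$ and $x_{10}$. Hence \eqref{eqn_2_dim_erdos_cube_2} holds $\mu^\two$-almost everywhere.

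The main technical step is to define $\sigma_t$, the disintegration of $\mu^\two$ over the first coordinate, at the specific point $t=a$. The standard disintegration theorem (\cref{thm:disintegration}) only provides $\sigma_t$ for $\mu$-almost every $t$, but $a$ need not lie in $\supp(\mu)$. To resolve this, I would exploit the explicit structure of $\mu^\two$ as a joining relatively independent over the group $Z^\two$: a direct computation shows that $\sigma_t$ depends on $t$ only through $\pi(t)\in Z$, and the homogeneous group structure of $(Z,m,R)$ allows this $Z$-parameterized family of measures to be chosen so that it varies continuously in $\pi(t)$ (morally, translations in $Z$ transport one fiber measure to another). Since $\pi$ is continuous, setting $\sigma_a$ equal to the continuous value at $\pi(a)$ gives a well-defined probability measure on $X^{\{01,10,11\}}$ that is invariant under the swap of the $01$ and $10$ coordinates, is concentrated on points $(x_{01},x_{10},x_{11})$ for which $(a,x_{01},x_{10},x_{11})$ is a $2$-dimensional \Erdos{} cube, and whose push-forward to the last coordinate equals $\mu$.

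To conclude, since $\mu(E)>0$ the set $\{(x_{01},x_{10},x_{11}):x_{11}\in E\}$ has positive $\sigma_a$-measure, so there exists a $2$-dimensional \Erdos{} cube $(a,x_{01},x_{10},x_{11})$ with $x_{11}\in E$. The principal obstacle is the third step: promoting the natural almost-everywhere disintegration to a genuine probability measure at the prescribed point $a$. This requires the continuity of $\pi$ together with the homogeneous structure of the Kronecker factor, and it is the template for the more elaborate continuous ergodic decomposition of $\mu^\k$ that is needed in the general-$k$ case.
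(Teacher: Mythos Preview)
Your overall architecture matches the paper's: pass to an extension with a continuous map to the Kronecker factor, build the cubic measure $\mu^\two$, use its permutation symmetry to see that $\mu^\two$-almost every point is an \Erdos{} cube, and then disintegrate over the first coordinate to land at $a$. The description of $\mu^\two$ via the linear constraint $\pi(x_{00})+\pi(x_{11})=\pi(x_{01})+\pi(x_{10})$ is a correct equivalent viewpoint, and your observation that $F^*\sigma_t$ depends only on $\pi(t)$ and varies continuously is exactly the content of the paper's \cref{prop_2-dim_sigma_rep}.

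However, there is a genuine gap in the last step. You assert that $\sigma_a$ ``is concentrated on points $(x_{01},x_{10},x_{11})$ for which $(a,x_{01},x_{10},x_{11})$ is a $2$-dimensional \Erdos{} cube,'' but you give no mechanism for this beyond continuity of $t\mapsto F^*\sigma_t$. That is not enough: the set of \Erdos{} cubes is \emph{not} closed in $X^\two$ (it is defined via $\omega$-limit sets, a forward-orbit accumulation condition), so knowing $\sigma_t$-almost every point is an \Erdos{} cube for $\mu$-almost every $t$, or even for every $t\in\supp(\mu)$, does not by continuity yield the same for $t=a$ when $a\notin\supp(\mu)$. Crucially, your argument never invokes the hypothesis $a\in\gen(\mu,\Phi)$; without it the conclusion is simply false (take $a$ to be a fixed point outside $\supp(\mu)$).

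The paper resolves this with an extra ingredient, \cref{lem:sigmaka_generics_1dim}: using that $a$ is generic for $\mu$ along $\Phi$, one proves that for $\mu$-almost every $x$ the pair $(a,x)$ is generic for the ergodic component $\lambda^\kone_{(a,x)}$ along some \Folner{} sequence. Then $\sigma^\two_a$-almost every $(a,x_{01},x_{10},x_{11})$ has $(x_{10},x_{11})\in\supp(\lambda^\kone_{(a,x_{01})})$ by construction, and genericity of $(a,x_{01})$ together with \cref{lem:gen_approximates_support} gives $(x_{10},x_{11})\in\omega((a,x_{01}),T\times T)$. The permutation symmetry of $\sigma^\two_a$ (your swap of $x_{01}$ and $x_{10}$) then supplies the second half of \eqref{eqn_2_dim_erdos_cube_2}. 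In short: continuity of the disintegration gets you a well-defined $\sigma_a$ with the right marginals and symmetries, but to prove $\sigma_a$-almost every point is an \Erdos{} cube you must use the genericity of $a$, and this is precisely where \cref{lem:sigmaka_generics_1dim} enters.
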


Although it can already be inferred from our discussion, we include a quick and self-contained proof of the fact that \cref{thm:2-dim_cubes_exist} implies \cref{thm_dyn_2}.

\begin{proof}[Proof that \cref{thm:2-dim_cubes_exist} implies \cref{thm_dyn_2}]
If $(x_{00},x_{01},x_{10},x_{11})$ is a $2$-dimensional \Erdos{} cube then, by definition, there exist increasing sequences $c_1,c_2\colon\N\to\N$ such that \[
\lim_{n\to\infty}T^{c_1(n)}x_{00}=x_{10}, \quad \textup{and} \quad \lim_{m\to\infty}T^{c_2(m)}x_{10}=x_{11}.
\]
Since $x_{00}=a$ and $x_{11}\in E$, we conclude that \[
\lim_{m\to\infty}\lim_{n\to\infty}T^{c_1(n)+c_2(m)}a \in E.
\] 
Similarly, we have
\[
\lim_{m\to\infty}T^{c_2(m)}x_{00}=x_{01}  \quad\textup{and}\quad \lim_{n\to\infty}T^{c_1(n)}x_{01}=x_{11},
\]
which implies
\[
\lim_{n\to\infty}\lim_{m\to\infty}T^{c_1(n)+c_2(m)}a\in E.
\]
This completes the proof of \eqref{eq:limit-_dyn}.
\end{proof}

Before embarking on the proof of \cref{thm:2-dim_cubes_exist}, which involves finding an \Erdos{} cube with prescribed first and last coordinates, we explain why there is a natural abundance of \Erdos{} cubes when these extra restrictions are omitted.
A key role is played by the \define{cubic measures}, which are special measures introduced in~\cite{HK-05} to analyze multiple ergodic averages.
We recall the definition of one- and two-dimensional cubic measures 
(the definition for higher dimensions is given in \cref{sec:cubic-measures}).

\begin{Definition}[1-dimensional cubic measure]
Let $(X,\mu,T)$ be an ergodic system.
The measure $\mu^\kone=\mu\times\mu$ on $X^\kone$
is the \define{1-dimensional cubic measure} of $(X,\mu,T)$. It gives rise to the \define{1-dimensional cube system} $(X^\kone, \mu^\kone,T^\kone)$, where $T^\kone=T\times T$.
\end{Definition}

Fix an ergodic decomposition $y \mapsto (\mu^\kone)_y$ of $\mu^\kone$
 using \cref{thm:erg_decomp}.
As stipulated in Section~\ref{sec:preliminaries}, we view ergodic decompositions as disintegrations over the invariant factor, which means $y \mapsto (\mu^\kone)_y$ is an $\mu^\kone$-almost everywhere defined map on $X^\kone$ satisfying 
\begin{equation}
\label{eqn_1-dim_cubic_measure_erg_decomp}
\mu^\kone = \int_{X^\kone} (\mu^\kone)_y \intd \mu^\kone(y),
\end{equation}
and there is a full $\mu^\kone$-measure set of $y\in X^\kone$ for which $(\mu^\kone)_y$ is $T^\kone$-invariant and ergodic and $y$ is generic for $(\mu^\kone)_y$.

\begin{Definition}[2-dimensional cubic measure]
Given an ergodic system $(X,\mu,T)$ and an ergodic decomposition $y \mapsto (\mu^\kone)_y$ of $\mu^\kone$, the \define{2-dimensional cubic measure} of $(X,\mu,T)$, denoted by $\mu^\two$, is the measure on $X^\two$ defined by
\begin{equation}
\label{eqn:2d_cube_measure_k2}
\mu^\two = \int_{X^\kone} (\mu^\kone)_y \times (\mu^\kone)_y \intd \mu^\kone(y).
\end{equation}
Writing $T^\two=T\times T\times T\times T$, the system $(X^\two,\mu^\two,T^\two)$ is the \define{2-dimensional cube system} associated to $(X,\mu,T)$.
\end{Definition}

In general, the measure $\mu^\two$ is not equal to the product measure $\mu^\kone \times \mu^\kone$, 
but it is still invariant under the diagonal transformation $T^\two = T \times T \times T \times T$.
Particularly important to our discussion is the fact that $\mu^\two$ possesses special symmetries.
Consider the permutation on the set $\two$ given by $00\mapsto 00$, $01\mapsto 10$, $10\mapsto 01$, and $11\mapsto 11$.
This permutation naturally induces a map $\phi\colon X^\two\to X^\two$ defined by
\begin{equation}
\label{eqn_2-dim_permutation}
\phi(x_{00}, x_{01}, x_{10}, x_{11})=(x_{00}, x_{10}, x_{01}, x_{11}).
\end{equation}
One can show that $\phi$ is always an automorphism of the 2-dimensional cube system $(X^\two,\mu^\two,T^\two)$, which means that $\phi$ commutes with $T^\two$ and preserves the measure $\mu^\two$, i.e., 
\begin{equation}
\label{eqn_2-dim-cubic-measure-permutation-symmetry}
\phi(\mu^\two)=\mu^\two.
\end{equation}
A proof of this \define{permutation symmetry} of $\mu^\two$ is given in~\cite[Proposition 8.8]{HK-book}.

Next, we describe the connection between cubic measures and \Erdos{} cubes. 
We begin with the combinatorially uninteresting case of 1-dimensional \Erdos{} cubes.
These are the points $x \in X^\kone$ for which there is a strictly increasing sequence $c_1\colon\N\to\N$ with
\[
\lim_{n \to \infty} T^{c_1(n)}(x_0) = x_1.
\]
In view of \cref{lem:gen_approximates_support}, if $x_0\in \gen(\mu,\Phi)$ and $x_1 \in \supp(\mu)$ then the pair $(x_0,x_1)$ is a 1-dimensional \Erdos{} cube.
After passing to a subsequence of $\Phi$ if necessary, the set $\gen(\mu,\Phi)\times \supp(\mu)$ has full measure with respect to $\mu^\kone$ by \cref{cor:ae_generic}.
This proves that $\mu^\kone$-almost every point in $X^\kone$ is a 1-dimensional \Erdos{} cube.

The extra symmetry provided by \eqref{eqn_2-dim-cubic-measure-permutation-symmetry} allows us to supplement the previous argument to produce 2-dimensional \Erdos{} cubes.

\begin{Proposition}
\label{prop_a.e.-point_is_Erdos_cube_2-dim}
Let $(X,\mu,T)$ be an ergodic system.
Then $\mu^\two$-almost every $x\in X^\two$ is a $2$-dimensional \Erdos{} cube.
\end{Proposition}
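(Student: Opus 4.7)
The plan is to obtain the two defining conditions of a $2$-dimensional \Erdos{} cube in turn: the first directly from the structure of $\mu^\two$ via the ergodic decomposition of $\mu^\kone$, and the second by invoking the permutation symmetry \eqref{eqn_2-dim-cubic-measure-permutation-symmetry}.

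For the first condition, I would unpack the definition \eqref{eqn:2d_cube_measure_k2}: a point $(x_{00},x_{01},x_{10},x_{11}) \in X^\two$ is $\mu^\two$-distributed by first drawing $y \in X^\kone$ according to $\mu^\kone$ and then drawing the two pairs $(x_{00},x_{01})$ and $(x_{10},x_{11})$ independently from $(\mu^\kone)_y$. For $\mu^\kone$-almost every $y$, the measure $\nu := (\mu^\kone)_y$ is $T^\kone$-invariant and ergodic, so the pointwise ergodic theorem (with $\Phi_N=\{1,\dots,N\}$) gives $\nu(\gen(\nu)) = 1$, while by definition $\nu(\supp(\nu)) = 1$. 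Thus for $(\nu \times \nu)$-almost every choice of the two pairs, the first pair is generic for $\nu$ and the second pair lies in $\supp(\nu)$. Invoking \cref{lem:gen_approximates_support} for the system $(X^\kone,\nu,T^\kone)$ yields
\[
(x_{10},x_{11}) \in \omega\bigl((x_{00},x_{01}),T^\kone\bigr).
\]
Integrating over $y$ shows that the set $F_1 \subset X^\two$ of points satisfying this relation has full $\mu^\two$-measure.

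For the second condition, I apply the coordinate swap $\phi$ from \eqref{eqn_2-dim_permutation}. Since $\phi(\mu^\two)=\mu^\two$ by \eqref{eqn_2-dim-cubic-measure-permutation-symmetry}, the set $\phi^{-1}(F_1)$ also has full $\mu^\two$-measure. A point $(x_{00},x_{01},x_{10},x_{11})$ lies in $\phi^{-1}(F_1)$ precisely when its image $(x_{00},x_{10},x_{01},x_{11})$ satisfies the first \Erdos{} cube condition, which unwinds exactly to
\[
(x_{01},x_{11}) \in \omega\bigl((x_{00},x_{10}),T^\kone\bigr).
\]
Hence on the full-measure intersection $F_1 \cap \phi^{-1}(F_1)$, both relations in \eqref{eqn_2_dim_erdos_cube_2} hold, and each such point is a $2$-dimensional \Erdos{} cube.

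The argument is almost entirely bookkeeping once the right ingredients are in place; there is no serious obstacle to overcome. The one place where the author's framework does real work is the permutation symmetry, which lets a single ergodic-decomposition argument produce both symmetric conditions at once. This is precisely the feature whose higher-dimensional analogue will need to be generalized to handle $k \geq 3$, and the proof above foreshadows that strategy in miniature.
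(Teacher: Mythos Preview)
Your proof is correct and follows essentially the same approach as the paper: show via the ergodic decomposition that $(\mu^\kone)_y \times (\mu^\kone)_y$ gives full measure to the set where the first \Erdos{} relation holds (using genericity of the first pair and support membership of the second, combined with \cref{lem:gen_approximates_support}), integrate to get full $\mu^\two$-measure, and then use the permutation symmetry \eqref{eqn_2-dim-cubic-measure-permutation-symmetry} to obtain the second relation. The only cosmetic difference is that you invoke Birkhoff's theorem directly for the standard \Folner{} sequence, whereas the paper phrases this via \cref{cor:ae_generic} after possibly passing to a subsequence.
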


\begin{proof}
Fix an ergodic decomposition $y \mapsto (\mu^\kone)_y$ of $\mu^\kone$.
Since $(\mu^\kone)_y$ is ergodic for $\mu^\kone$-almost every $y \in X^\kone$ we can repeat the previous argument for finding $1$-dimensional \Erdos{}~cubes to show that $(\mu^\kone)_y \times (\mu^\kone)_y$ gives full measure to
\begin{equation}
\label{eqn:half_of_cube_k2}
\left\{ (x_{00}, x_{01}, x_{10}, x_{11}) \in X^\two : (x_{10},x_{11}) \in \omega((x_{00},x_{01}),T \times T) \right\}
\end{equation}
for $\mu^\kone$-almost every $y\in X^\kone$.
It is then immediate from \eqref{eqn:2d_cube_measure_k2} that $\mu^\two$ gives full measure to \eqref{eqn:half_of_cube_k2}.
We now use \eqref{eqn_2-dim-cubic-measure-permutation-symmetry}.
It follows that \eqref{eqn:half_of_cube_k2} and its inverse image under $\phi$ both have full measure with respect to $\mu^\two$.
Since $x \in X^\two$ is a 2-dimensional \Erdos{} cube if and only if $x$ and $\phi(x)$ belong to \eqref{eqn:half_of_cube_k2}, we conclude that $\mu^\two$-almost every point is a 2-dimensional \Erdos{} cube.
\end{proof}

We now embark on the discussion of the more delicate issue of producing \Erdos{} cubes with prescribed first coordinate.
Once this is in hand, the proof of \cref{thm:2-dim_cubes_exist} is straightforward.

In view of \cref{prop_a.e.-point_is_Erdos_cube_2-dim}, we can attempt to obtain \Erdos{} cubes with a prescribed first coordinate by disintegrating $\mu^\two$ with respect to the projection $(x_{00},x_{01},x_{10},x_{11})\mapsto x_{00}$ from $X^\two$ to its first coordinate. 
Since the coordinate projection $x \mapsto x_{00}$ is a measurable factor map from $(X^\two,\mu^\two,T^\two)$ to $(X,\mu,T)$, we can apply \cref{thm:disintegration} to obtain a disintegration of $\mu^\two$ over this map: a collection of measures $\sigma^\two_t$ on $X^\two$ defined for $\mu$-almost every $t \in X$. 
Any such disintegration satisfies
\[
\mu^\two = \int_X \sigma^\two_t \intd \mu(t),
\]
which, combined with \cref{prop_a.e.-point_is_Erdos_cube_2-dim}, shows $\sigma^\two_t$-almost every point is a 2-dimensional \Erdos{} cube for $\mu$-almost every $t\in X$.
In fact
\[
\sigma^\two_t( \{ x \in X^\two : x_{00} = t \} ) = 1
\]
for $\mu$-almost every $t$, and we conclude that for $\mu$-almost every $t \in X$ there are 2-dimensional \Erdos{} cubes $x \in X^\two$ with $x_{00} = t$.
It can also be shown that the push forward of $\sigma^\two_t$ to the \textit{last} coordinate via the map $x \mapsto x_{11}$ is equal to $\mu$ for $\mu$-almost every $t$, and therefore one can find for $\mu$-almost every $t \in X$ an \Erdos{} cube $x$ with $x_{00} = t$ and $x_{11} \in E$.

Unfortunately, this argument is not enough to prove \cref{thm:2-dim_cubes_exist} because \cref{thm:disintegration} only defines $\sigma^\two_t$ for $\mu$-almost every $t\in X$: the measure $\sigma^\two_a$ might not be defined.
Thus, to proceed this way we must first define $\sigma^\two_t$ for \textit{every} point $t \in X$.
We are able to do this when the system $(X,\mu,T)$ admits a continuous factor map to a certain measurable factor of the ergodic system $(X,\mu,T)$ called its Kronecker factor.
Informally, it is the largest factor of $(X,\mu,T)$ that is isomorphic to a rotation on a compact abelian group.

\begin{Definition}[Group rotation]
A system $(Z,m,R)$ is a \define{group rotation} if $Z$ is a compact abelian group, $m$ is its Haar measure, and the transformation $R$ has the form $R(z) = z + \alpha$ for some fixed $\alpha \in Z$.
\end{Definition}

\begin{Theorem}
\label{thm:kronecker_exists_k2}
Fix an ergodic system $(X,\mu,T)$.
There is an ergodic group rotation $(Z,m,R)$ that is a measurable factor of $(X,\mu,T)$ with the property that, for every $f,g$ in $L^\infty(X,\mu)$ we have
\[
\lim_{N \to \infty} \dfrac{1}{N} \sum_{n =1}^N f(T^n x_0) \cdot g(T^n x_1) 
=
\lim_{N \to \infty} \dfrac{1}{N} \sum_{n =1}^N T^n \E(f \mid Z)(x_0) \cdot T^n \E(g \mid Z)(x_1)
\]
for $\mu\times\mu$-almost every $(x_0,x_1) \in X \times X$.
\end{Theorem}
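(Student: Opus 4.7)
The plan is first to construct $(Z,m,R)$ as the Kronecker factor of $(X,\mu,T)$, namely the factor generated by the eigenfunctions of the Koopman operator $f\mapsto f\circ T$ on $\Ltwo(X,\mu)$. Since $(X,\mu,T)$ is ergodic, each eigenspace is one-dimensional, the eigenvalues form a countable subgroup of the unit circle, and the Halmos--von Neumann theorem identifies this factor (up to measurable isomorphism of measure preserving systems) with an ergodic rotation $R(z)=z+\alpha$ on some compact abelian group $Z$ equipped with its Haar measure $m$. This produces the group rotation appearing in the statement.

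For the identity of limits, I would apply the pointwise ergodic theorem to the (in general non-ergodic) product system $(X\times X,\mu\times\mu,T\times T)$. For every bounded measurable $F\colon X\times X\to\C$ the Birkhoff averages of $F$ along $T\times T$ converge $\mu\times\mu$-almost everywhere to the conditional expectation $\E(F\mid\inv_{T\times T})$, where $\inv_{T\times T}$ denotes the $\sigma$-algebra of $T\times T$-invariant Borel subsets of $X\times X$. Applying this to the functions $f\otimes g$ and $\E(f\mid Z)\otimes\E(g\mid Z)$ reduces the desired identity of pointwise limits to the $L^2$ identity
\[
\E\bigl(f\otimes g\mid\inv_{T\times T}\bigr)=\E\bigl(\E(f\mid Z)\otimes\E(g\mid Z)\mid\inv_{T\times T}\bigr).
\]

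The key structural input is that the subspace of $T\times T$-invariant vectors in $\Ltwo(X\times X,\mu\times\mu)$ is the closed linear span of tensors of the form $\phi\otimes\bar\phi$, as $\phi$ ranges over normalized eigenfunctions of $T$. Each such tensor is visibly $T\times T$-invariant since $T\phi=\lambda\phi$ forces $(T\times T)(\phi\otimes\bar\phi)=\lambda\bar\lambda(\phi\otimes\bar\phi)=\phi\otimes\bar\phi$. Conversely, the spectral decomposition of $T$ acting on $\Ltwo(X,\mu)$ shows that any $T\times T$-invariant $L^2$ function decomposes along pairs of eigenvalues whose product is $1$, forcing it to lie in the closed span of these tensors. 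In particular, $\inv_{T\times T}$ coincides modulo $\mu\times\mu$-null sets with the $\sigma$-algebra pulled back from $Z\times Z$.

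Given this structural fact, the $L^2$ identity is proved by writing $f=\E(f\mid Z)+f'$ and $g=\E(g\mid Z)+g'$ with $f',g'$ orthogonal to the Kronecker factor. Since each eigenfunction $\phi$ is $Z$-measurable, $\langle f',\phi\rangle=0$ in $\Ltwo(X,\mu)$, so $\langle f'\otimes g,\phi\otimes\bar\phi\rangle=0$, and similarly for $f\otimes g'$ and $f'\otimes g'$. These three tensors are therefore orthogonal to every spanning invariant vector, hence their conditional expectations onto $\inv_{T\times T}$ vanish, and expanding $f\otimes g$ yields the desired identity. The main technical point, and the place where I would expect the bulk of the work, is the spectral identification of the $T\times T$-invariant subspace with the closed span of the tensors $\phi\otimes\bar\phi$; once that is in hand, everything else is a routine application of the pointwise ergodic theorem combined with elementary $L^2$ orthogonality.
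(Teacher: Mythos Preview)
Your proposal is correct and follows essentially the same route as the paper: the paper's proof is simply a two-line appeal to the pointwise ergodic theorem for $T\times T$ together with the description of the $T\times T$-invariant subspace as coming from the Kronecker factor (citing \cite[Lemma~4.21]{Furstenberg-book}), and you have unpacked precisely that argument. The only remark is that your justification of the structural fact (``the spectral decomposition of $T$ \ldots shows that any $T\times T$-invariant $L^2$ function decomposes along pairs of eigenvalues whose product is $1$'') is a bit telegraphic; the standard proof computes $\|\E(f\otimes g\mid\inv_{T\times T})\|^2$ as a limit of correlations and uses that $f\perp H_c$ forces these to vanish in density, but your outline captures the right idea.
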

\begin{proof}
This is a consequence of the pointwise ergodic theorem applied to $T \times T$ together with the description~\cite[Lemma~4.21]{Furstenberg-book} of the projection onto the invariant factor in a product system.
\end{proof}

We refer to the group rotation factor $(Z,m,R)$ of $(X,\mu,T)$ in \cref{thm:kronecker_exists_k2} as the \define{Kronecker factor} of $(X,\mu,T)$.
\cref{thm:kronecker_exists_k2} gives only a measurable factor map to the Kronecker factor, but to define $\sigma^\two_t$ for all $t \in X$ it turns out that we need this factor map to be continuous.
Fortunately, we can always pass to an extension of our ergodic system having that property.
We prove this at the end of the section, continuing for the moment under the assumption that $(X,\mu,T)$ has its Kronecker factor $(Z,m,R)$ as a continuous factor via a continuous factor map $\pi$.
Under this assumption we are able to improve upon \cref{thm:erg_decomp}: we give a disintegration of $\mu^\kone$ that is defined on all of $X^\kone$, continuous, and an ergodic decomposition of $\mu^\kone$.  
This result is the main technical step in the proof of \cref{thm:2-dim_cubes_exist}.

\begin{Proposition}
\label{prop_cont_kronecker_implies_cont_erg_decomp}
Let $(X,\mu,T)$ be an ergodic system and assume there is a continuous factor map $\pi$ to its Kronecker factor $(Z,m,R)$.
There exists a continuous map $x\mapsto \lambda_{x}^\kone$ from $X^\kone$ to $\meas(X^\kone)$ with the following properties.
\begin{enumerate}
[label=(\roman{enumi}),ref=(\roman{enumi}),leftmargin=*]
\item
\label{itm_ced_d2_erg_decomp}
The map $x\mapsto \lambda_{x}^\kone$ is an ergodic decomposition of $\mu^\kone$ as in \cref{def_ergdec}.
\item\label{itm_ced_d2_kronecker_saturated}
We have that 
$\lambda^\kone_x = \lambda^\kone_y$ whenever $\bigl(\pi(x_0), \pi(x_1)\bigr) = \bigl(\pi(y_0), \pi(y_1)\bigr)$.
\item\label{itm_ced_d2_pushforward}
For all $x \in X^\kone$, we have that $T^\kone\lambda_x^\kone=\lambda^\kone_{T^\kone(x)}=\lambda_x^\kone$.
\item\label{itm_ced_d2_marginal_proj}
For all $x \in X$ and all Borel $F \subset X$, we have that 
$\lambda^\kone_x(X \times F) = \mu(F)$.
\end{enumerate}
\end{Proposition}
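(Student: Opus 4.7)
The plan is to realize the ergodic components of $\mu^\kone = \mu \times \mu$ as a continuous family of measures parametrized by the ``difference'' in the Kronecker factor. Since $R$ is an ergodic rotation on the compact abelian group $Z$, the $(R \times R)$-invariant $\sigma$-algebra on $(Z \times Z, m \times m)$ is generated by the continuous map $(z_0, z_1) \mapsto z_1 - z_0$. Pulling back via the continuous factor map $\pi$ yields a continuous, $T^\kone$-invariant map $\Delta \colon X^\kone \to Z$ defined by $\Delta(x_0, x_1) = \pi(x_1) - \pi(x_0)$, and by \cref{thm:kronecker_exists_k2} this $\Delta$ generates the $T^\kone$-invariant $\sigma$-algebra on $(X^\kone, \mu^\kone)$ modulo null sets. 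The aim is to define an ergodic component on each level set of $\Delta$ in a manifestly continuous way.

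Fix a measurable disintegration $z \mapsto \mu_z$ of $\mu$ over $\pi$ and set
\[
\nu_w := \int_Z \mu_z \times \mu_{z+w} \intd m(z) \in \meas(X^\kone)
\]
for each $w \in Z$. The main step is to show that $w \mapsto \nu_w$ is continuous. By Stone--Weierstrass it suffices to check continuity of the pairing against tensor products $F \otimes G$ for $F, G \in C(X)$, and the disintegration identity yields
\[
\int F \otimes G \intd \nu_w = \int_Z \E(F \mid Z)(z) \, \E(G \mid Z)(z + w) \intd m(z).
\]
Up to a reflection this is a convolution of the two $L^2(Z, m)$ functions $\E(F \mid Z)$ and $\E(G \mid Z)$, and convolutions of $L^2$ functions on a compact abelian group are automatically continuous (a consequence of Cauchy--Schwarz combined with the strong continuity of translation on $L^2(Z, m)$). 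This convolution observation is the heart of the argument and the only real obstacle.

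Now define $\lambda^\kone_x := \nu_{\Delta(x)}$ for every $x \in X^\kone$. Then continuity of $\Delta$ together with continuity of $w \mapsto \nu_w$ makes $x \mapsto \lambda^\kone_x$ continuous, and property (ii) is immediate. For (iii), the intertwining $\pi \circ T = R \circ \pi$ gives $\Delta \circ T^\kone = \Delta$, so $\lambda^\kone_{T^\kone x} = \lambda^\kone_x$ for every $x$; and the relation $T \mu_z = \mu_{z+\alpha}$, a consequence of the uniqueness clause in \cref{thm:disintegration}, combined with translation invariance of $m$, yields $T^\kone \nu_w = \nu_w$ for every $w$. Property (iv) is the direct computation $\nu_w(X \times F) = \int_Z \mu_{z+w}(F) \intd m(z) = \mu(F)$.

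Finally for (i), Fubini together with invariance of $m$ gives
\[
\int_{X^\kone} \lambda^\kone_x \intd \mu^\kone(x) = \int_Z \nu_w \intd m(w) = \mu^\kone,
\]
while $\nu_w$ is $T^\kone$-ergodic for $m$-almost every $w$ because any $T^\kone$-invariant function on $(X^\kone, \mu^\kone)$ is, modulo null sets, a function of $\Delta$ and is therefore $\nu_w$-almost surely constant. Applying the pointwise ergodic theorem within each ergodic component then delivers the a.e. identity $\int f \intd \lambda^\kone_x = \E(f \mid \inv)(x)$ required by \cref{def_ergdec}.
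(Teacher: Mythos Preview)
Your proof is correct and essentially the same as the paper's. The paper defines
\[
\lambda^\kone_x = \int_Z \eta_{z+\pi(x_0)} \times \eta_{z+\pi(x_1)} \intd m(z),
\]
which after the substitution $z \mapsto z - \pi(x_0)$ becomes exactly your $\nu_{\Delta(x)}$; your continuity argument via the $L^2$ convolution and your verification of (i)--(iv) match the paper's almost line by line, the only cosmetic difference being that you factor through the difference map $\Delta$ from the start and invoke the identification $\sigma(\Delta)=\inv$ explicitly, whereas the paper works directly with the pair $(\pi(x_0),\pi(x_1))$ and verifies \eqref{eqn:proving_ergodic_k2} by computing the ergodic averages on the Kronecker factor.
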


\begin{Remark}
\label{rem_ced}
Note that any ergodic decomposition of $\mu^\kone$ satisfies properties \ref{itm_ced_d2_kronecker_saturated} through \ref{itm_ced_d2_marginal_proj} for almost all $x,y\in X^\kone$; the fact that $x\mapsto \lambda_{x}^\kone$ satisfies these properties for all $x,y\in X^\kone$ is what makes it special.
We refer to $x\mapsto \lambda_x^\kone$ as a \define{continuous ergodic decomposition}, and discuss the general case in \cref{sec:cont_erg_decomp}.
\end{Remark}

\begin{proof}[Proof of \cref{prop_cont_kronecker_implies_cont_erg_decomp}]
Apply \cref{thm:disintegration} to get a disintegration $z \mapsto \eta_z$ of $\mu$ over the continuous factor map $\pi$ from $(X,\mu,T)$ to its Kronecker factor $(Z,m,R)$.
Define
\begin{equation}
\label{eqn:lambda_2_dim_definition_for_section_2_is_this_unqiue_yet}
\lambda^\kone_x = \int_Z \eta_{z + \pi(x_0)} \times \eta_{z+\pi(x_1)} \intd m(z)
\end{equation}
for every $x \in X^\kone$.

We first note that for each $x \in X^\kone$ the measures $\eta_{z + \pi(x_0)}$ and $\eta_{z + \pi(x_1)}$ are defined for $m$-almost every $z \in Z$ and therefore \eqref{eqn:lambda_2_dim_definition_for_section_2_is_this_unqiue_yet} is well-defined.
To prove that $x \mapsto \lambda^\kone_x$ is continuous first note that uniform continuity implies
\[
(v,w) \mapsto \int_Z f(z + w) \cdot g(z+v) \intd m(z)
\]
from $Z$ to $\C$ is continuous whenever $f,g \colon  Z \to \C$ are continuous.
An approximation argument then gives continuity for every $f,g \in \lp^2(Z,m)$. 
In particular,
\[
x \mapsto \int_Z \E(f \mid Z)(z + \pi(x_0)) \cdot \E(g \mid Z)(z + \pi(x_1)) \intd m(z)
\]
from $X^\kone$ to $\C$ is continuous whenever $f,g \colon  X \to \C$ are continuous, which in turn implies continuity of \eqref{eqn:lambda_2_dim_definition_for_section_2_is_this_unqiue_yet}.

To prove that $x\mapsto \lambda_{x}^\kone$ is an ergodic decomposition we calculate
\begin{align*}
&
\int_{X^\kone} \int_Z \eta_{z + \pi(x_0)} \times \eta_{z + \pi(x_1)} \intd m(z) \intd \mu^\kone(x)
\\
=
&
\int_Z \int_X \eta_{z + \pi(x_0)} \intd \mu(x_0) \times \int_X \eta_{z + \pi(x_1)} \intd \mu(x_1) \intd m(z), 
\end{align*}
which is equal to $\mu^\kone$ because both inner integrals are equal to $\mu$.
We conclude that
\begin{equation}
\label{eq_continuousergodicdecompositionofmu1}
\mu^\kone = \int_{X^\kone}\lambda^\kone_x \intd \mu^\kone(x), 
\end{equation}
which shows $x \mapsto \lambda^\kone_x$ is a disintegration of $\mu^\kone$.

We are left with verifying that
\[
\int_{X^\kone} F \intd \lambda^\kone_x = \E(F \mid \inv)(x)
\]
for $\mu^\kone$-almost every $x\in X^\kone$ whenever $F \colon  X^\kone \to \C$ is measurable and bounded.
Recall that $\inv$ denotes the $\sigma$-algebra of $T^\kone$-invariant sets.
Fix such an $F$.
It follows from the pointwise ergodic theorem that
\[
\lim_{N \to \infty} \dfrac{1}{N} \sum_{n=1}^N F( T^n x_0, T^n x_1)
=
\E(F \mid \inv)(x)
\]
for $\mu^\kone$-almost every $x=(x_0,x_1)\in X^\kone$.
We therefore wish to prove that
\[
\int_{X^\kone} F \intd \lambda^\kone_x = \lim_{N \to \infty} \dfrac{1}{N} \sum_{n=1}^N F( T^n x_0, T^n x_1)
\]
holds for $\mu^\kone$-almost every $x \in X^\kone$.

By an approximation argument it suffices to verify that 
\begin{equation}
\label{eqn:proving_ergodic_k2}
\int_{X^\kone} f \otimes g \intd \lambda^\kone_x = \lim_{N \to \infty} \dfrac{1}{N} \sum_{n=1}^N f( T^n x_0) \cdot g(T^n x_1)
\end{equation}
holds for $\mu^\kone$-almost every $x \in X^\kone$ whenever $f,g$ belongs to $\lp^\infty(X,\mu)$.

In fact, in view of \cref{thm:kronecker_exists_k2}, only the conditional expectations of $f$ and $g$ on the Kronecker factor contribute to the right-hand side.
We therefore have for any $f,g \in \lp^2(X,\mu)$ that
\[
\lim_{N \to \infty} \dfrac{1}{N} \sum_{n=1}^N f(T^n x_0) \cdot g(T^n x_1)
=
\lim_{N \to \infty} \dfrac{1}{N} \sum_{n=1}^N \E(f \mid Z)(T^n x_0) \cdot \E(g \mid Z)(T^n x_1)
\]
for $\mu^\kone$-almost every $x\in X^\kone$.
Now
\[
\lim_{N \to \infty} \dfrac{1}{N} \sum_{n=1}^N \phi(T^n z_0) \cdot \psi(T^n z_1)
=
\int_Z \phi(s + z_0) \cdot \psi(s + z_1) \intd m(s)
\]
for any $\phi,\psi$ in $\lp^2(Z,m)$
using properties of rotations on a compact abelian group.
Taking $\phi = \E(f \mid Z)$ and $\psi = \E(g \mid Z)$ gives
\[
\lim_{N \to \infty} \dfrac{1}{N} \sum_{n=1}^N \E(f \mid Z)(T^nx_0) \cdot \E(g \mid Z)(T^nx_1)
=
\int_{X^\kone} f \otimes g \intd \lambda^\kone_x
\]
for $\mu^\kone$-almost all $x\in X^\kone$.
As a consequence we have for every $f,g \in \lp^2(X,\mu)$ that
\[
\int_{X^\kone} f \otimes g \intd \lambda^\kone_x = \E( f \otimes g \mid \inv)(x)
\]
for $\mu^\kone$-almost every $x\in X^\kone$, proving property~\ref{itm_ced_d2_erg_decomp}.

It is immediate from \eqref{eqn:lambda_2_dim_definition_for_section_2_is_this_unqiue_yet} that $\pi(x_0) = \pi(y_0)$ and $\pi(x_1) = \pi(y_1)$ together imply the measures $\lambda^\kone_x$ and $\lambda^\kone_y$ are equal, verifying property~\ref{itm_ced_d2_kronecker_saturated}.

Property~\ref{itm_ced_d2_pushforward} follows from \cref{thm:erg_decomp} and the continuity of $x\mapsto \lambda_{x}^\kone$. Finally, property ~\ref{itm_ced_d2_pushforward} follows from
\[
\lambda^\kone_x(X \times F)
=
\int_Z \eta_{z + \pi(x_0)}(X) \cdot \eta_{z + \pi(x_1)}(F) \intd m(z)
=
\int_Z \eta_z(F) \intd m(z) = \mu(F)
\]
because $z \mapsto \eta_z$ is a disintegration of $\mu$.
\end{proof}

We next use \cref{prop_cont_kronecker_implies_cont_erg_decomp} to define the measures $\sigma^\kone_t$ directly for \textit{every} point $t \in X$, dispensing with our direct application of \cref{thm:disintegration}.

\begin{Definition}
\label{def:sigmaka_k2}
Fix an ergodic system $(X,\mu,T)$ with a continuous factor map to its Kronecker factor and let $\lambda^\kone\colon X^\kone\to\meas(X^\kone)$ be the disintegration given by \cref{prop_cont_kronecker_implies_cont_erg_decomp}.
We define a measure $\sigma^\two_t$ on $X^\two$ by
\begin{equation}
\label{eqn:sigmaka_def_k2}
\sigma^\two_t = \int_X \delta_{(t,x)} \times \lambda^\kone_{(t,x)} \intd \mu(x)
\end{equation}
for all $t \in X$.
\end{Definition}

For the rest of the section all references to measures $\sigma^\two_t$ are made with respect to \cref{def:sigmaka_k2}.
The following properties of the measures $\sigma^\two_t$ are essential for producing \Erdos{} cubes with prescribed coordinates.
Writing $F^* \colon  X^\two \to X^{\two \setminus \{00\}}$ for the projection
\begin{equation}
\label{eqn:ignore_first_k2}
F^* (x_{00},x_{01},x_{10},x_{11})
=
(x_{01},x_{10},x_{11})
\end{equation}
 note in particular the last property, which allows us to make certain statements about $\sigma^\two_t$-almost every point even when $t$ does not lie in the support of $\mu$.

\begin{Theorem}
\label{prop_2-dim_sigma_rep}
Let $(X,\mu,T)$ be an ergodic system and assume there is a continuous factor map $\pi$ to its Kronecker factor.
Then the map $t \mapsto \sigma^\two_t$ is continuous, gives full measure to $\{ x \in X^\two : x_{00} = t \}$, satisfies
\begin{equation}
\label{eqn:sigma_props_k2}
\int_X \sigma^\two_t \intd \mu(t) = \mu^\two
\end{equation}
and the following properties.
\begin{enumerate}
[label=(\roman{enumi}),ref=(\roman{enumi}),leftmargin=*]
\item\label{itm_cd1c_i}
The push forward $T^\two \sigma^\two_t$ equals $\sigma^\two_{T(t)}$ for every $t \in X$.
\item\label{itm_cd1c_ii}
The push forwards $F^* \sigma^\two_t$ and $F^* \sigma^\two_s$ are equal whenever $\pi(t) = \pi(s)$.
\end{enumerate}
\end{Theorem}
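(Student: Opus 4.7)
The plan is to verify the five conclusions by reading them off the defining formula \eqref{eqn:sigmaka_def_k2} together with the four properties of the continuous ergodic decomposition supplied by \cref{prop_cont_kronecker_implies_cont_erg_decomp}. Two assertions are essentially by inspection. Concentration of $\sigma^\two_t$ on $\{x \in X^\two : x_{00} = t\}$ is immediate, because every summand $\delta_{(t,x)} \times \lambda^\kone_{(t,x)}$ assigns full mass to the set where the first $X^\kone$-coordinate equals $(t,x)$. For continuity of $t \mapsto \sigma^\two_t$, I would fix $F \in C(X^\two)$, use continuity of $(u,v) \mapsto \lambda^\kone_{(u,v)}$ to make $(t,x) \mapsto \int F(t,x,\cdot,\cdot) \, d\lambda^\kone_{(t,x)}$ continuous and uniformly bounded, and then invoke dominated convergence to push continuity in $t$ through the outer integral against $\mu(x)$.

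Properties (i) and (ii) are diagram chases. Writing $T^\two = T^\kone \times T^\kone$ in the factorization $X^\two = X^\kone \times X^\kone$, the pushforward sends $\delta_{(t,x)}$ to $\delta_{(Tt,Tx)}$ and, by property \ref{itm_ced_d2_pushforward}, sends $\lambda^\kone_{(t,x)}$ to itself, which also equals $\lambda^\kone_{(Tt,Tx)}$ by the same property. A change of variables $x \mapsto Tx$ together with $T$-invariance of $\mu$ then reassembles the integral as $\sigma^\two_{Tt}$, yielding (i). For (ii), whenever $\pi(t) = \pi(s)$ one has $(\pi(t),\pi(x)) = (\pi(s),\pi(x))$ for every $x \in X$, so property \ref{itm_ced_d2_kronecker_saturated} gives $\lambda^\kone_{(t,x)} = \lambda^\kone_{(s,x)}$; since the projection $F^*$ discards the only coordinate where $\sigma^\two_t$ and $\sigma^\two_s$ can differ, both pushforwards reduce to the common expression $\int_X \delta_x \times \lambda^\kone_{(t,x)} \, d\mu(x)$.

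The main step, and the principal obstacle, is the identity $\int_X \sigma^\two_t \, d\mu(t) = \mu^\two$. Fubini together with $\mu \times \mu = \mu^\kone$ collapses the left side to $\int_{X^\kone} \delta_z \times \lambda^\kone_z \, d\mu^\kone(z)$, so I must match this with the defining integral $\mu^\two = \int_{X^\kone} \lambda^\kone_y \times \lambda^\kone_y \, d\mu^\kone(y)$. Here ergodicity enters decisively: since $\lambda^\kone$ is an ergodic decomposition, distinct components are mutually singular, and combining \cref{lem:cont_erg_generics} with $\mu^\kone = \int \lambda^\kone_y \, d\mu^\kone(y)$ shows that for $\mu^\kone$-almost every $y$ the measure $\lambda^\kone_y$ gives full mass to points $w$ with $\lambda^\kone_w = \lambda^\kone_y$. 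This upgrades the tautological identity $\lambda^\kone_y \times \lambda^\kone_y = \int \delta_w \times \lambda^\kone_y \, d\lambda^\kone_y(w)$ to
$$\lambda^\kone_y \times \lambda^\kone_y = \int_{X^\kone} \delta_w \times \lambda^\kone_w \, d\lambda^\kone_y(w),$$
and integrating against $\mu^\kone(y)$ together with $\int \lambda^\kone_y \, d\mu^\kone(y) = \mu^\kone$ collapses the double integral to $\int_{X^\kone} \delta_w \times \lambda^\kone_w \, d\mu^\kone(w)$, matching the Fubini computation and closing the argument.
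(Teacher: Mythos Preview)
Your proposal is correct and follows essentially the same route as the paper. In fact you fill in more than the paper's written proof does: the paper only spells out the concentration on $\{x_{00}=t\}$ and the disintegration identity \eqref{eqn:sigma_props_k2}, leaving continuity and properties \ref{itm_cd1c_i}, \ref{itm_cd1c_ii} implicit, whereas you sketch all five. For \eqref{eqn:sigma_props_k2} both you and the paper hinge on the same fact---that for $\mu^\kone$-almost every $y$ one has $\lambda^\kone_y(\{w:\lambda^\kone_w=\lambda^\kone_y\})=1$---and run the same chain of equalities; the paper cites property \ref{itm_ced_d2_pushforward} plus ergodicity of $\lambda^\kone_y$ for this, while you route it through \cref{lem:cont_erg_generics}, which is an equally valid justification.
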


\begin{Remark}
\label{rem_cd1c}
In analogy to \cref{rem_ced}, let us point out that any decomposition of $\mu^\two$ with respect to the first coordinate satisfies properties \ref{itm_cd1c_i} and \ref{itm_cd1c_ii} for almost all $t\in X$. The fact that $t\mapsto \sigma^\two_t$ satisfies these properties for all $t\in X$ is what distinguishes it from a generic disintegration.
\end{Remark}

\begin{proof}[Proof of \cref{prop_2-dim_sigma_rep}]
The fact that the set of points in $X^\two$ whose first coordinate equals $t$ has full measure with respect to $\sigma^\two_t$ is immediate from \eqref{eqn:sigmaka_def_k2}.  
To prove \eqref{eqn:sigma_props_k2} we calculate
\begin{align*}
\int_X \sigma^\two_t \intd \mu(t)
&= \int_X\int_X \delta_{(t,s)} \times \lambda^\kone_{(t,s)}\intd\mu(t)\d\mu(s)
\\
&= \int_{X^\kone} \delta_{(t,s)} \times \lambda^\kone_{(t,s)}\d\mu^\kone(t,s)
\\
&= \int_{X^\kone} \biggl(\int_{X^\kone} \delta_{(u,v)} \times \lambda^\kone_{(u,v)} \d\lambda^\kone_{(t,s)}(u,v)\biggr)\d\mu^\kone(t,s)
\\
&= \int_{X^\kone} \biggl(\int_{X^\kone} \delta_{(u,v)} \times \lambda^\kone_{(t,s)} \d\lambda^\kone_{(t,s)}(u,v)\biggr)\d\mu^\kone(t,s)
\\
&= \int_{X^\kone} \lambda^\kone_{(t,s)} \times \lambda^\kone_{(t,s)}\d\mu^\kone(t,s)\,=\mu^\two
\end{align*}
where the fourth  equality follows from
\[
\lambda^\kone_{(t,s)} ( \{ (u,v) : \lambda^\kone_{(t,s)} = \lambda^\kone_{(u,v)} \} ) = 1
\]
for $\mu^\kone$-almost every $(t,s)$, which in turn is a consequence of part \ref{itm_ced_d2_pushforward} of \cref{prop_cont_kronecker_implies_cont_erg_decomp} and ergodicity of $\lambda^\kone_{(t,s)}$ for $\mu^\kone$-almost every $(t,s)$.
\end{proof}

We have reduced the proof of \cref{thm:2-dim_cubes_exist} to the statement that $\sigma^\two_a$-almost every point is an \Erdos{} cube.
The proof of \cref{prop_a.e.-point_is_Erdos_cube_2-dim} made essential usage of the fact that $\mu^\two$ was symmetric with respect to the permutation map \eqref{eqn_2-dim_permutation}.
Our penultimate ingredient in the proof of \cref{thm:2-dim_cubes_exist} is that the measures $\sigma^\two_t$ have the same symmetries.

To prove that \textit{all} of the measures $\sigma^\two_t$ have the desired symmetries we need to be able to deduce that a property holds for all $\sigma^\two_t$ if it holds for $\mu$-almost every $t\in X$. It is not enough for us that the map $t \mapsto \sigma^\two_t$ is continuous, as we are particularly interested in $\sigma^\two_a$ and in the most challenging  situations the point $a$ does not belong to $\supp(\mu)$.
Indeed, when $a$ does belong to the support of $\mu$ one can prove \cref{thm:2-dim_cubes_exist} relatively easily as in that case one can verify $(a,a,a,a)$ is a 2-dimensional \Erdos{} cube.
Fortunately, the projections $F^* \sigma^\two_t$ only depend on the value of $\pi(t)$. We thus are able to apply the following lemma to make deductions about $\sigma^\two_t$ for \textit{all} points $t \in X$.

\begin{Lemma}
\label{lem:supp_to_cont_magic}
Let $\pi\colon(X,\mu,T)\to (Y,\nu,S)$ be a continuous factor map and assume that $\supp(\nu)=Y$.
Then the only set $F \subset X$ that is closed, has full measure and satisfies
\[
a \in F \textup{ and } \pi(b) = \pi(a) \implies b \in F
\]
is $F=X$.
\end{Lemma}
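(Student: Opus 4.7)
The plan is to consider the complement $F^c = X \setminus F$, which is open with $\mu(F^c) = 0$, and show that the hypotheses force it to be empty. The saturation condition on $F$ transfers verbatim to $F^c$: a point $b$ lies in $F^c$ iff $\pi(b) \in \pi(F^c)$. Equivalently, if I set $V = \pi(F^c) \subset Y$, then $F^c = \pi^{-1}(V)$. So the strategy is to show $V$ is an open subset of $Y$ of $\nu$-measure zero, and then invoke $\supp(\nu) = Y$ to conclude $V = \emptyset$.

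First I would establish that $V$ is open. The map $\pi$ is a continuous surjection from the compact space $X$ onto the Hausdorff space $Y$, hence a closed map, so $\pi(F)$ is closed in $Y$. The saturation of $F$ implies $\pi(F) \cap V = \emptyset$, because a fiber over a point of $\pi(F)$ lies entirely in $F$ and so cannot meet $F^c$; combined with surjectivity of $\pi$, this gives $V = Y \setminus \pi(F)$, which is open.

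Next I would compute the measure: since $\pi\mu = \nu$ and $F^c = \pi^{-1}(V)$,
\[
\nu(V) = \mu(\pi^{-1}(V)) = \mu(F^c) = 0.
\]
Because $\supp(\nu) = Y$, every nonempty open subset of $Y$ has positive $\nu$-measure. Therefore $V$ must be empty, so $F^c = \pi^{-1}(\emptyset) = \emptyset$, i.e., $F = X$.

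The only nontrivial ingredient is the openness of $V$, which relies on $\pi$ being a closed map; this is where compactness of $X$ (built into the definition of a topological system in the paper) is used essentially. The remainder is a direct bookkeeping argument using that $F$ is a union of $\pi$-fibers and that measure-zero open sets are forbidden by the support hypothesis.
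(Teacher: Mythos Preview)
Your proof is correct and follows essentially the same approach as the paper's. The paper's version is slightly more direct: rather than passing to the complement, it observes that the saturation condition gives $F = \pi^{-1}(\pi(F))$, so the compact set $\pi(F)$ has full $\nu$-measure and therefore equals $Y$ by the support hypothesis, whence $F = \pi^{-1}(Y) = X$.
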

\begin{proof}
Since $F = \pi^{-1}(\pi(F))$ we conclude that the compact set $\pi(F)$ has full measure.
But $\nu$ has full support, so $\pi(F) = Y$.
\end{proof}

Recall the maps $\phi \colon  X^\two \to X^\two$ of \eqref{eqn_2-dim_permutation} and $F^*$ of \eqref{eqn:ignore_first_k2}.

\begin{Corollary}
\label{thm:sigmaka_symm_k2}
Let $(X,\mu,T)$ be an ergodic system and assume there is a continuous factor map $\pi$ to its Kronecker factor.
For every $t \in X$ the push forward $\phi \sigma^\two_t$ is equal to $\sigma^\two_t$.
\end{Corollary}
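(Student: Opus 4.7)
The plan is to exhibit $F = \{t \in X : \phi\sigma^\two_t = \sigma^\two_t\}$ as a closed, full-measure, $\pi$-saturated subset of $X$, and then invoke \cref{lem:supp_to_cont_magic} with the continuous factor map $\pi \colon X \to Z$ (noting that Haar measure on the compact group $Z$ has full support) to conclude $F = X$. The set $F$ is closed because both $t \mapsto \sigma^\two_t$ and the pushforward operator $\phi$ are continuous (the latter on $\meas(X^\two)$ since $\phi$ is a homeomorphism).

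For the full-measure claim, I would apply uniqueness in \cref{thm:disintegration} (or the first-coordinate version established just above) to two disintegrations of $\mu^\two$ over the projection to the $00$-coordinate. On the one hand, \eqref{eqn:sigma_props_k2} gives $\int_X \sigma^\two_t \intd \mu(t) = \mu^\two$. On the other hand, since $\phi$ fixes the first coordinate and $\phi\mu^\two = \mu^\two$ by~\eqref{eqn_2-dim-cubic-measure-permutation-symmetry},
\[
\int_X \phi\sigma^\two_t \intd \mu(t) = \phi\int_X \sigma^\two_t \intd \mu(t) = \phi\mu^\two = \mu^\two,
\]
and each $\phi\sigma^\two_t$ is concentrated on $\{x \in X^\two : x_{00} = t\}$ for the same reason. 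Uniqueness therefore yields $\phi\sigma^\two_t = \sigma^\two_t$ for $\mu$-almost every $t$, i.e. $\mu(F) = 1$.

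For $\pi$-saturation, observe that if $\tilde\phi \colon X^{\two \setminus \{00\}} \to X^{\two\setminus\{00\}}$ denotes the transposition $(x_{01},x_{10},x_{11}) \mapsto (x_{10},x_{01},x_{11})$, then $F^* \circ \phi = \tilde\phi \circ F^*$ because $\phi$ fixes the $00$-coordinate. Consequently, any measure concentrated on $\{x_{00} = t\}$ is determined by its push-forward under $F^*$, and
\[
\phi\sigma^\two_t = \sigma^\two_t \iff \tilde\phi F^*\sigma^\two_t = F^*\sigma^\two_t.
\]
By part~\ref{itm_cd1c_ii} of \cref{prop_2-dim_sigma_rep}, the measure $F^*\sigma^\two_t$ depends only on $\pi(t)$, so the right-hand condition is invariant under replacing $t$ by any $s$ with $\pi(s) = \pi(t)$. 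Thus $F$ is a union of $\pi$-fibres.

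Combining these three properties, \cref{lem:supp_to_cont_magic} applied to $\pi$ (with $\supp(m) = Z$) gives $F = X$, as required. The only step that requires care is verifying the equivalence $\phi\sigma^\two_t = \sigma^\two_t \iff F^*\phi\sigma^\two_t = F^*\sigma^\two_t$, i.e.\ that knowledge of the first coordinate together with the $F^*$-projection uniquely determines a measure on $X^\two$; this follows because $\sigma^\two_t$ and $\phi\sigma^\two_t$ are both supported on the slice $\{x_{00} = t\}$, on which $F^*$ is a bijection onto $X^{\two\setminus\{00\}}$.
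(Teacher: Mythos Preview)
Your proof is correct and follows essentially the same approach as the paper: establish that the set of $t$ with $\phi\sigma^\two_t = \sigma^\two_t$ is closed, of full $\mu$-measure via uniqueness of disintegrations combined with $\phi\mu^\two = \mu^\two$, and $\pi$-saturated via the identity $F^*\circ\phi = \tilde\phi\circ F^*$ together with part~\ref{itm_cd1c_ii} of \cref{prop_2-dim_sigma_rep}, then apply \cref{lem:supp_to_cont_magic}. The paper phrases the saturation step by passing to the auxiliary set $H' = \{t : F^*\phi\sigma^\two_t = F^*\sigma^\two_t\}$ and recovering the conclusion from $\sigma^\two_t = \delta_t \times F^*\sigma^\two_t$, whereas you argue the equivalence directly on the original set; the two are equivalent.
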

\begin{proof}
By continuity, the set
\[
H = \{ t \in X : \phi \sigma^\two_t = \sigma^\two_t \}
\]
is closed.
We wish to prove it has full measure.
To do so we  show that $t \mapsto \phi \sigma^\two_t$ is a disintegration of $\mu^\two$ and appeal to uniqueness in \cref{thm:disintegration}.
To see this, calculate
\begin{align*}
\int_X \int_{X^\two} f_{00} \otimes f_{01} \otimes f_{10} \otimes f_{11} \intd \phi \sigma^\two_t \intd \mu(t)
=
&
\int_X \int_{X^\two} f_{00} \otimes f_{10} \otimes f_{01} \otimes f_{11} \intd \sigma^\two_t \intd \mu(t)
\\
=
&
\int_{X^\two} f_{00} \otimes f_{10} \otimes f_{01} \otimes f_{11} \intd \mu^\two(t)
\end{align*}
and note that $\phi \mu^\two = \mu^\two$ by \eqref{eqn_2-dim-cubic-measure-permutation-symmetry}.
Since the map $\phi$ commutes with the projection $(x_{00},x_{01},x_{10},x_{11})\mapsto x_{00}$ onto the first coordinate we certainly have
\[
(\phi \sigma^\two_t)( \{ x \in X^\two : x_{00} = t \}) = 1
\]
for every $t \in X$.
It follows from uniqueness in \cref{thm:disintegration} that $\mu(H) = 1$.

Since $H$ has full measure, it follows that the closed set
\[
H' = \{ t \in X : F^*(\phi \sigma^\two_t) = F^*(\sigma^\two_t) \}
\]
is also of full measure.
Since $F^*$ and $\phi$ commute we conclude that whenever $t \in H'$ and $\pi(t) = \pi(s)$ one also has $s \in H'$.
Applying \cref{lem:supp_to_cont_magic} gives $H' = X$.
Since $\sigma^\two_t = \delta_t \times F^* \sigma^\two_t$ for all $t$ we are done.
\end{proof}

Our last ingredient is that $\sigma^\two_t$-almost every point $x\in X^\two$ is generic for $\lambda^\two_x$ along some \Folner{} sequence.
It is a special case of \cref{thm:sigmaka_generics}, which we state and prove in \cref{sec:cubes_exist}. 
The proofs of the two results are essentially the same, relying only on the continuity and equivariance of $t \mapsto \sigma^\two_t$ and basic facts from measure theory and ergodic theory.
To avoid repetition, we omit the proof of the following version, which is all we need for \cref{thm:2-dim_cubes_exist}, and refer the reader to the proof of \cref{thm:sigmaka_generics} in \cref{sec:cubes_exist}.

\begin{Lemma}
\label{lem:sigmaka_generics_1dim}
Let $(X,\mu,T)$ be an ergodic system and assume there is a continuous factor map to its Kronecker factor.  Fix $a \in \gen(\mu,\Phi)$ for some \Folner{} sequence $\Phi$.
Then there exists a \Folner{} sequence $\Psi$ such that for $\mu$-almost every $x\in X$ the point $(a,x)$ is generic for $\lambda^\kone_{(a,x)}$ along $\Psi$.
\end{Lemma}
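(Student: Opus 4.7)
The plan is to show, for each continuous $F$ on $X^\kone$, that the averages $A^F_N(x) := \frac{1}{|\Phi_N|}\sum_{n\in\Phi_N}F(T^na,T^nx)$ converge in $L^2(\mu)$ to $G_F(a,x) := \int F\intd\lambda^\kone_{(a,x)}$. From this $L^2(\mu)$-convergence, a standard diagonal extraction over a countable dense subset of $C(X^\kone)$ yields a single Følner subsequence $\Psi$ of $\Phi$ along which $A^F_{\Psi_N}(x)\to G_F(a,x)$ holds for $\mu$-almost every $x$ and every $F$ in the dense set; by density and the uniform bound $\|F\|_\infty$ this extends to all continuous $F$, giving the genericity of $(a,x)$ for $\lambda^\kone_{(a,x)}$ along $\Psi$.

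To prove $L^2(\mu)$-convergence, expand
\[
\|A^F_N - G_F(a,\cdot)\|^2_{L^2(\mu)}=\|A^F_N\|^2_{L^2(\mu)}-2\Re\bigl\langle A^F_N,G_F(a,\cdot)\bigr\rangle_{L^2(\mu)}+\|G_F(a,\cdot)\|^2_{L^2(\mu)}
\]
and show each summand tends to the common value $\|G_F\|^2_{L^2(\mu^\kone)}$. For the third summand, $\psi^F(u):=\|G_F(u,\cdot)\|^2_{L^2(\mu)}$ is continuous (from continuity of $G_F$, inherited from continuity of $\lambda^\kone$ in \cref{prop_cont_kronecker_implies_cont_erg_decomp}) and $T$-invariant (from the $T^\kone$-invariance of $G_F$ given by property \ref{itm_ced_d2_pushforward} of the same proposition, together with the $T$-invariance of $\mu$). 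Since $\psi^F\circ T=\psi^F$, one has $\psi^F(a)=\frac{1}{|\Phi_N|}\sum_n\psi^F(T^na)$ for all $N$, and genericity of $a$ along $\Phi$ forces this common value to equal $\int\psi^F\intd\mu=\|G_F\|^2_{L^2(\mu^\kone)}$. For the second summand, the $T^\kone$-invariance of $G_F$ combined with $T$-invariance of $\mu$ allows one to absorb the iterate into $G_F$, reducing the inner product to $\frac{1}{|\Phi_N|}\sum_n\phi^F(T^na)$ for the continuous function $\phi^F(u):=\int F(u,y)\overline{G_F(u,y)}\intd\mu(y)$, whose limit by genericity is $\int\phi^F\intd\mu=\langle F,G_F\rangle_{L^2(\mu^\kone)}=\|G_F\|^2_{L^2(\mu^\kone)}$ (using $G_F=\E(F\mid\inv)$).

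The first summand is the most delicate. Expanding $\|A^F_N\|^2_{L^2(\mu)}$ as a double sum over $(n,m)\in\Phi_N\times\Phi_N$ and using $T$-invariance of $\mu$ to reparameterise, one regroups by the lag $k=n-m$: for each fixed $k$, the inner sum over pairs with that lag is an ergodic average along the orbit of $a$ of a continuous function, which converges by genericity of $a$ to the correlation $c_k:=\langle F,F\circ(T^\kone)^k\rangle_{L^2(\mu^\kone)}$; the total weighted sum formally matches $\|B^F_N\|^2_{L^2(\mu^\kone)}$ for $B^F_N(y_0,y_1):=\frac{1}{|\Phi_N|}\sum_nF(T^ny_0,T^ny_1)$, which tends to $\|G_F\|^2_{L^2(\mu^\kone)}$ by the mean ergodic theorem on $(X^\kone,\mu^\kone,T^\kone)$. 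The main obstacle lies precisely here: the fixed-lag pointwise convergence (via genericity of $a$) is not a priori uniform in $k$, so reconciling it with the summation over $k$ in order to establish convergence of the entire double sum at the specific point $a$ is where the continuity and equivariance of $\lambda^\kone$, combined with $a\in\gen(\mu,\Phi)$, must be exploited most carefully.
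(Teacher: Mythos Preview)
Your plan for the second and third summands is sound: the continuity and $T^\kone$-invariance of $G_F$ really do reduce those terms to ergodic averages of continuous functions along the orbit of $a$, and genericity finishes the job. The problem is exactly where you flag it: the first summand $\|A^F_N\|^2_{L^2(\mu)}$. Rewriting it as a lag-sum
\[
\|A^F_N\|^2_{L^2(\mu)}=\frac{1}{|\Phi_N|^2}\sum_{n,m\in\Phi_N}H_{n-m}(T^m a),
\qquad
H_k(u)=\int_X F\bigl(T^k u,T^k y\bigr)\,\overline{F(u,y)}\intd\mu(y),
\]
you are faced with a double average in which both the function $H_k$ and the orbit point $T^m a$ vary. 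Genericity of $a$ handles each $H_k$ individually, but there is no uniformity in $k$, and the number of lags grows with $N$; nothing in your outline supplies the missing uniformity, and continuity or equivariance of $\lambda^\kone$ do not obviously provide it either. This step can in fact be pushed through for tensor functions $F=f\otimes g$ by splitting $g$ into its Kronecker and anti-Kronecker parts, using a van der Corput estimate on the weakly mixing piece and a character expansion on the compact piece, but that is substantial extra work you have not indicated and which does not follow from what you wrote.

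The paper takes a completely different route that sidesteps this difficulty. Rather than computing an $L^2$-limit at the specific point $a$, it first uses the pointwise ergodic theorem to find some $b\in\supp(\mu)$ for which $\mu$-almost every $(b,x)$ is generic for $\lambda^\kone_{(b,x)}$ along the standard averages. It then chooses a sequence $s(m)$ with $T^{s(m)}a\to b$ (possible since $a\in\gen(\mu,\Phi)$ and $b\in\supp(\mu)$), so that by continuity and equivariance of $t\mapsto\sigma^\kone_t=\delta_t\times\mu$ one has $(T^\kone)^{s(m)}\sigma^\kone_a\to\sigma^\kone_b$ weakly. A quantitative Borel--Cantelli argument (with Urysohn approximations) then transfers the genericity from $\sigma^\kone_b$-typical points to $\sigma^\kone_a$-typical points, producing the \Folner{} sequence $\Psi_m=\{s(m),s(m)+1,\dots,s(m)+N(m)\}$. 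This ``shift and transfer'' argument uses only continuity and equivariance of $t\mapsto\sigma^\kone_t$ together with $\lambda^\kone_{T^\kone x}=\lambda^\kone_x$, and never needs to control any double sum at the point $a$.
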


We are now ready to prove \cref{thm:2-dim_cubes_exist} under the additional assumption that $(X,\mu,T)$ has a continuous factor map to its Kronecker factor $(Z,m,R)$.
In fact, all we need to do is emulate the proof of \cref{prop_a.e.-point_is_Erdos_cube_2-dim} but with $\mu^\two$ replaced by $\sigma_a^\two$.

\begin{proof}[Proof of \cref{thm:2-dim_cubes_exist}, assuming a continuous factor map]
We must produce an \Erdos{} cube in $X^\two$ whose first coordinate is $a$ and whose last coordinate belongs to $E$.

From \cref{prop_cont_kronecker_implies_cont_erg_decomp} we know each of the measures $\lambda^\kone_x$ has $\mu$ as its second marginal.
In particular $\lambda^\kone_x(X \times E) = \mu(E)$ for all $x \in X^\kone$.
It follows that
\[
\sigma_t^\two(X\times X\times X\times E)=\mu(E)>0
\]
for all $t \in X$.
It is also immediate that $\sigma^\two_t$-almost every $x \in X^\two$ has $t$ as its first coordinate.
It therefore suffices to prove that $\sigma_a^\two$-almost every point is an \Erdos{} cube, as then there must be at least one whose first coordinate equals $a$ and whose last coordinate lies in $E$.

\cref{thm:sigmaka_symm_k2} tells us that $\sigma^\two_a$ and its push forward $\phi \sigma^\two_a$ are equal.
In light of this symmetry, it suffices to show that for $\sigma_a^\two$-almost every point $x \in X^\two$ there exists an increasing sequence $c\colon\N\to\N$ such that
\[
\lim_{n\to\infty} T^{c(n)}(x_{00}, x_{01})=(x_{10}, x_{11}),
\]
because then it follows from $\phi \sigma_a^\two=\sigma_a^\two$ that $\sigma_a^\two$-almost every $(x_{00}, x_{01}, x_{10}, x_{11})$ is an \Erdos{} cube.
This is equivalent to showing that for $\mu$-almost every $x\in X$ and $\delta_{(a,x)}\times \lambda_{(a,x)}^\kone$-almost every $(x_{00}, x_{01}, x_{10}, x_{11})\in X^\two$ there is some $c\colon\N\to\N$ with
\[
\lim_{n\to\infty} T^{c(n)}(x_{00}, x_{01})=(x_{10}, x_{11})
\]
due to the description of $\sigma_a^\two$ given in \cref{prop_2-dim_sigma_rep}.
Note that the set of $x\in X$ for which $(a,x)$ is generic for $\lambda_{(a,x)}^\kone$ along some \Folner{} sequence has full measure with respect to $\mu$, due to \cref{lem:sigmaka_generics_1dim}. 
For any such $x$, the orbit of the point $(a,x)$ under $T\times T$ accumulates at any point in $\supp(\lambda_{(a,x)}^\kone)$.
Since the set $\{(a,x)\}\times \supp(\lambda_{(a,x)}^\kone)$ has full measure with respect to $\delta_{(a,x)} \times \lambda_{(a,x)}^\kone$, this finishes the proof.
\end{proof}

To complete the proof of \cref{thm:2-dim_cubes_exist}, and hence the proof of \cref{thm:main_theorem_k2}, we address the assumption that $(X,\mu,T)$ has a continuous factor map to its Kronecker factor.
In doing so, the following result is vital.

\begin{Proposition}
\label{prop:cool_point_exists_k2}
Assume $(X,\mu,T)$ is an ergodic system and $a \in X$ is transitive.
Assume $(Z,m,R)$ is a group rotation and that there is a measurable factor map $\rho$ from $(X,\mu,T)$ to $(Z,m,R)$.
Then there is a point $\tilde{z} \in Z$ and a \Folner{} sequence $\Psi$ such that
\begin{equation*}
\lim_{N \to \infty} \dfrac{1}{|\Psi_N|} \sum_{n \in \Psi_N} f(T^n a) \, g(R^n \tilde{z})
=
\int_X f \cdot (g \circ \rho) \intd \mu
\end{equation*}
for all $f \in \cont(X)$ and all $g \in \cont(Z)$.
\end{Proposition}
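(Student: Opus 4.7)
The plan is to reinterpret the desired identity as the genericity of a point $(a,\tilde{z})$ for a specific joining on the product system $(X\times Z,T\times R)$, and to produce a suitable $\tilde{z}$ using the classification of joinings with a group rotation factor. Since $a$ is transitive, $X=\overline{\orbit(a,T)}$, so $\mu$ is a $T$-invariant measure on the orbit closure of $a$; by~\cite[Proposition~3.9]{Furstenberg-book} (the same tool used in the proof of \cref{thm_fcp}) there is a \Folner{} sequence $\Phi$ with $a\in\gen(\mu,\Phi)$.

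Next, introduce the \emph{graph joining} $\tilde\mu\in\meas(X\times Z)$, defined as the pushforward of $\mu$ under the measurable map $x\mapsto(x,\rho(x))$. Then $\tilde\mu$ is $T\times R$-invariant and ergodic, with marginals $\mu$ on $X$ and $m$ on $Z$, and
\[
\int_{X\times Z} f\otimes g\intd\tilde\mu \;=\; \int_X f\cdot(g\circ\rho)\intd\mu.
\]
So the target identity amounts to showing that $(a,\tilde{z})$ is generic for $\tilde\mu$ along some \Folner{} sequence in the topological system $(X\times Z,T\times R)$.

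Fix any $z_0\in Z$ and, by weak* compactness, pass to a subsequence along which
\[
\nu_N=\frac{1}{|\Phi_N|}\sum_{n\in\Phi_N}\delta_{(T^n a,\,R^n z_0)}
\]
converges weak* to some $\nu\in\meas(X\times Z)$. Genericity of $a$ forces the $X$-marginal of $\nu$ to be $\mu$, while unique ergodicity of the rotation $R$ forces the $Z$-marginal of $\nu$ to be $m$; thus $\nu$ is a $T\times R$-invariant joining of $\mu$ and $m$, and $\supp(\nu)\subset\overline{\orbit((a,z_0),T\times R)}$. Now I would invoke the classification of ergodic joinings of $(X,\mu,T)$ with its group-rotation factor $(Z,m,R)$: for each $w\in Z$ the pushforward $\tilde\mu_w$ of $\mu$ under $x\mapsto(x,\rho(x)+w)$ is an ergodic $T\times R$-invariant joining of $\mu$ and $m$, and every such ergodic joining is of this form, because the measurable function $(x,z)\mapsto\rho(x)-z$ is $T\times R$-invariant and hence constant on any ergodic joining. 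Combining this with the ergodic decomposition of $\nu$ and the extremality of $\mu$ among $T$-invariant measures yields $\nu=\int_Z\tilde\mu_w\intd\eta(w)$ for some $\eta\in\meas(Z)$.

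Finally, choose any $w\in\supp(\eta)$ and set $\tilde{z}=z_0-w$. Writing $S_w(x,z)=(x,z+w)$, we have $\tilde\mu=S_{-w}\tilde\mu_w$ and $\supp(\tilde\mu_w)\subset\supp(\nu)\subset\overline{\orbit((a,z_0),T\times R)}$, so
\[
\supp(\tilde\mu)\subset S_{-w}\,\overline{\orbit((a,z_0),T\times R)}\;=\;\overline{\orbit((a,\tilde{z}),T\times R)}.
\]
A second application of~\cite[Proposition~3.9]{Furstenberg-book}, now to $(X\times Z,T\times R)$ and the $T\times R$-invariant measure $\tilde\mu$ on the orbit closure of $(a,\tilde{z})$, produces a \Folner{} sequence $\Psi$ along which $(a,\tilde{z})$ is generic for $\tilde\mu$; testing this against $F=f\otimes g$ delivers the stated identity. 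The main obstacle is the classification step, which pins down ergodic joinings as shifted graphs of $\rho$; once that is in hand, the shift trick $\tilde{z}=z_0-w$ is what converts an arbitrary subsequential limit $\nu$ into the graph joining $\tilde\mu$ itself.
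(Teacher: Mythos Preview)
Your argument is correct. The paper itself does not give a proof: it simply cites \cite[Proposition~6.1]{HK-uniformity}, which handles the more general situation where $(Z,m,R)$ is any distal factor, not just a group rotation. So you have supplied a self-contained argument where the paper outsources one.

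Your route exploits a feature specific to the Kronecker case that the cited result cannot: the function $(x,z)\mapsto \rho(x)-z$ is a concrete $T\times R$-invariant coordinate, which forces every ergodic joining of $\mu$ and $m$ to be a translate $\tilde\mu_w$ of the graph joining. This classification, together with the continuity of $w\mapsto\tilde\mu_w$ and the ergodicity of $\mu$ (so that the $X$-marginal of each ergodic component of $\nu$ is forced to be $\mu$), is exactly what makes the shift trick $\tilde z=z_0-w$ work. For a general distal factor there is no such explicit invariant, and the Host--Kra argument proceeds differently. In short: your proof is correct and genuinely simpler for this special case; the paper's citation buys the stronger \cref{prop:cool_point_exists} at the cost of a black box.

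Two small points worth tightening if you write this out in full: (i) the inclusion $\supp(\tilde\mu_w)\subset\supp(\nu)$ for $w\in\supp(\eta)$ uses that $w\mapsto\tilde\mu_w(F)$ is upper semicontinuous for closed $F$, so the set $\{w:\tilde\mu_w(\supp(\nu))=1\}$ is closed and of full $\eta$-measure; (ii) measurability of the map associating to an ergodic component its parameter $w$ follows from evaluating characters of $Z$ against $\rho(x)-z$.
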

\begin{proof}
This is a special case of \cite[Proposition~6.1]{HK-uniformity} restated in our terminology.
\end{proof}

The existence of the point $\tilde{z}$ guaranteed by \cref{prop:cool_point_exists_k2} allows us to produce continuous factor maps to structured factors that a priori only have measurable factor maps. The analog of this  construction for $k$-fold sumsets -- in which Kronecker factors are replaced with pronilfactors -- is carried out in \cref{subsec:top_pronil}.

\begin{Proposition}
\label{prop_continuous_kronecker_factor}
Assume $(X,\mu,T)$ is an ergodic system and that $a \in \gen(\mu,\Phi)$ for some \Folner{} sequence $\Phi$ and that $a$ is transitive.
There is an ergodic system $(\tilde{X},\tilde{\mu},\tilde{T})$, a \Folner{} sequence $\Psi$, a point $\tilde{a}\in\gen(\mu,\Psi)$, and a continuous factor map $\pi_1\colon \tilde{X}\to X$ with $\pi_1(\tilde{a})=a$ such that $(\tilde{X},\tilde{\mu},\tilde{T})$ has a continuous factor map to its Kronecker factor. 
\end{Proposition}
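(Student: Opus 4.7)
The plan is to realize the required extension by graphing the measurable factor map $X\to Z$ into the product $X\times Z$, using \cref{prop:cool_point_exists_k2} to guarantee that the distinguished point lifts to a generic point along a suitable \Folner{} sequence. First I would invoke \cref{thm:kronecker_exists_k2} to extract the Kronecker factor $(Z,m,R)$ of $(X,\mu,T)$ together with a measurable factor map $\rho\colon X\to Z$, and then apply \cref{prop:cool_point_exists_k2} to this data and the transitive point $a$ to obtain $\tilde z\in Z$ and a \Folner{} sequence $\Psi$ with
\[
\lim_{N\to\infty}\frac{1}{|\Psi_N|}\sum_{n\in\Psi_N} f(T^n a)\,g(R^n\tilde z)=\int_X f\cdot(g\circ\rho)\intd\mu
\]
for all $f\in\cont(X)$ and $g\in\cont(Z)$.

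Next I would set $\tilde X=X\times Z$, $\tilde T=T\times R$, $\tilde a=(a,\tilde z)$, and let $\tilde\mu$ be the push-forward of $\mu$ under the graph map $\Gamma(x)=(x,\rho(x))$; equivalently, $\tilde\mu$ is characterized by $\int f\otimes g\intd\tilde\mu=\int_X f\cdot(g\circ\rho)\intd\mu$ for continuous $f,g$. Because $\rho\circ T=R\circ\rho$ holds $\mu$-a.e., the map $\Gamma$ intertwines $T$ with $\tilde T$, which makes $\tilde\mu$ a $\tilde T$-invariant measure and turns $\Gamma$ into a measure-preserving measurable isomorphism between $(X,\mu,T)$ and $(\tilde X,\tilde\mu,\tilde T)$; in particular $\tilde\mu$ is ergodic. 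The averaging identity above, together with a Stone-Weierstrass density argument in $\cont(\tilde X)$, then gives $\tilde a\in\gen(\tilde\mu,\Psi)$. Both coordinate projections $\pi_1\colon\tilde X\to X$ and $\pi_2\colon\tilde X\to Z$ are continuous, surjective, $\tilde T$-equivariant, and push $\tilde\mu$ to $\mu$ and to $m$ respectively, so $\pi_1$ is a continuous factor map satisfying $\pi_1(\tilde a)=a$ and $\pi_2$ is a continuous factor map from $(\tilde X,\tilde\mu,\tilde T)$ onto the ergodic group rotation $(Z,m,R)$.

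The step that I expect to require the most care is verifying that $(Z,m,R)$ is genuinely a Kronecker factor of the extension, not merely a group-rotation factor lying below the true Kronecker factor; otherwise $\pi_2$ would witness continuity only for a sub-factor and would not establish the conclusion. For this I would use that the Kronecker factor is an invariant of measurable isomorphism: since $\Gamma$ is a measurable isomorphism and $(Z,m,R)$ is by construction the Kronecker factor of $(X,\mu,T)$, the eigenfunctions of $\tilde T$ on $\Ltwo(\tilde X,\tilde\mu)$ correspond bijectively to those of $T$ on $\Ltwo(X,\mu)$, so the maximal group-rotation factor of $\tilde X$ is already captured by $\pi_2$.
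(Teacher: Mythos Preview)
Your proposal is correct and follows essentially the same approach as the paper: both build the extension as the graph measure on $X\times Z$ via $x\mapsto(x,\rho(x))$, use \cref{prop:cool_point_exists_k2} to lift $a$ to a generic point $(a,\tilde z)$, and appeal to the measurable isomorphism $\Gamma$ to identify the Kronecker factor of the extension with $(Z,m,R)$. Your write-up is in fact more careful than the paper's, making explicit the Stone--Weierstrass step and the isomorphism-invariance of the Kronecker factor that the paper leaves implicit.
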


\begin{proof}
Let $\pi\colon X\to Z$ be a measurable factor map from $(X,\mu,T)$ to its Kronecker factor $(Z,m,R)$.
Consider the transformation $\tilde{T}=T\times R$ on the product $\tilde{X}=X\times Z$.
Let $\pi_1(x,z)=x$ and $\pi_2(x,z)=z$ denote the two coordinate projections.
Let $\tilde{\mu}$ be the push forward of $\mu$ under the map $\rho\colon  X \to \tilde{X}$ defined by $\rho(x) = (x,\pi(x))$.
Note that $(X,\mu,T)$ and $(\tilde{X},\tilde{\mu},\tilde{T})$ are isomorphic via the map $\rho$ and hence have isomorphic Kronecker factors.
Thus $\pi_2$ is a continuous factor map from $(\tilde{X},\tilde{\mu},\tilde{T})$ to its Kronecker factor.
There exists, by \cref{prop:cool_point_exists_k2}, a point $\tilde{z} \in Z$ and a \Folner{} sequence $\Psi$ such that $\tilde{a}=(a,\tilde{z})$ is generic for $\tilde{\mu}$ along $\Psi$. Since $\pi_1\colon \tilde{X}\to X$ is continuous and $\pi_1(\tilde{a})=a$, we are done.
\end{proof}

\begin{proof}[Proof of \cref{thm:2-dim_cubes_exist} (in full generality)]
Suppose $(X,\mu,T)$, $E\subset X$, and $a\in X$ are as in the hypothesis of \cref{thm:2-dim_cubes_exist}.
We first replace $X$ with the closure $X'$ of $\{ T^n(a) : n \in \Z \}$.
Since $a \in \gen(\mu,\Phi)$, we have that  $\mu(X') = 1$ and $\mu(E \cap X') > 0$.
Thus $(X',\mu,T)$ is an ergodic system in which the point $a$ is transitive and the set $E' = X' \cap E$ is open in $X'$ and has positive measure. We thus proceed assuming $a$ is transitive in $(X,\mu,T)$.

 Let $(\tilde{X},\tilde{\mu},\tilde{T})$, $\pi_1\colon \tilde{X}\to X$, and $\tilde{a}\in\tilde{X}$ be as guaranteed by \cref{prop_continuous_kronecker_factor}. Define $\tilde{E}=\pi_1^{-1}(E)$. Since $(\tilde{X},\tilde{\mu},\tilde{T})$ admits a continuous factor map onto its Kronecker factor and \cref{thm:2-dim_cubes_exist} has already been proven for such systems, there exists a $2$-dimensional \Erdos{} cube $\tilde{x} \in \tilde{X}^\two$ with $\tilde{x}_{00}=\tilde{a}$ and $\tilde{x}_{11}\in \tilde{E}$. Since $\pi_1\colon \tilde{X}\to X$ is continuous, the quadruple
\[
x = \bigl(\pi_1(\tilde{x}_{00}),\pi_1(\tilde{x}_{01}),\pi_1(\tilde{x}_{10}),\pi_1(\tilde{x}_{11})\bigr)
\]
is an \Erdos{} cube in $X^\two$ with $x_{00}=a$ and $x_{11}\in E$, finishing the proof.
\end{proof}

\section{\Erdos{} cubes}
\label{sec:erdos-cubes}
In the previous section we defined $2$-dimensional \Erdos{} cubes, which played via \cref{thm:2-dim_cubes_exist}  a vital role in our proof of the $k=2$ case of \cref{thm:main_theorem}.
The purpose of this section is
to generalize the notion of \Erdos{} cubes to higher dimensions. We then formulate the main dynamical result of this paper, \cref{thm:cubes_exist}, which is a natural extension of \cref{thm:2-dim_cubes_exist}. At the end of this section we prove that \cref{thm:cubes_exist} implies \cref{thm:main_theorem}.

\subsection{Defining \Erdos{} cubes}
\label{subsec:defining_erdos}

For $k\in\N$, define the \define{$k$-dimensional cube} to be $\{0,1\}^k$ and denote it by $\k$. 
An element of $\k$ is written as $\epsilon = \epsilon_1 \cdots \epsilon_k$, usually without any commas or parentheses other than when this may create ambiguity.
We enumerate the elements of $\k$ in lexicographic order. For our purposes, the most relevant feature of the ordering is that the first element is $\vec 0 = 0 \cdots 0$ and the last element is $\vec 1 = 1 \cdots 1$.  For example, the elements of $\three$ listed in order are: $000, 001, 010, 011, 100, 101, 110, 111$.

For any set $X$, a point $x\in X^\k$ is written as $ x = (x_\epsilon : \epsilon\in\k)$, again enumerating the indices in lexicographic order.
For clarity, when we write $x\in X^\k$ as a $2^k$-tuple indexed by $\k$, we separate the entries with commas. 
By convention, when we write $X^{\llbracket 0 \rrbracket}$, we mean just $X$.

A \define{face} of $\k$ is a subset of $\k$ determined by fixing a single coordinate.
These play a particular role in our analysis, making use of the elementary observation that the complement of such a face is also a face, and we refer to this as the \define{opposite face}.
There are $k$ pairs of opposite faces in $\k$.
For each $1 \le i \le k$ write $F^i x$ for the $i$th upper face of $x \in X^\k$ and $F_i x$ for the $i$th lower face of $x$.
Formally, these belong to $X^\kdown$ and are defined for $x \in X^\k$ as follows:
\begin{gather*}
F^i x = (x_\epsilon :  \epsilon_i = 1 )\quad \text{ (\define{i\textsuperscript{th}  upper face of }} x) \\
F_i x = (x_\epsilon : \epsilon_i = 0 ) \quad \text{ (\define{i\textsuperscript{th} lower face of }} x)
\end{gather*}
Write also $F^* x$ for the point
\begin{equation}
\label{eqn_fstar}
F^* x = (x_\epsilon : \epsilon \ne \vec 0)
\end{equation}
in $X^{\k \setminus \{ \vec 0 \}}$.

For example, if $k=2$ then a point $x=(x_{00},x_{01},x_{10},x_{11})\in X^\two$ has two lower faces, $F_1x=(x_{00},x_{01})$ and $F_2x=(x_{00},x_{10})$, and two upper faces, $F^1x=(x_{10},x_{11})$ and $F^2x=(x_{01},x_{11})$. We also have $F^*(x) = (x_{01},x_{10},x_{11})$.
When $k=3$ the lower faces of 
\[
x=(x_{000},x_{001},x_{010},x_{011},x_{100},x_{101},x_{110},x_{111})\in X^\three
\]
are
\begin{align*}
F_1x &= (x_{000},x_{001},x_{010},x_{011}) \\
F_2x &= (x_{000},x_{001},x_{100},x_{101}) \\
F_3x &= (x_{000},x_{010},x_{100},x_{110})
\end{align*}
and the corresponding upper faces are
\begin{align*}
F^1x &= (x_{100},x_{101},x_{110},x_{111}) \\
F^2x &= (x_{010},x_{011},x_{110},x_{111}) \\
F^3x &= (x_{001},x_{011},x_{101},x_{111})
\end{align*}
respectively.
As a visual aid, we can think of a point in $X^\three$ as being represented by a $3$-dimensional cube:
\[
\begin{tikzpicture}
\coordinate (a) at (0,0);
\coordinate (y) at (0,3);
\coordinate (w) at (1.5,1.5);
\coordinate (z) at (1.5,4.5);
\coordinate (p) at (3,0);
\coordinate (q) at (3,3);
\coordinate (r) at (4.5,1.5);
\coordinate (s) at (4.5,4.5);
\draw (a) -- (y) -- (z) -- (s) -- (r) -- (p) -- (a);
\draw[dashed] (a) -- (w) -- (z);
\draw (p) -- (q) -- (y);
\draw[dashed] (w) -- (r);
\draw (q) -- (s);
\fill (a) circle (0.075);
\node[anchor=north west] at (a) {$x_{000}$};
\fill (y) circle (0.075);
\node[anchor=north west] at (y) {$x_{001}$};
\fill (w) circle (0.075);
\node[anchor=north west] at (w) {$x_{010}$};
\fill (z) circle (0.075);
\node[anchor=north west] at (z) {$x_{011}$};
\fill (p) circle (0.075);
\node[anchor=north west] at (p) {$x_{100}$};
\fill (q) circle (0.075);
\node[anchor=north west] at (q) {$x_{101}$};
\fill (r) circle (0.075);
\node[anchor=north west] at (r) {$x_{110}$};
\fill (s) circle (0.075);
\node[anchor=north west] at (s) {$x_{111}$};
\end{tikzpicture}
\]
This geometric interpretation of points in $X^\three$ motivates our use of terminology.

Whenever $T$ is a homeomorphism of $X$, we write $T^\k$ for the homeomorphism of $X^\k$ defined by
\[
(T^\k x)_\epsilon = T(x_\epsilon)
\]
for all $x \in X^\k$ and $\epsilon\in\k$.
Note that if $x \in X^\k$ we can apply $T^\kdown$ to any of the upper and lower faces of $x$.

A permutation $\phi$ of $\{1,\dots,k\}$ induces a map $(\phi \epsilon)_i = \epsilon_{\phi(i)}$ from $\k$ to $\k$ that, by an abuse of notation, we also denote by $\phi$.
With a further abuse of notation we also use $\phi$ to denote the map $X^\k\to X^\k$ it induces via
\begin{equation}
\label{eqn:perm_on_measures}
(\phi x)_\epsilon = x_{\phi(\epsilon)}
\end{equation}
for all $\epsilon \in \k$.
See \eqref{eqn_2-dim_permutation} for an example of a two dimensional permutation.
For each $1 \le i \le k$ write $\phi_i$ for the permutation that exchanges $1$ with $i$ and fixes all other points. One then has
\begin{equation}
\label{eqn:perms_changing_faces}
F^1(\phi_i x) = F^i x \quad \text{ and } \quad F_1(\phi_i x) = F_i x
\end{equation}
for all $1 \le i \le k$. The maps $\phi$ are referred to as  \define{permutations of digits} in~\cite[Chapter 6]{HK-book}.

Mimicking the case $k = 2$, a $k$-dimensional \Erdos{} cube is a point $x \in X^\k$ thought of as forming the vertices of a $k$-dimensional cube, with the property that its lower faces dynamically approximate their corresponding upper faces.
The formal definition is as follows.

\begin{Definition}[\Erdos{} Cubes]
\label{def:erdos_cubes}
Fix a compact metric space $X$ and a homeomorphism $T$ of $X$.
A point $x \in X^\k$ is a \define{$k$-dimensional \Erdos{} cube} if
\begin{equation}
\label{eqn:erdos_cube_approximating_def}
F^i x \in \omega( F_i x, T^\kdown)
\end{equation}
for each $1 \le i \le k$.
\end{Definition}

Note that \eqref{eqn:erdos_cube_approximating_def} simply says there exists a strictly increasing sequence $c_i\colon\N\to\N$ such that
\[
\lim_{n \to \infty} (T^\kdown)^{c_i(n)} (F_i x) = F^i x
\]
holds.
Even though we are working with an invertible map $T\colon X \to X$, we require in the definition of an \Erdos{} cube that $F^i x$ is an accumulation point of the \textit{forward} iterates of $F_i x$ under $T^\kdown$.

We refer to them as cubes because of their connection to cube systems and since they form a natural subclass of the \define{dynamical cubes} (see \cref{def_dyn_cubes}) introduced in~\cite{HKM} in the study of topological pronilfactors. 
In \cref{subsec:comparing_cubes} we describe the relations among various notions of cubes, including examples and non-examples of these objects.

The next two lemmas are useful properties that follow immediately from the definitions of \Erdos{} cubes and the permutations in~\eqref{eqn:perms_changing_faces}.

\begin{Lemma}
\label{lemma_erdos_cubes_symmetries}
Let $(X,T)$ be a topological system and let $k \in \N$.
A point $x \in X^\k$ is a $k$-dimensional \Erdos{} cube if and only if
\[
\phi_i x \in \{ y \in X^\k : F^1 y \in \omega(F_1 y, T^\kdown) \}
\]
for all $1 \le i \le k$.
\end{Lemma}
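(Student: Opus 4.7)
The plan is to unwind both sides of the equivalence directly from the definitions, using only equation \eqref{eqn:perms_changing_faces} as the bridge between the permutations $\phi_i$ and the face operators $F^i, F_i$.

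First I would recall that, by \cref{def:erdos_cubes}, the point $x \in X^\k$ is a $k$-dimensional \Erdos{} cube exactly when $F^i x \in \omega(F_i x, T^\kdown)$ holds for each $1 \le i \le k$. Next I would observe that, by \eqref{eqn:perms_changing_faces}, the permutation $\phi_i$ satisfies $F^1(\phi_i x) = F^i x$ and $F_1(\phi_i x) = F_i x$ for all $1 \le i \le k$. Substituting these identities into the condition defining membership in the set $\{ y \in X^\k : F^1 y \in \omega(F_1 y, T^\kdown) \}$ turns the statement ``$\phi_i x$ belongs to this set'' into the statement $F^i x \in \omega(F_i x, T^\kdown)$, which is precisely the $i$th condition in \eqref{eqn:erdos_cube_approximating_def}.

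Taking the conjunction over $1 \le i \le k$ then yields the equivalence, so there is no serious obstacle here; the lemma is essentially a reformulation of \cref{def:erdos_cubes} that repackages the $k$ face-approximation conditions uniformly as conditions on the first face after applying the permutations $\phi_i$. The only minor subtlety worth flagging explicitly is that $\phi_i$ commutes with $T^\k$ (so that applying $\phi_i$ does not disturb the dynamics when comparing $\omega$-limit sets under $T^\kdown$), but this is built into the definition of $\phi_i$ as a coordinate permutation acting on $X^\k$.
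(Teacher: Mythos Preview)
Your proposal is correct and matches the paper's approach exactly: the paper does not give a separate proof but simply states that the lemma follows immediately from the definition of \Erdos{} cubes and the identities \eqref{eqn:perms_changing_faces}, which is precisely the substitution you carry out. Your closing remark about $\phi_i$ commuting with $T^\k$ is harmless but unnecessary here, since the substitution $F^1(\phi_i x)=F^i x$, $F_1(\phi_i x)=F_i x$ already rewrites the condition $F^1(\phi_i x)\in\omega(F_1(\phi_i x),T^\kdown)$ directly as $F^i x\in\omega(F_i x,T^\kdown)$ without any appeal to equivariance.
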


\begin{Lemma}
\label{lem:erdos_face_is_erdos}
Assume $(X,T)$ is a topological system and that $x \in X^\k$ is a $k$-dimensional \Erdos{} cube.
Then $F^i(x)$ and $F_i(x)$ are $(k-1)$-dimensional \Erdos{} cubes for all $1 \le i \le k$.
\end{Lemma}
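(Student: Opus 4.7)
The plan is that this lemma is essentially a restriction argument: each sub-face of $F_i x$ corresponds to fixing two coordinates of $x$, and the convergence witnessing the $(k-1)$-dimensional \Erdos{} cube property for $F_i x$ in a given direction is obtained by restricting the convergence in a corresponding direction that already witnesses $x$ to be a $k$-dimensional \Erdos{} cube.

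Concretely, fix $1 \le i \le k$ and identify $y = F_i x \in X^\kdown$ with the tuple $y_\eta = x_{\eta_1 \cdots \eta_{i-1}\,0\,\eta_i \cdots \eta_{k-1}}$ for $\eta \in \kdown$. For each $1 \le j \le k-1$, let $j' \in \{1,\dots,k\}$ denote the position in $\k$ corresponding to the $j$-th position in $\kdown$ under this identification, so $j' = j$ if $j < i$ and $j' = j+1$ if $j \ge i$. Under the identification the faces of $y$ become
\[
F_j(y) = (x_\epsilon : \epsilon_i = 0,\ \epsilon_{j'} = 0), \qquad F^j(y) = (x_\epsilon : \epsilon_i = 0,\ \epsilon_{j'} = 1),
\]
both viewed as elements of $X^\kddown$. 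Since $x$ is a $k$-dimensional \Erdos{} cube there is a strictly increasing sequence $c\colon \N \to \N$ with $(T^\kdown)^{c(n)}(F_{j'} x) \to F^{j'}(x)$, and since $T^\kdown$ acts coordinatewise, restricting this convergence to those coordinates $\epsilon$ with $\epsilon_i = 0$ yields $(T^\kddown)^{c(n)}(F_j y) \to F^j y$. Hence $F^j(y) \in \omega(F_j(y), T^\kddown)$ for every $1 \le j \le k-1$, so $F_i x$ is a $(k-1)$-dimensional \Erdos{} cube.

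The case of $F^i x$ is identical after restricting the same convergences instead to the coordinates with $\epsilon_i = 1$, so that the two fixed coordinates become $\epsilon_i = 1$ and $\epsilon_{j'} \in \{0,1\}$. The only real care needed in either case is the bookkeeping for the identifications of sub-faces of a face with codimension-two sub-faces of $x$; given that, the lemma is an immediate consequence of the coordinatewise action of $T^\k$ on $X^\k$, and I do not expect any substantive obstacle.
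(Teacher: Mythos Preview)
Your argument is correct and is precisely the direct verification from the definitions that the paper has in mind; the paper does not spell out a proof, stating only that the lemma ``follow[s] immediately from the definitions of \Erdos{} cubes and the permutations.'' Your bookkeeping for identifying the $j$th face of $F_i x$ with the sub-face $\{\epsilon_i = 0,\ \epsilon_{j'} \in \{0,1\}\}$ of $x$ and then restricting the coordinatewise convergence is exactly the content of that remark.
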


The following theorem is the main dynamical result of this paper. It extends \cref{thm:2-dim_cubes_exist} and, as we show in \cref{sec:building_sumsets},  implies our main combinatorial result \cref{thm:main_theorem} for multiple sumsets.

\begin{Theorem}
\label{thm:cubes_exist}
Assume $(X,\mu,T)$ is an ergodic system, that $a \in \gen(\mu,\Phi)$ for some \Folner{} sequence $\Phi$ in $\N$, and that $E\subset X$ satisfies $\mu(E)>0$.
For every $k \in \N$ there is a $k$-dimensional \Erdos{} cube $x \in X^\k$ with $x_{\vec0}=a$ and $x_{\vec1}\in E$.
\end{Theorem}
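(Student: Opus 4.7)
The plan is to mimic the strategy developed in Section~\ref{sec_B+C} for $k=2$, replacing the Kronecker factor by the $(k-1)$-step pronilfactor and $\mu^\two$ by the $k$-dimensional cubic measure $\mu^\k$. First I would reduce to the case where $a$ is a transitive point, by replacing $X$ with the orbit closure of $a$ (positivity of $\mu(E)$ is preserved since $a\in\gen(\mu,\Phi)$). Then I would invoke the higher-order analog of \cref{prop_continuous_kronecker_factor}, promised in \cref{sec:cubism}, passing to an extension $(\tilde X,\tilde\mu,\tilde T)$ that admits continuous factor maps to all its pronilfactors up through step $k-1$, together with a lift $\tilde a\in\tilde X$ of $a$ still generic along some \Folner{} sequence. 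Since the theorem transports along continuous factor maps, it suffices to prove it for such an extension.

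Next I would construct a continuous ergodic decomposition $x\mapsto\lambda^\k_x$ of $\mu^\k$ defined on all of $X^\k$. This is the higher-order analog of \cref{prop_cont_kronecker_implies_cont_erg_decomp} and is stated as \cref{thm:muk_cont_erg_decomp}; I would assume it here. Using $\lambda^\k$ together with the continuous factor map $x\mapsto x_{\vec 0}$ from $X^\k$ to $X$, I would then define for every $t\in X$ a probability measure $\sigma^\k_t$ on $X^\k$ in analogy with \cref{def:sigmaka_k2}, so that $t\mapsto\sigma^\k_t$ is continuous, each $\sigma^\k_t$ is concentrated on $\{x\in X^\k:x_{\vec 0}=t\}$, one has $\int\sigma^\k_t\intd\mu(t)=\mu^\k$, and the push-forward of $\sigma^\k_t$ to the last coordinate equals $\mu$. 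The last property, in particular, gives $\sigma^\k_t(\{x\in X^\k : x_{\vec 1}\in E\})=\mu(E)>0$ for every $t$.

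The heart of the argument will be to show that $\sigma^\k_a$-almost every point of $X^\k$ is a $k$-dimensional \Erdos{} cube. The cubic measure $\mu^\k$ is invariant under the full group of permutations of digits (a higher-order version of \eqref{eqn_2-dim-cubic-measure-permutation-symmetry}), and by \cref{lemma_erdos_cubes_symmetries} a point $x$ is a $k$-dimensional \Erdos{} cube precisely when, for each $i$, the permuted point $\phi_i x$ satisfies the single relation $F^1(\phi_i x)\in\omega(F_1(\phi_i x),T^\kdown)$; each such relation reduces to a one-dimensional approximation statement that holds $\mu^\k$-almost everywhere, as in \cref{prop_a.e.-point_is_Erdos_cube_2-dim}. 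I would then prove that the measures $\sigma^\k_t$ are \emph{all} invariant under the permutations $\phi_i$: this is the analog of \cref{thm:sigmaka_symm_k2}, and the standard trick is to show that $t\mapsto\phi_i\sigma^\k_t$ is another disintegration of $\mu^\k$ over the first coordinate (so the two agree for $\mu$-almost every $t$) and then to upgrade almost-everywhere agreement to everywhere agreement by verifying that $F^*\sigma^\k_t$ depends only on the projection of $t$ to an appropriate pronilfactor, applying \cref{lem:supp_to_cont_magic}. Combining this symmetry with the higher-dimensional version of \cref{lem:sigmaka_generics_1dim} gives that $\sigma^\k_a$-almost every $x$ is generic for $\lambda^\k_x$ along some \Folner{} sequence, from which the orbit approximation at each face follows, and one concludes that $\sigma^\k_a$ gives full measure to $k$-dimensional \Erdos{} cubes; since $\sigma^\k_a(\{x:x_{\vec 1}\in E\})>0$, some such cube has $x_{\vec 0}=a$ and $x_{\vec 1}\in E$.

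The principal obstacle will be the continuous ergodic decomposition $\lambda^\k$ of $\mu^\k$: for $k=2$ this was essentially an exercise with Haar measure on the Kronecker group (see \eqref{eqn:lambda_2_dim_definition_for_section_2_is_this_unqiue_yet}), but for higher $k$ one needs to replace rotations on a compact abelian group with translations on a $(k-1)$-step nilmanifold and use the algebraic structure of cube systems in a substantive way, which is the technical work behind \cref{thm:sigmas_live_on_cubes}. A secondary, but still nontrivial, obstacle is the passage to an extension with continuous factor maps to all pronilfactors simultaneously, which requires iterating the one-step construction of \cref{prop:cool_point_exists_k2}. Once these two ingredients are in place, the remaining steps — building $\sigma^\k_t$, verifying its symmetries, extracting genericity, and assembling the actual \Erdos{} cube — follow the two-dimensional template with heavier bookkeeping but no new conceptual input.
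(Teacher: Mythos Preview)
Your proposal is correct and follows essentially the same approach as the paper: reduce to $a$ transitive, pass to an extension with topological pronilfactors (\cref{lem:nice_extension}), build the continuous ergodic decomposition $\lambda^\k$ and the disintegration $\sigma^\k_t$, and then use permutation invariance of $\sigma^\k_a$ together with genericity (\cref{thm:sigmaka_generics}) to show $\sigma^\k_a$-almost every point is an \Erdos{} cube. One technical correction: for $k\geq 3$ the map $x\mapsto\lambda^\k_x$ cannot be defined on all of $X^\k$ but only on the closed full-measure subset $\nilc^\k(X)=(\pi_k^\k)^{-1}(\cube^\k(Z_k))$ (see \cref{sec:cubism}), and one must verify that $\sigma^\k_t$ is supported there; this is the main new wrinkle beyond the $k=2$ template and the reason the $k$-step (not $(k-1)$-step) pronilfactor enters.
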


\subsection{\cref{thm:cubes_exist} implies \cref{thm:main_theorem}}
\label{sec:building_sumsets}

In this subsection we use \cref{thm:cubes_exist} to prove \cref{thm:main_theorem}. 
Our method streamlines and generalizes the proof that \cref{thm:2-dim_cubes_exist} implies \cref{thm:main_theorem_k2} that we have seen in \cref{sec_B+C}.
We need to set up some terminology. Given $\epsilon=(\epsilon_1,\dots,\epsilon_k)\in\k$, denote by $\bar\epsilon\in\k$ the complement $(1-\epsilon_1,\dots,1-\epsilon_k)$.
Let $x\in X^\k$ be an \Erdos{} cube and let $E\subset X$ be an open set such that $x_{\vec1}\in E$. 
Let $B_1,\dots,B_k\subset\N$. 
We say that $(B_1,\dots,B_k)$ is \emph{$(x,E)$-acceptable} if for any $b=(b_1,\ldots,b_k)\in B_1\times\cdots\times B_k$ and any $\epsilon=(\epsilon_1,\ldots,\epsilon_k)\in\k$, 
\begin{equation}
\label{eq_acceptable}
T^{\bar\epsilon\cdot b}x_{\epsilon}\in E,
\end{equation}
where $\bar\epsilon\cdot b= \bar\epsilon_1b_1+\cdots+\bar\epsilon_kb_k$.
Note in particular that
\[
B_1+\cdots+B_k\subset\{n:T^nx_{\vec 0}\in E\}
\]
whenever $(B_1,\dots,B_k)$ is $(x,E)$-acceptable by taking $\epsilon = {\vec 0}$ in \eqref{eq_acceptable}.
To prove \cref{thm:main_theorem}, we make use of the following lemma as an iterative step.

\begin{Lemma}
Let $k\in\N$, let $(X,T)$ be a topological dynamical system, let $x\in X^\k$ be an \Erdos{} cube, and let $E\subset X$ be an open set with $x_{\vec 1}\in E$.
Assume $(B_1,\dots,B_k)$ consists of finite sets and is $(x,E)$-acceptable.
Fix $i \in \{1,\dots,k\}$.
There exists $c\in\N\setminus B_i$ such that, letting $B_i'=B_i\cup\{c\}$ and $B_j'=B_j$ for all $j\neq i$, the tuple $\big(B_1',\dots,B_k'\big)$ is still $(x,E)$-acceptable.
\end{Lemma}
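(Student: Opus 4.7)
The plan is to choose $c$ from an approximating sequence provided by the \Erdos{} cube structure in the direction $i$. Since $x$ is a $k$-dimensional \Erdos{} cube, \cref{def:erdos_cubes} gives a strictly increasing sequence $(c_n)_{n\in\N}$ in $\N$ with $(T^\kdown)^{c_n} F_i x \to F^i x$. Coordinate-wise, for each $\epsilon \in \k$ with $\epsilon_i = 0$, writing $\epsilon^*$ for the tuple obtained from $\epsilon$ by flipping the $i$th coordinate, this says $T^{c_n} x_\epsilon \to x_{\epsilon^*}$ as $n \to \infty$.

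I would then verify that any sufficiently large $c_n$ does the job. After adjoining $c_n$ to $B_i$, the only new tuples $b \in B_1' \times \cdots \times B_k'$ are those with $b_i = c_n$, and for such $b$ the conditions in \eqref{eq_acceptable} split into two cases according to the value of $\epsilon_i$. If $\epsilon_i = 1$, then $\bar\epsilon_i = 0$ and so $\bar\epsilon \cdot b$ does not involve $c_n$; the condition $T^{\bar\epsilon \cdot b} x_\epsilon \in E$ is then implied by the existing $(x,E)$-acceptability of $(B_1,\ldots,B_k)$ after replacing the entry $b_i = c_n$ with any $b_i^* \in B_i$. If instead $\epsilon_i = 0$, then $\bar\epsilon_i = 1$ and
\[
T^{c_n + \sum_{j \neq i} \bar\epsilon_j b_j} x_\epsilon
= T^{\sum_{j \neq i} \bar\epsilon_j b_j}\bigl(T^{c_n} x_\epsilon\bigr)
\longrightarrow T^{\sum_{j \neq i} \bar\epsilon_j b_j} x_{\epsilon^*}
\]
as $n \to \infty$. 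The limit equals $T^{\bar{\epsilon^*}\cdot \tilde b} x_{\epsilon^*}$ for any $\tilde b \in B_1 \times \cdots \times B_k$ extending $(b_j)_{j \neq i}$ (since $\bar{\epsilon^*}_i = 0$ makes the $i$th entry of $\tilde b$ irrelevant), and so it lies in $E$ by the existing acceptability.

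Since $E$ is open and only finitely many pairs $(\epsilon, (b_j)_{j \neq i})$ arise, each of these conditions holds for all large enough $n$; and since $B_i$ is finite we may further demand $c_n \notin B_i$. Setting $c = c_n$ for any such $n$ finishes the argument.

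The main obstacle I anticipate is the degenerate case in which $B_i$ itself is empty, so that no $b_i^* \in B_i$ is available for the substitution in the case $\epsilon_i = 1$. In that event either some other $B_j$ with $j \neq i$ is also empty, in which case $B_1' \times \cdots \times B_k'$ remains empty and the conclusion is vacuous; or else the $\epsilon_i = 1$ conditions reduce to the $(F^i x, E)$-acceptability of $(B_j)_{j \neq i}$ on the $(k-1)$-dimensional \Erdos{} cube $F^i x$ furnished by \cref{lem:erdos_face_is_erdos}, a bookkeeping point that dovetails with the iterative construction the main theorem's proof uses when invoking this lemma. The core dynamical input --- the convergence $T^{c_n} x_\epsilon \to x_{\epsilon^*}$ combined with openness of $E$ --- is the same in all cases.
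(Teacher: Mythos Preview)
Your argument is essentially the paper's: there the target conditions are packaged into the open sets $H_\epsilon = \bigcap_{b \in B_1\times\cdots\times B_k} T^{-\overline{\eta(\epsilon)}\cdot b} E$ containing $x_{\eta(\epsilon)}$ (your $x_{\epsilon^*}$), and one uses $F^ix \in \omega(F_ix, T^\kdown)$ to select a single $c$ with $T^c x_\epsilon \in H_\epsilon$ for all $\epsilon$ with $\epsilon_i=0$; your convergence--plus--openness phrasing is the same device unpacked, and the $\epsilon_i=1$ versus $\epsilon_i=0$ case split in the verification is identical.

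Your flag on the case $B_i=\emptyset$ is well placed. The paper dispatches $\epsilon_i=1$ with the phrase ``holds automatically,'' which silently performs exactly the substitution $b_i\mapsto b_i^*\in B_i$ you describe and therefore also needs $B_i\ne\emptyset$. In fact the lemma as literally stated can fail in that edge case: when $B_i=\emptyset$ but the remaining $B_j$ are nonempty, the conditions for $\epsilon_i=1$ do not involve $c$ at all and are not implied by the vacuous hypothesis (take $k=2$, an irrational circle rotation, $x=(0,0,0,0)$, $E$ a small arc around $0$, and $B_1=\{n\}$ with $T^n0\notin E$). Your instinct to push this onto the surrounding iteration is the honest move; the cleanest repair is to strengthen the definition of ``acceptable'' so that for each $\epsilon$ the quantification runs only over $(b_j)_{j:\bar\epsilon_j=1}$, after which both your argument and the paper's go through verbatim and the iteration in \cref{corollary_infiniteacceptable} starting from $(\emptyset,\dots,\emptyset)$ becomes legitimate.
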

\begin{proof}
Fix $i \in \{1,\dots,k\}$.
Given $\epsilon\in\k$ with $\epsilon_i=0$, let $ \eta(\epsilon)\in\k$ be the element that differs from $\epsilon$ only on the $i$th coordinate.
Define
\[
H_\epsilon
=
\bigcap_{b \in B_1 \times \cdots \times B_k}T^{-\overline{\eta(\epsilon)}\cdot b} E
\]
for each $\epsilon \in \k$ with $\epsilon_i = 0$.
In view of \eqref{eq_acceptable}, we have
that $x_{ \eta(\epsilon)} \in H_\epsilon$ whenever $\epsilon_i = 0$. 
Since $x$ is an \Erdos{} cube, $F^ix\in\omega(F_ix,T^\kdown)$, and so in particular there exists $c\in\N\setminus B_i$ such that 
\begin{equation}
\label{eq_messyproof}
\text{ for all } \epsilon\in\k, \ \epsilon_i = 0 \Rightarrow T^c x_\epsilon \in H_\epsilon.
\end{equation}
We claim that with this choice of $c$ the tuple  $(B_1',\dots,B_k' )$ in the statement of the lemma is $(x,E)$-acceptable.
Let $\epsilon\in\k$ and $b\in B_1'\times\cdots\times B_k'$.
If $\epsilon_i=1$ or if $b_i\neq c$, then \eqref{eq_acceptable} holds automatically.
If $\epsilon_i=0$ and $b_i=c$, then \eqref{eq_messyproof} and the definition of $H_\epsilon$ imply $T^c x_\epsilon \in T^{-\overline{ \eta(\epsilon)}\cdot b}E$ as the $i$th coordinate of $\overline{ \eta(\epsilon)}$ is $0$.
In other words, $T^{c+\overline{ \eta(\epsilon)}\cdot b}x_\epsilon\in E$, but since $c+\overline{ \eta(\epsilon)}\cdot b=\bar \epsilon\cdot b$, this is precisely \eqref{eq_acceptable}.
\end{proof}

\begin{Corollary}
\label{corollary_infiniteacceptable}
Let $(X,T)$ be a topological dynamical system, let $x\in X^\k$ be an Erd\H os cube,  and let $E\subset X$ be an open set with $x_{\vec 1}\in E$.
Then there exist infinite sets $B_1,\dots,B_k\subset\N$ such that $(B_1,\dots,B_k)$ is $(x,E)$-acceptable.
\end{Corollary}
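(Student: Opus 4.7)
The plan is to iterate the preceding Lemma in a round-robin fashion, starting from the empty tuple and building up to infinite sets. First I would observe that the trivial tuple $(\emptyset,\dots,\emptyset)$ is vacuously $(x,E)$-acceptable, because the product $B_1\times\cdots\times B_k$ is empty whenever any $B_i$ is empty, so condition \eqref{eq_acceptable} has no instances to verify.

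Next, I would construct inductively an increasing sequence of $(x,E)$-acceptable tuples of finite sets. Set $B_1^{(0)}=\cdots=B_k^{(0)}=\emptyset$. For $n\geq 1$, let $i(n)\in\{1,\dots,k\}$ be the residue of $n$ modulo $k$ (so that each index is visited infinitely often), and apply the preceding Lemma to the acceptable tuple $(B_1^{(n-1)},\dots,B_k^{(n-1)})$ with $i=i(n)$. This produces some $c_n\in\N\setminus B_{i(n)}^{(n-1)}$ such that the tuple obtained by setting $B_{i(n)}^{(n)}=B_{i(n)}^{(n-1)}\cup\{c_n\}$ and leaving the other coordinates unchanged remains $(x,E)$-acceptable. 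Finally set $B_i=\bigcup_{n\geq 0} B_i^{(n)}$; since each index is visited infinitely often and at each visit a genuinely new element is added, the sets $B_1,\dots,B_k$ are all infinite.

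The remaining step is to verify that $(B_1,\dots,B_k)$ itself is $(x,E)$-acceptable. Given any $b=(b_1,\dots,b_k)\in B_1\times\cdots\times B_k$, each coordinate $b_j$ belongs to some $B_j^{(n_j)}$; taking $N=\max_j n_j$, we have $b\in B_1^{(N)}\times\cdots\times B_k^{(N)}$. Acceptability of the tuple $(B_1^{(N)},\dots,B_k^{(N)})$ then gives $T^{\bar\epsilon\cdot b}x_\epsilon\in E$ for every $\epsilon\in\k$, which is exactly \eqref{eq_acceptable}.

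There is no serious obstacle in this argument: it is the standard greedy/diagonal construction once the one-step extension Lemma is in hand. The only subtlety worth flagging is that the definition of $(x,E)$-acceptability involves only finitely many coordinates at a time, which is what allows the property to pass from the finite approximations to their union.
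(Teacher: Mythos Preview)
Your proof is correct and is exactly the approach taken in the paper, which simply notes that $(\emptyset,\dots,\emptyset)$ is vacuously $(x,E)$-acceptable and that iterating the previous lemma gives the result. You have spelled out the round-robin iteration and the passage to the union in more detail than the paper does, but the argument is the same.
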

\begin{proof}
The tuple $(\emptyset,\dots,\emptyset)$ is vacuously $(x,E)$-acceptable for any $x,E$.
Iterating the previous lemma gives the corollary.
\end{proof}

We are now ready to prove \cref{thm:main_theorem}.
\begin{proof}[Proof of \cref{thm:main_theorem}]
Let $A\subset\N$ have positive Banach upper density.
By \cref{thm_fcp}, there exists an ergodic system $(X,\mu,T)$,
a point $a \in X$, 
a clopen set $E\subset X$, 
and a  \Folner{} sequence $\Phi$
such that $\mu(E) > 0$, $A = \{n \in \N : T^n a \in E\}$, and $a \in \gen(\mu,\Phi)$.
By \cref{thm:cubes_exist}, there exists an \Erdos{} cube $x \in X^\k$ with $x_{\vec 0}=a$ and $x_{\vec 1} \in E$.
By \cref{corollary_infiniteacceptable},  there exist infinite sets $B_1,\dots,B_k\subset\N$ such that $(B_1,\dots,B_k)$ is $(x,E)$-acceptable.
Thus
\[
B_1 + \cdots + B_k \subset \{ n \in \N : T^n x_{\vec 0}\in E \} = A
\]
by taking $\epsilon=\vec0$ in \eqref{eq_acceptable}.
\end{proof}

\section{Cube systems and pronilfactors}
\label{sec:cubism}

Our main result - \cref{thm:cubes_exist} - is that \Erdos{} cubes with prescribed first coordinate exist under certain assumptions.
In fact, we are only able to prove this directly when the system $(X,\mu,T)$ has continuous factor maps to certain structured factors known as pronilfactors.
The proof of \cref{thm:cubes_exist} then follows in general from the fact that every ergodic system $(X,\mu,T)$ has an extension with such factor maps.
In this section we recall what we need from the theory of pronilfactors and prove that every ergodic system has an extension as described above.
We conclude with some discussion on the relationship between \Erdos{} cubes and other types of cubes in the dynamics literature.
The material of \cref{subsec:comparing_cubes} is not needed for the proof of \cref{thm:cubes_exist}.

\subsection{Cubic measures and structured factors}
\label{sec:cubic-measures}
Let $(X,\mu,T)$ be an ergodic system. 
It was important in \cref{sec_B+C} to have an ergodic decomposition of $\mu^\kone = \mu \times \mu$.
As discussed in the introduction, the appropriate generalization of $\mu \times \mu$ for the proof of \cref{thm:main_theorem} in general are the cubic measures $\mu^\k$ whose definition we recall.
We remark that while we refer to these measures and other objects as various types of cubes and use associated geometric terminology, such as faces or sides, these objects should more properly be referred to as parallelograms in two dimensions or as parallelepipeds in higher dimensions.  

For each $k\in\N$, the \define{$k$-dimensional cube   system} $(X^\k, \mu^\k, T^\k)$ is the system defined on the product space $X^\k$ endowed with the transformation $T^\k = T \times \cdots \times T$, the product taken $2^k$ times, and equipped with the measure $\mu^\k$ defined inductively by 
\begin{align}
\label{eqn_def_cubic_measures}
\mu^\kup
=
\int_{X^\k} (\mu^\k)_x \times (\mu^\k)_x \intd \mu^\k(x)
\end{align}
where $x\mapsto (\mu^{\k})_x$ is an ergodic decomposition of $\mu^{\k}$.
Note that
\[
(X^\kone, \mu^\kone, T^\kone) = (X \times X, \mu \times \mu, T \times T)
\]
by our assumption that $(X,\mu,T)$ is ergodic, but when $k> 1$ the measure $\mu^\k$ is generally not equal to the product measure $\mu^\kdown \times \mu^\kdown$.
The following lemma states that $\mu^\k$ is invariant under the maps $\phi\colon  X^\k \to X^\k$ induced by permutations $\phi$ of $\{1,\dots,k\}$ as defined in \eqref{eqn:perm_on_measures}.

\begin{Lemma}[{\cite[Proposition~8, Chapter~8]{HK-book}}]
\label{lemma_muk_permutation_invariant}
Fix $k \in \N$.
For every permutation $\phi$ of $\{1,\dots,k\}$ the push forward $\phi \mu^\k$ is equal to $\mu^\k$.
\end{Lemma}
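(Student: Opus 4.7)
I would prove this by induction on $k$, with trivial base case $k=1$ (where $S_1$ is the identity group). For the inductive step, since $S_k$ is generated by the stabiliser $\mathrm{Stab}_{S_k}(k)\cong S_{k-1}$ together with the transposition $\tau=(k-1\;k)$, it suffices to verify invariance of $\mu^\k$ under each generator.

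For any $\phi\in S_{k-1}$ fixing the index $k$, the induced map on $X^\k$ respects the product splitting $X^\k = X^\kdown\times X^\kdown$ by the value of $\epsilon_k$ and acts diagonally as $\phi\times\phi$ (abusing notation, $\phi$ also denotes the induced map on $X^\kdown$). The inductive hypothesis gives $\phi\mu^\kdown=\mu^\kdown$, and $\phi$ manifestly commutes with $T^\kdown$, so $\phi$ is a measure-preserving automorphism of $(X^\kdown,\mu^\kdown,T^\kdown)$. Uniqueness of the ergodic decomposition (\cref{thm:erg_decomp}) then forces $(\mu^\kdown)_{\phi(x)}=\phi(\mu^\kdown)_x$ for $\mu^\kdown$-almost every $x$; substituting into \eqref{eqn_def_cubic_measures} and changing variables via $\phi$ yields $\phi\mu^\k=\mu^\k$.

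The transposition $\tau$ is the main obstacle because it does not respect the splitting of $X^\k$ by $\epsilon_k$. My approach is to regroup the coordinates of $\k$ according to the pair $(\epsilon_{k-1},\epsilon_k)\in\two$, yielding an identification $X^\k\cong(X^\kddown)^\two$ under which $\tau$ acts simply by swapping the two factors indexed by $01$ and $10$. Unfolding \eqref{eqn_def_cubic_measures} twice --- first from $\mu^\kddown$ to $\mu^\kdown$, then from $\mu^\kdown$ to $\mu^\k$ --- expresses $\mu^\k$ as an iterated integral against the ergodic decompositions of $\mu^\kddown$ and $\mu^\kdown$. Invariance under $\tau$ then reduces to a symmetry in the middle two arguments of this iterated integral, provable from the inductive hypothesis at level $k-1$ (which, among other things, gives invariance of $\mu^\kdown$ under the internal swap of its two constituent copies of $X^\kddown$) together with the equivariance of the ergodic decomposition of $\mu^\kdown$ under these symmetries.

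The hard part is disentangling the nested ergodic decompositions cleanly enough to exhibit the swap symmetry; the bookkeeping is delicate because the defining formula \eqref{eqn_def_cubic_measures} privileges one direction (the last coordinate). Should the direct combinatorial approach become unwieldy, I would instead establish the Host--Kra-style limit formula
\[
\int_{X^\k}\bigotimes_{\epsilon\in\k} f_\epsilon \,d\mu^\k \;=\; \lim_{N\to\infty}\frac{1}{N^k}\sum_{n\in\{1,\dots,N\}^k}\int_X \prod_{\epsilon\in\k} T^{n\cdot\epsilon}f_\epsilon \,d\mu,
\]
where $n\cdot\epsilon=\sum_{i=1}^k n_i\epsilon_i$; this characterisation is manifestly invariant under simultaneously permuting the indices $i$ in the tuple $(n_1,\ldots,n_k)$ and in the digits of each $\epsilon$, and so gives the full $S_k$-invariance at once. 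Verifying the limit formula itself is a separate induction using the pointwise ergodic theorem applied to $T^\kdown$ on $X^\kdown$ and the definition \eqref{eqn_def_cubic_measures}.
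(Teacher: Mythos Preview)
The paper does not prove this lemma; it is stated with a citation to \cite[Proposition~8, Chapter~8]{HK-book} and no argument is given. So there is no ``paper's own proof'' to compare against beyond the reference.

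Your plan is essentially the standard one and is correct in outline. Two small remarks. First, in this paper's conventions the product splitting $X^\k=X^\kdown\times X^\kdown$ used in \eqref{eqn_def_cubic_measures} is by the value of $\epsilon_1$, not $\epsilon_k$ (see the proof of \cref{prop_a.e.-point_is_Erdos_cube_2-dim}, where the two $X^\kone$ factors are $(x_{00},x_{01})$ and $(x_{10},x_{11})$). This does not affect your argument---just replace ``permutations fixing $k$'' by ``permutations fixing $1$'' and the transposition $(k-1\;k)$ by $(1\;2)$. Second, of your two routes, the limit-formula approach is considerably cleaner: the iterated form
\[
\int_{X^\k}\bigotimes_{\epsilon\in\k} f_\epsilon \,d\mu^\k
=\lim_{N_k\to\infty}\cdots\lim_{N_1\to\infty}\frac{1}{N_1\cdots N_k}\sum_{n_1=1}^{N_1}\cdots\sum_{n_k=1}^{N_k}\int_X\prod_{\epsilon\in\k}T^{n\cdot\epsilon}f_\epsilon\,d\mu
\]
follows by a straightforward induction from \eqref{eqn_def_cubic_measures} and the mean ergodic theorem for $T^\kdown$, and its symmetry under permutations of the averaging variables (together with the corresponding relabelling of the $f_\epsilon$) gives the result immediately. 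The direct combinatorial handling of the transposition via a double unfolding works too, but as you note the bookkeeping is genuinely delicate---one needs to identify the ergodic decomposition of $\mu^\kdown$ in terms of the invariant $\sigma$-algebra of $(X^\kddown,\mu^\kddown,T^\kddown)$ and then check that the two orders of relatively independent joining agree. This is exactly what the cited proposition in \cite{HK-book} does.
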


We also need later the following description of $\mu^\k$.

\begin{Lemma}
\label{lem:dont_know_if_obvious}
For every $k \in \N$, we have
\begin{align}
\label{eqn_alt_def_cubic_measures}
\mu^\kup = \int_{X^\k} \delta_x \times (\mu^\k)_x \intd \mu^\k(x).
\end{align}
\end{Lemma}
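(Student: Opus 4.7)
The plan is to leverage a fundamental self-reference property of ergodic decompositions: for $\mu^\k$-almost every $x \in X^\k$, the measure $(\mu^\k)_x$ is concentrated on the set $\{ y \in X^\k : (\mu^\k)_y = (\mu^\k)_x \}$. This property is already used implicitly in the proof of \cref{prop_2-dim_sigma_rep}. It follows from the defining property of the ergodic decomposition, namely $\int F \intd (\mu^\k)_x = \E(F \mid \inv)(x)$ for $\mu^\k$-almost every $x$ and every bounded Borel $F$, applied to indicator functions that separate measures, together with the fact that $x\mapsto (\mu^\k)_x$ is invariant under $T^\k$ (as a $\mu^\k$-almost everywhere defined function).

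Given this fact, the main idea is a direct computation starting from \eqref{eqn_def_cubic_measures}. First, for any $x \in X^\k$ one has the trivial identity $(\mu^\k)_x = \int_{X^\k} \delta_y \intd (\mu^\k)_x(y)$, so by Fubini
\[
(\mu^\k)_x \times (\mu^\k)_x = \int_{X^\k} \delta_y \times (\mu^\k)_x \intd (\mu^\k)_x(y).
\]
Next, for $\mu^\k$-almost every $x$, the self-reference property allows one to replace $(\mu^\k)_x$ by $(\mu^\k)_y$ inside the integrand without changing the value of the integral, giving
\[
(\mu^\k)_x \times (\mu^\k)_x = \int_{X^\k} \delta_y \times (\mu^\k)_y \intd (\mu^\k)_x(y).
\]

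Finally, integrating against $\mu^\k(x)$ and invoking the fact that $x \mapsto (\mu^\k)_x$ is a disintegration of $\mu^\k$ yields
\[
\mu^\kup \,=\, \int_{X^\k} \int_{X^\k} \delta_y \times (\mu^\k)_y \intd (\mu^\k)_x(y) \intd \mu^\k(x) \,=\, \int_{X^\k} \delta_y \times (\mu^\k)_y \intd \mu^\k(y),
\]
which is \eqref{eqn_alt_def_cubic_measures}. Strictly speaking, this identity of measures should be checked by testing against bounded measurable functions $H\colon X^\kup \to \C$, but since $y \mapsto \int H \intd (\delta_y \times (\mu^\k)_y)$ is bounded and measurable, the computation above goes through verbatim.

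The only non-routine ingredient is the self-reference property of ergodic decompositions, and its verification is standard; the rest of the argument is bookkeeping with Fubini and the defining property of a disintegration. I therefore do not expect any substantial obstacle.
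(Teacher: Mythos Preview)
Your proposal is correct and follows essentially the same approach as the paper: both arguments hinge on the self-reference property of the ergodic decomposition (that $(\mu^\k)_y = (\mu^\k)_x$ for $(\mu^\k)_x$-almost every $y$, for $\mu^\k$-almost every $x$), combined with the trivial identity $\nu = \int \delta_y \intd \nu(y)$ and the disintegration property $\mu^\k = \int (\mu^\k)_x \intd \mu^\k(x)$. The only cosmetic difference is direction: the paper starts from the right-hand side of \eqref{eqn_alt_def_cubic_measures} and manipulates it into \eqref{eqn_def_cubic_measures}, whereas you start from \eqref{eqn_def_cubic_measures} and arrive at \eqref{eqn_alt_def_cubic_measures}.
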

\begin{proof}
Using the ergodic decomposition
\[
\mu^\k = \int_{X^\k} (\mu^\k)_x \intd \mu^\k(x)
\]
we have that
\[
\int_{X^\k} \delta_x \times (\mu^\k)_x \intd \mu^\k(x)
=
\int_{X^\k} \int_{X^\k} \delta_y \times (\mu^\k)_y \intd \mu^\k_x(y) \intd \mu^\k(x).
\]
Since $x\mapsto (\mu^\k)_x$ is the ergodic decomposition of $\mu^\k$, for $\mu^\k$-almost every $x\in X^\k$ and $(\mu^\k)_x$-almost every $y\in X^\k$, we have that  $(\mu^\k)_y = (\mu^\k)_x$. 
Thus, for $\mu^\k$-almost every $x\in X^\k$ 
it follows that 
\[
\int_{X^\k} \delta_y \times (\mu^\k)_y \intd (\mu^\k)_x(y)
=
\int_{X^\k} \delta_y \times (\mu^\k)_x \intd (\mu^\k)_x(y)
=
(\mu^\k)_x \times (\mu^\k)_x
\]
and so~\eqref{eqn_def_cubic_measures} and~\eqref{eqn_alt_def_cubic_measures} 
define the same measure.
\end{proof}

Our proof of \cref{prop_cont_kronecker_implies_cont_erg_decomp} needs the system $(X,\mu,T)$ to have a continuous factor map to its Kronecker factor, and it is essential in the proof that functions on the Kronecker factor describe the invariant $\sigma$-algebra of the system $(X^\kone,\mu^\kone,T^\kone)$. 
The appropriate generalization of Kronecker factor is the $k$-step pronilfactor of $(X,\mu,T)$ and the structure theory of measure preserving systems  tells us that functions on pronilfactors can be used to describe the invariant $\sigma$-algebra of $(X^\k,\mu^\k,T^\k)$. 
To formally state this in the form we need, we recall some details about nilsystems.

If $G$ is a $k$-step nilpotent Lie group and $\Gamma$ is a discrete cocompact subgroup of $G$, then the compact manifold $X = G/\Gamma$ is a \define{$k$-step nilmanifold}.  The group $G$ acts on $X$ by left translation, and there is a unique Borel probability measure $\mu$ on $X$ that is invariant under this action, called the \define{Haar measure}. 

If $g\in G$ is a fixed element, $T\colon X\to X$ is the map $x\mapsto gx$, and $\mu$ is the Haar measure, then $(X, \mu, T)$
is  a \define{ $k$-step nilsystem}.
It is a classical result that for all $k$-step nilsystems $(X,\mu, T)$ the properties (i) minimal; (ii) transitive; (iii) ergodic; (iv) uniquely ergodic; are equivalent.
(See~\cite[Theorem~11, Chapter~11]{HK-book} for a summary and discussion.) 

The connection between $\mu^\k$ and $k$-step nilsystems is phrased in terms of certain seminorms $\ghk \cdot \ghk_{k+1}$ on $\lp^\infty(X,\mu)$ introduced by Host and Kra~\cite{HK-05}. For our purposes we only need the connection below between these seminorms and the measure $\mu^\k$. The theory of these seminorms can be found in~\cite{HK-book}.
To fully explain the connection we need to discuss inverse limits of systems.
Suppose for each $j\in\N$ we have a system $(X_j,\mu_j,T_j)$, and suppose there are continuous factor maps $\pi_j \colon X_{j} \to X_{j-1}$ for all $j>1$.
Then there exists a unique \emph{inverse limit} $(X,\mu,T)$ which is a system admitting continuous factor maps $\psi_j \colon X \to X_j$ such that $\pi_j\circ\psi_j=\psi_{j-1}$ for every $j>1$, and such that 
\[
\bigcup_{j\in\N}C(X_j)\circ\psi_j
\]
is dense in $C(X)$.
Additionally, whenever each of the systems $(X_j,\mu_j,T_j)$ is ergodic, so is the inverse limit $(X,\mu,T)$.
(See for example {\cite[Section~6, Chapter 3]{HK-book}} for the details.)

\begin{Theorem}
\label{thm:pronilfactors}
Fix an ergodic system $(X,\mu,T)$.
For every $k \in \N$ there is a system $(Z_k,m_k,T)$ with all the following properties.
\begin{itemize}
\item
The system $(Z_k,m_k,T)$ is an inverse limit of ergodic $k$-step nilsystems.
\item
There is a measurable factor map $\rho_k$ from $(X,\mu,T)$ to $(Z_k,m_k,T)$.
\item
For any $f \in \lp^\infty(X,\mu)$ we have $\ghk f \ghk_{k+1} = 0$ if and only if $f$ and $\lp^\infty(Z_k,m_k) \circ \rho_k$ are orthogonal in $\lp^2(X,\mu)$.
\end{itemize}
\end{Theorem}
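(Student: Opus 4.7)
The plan is to construct $(Z_k, m_k, T)$ as the measure-theoretic factor of $(X,\mu,T)$ associated with the seminorm $\ghk \cdot \ghk_{k+1}$, and then invoke the Host--Kra structure theorem to identify it with an inverse limit of ergodic $k$-step nilsystems.

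First I would define the factor. Using the calculus of the seminorms $\ghk \cdot \ghk_{k+1}$ developed in~\cite{HK-05}---in particular their subadditivity, shift-invariance, monotonicity under conditional expectation onto invariant sub-$\sigma$-algebras, and the Cauchy--Schwarz--Gowers-type duality---one checks that the collection of Borel sets $A$ satisfying $\ghk \one_A \ghk_{k+1} = \|\one_A\|_{L^2}$ forms a $T$-invariant sub-$\sigma$-algebra $\mathcal{Z}_k$. The abstract factor $(Z_k, m_k, T)$ determined by $\mathcal{Z}_k$, together with the associated measurable factor map $\rho_k$, immediately yields the second bullet.

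Next I would verify the orthogonality characterisation in the third bullet. By construction, a function $f \in \lp^\infty(X,\mu)$ is orthogonal to $\lp^\infty(Z_k, m_k) \circ \rho_k$ in $\lp^2(X,\mu)$ precisely when $\E(f \mid Z_k) = 0$. The key point from the seminorm calculus is that $\ghk f \ghk_{k+1}$ depends only on $\E(f \mid Z_k)$, and conversely that $\ghk f \ghk_{k+1} = 0$ if and only if $\E(f \mid Z_k) = 0$. Granting these formal properties of $\ghk \cdot \ghk_{k+1}$, the equivalence is essentially tautological.

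I expect the main obstacle to be the first bullet: the claim that $(Z_k, m_k, T)$ is an inverse limit of ergodic $k$-step nilsystems. This is the central structure theorem of~\cite{HK-05} and proceeds by induction on $k$. The base case $k = 1$ identifies $Z_1$ with the classical Kronecker factor. For the inductive step, one shows that $Z_{k+1}$ is realised as an isometric extension of $Z_k$ by a compact abelian group; a delicate cocycle analysis---exploiting that the defining cocycle is of ``type $k$'' in the Host--Kra sense, and so its appropriate vertical derivatives are coboundaries over $Z_k$---then identifies each finite-dimensional quotient with an ergodic $(k+1)$-step nilsystem, exhibiting $Z_{k+1}$ as the desired inverse limit. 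Rather than reproduce this lengthy structural argument, I would invoke the main theorem of~\cite{HK-05} directly.
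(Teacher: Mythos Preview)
Your proposal is correct and takes essentially the same approach as the paper: the paper's proof is a one-line citation to the relevant results in~\cite{HK-book} (specifically Theorem~2 in Chapter~16, Theorem~7 in Chapter~9, and the discussion in Section~3.1 of Chapter~13), and your outline simply unpacks what those cited results say and how they combine. One small caution: your description of $\mathcal{Z}_k$ as the collection of sets $A$ with $\ghk \one_A \ghk_{k+1} = \|\one_A\|_{L^2}$ is not the standard formulation and would need justification; it is cleaner to characterise $\mathcal{Z}_k$ directly as the (unique) invariant sub-$\sigma$-algebra for which $\E(f \mid \mathcal{Z}_k) = 0 \iff \ghk f \ghk_{k+1} = 0$, as in~\cite{HK-05}.
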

\begin{proof}
This is a combination of Theorem~2 in Chapter~16, Theorem~7 in Chapter~9, and the discussion at the beginning of Section 3.1 in Chapter 13 of~\cite{HK-book}.
\end{proof}

\begin{Definition}
\label{def:pronil}
Whenever $(X,\mu,T)$ is ergodic, we call the system $(Z_k,m_k,T)$ satisfying the conclusion of \cref{thm:pronilfactors} the $k$-step \define{pronilfactor} of $(X,\mu,T)$. 
\end{Definition}

The role of the $k$-step pronilfactor in the proof of \cref{thm:cubes_exist} is analogous to the role of the Kronecker factor in the proof of  \cref{thm:2-dim_cubes_exist}.

\subsection{Continuous factor maps to pronilfactors}
\label{subsec:top_pronil}

It is crucial for the proof of \cref{prop_cont_kronecker_implies_cont_erg_decomp} that the system $(X,\mu,T)$ has a continuous factor map to its Kronecker factor. In this section we explain how to arrive at this situation via an extension procedure that holds in the more general setting of pronilfactors.

\begin{Definition}
\label{def:top_pro_nil}
Given an ergodic system $(X, \mu, T)$ and $k\in\N$, let $(Z_k,m_k,T)$ be its $k$-step pronilfactor.
We say that $(X, \mu, T)$ \define{has topological pronilfactors} if for every $k\in\N$ there is a continuous factor map $\pi_k\colon X\to Z_k$.
\end{Definition}

\begin{Remark}
It follows from a careful analysis of our proof of \cref{thm:main_theorem} that, to find $k$-dimensional \Erdos{} cubes, it suffices to have a continuous factor map to the $k$-step pronilfactor. However, there is no loss in assuming that we have topological pronilfactors for all $k\in\N$, and this simplifies the terminology.
\end{Remark}

Note that not every system has topological pronilfactors. In fact, there are systems (see~\cite{Lehrer} or~\cite{DFM}) with non-trivial Kronecker factor but no continuous eigenfunctions, which precludes the existence of a continuous factor map to a non-trivial Kronecker system.
We handle such systems using the following proposition,  which allows us to associate with every ergodic system a larger system that has topological pronilfactors and which plays a role analogous to that played by \cref{prop:cool_point_exists_k2} in \cref{sec_B+C}.
\begin{Proposition}
\label{prop:cool_point_exists}
Assume $(X,\mu,T)$ is an ergodic system and $a \in X$ is transitive.
Assume $(Z,S)$ is a topological system that is distal and that $m$ is an $S$-invariant measure on $Z$.
Assume also that there is a measurable factor map $\rho$ from $(X,\mu,T)$ to $(Z,m,S)$.
Then there is a point $\tilde{z} \in Z$ and a \Folner{} sequence $\Psi$ such that
\begin{equation}
\label{eq_distalgenericfactor}
\lim_{N \to \infty} \dfrac{1}{|\Psi_N|} \sum_{n \in \Phi_N} f(T^n a) \, g(S^n \tilde{z})
=
\int_X f \cdot (g \circ \rho) \intd \mu
\end{equation}
for all $f \in \cont(X)$ and all $g \in \cont(Z)$.
\end{Proposition}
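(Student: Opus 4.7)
The plan is to identify the right-hand side of \eqref{eq_distalgenericfactor} as integration against the graph joining of $\mu$ and $m$, and then to produce $\tilde z$ and $\Psi$ so that $(a, \tilde z)$ is jointly generic for this joining along $\Psi$. Concretely, let $\tau$ be the pushforward of $\mu$ under the measurable map $x \mapsto (x, \rho(x))$. Because $\rho$ intertwines $T$ and $S$ $\mu$-almost everywhere, $\tau$ is a $T \times S$-invariant Borel probability measure on $X \times Z$ that is measure-theoretically isomorphic to $(X, \mu, T)$ and hence ergodic. In this language, \eqref{eq_distalgenericfactor} asserts exactly that $(a, \tilde z) \in \gen(\tau, \Psi)$ with respect to $T \times S$, and setting $g \equiv 1$ shows in particular that $a \in \gen(\mu, \Psi)$ must hold.

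I would first dispatch the one-dimensional consequence. Since $a$ is transitive and $\supp(\mu) \subseteq \overline{\orbit(a, T)} = X$, the argument attributed to Furstenberg in \cite[Proposition~3.9]{Furstenberg-book} (already invoked in the proof of \cref{thm_fcp}) yields a \Folner{} sequence $\Phi$ along which $a \in \gen(\mu, \Phi)$. Separately, the pointwise ergodic theorem applied to the ergodic system $(X \times Z, \tau, T \times S)$, combined with the $\mu$-almost everywhere identity $S^n \circ \rho = \rho \circ T^n$, produces a set $X_0 \subset X$ of full $\mu$-measure such that $(x_0, \rho(x_0)) \in \gen(\tau)$ whenever $x_0 \in X_0$. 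These two steps together give an abundance of joint-generic points and the one-coordinate genericity of $a$ along $\Phi$, but they do not directly produce a joint-generic point with first coordinate exactly $a$.

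The heart of the proof is closing this gap by choosing $\tilde z \in Z$ so that $(a, \tilde z)$ is jointly generic for $\tau$ along some \Folner{} subsequence of $\Phi$. My plan is to extract, by a diagonal argument using weak-$*$ compactness of $\meas(X \times Z)$ and compactness of $Z$, a subsequence $\Psi$ of $\Phi$ and a point $\tilde z \in Z$ such that $\frac{1}{|\Psi_N|}\sum_{n \in \Psi_N}\delta_{(T^n a, S^n \tilde z)}$ converges weak-$*$ to a $T \times S$-invariant measure $\nu$ whose first marginal is $\mu$. The remaining task is to arrange that $\nu = \tau$, and this is where distality of $(Z, S)$ is essential. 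I would proceed by induction on the tower of isometric extensions expressing $(Z, S)$ as an inverse limit starting from the trivial system, as provided by Furstenberg's structure theorem for distal systems. The base case is trivial; for the inductive step, given a lift $\tilde z'$ to the base of an isometric extension already producing the correct joint-generic behavior against the composition of $\rho$ with the projection to that base, the fiber group of the extension acts by homeomorphisms commuting with $S$, and equicontinuity of the fiber action lets us select a lift $\tilde z$ in the fiber that forces $\nu = \tau$. The main technical challenge I expect is making this fiber-lifting precise in the isometric (not merely equicontinuous) case; this strategy generalizes the rotation case handled in \cite[Proposition~6.1]{HK-uniformity} and used in \cref{prop:cool_point_exists_k2}.
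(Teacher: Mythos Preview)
The paper's proof consists of a single sentence: the proposition is a restatement of \cite[Proposition~6.1]{HK-uniformity} in the paper's terminology. You have misread that reference --- it already treats the full distal case, not merely the group-rotation case. The earlier \cref{prop:cool_point_exists_k2} is described in the paper as a \emph{special case} of \cite[Proposition~6.1]{HK-uniformity}, while the present proposition is a direct \emph{restatement} of it; so there is nothing to generalize.

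That said, your sketch of how one would prove the result from scratch --- passing to the graph joining $\tau$, reducing \eqref{eq_distalgenericfactor} to the assertion $(a,\tilde z)\in\gen(\tau,\Psi)$, and then inducting up Furstenberg's tower of isometric extensions --- is the right outline and is in essence how the argument in \cite{HK-uniformity} proceeds. The inductive step is where all the content lies and is left vague in your proposal: one must show that any weak-$*$ limit of the empirical measures along a sub-\Folner{} sequence projects to the correct joining on the base, and then use the structure of an isometric extension (unique ergodicity over the base via the compact fiber group) to identify the limit with $\tau$. You would also need to first restrict to the minimal component of $Z$ supporting $m$, since Furstenberg's structure theorem applies to minimal distal systems rather than arbitrary ones.
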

\begin{proof}
This is a restatement of \cite[Proposition~6.1]{HK-uniformity} in our terminology.
\end{proof}

\begin{lemma}
\label{lem:nice_extension}
Fix an ergodic system $(X,\mu,T)$ and a transitive point $a \in X$.
There is an ergodic system $(\tilde{X},\tilde{\mu},\tilde{T})$ with topological pronilfactors, a \Folner{} sequence $\Psi$, a point $\tilde{a} \in \gen(\tilde{\mu},\Psi)$ and a continuous factor map $\pi \colon \tilde{X} \to X$ with $\pi(\tilde{a}) = a$.
Moreover $(X,\mu,T)$ and $(\tilde{X},\tilde{\mu},\tilde{T})$ are measure theoretically isomorphic systems.
\end{lemma}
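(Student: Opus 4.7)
The plan is to realize $(X,\mu,T)$ inside a larger compact metric topological system that projects continuously to \emph{every} pronilfactor simultaneously, and then use \cref{prop:cool_point_exists} to lift the generic point $a$ to this extension. This mimics the proof of \cref{prop_continuous_kronecker_factor}, but with a single infinite product collecting all pronilfactors at once.

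First I would invoke \cref{thm:pronilfactors} to obtain, for each $k\in\N$, the $k$-step pronilfactor $(Z_k,m_k,T_k)$ with its measurable factor map $\rho_k\colon X\to Z_k$. Form the compact metric space $Y=X\times\prod_{k\in\N}Z_k$ equipped with the product topology and the homeomorphism $\tilde T=T\times\prod_k T_k$, define $\Phi\colon X\to Y$ by $\Phi(x)=(x,\rho_1(x),\rho_2(x),\ldots)$ ($\mu$-a.e.), and set $\tilde\mu=\Phi\mu$ and $\tilde X=\supp(\tilde\mu)$. Since each $\rho_k$ intertwines $T$ with $T_k$ almost everywhere, $\tilde\mu$ is $\tilde T$-invariant and $\Phi$ is a measure-theoretic isomorphism from $(X,\mu,T)$ to $(\tilde X,\tilde\mu,\tilde T)$; in particular the latter is ergodic, and this gives the ``moreover'' clause of the lemma. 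The first-coordinate projection $\pi\colon\tilde X\to X$ is continuous and pushes $\tilde\mu$ to $\mu$, so it is a continuous factor map; for each $k$ the projection $\tilde X\to Z_k$ is likewise continuous, and since $\Phi$ is an isomorphism these projections realize the pronilfactors of $(\tilde X,\tilde\mu,\tilde T)$. Thus $(\tilde X,\tilde\mu,\tilde T)$ has topological pronilfactors in the sense of \cref{def:top_pro_nil}.

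To produce $\tilde a$, I would apply \cref{prop:cool_point_exists} with target $Z=\prod_k Z_k$, transformation $S=\prod_k T_k$, measure the pushforward of $\mu$ under $\rho=(\rho_k)_{k\in\N}$, and $\rho$ as the measurable factor map. The key topological input is that $Z$ is distal: each $Z_k$ is distal as an inverse limit of $k$-step nilsystems, and both inverse limits and countable products of distal systems remain distal. So \cref{prop:cool_point_exists} yields $\tilde z\in Z$ and a \Folner{} sequence $\Psi$ satisfying \eqref{eq_distalgenericfactor} for all $f\in\cont(X)$ and $g\in\cont(Z)$. The right-hand side of that identity equals $\int_Y f\otimes g\intd\tilde\mu$, and since functions of the form $f\otimes g$ are uniformly dense in $\cont(Y)$ by Stone--Weierstrass, $\tilde a=(a,\tilde z)$ is generic for $\tilde\mu$ along $\Psi$. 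In particular $\tilde a\in\supp(\tilde\mu)=\tilde X$ and $\pi(\tilde a)=a$ by construction.

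The main obstacle is the joint genericity: we need a single point whose orbit averages simultaneously see $\mu$ on $X$ and are compatible with the a priori only measurable factor maps $\rho_k$. This is exactly what \cref{prop:cool_point_exists} delivers, using crucially that pronilfactors are distal. Once this ingredient is in hand, the rest of the argument is essentially bookkeeping: by packaging the countably many pronilfactors into one product extension, the measurable maps $\rho_k$ become coordinate projections and are therefore automatically continuous.
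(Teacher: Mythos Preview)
Your approach is essentially the paper's own: package all the pronilfactors into a single product $Z=\prod_k Z_k$, push $\mu$ forward along $x\mapsto(x,\rho(x))$ to get $\tilde\mu$ on $X\times Z$, and invoke \cref{prop:cool_point_exists} (using distality of $Z$) to produce $\tilde z$ with $(a,\tilde z)$ generic for $\tilde\mu$.

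There is, however, a genuine gap in your choice $\tilde X=\supp(\tilde\mu)$ and the assertion ``in particular $\tilde a\in\supp(\tilde\mu)=\tilde X$''. Genericity for $\tilde\mu$ does \emph{not} force $\tilde a$ to lie in $\supp(\tilde\mu)$: the forward orbit of a generic point accumulates on the support (\cref{lem:gen_approximates_support}), but the point itself can sit outside it. The paper stresses exactly this phenomenon repeatedly (see the remark following the proof that \cref{thm_dyn_2} and \cref{thm:main_theorem_k2} are equivalent, and the discussion preceding \cref{lem:supp_to_cont_magic}). With your definition, $\tilde a$ need not belong to $\tilde X$, and relatedly $\pi\colon\tilde X\to X$ need not be surjective: its image is a closed set containing $\supp(\mu)$, but the transitive point $a$ may well lie outside $\supp(\mu)$, so $\pi$ can fail the surjectivity clause of \cref{def:cont_factor_k2}.

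The paper sidesteps this by taking $\tilde X$ to be the \emph{orbit closure of $\tilde a$} in $X\times Z$ rather than $\supp(\tilde\mu)$. Then $\tilde a\in\tilde X$ trivially, and transitivity of $a$ in $X$ forces $\pi(\tilde X)=X$; meanwhile genericity of $\tilde a$ for $\tilde\mu$ ensures $\supp(\tilde\mu)\subset\tilde X$, so $\tilde\mu$ lives on $\tilde X$ and the remainder of your argument goes through unchanged.
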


\begin{proof}
For each $k\in\N$, we apply \cref{thm:pronilfactors} to $(X,\mu,T)$ 
to obtain the $k$-step pronilfactor $(Z_k, m_k, T)$ of $(X,\mu,T)$.
It is an inverse limit of ergodic $k$-step nilsystems and there is a measurable factor maps $\rho_k \colon (X, \mu, T)\to (Z_k, m_k, T)$.
Define $(Z,m,S)$ to be the system where 
\[
Z = \prod_{k \in \N} Z_k, 
\]
$m$ is the push forward of $\mu$ under the measurable map $\rho$ defined by
\[
\rho(x) = (\rho_k(x) : k \in \N)
\]
for all $x \in X$, and $S$ is the product transformation. 
Note that $S$ is distal on $Z$ and that $\rho$ is a measurable factor map from $(X,\mu,T)$ to $(Z,m,S)$.
We can therefore apply \cref{prop:cool_point_exists}
to obtain some point $\tilde{z} \in Z$ and a \Folner{} sequence $\Psi$ such that \eqref{eq_distalgenericfactor} holds for all $f \in \cont(X)$ and all $g \in \cont(Z)$.
Write $\tilde{a} = (a,\tilde{z})$ and $\tilde{T} = T \times S$.
Note that \eqref{eq_distalgenericfactor} implies $\tilde{a} \in \gen(\tilde{\mu},\Psi)$, where $\tilde{\mu}$ is the push forward of $\mu$ under the map $x \mapsto (x,\rho(x))$.
Take $\tilde{X}$ to be the orbit closure of $\tilde{a}$ in $X \times Z$ 
and let $\pi \colon \tilde{X} \to X$ be the projection on the first coordinate.

The maps $x \mapsto (x,\rho(x))$ and $(x,z) \mapsto x$ are almost sure inverses of one another and therefore the systems $(X,\mu,T)$ and $(\tilde{X},\tilde{\mu},\tilde{T})$ are measure theoretically isomorphic. In particular, it follows that $(\tilde{X},\tilde{\mu},\tilde{T})$ is ergodic and that its $k$-step pronilfactor is isomorphic to $(Z_k,m_k,T)$ for each $k \in \N$. Thus the map $(x,z) \mapsto z_k$ is a continuous factor map from $(\tilde{X},\tilde{\mu},\tilde{T})$ to its $k$-step pronilfactor.
\end{proof}

The upshot is that the following theorem implies \cref{thm:cubes_exist}.

\begin{Theorem}
\label{thm:cubes_exist_pronil}
Assume $(X,\mu,T)$ is an ergodic system with topological pronilfactors, that $a \in \gen(\mu,\Phi)$ for some \Folner{} sequence $\Phi$ and that $E \subset X$ satisfies $\mu(E) > 0$. 
For every $k \in \N$ there is a $k$-dimensional \Erdos{} cube $x \in X^\k$ with $x_{\vec0}=a$ and $x_{\vec 1} \in E$.
\end{Theorem}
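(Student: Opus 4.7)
The plan is to adapt the $k=2$ argument of Section~\ref{sec_B+C} to general $k$, replacing the Kronecker factor throughout with the $k$-step pronilfactor. Concretely, I will construct for \emph{every} $t\in X$ a probability measure $\sigma^\k_t$ on $X^\k$ which is concentrated on $\{x:x_{\vec 0}=t\}$, depends continuously on $t$, satisfies $\int_X\sigma^\k_t\intd\mu(t)=\mu^\k$, pushes forward to $\mu$ on the last coordinate, and is invariant under all digit permutations $\phi$ of $\k$. Granted these properties, applied to the generic point $a$ they yield $\sigma^\k_a(X^{\k\setminus\{\vec 1\}}\times E)=\mu(E)>0$, and combined with the separately formulated \cref{thm:sigmas_live_on_cubes} (that $\sigma^\k_a$-almost every point is a $k$-dimensional \Erdos{} cube), this immediately produces an \Erdos{} cube with $x_{\vec 0}=a$ and $x_{\vec 1}\in E$.

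The first and main technical step is to build a continuous ergodic decomposition $x\mapsto \lambda^\k_x$ of $\mu^\k$ defined on \emph{all} of $X^\k$ and depending only on the images $(\pi_k(x_\epsilon):\epsilon\in\k)$ in the pronilfactor. In the Kronecker case this came from the explicit averaging formula using the group structure of $Z$; for pronilfactors no such single-group formula exists, but writing the pronilfactor as an inverse limit of ergodic $k$-step nilsystems and using the fact (\cref{thm:pronilfactors}) that $\lp^\infty(Z_k)\circ\rho_k$ captures the contribution of $f$ to $\ghk f\ghk_{k+1}$, one can set up a continuous disintegration at each finite stage — exploiting unique ergodicity and minimality of nilsystems — and pass to the inverse limit while preserving continuity. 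This is \cref{thm:muk_cont_erg_decomp}, and I expect it to carry the bulk of the technical difficulty.

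With $\lambda^\k$ in hand, I would define $\sigma^\k_t$ inductively by $\sigma^\kone_t=\delta_t\times\mu$ and
\[
\sigma^\k_t \;=\; \int_{X^\kdown} \delta_{x'}\times\lambda^\kdown_{x'}\,\intd \sigma^\kdown_t(x')
\]
for $k\ge 2$. Continuity, concentration on $\{x:x_{\vec 0}=t\}$, the disintegration identity $\int_X\sigma^\k_t\intd\mu(t)=\mu^\k$ (using \cref{lem:dont_know_if_obvious} inductively), and the pushforward to $\mu$ on the last coordinate all follow by induction from the corresponding properties of $\lambda^\k$. The invariance of $\sigma^\k_t$ under each digit permutation $\phi_i$ is then established first for $\mu$-almost every $t$, by combining \cref{lemma_muk_permutation_invariant} with uniqueness of disintegrations exactly as in \cref{thm:sigmaka_symm_k2}. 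To upgrade this to \emph{every} $t\in X$ — essential because $a$ need not lie in $\supp\mu$ — I would apply \cref{lem:supp_to_cont_magic} to the continuous projection $F^*$, using that $F^*\sigma^\k_t$ depends only on $\pi_k(t)$ (a consequence of pronilfactor-saturation of $\lambda^\k$) and that the image of $\mu$ under $\pi_k$ has full support.

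Finally, to prove \cref{thm:sigmas_live_on_cubes}, invariance of $\sigma^\k_a$ under the permutations $\phi_i$ from~\eqref{eqn:perms_changing_faces} reduces the task to showing that for $\sigma^\k_a$-almost every $x$ the lower face $F_1 x$ has $F^1 x$ in its $\omega$-limit under $T^\kdown$. This in turn follows by adapting \cref{lem:sigmaka_generics_1dim}: along a suitable \Folner{} subsequence provided by the pointwise ergodic theorem, $F_1 x$ becomes generic for a $(k-1)$-dimensional cubic conditional, whose support contains $F^1 x$. Combining all four paragraphs completes the deduction. The principal obstacle, as noted, will be the pointwise construction of the continuous ergodic decomposition $\lambda^\k$ in the absence of a group formula, where one must carefully exploit the unique ergodicity of minimal nilsystems and the inverse-limit structure of pronilfactors to avoid the genuinely measurable-but-not-continuous phenomena that generally occur for ergodic decompositions.
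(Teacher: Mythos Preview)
Your plan matches the paper's approach almost exactly: build a continuous ergodic decomposition $x\mapsto\lambda^\k_x$ of $\mu^\k$ that is saturated along $\pi_k^\k$-fibers, use it to define $\sigma^\k_t$ inductively via $\sigma^\kup_t=\int\delta_x\times\lambda^\k_x\,\intd\sigma^\k_t(x)$, verify permutation invariance first almost everywhere and then upgrade to all $t$ via \cref{lem:supp_to_cont_magic}, and finally invoke \cref{thm:sigmas_live_on_cubes}.

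Two points where your sketch needs sharpening. First, $\lambda^\k_x$ cannot be defined on all of $X^\k$: the construction passes through the continuous ergodic decomposition $z\mapsto\xi^\k_z$ of $m^\k$, which lives on $\cube^\k(Z_k)$, so $\lambda^\k_x$ is only defined for $x\in\nilc^\k(X)=(\pi_k^\k)^{-1}(\cube^\k(Z_k))$. This is harmless because one checks inductively that $\sigma^\k_t(\nilc^\k(X))=1$ for every $t$, but you should be aware that the domain is this proper subset, not $X^\k$.

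Second, and more substantively, your plan to obtain the continuous ergodic decomposition at each nilsystem stage by ``exploiting unique ergodicity and minimality'' will not work as stated: the cube system $(\cube^\k(Z),m^\k,T^\k)$ of an ergodic $k$-step nilsystem $Z$ is again a nilsystem (\cref{thm:nil_cubes_is_nil}), but it is \emph{not} ergodic or minimal for $k\ge 1$, so unique ergodicity is unavailable. The paper instead proves that \emph{every} nilsystem, ergodic or not, admits a continuous ergodic decomposition (\cref{thm:nil_cont_erg_decomp}), and for this the key external input is Leibman's theorem on orbit closures in nilmanifolds (\cref{thm:leibman}), which describes how the ergodic components vary along the connected component of the structure group. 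This is the genuine missing ingredient in your outline.
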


\begin{proof}[Proof that \cref{thm:cubes_exist_pronil} implies \cref{thm:cubes_exist}]
Assume $(X,\mu,T)$ is an ergodic system, that $a \in \gen(\mu,\Phi)$ for some \Folner{} sequence $\Phi$, and that $E \subset X$ has positive measure.
Fix $k \in \N$.
We wish to apply \cref{lem:nice_extension} and for this we need the point $a$ to be transitive.
We therefore replace $X$ with the closure $X'$ of $\{ T^n(a) : n \in \Z \}$.
Since $a \in \gen(\mu,\Phi)$, we must have $\mu(X') = 1$ and $\mu(E \cap X') > 0$.
Thus $(X',\mu,T)$ is an ergodic system in which the point $a$ is transitive and the set $X' \cap E$ has positive measure.
Apply \cref{lem:nice_extension} to $(X',\mu,T)$ to get a system $(\tilde{X},\tilde{\mu},\tilde{T})$, a \Folner{} sequence $\Psi$, a point $\tilde{a} \in \gen(\tilde{\mu},\Psi)$, and a continuous factor map $\pi$ from $(\tilde{X},\tilde{\mu},\tilde{T})$ to $(X',\mu,T)$.
Set $\tilde{E} = \pi^{-1}(X' \cap E)$.

Now apply \cref{thm:cubes_exist_pronil} to get an \Erdos{} cube $\tilde{x} \in \tilde{X}^\k$ with $\tilde{x}_{\vec 0}  = \tilde{a}$ and $\tilde{x}_{\vec 1} \in E$.
It is immediate that $\pi(\tilde{x})$ is an \Erdos{} cube whose first coordinate is $a$ and whose last coordinate belongs to $E$.
\end{proof}

\subsection{Nilcubes}
We now turn to an issue in the proof of \cref{thm:main_theorem} for $k \ge 3$ that is not apparent in the case $k=2$ discussed in \cref{sec_B+C}.
To prove \cref{prop_cont_kronecker_implies_cont_erg_decomp} we defined for \textit{every} point $x \in X^\kone$ a measure $\lambda^\kone_x$ on $X \times X$ with various properties.
In the next section we define measures $\lambda^\k_x$ on $X^\k$ to prove \cref{thm:main_theorem} in general.
However, even when $(X,\mu,T)$ has topological pronilfactors, we are not able to define $\lambda^\k_x$ for all $x \in X^\k$.
In preparation for the next section, we define the space that is the domain of the map $x \mapsto \lambda^\k_x$.
To do so, we recall the definition of dynamical cubes from~\cite{HKM}.

\begin{Definition}[Dynamical Cubes]
\label{def_dyn_cubes}
For a topological system $(X, T)$, we define the set of \define{$k$-dimensional dynamical cubes} $\cube^\k(X,T)$ to be the orbit closure of the diagonal points under the face transformations.
Formally
\[
\cube^\k(X, T) = \overline{ \{ ( T^{\epsilon \cdot n} x : \epsilon \in \k ) : x \in X  \textup{ and } n \in \Z^k \} }
\]
with the closure taken in $X^\k$ and $\epsilon \cdot n = \epsilon_1 n_1 + \cdots + \epsilon_k n_k$ for $\epsilon \in \k$ and $n \in \Z^k$.
\end{Definition}

In our setting we only have a single transformation on a given topological system, so we write $\cube^\k(X)$ instead of the more cumbersome $\cube^\k(X,T)$. For example, $\cube^\two(X)$ is the closure of
\[
\{ (x, T^n x, T^m x, T^{m+n} x) : m,n \in \Z \textup{ and } x \in X \}
\]
in $X^\two$.

\begin{Theorem}[{\cite[Theorem 8, Chapter 12]{HK-book}}]
\label{thm:nil_cubes_is_nil}
Assume $(Z,m,T)$ is an ergodic $k$-step nilsystem.
Then $(\cube^\k(Z),T^\k,m^\k)$ is a $k$-step nilsystem, and in particular $m^\k(\cube^\k(Z))=1$.
\end{Theorem}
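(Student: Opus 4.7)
The plan is to exhibit $\cube^\k(Z)$ directly as a nilmanifold by constructing a $k$-step nilpotent Lie group acting transitively on it --- the \emph{Host--Kra cube group} --- and then identifying the cubic measure $m^\k$ with the Haar measure on this nilmanifold.

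First I would write $Z = G/\Gamma$ for a $k$-step nilpotent Lie group $G$ with $\Gamma$ a discrete cocompact subgroup, and let $g \in G$ be the element such that $T$ is left translation by $g$. Define $\mathcal{G}^\k \subseteq G^\k$ to be the subgroup generated by the diagonal elements $h^\Delta = (h,h,\dots,h)$ for $h \in G$ together with, for each $1 \le j \le k$, the \emph{upper face} elements $\iota^j(h) \in G^\k$ defined by $(\iota^j(h))_\epsilon = h$ if $\epsilon_j = 1$ and the identity otherwise, where $h$ is drawn from $G_j$, the $j$-th term of the lower central series of $G$. The central computation, which is where the bulk of the work lies, is the commutator calculus showing that $\mathcal{G}^\k$ is itself $k$-step nilpotent: one uses repeatedly that $[G_i, G_j] \subseteq G_{i+j}$ and that $G_{k+1} = \{e\}$, combined with the combinatorial fact that commutators of face elements indexed by disjoint collections of coordinates activate only on the intersection of the corresponding faces. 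Set $\Gamma^\k = \mathcal{G}^\k \cap \Gamma^\k$ (a discrete cocompact subgroup of $\mathcal{G}^\k$).

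Next I would verify the identification $\cube^\k(Z) = \mathcal{G}^\k \cdot \Delta$, where $\Delta = \{(z,\dots,z) : z \in Z\}$ is the diagonal. For $\supseteq$, each generator of $\mathcal{G}^\k$ arises from a composition of face transformations: the diagonal is built from the shift $T^\k$ itself, while each upper face element $\iota^j(h)$ is, by ergodicity of the corresponding face system and an application of density of orbits in nilsystems, approximable by shifts along the $j$-th face. For $\subseteq$, the face transformations all lie in $\mathcal{G}^\k$, so every orbit of the generating family is contained in $\mathcal{G}^\k \cdot \Delta$, and the latter set is closed (being the continuous image of the nilmanifold $\mathcal{G}^\k / \mathrm{Stab}(\Delta)$). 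This exhibits $\cube^\k(Z)$ as a homogeneous space of $\mathcal{G}^\k$, and since $T^\k$ acts on it by left translation by the diagonal element $g^\Delta \in \mathcal{G}^\k$, the pair $(\cube^\k(Z), T^\k)$ is a $k$-step nilsystem.

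For the measure assertion, proceed by induction on $k$. The base case $k=1$ is immediate from ergodicity of $(Z,m,T)$. For the inductive step, let $\tilde{m}^\k$ denote the Haar measure on $\cube^\k(Z)$, viewed as a measure on $Z^\k$. Using the inductive description \eqref{eqn_def_cubic_measures}, it suffices to show two things: that $\tilde m^\k$ coincides with $\mu^\k$, and that its ergodic decomposition with respect to $T^\k$ is compatible with the independent self-joining on the top face in the formula defining $\mu^\kup$. The first follows from inductive identification together with the fact that both $m^\k$ and $\tilde m^\k$ are $\mathcal{G}^\k$-invariant probability measures on a homogeneous space of $\mathcal{G}^\k$, hence equal by uniqueness of the Haar measure. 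The second uses that the ergodic components of a nilsystem are themselves nilsystems (a standard consequence of the structure theory of translations on nilmanifolds), which lets one match \eqref{eqn_def_cubic_measures} with the Haar measure on the $(k+1)$-dimensional cube nilmanifold constructed from $\mathcal{G}^\kup$.

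The main obstacle is the nilpotency verification for $\mathcal{G}^\k$: the commutator calculus is delicate because the face-indexed generators interact in a way that must be tracked simultaneously across the lower central series filtration of $G$ and across the combinatorics of faces in $\k$. Once this is done, the identification of $\cube^\k(Z)$ as a homogeneous space and the induction on the measure fall out quickly.
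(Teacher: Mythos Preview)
The paper does not prove this theorem; it is stated with a citation to \cite[Theorem~8, Chapter~12]{HK-book} and used as a black box. There is therefore no proof in the paper to compare against. Your sketch is a fair outline of the argument in that reference, via the Host--Kra cube group acting transitively on the diagonal and the identification of $m^\k$ with the Haar measure on the resulting nilmanifold.

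One technical correction to your sketch: in the standard construction the upper-face generators $\iota^j(h)$ range over all $h \in G$, not only $h \in G_j$. The group $\mathcal{G}^\k$ is generated by the full diagonal copy of $G$ together with, for each $1 \le j \le k$, the full copy of $G$ acting on the $j$-th upper face. The commutator calculation then shows that iterated commutators of length $r$ in $\mathcal{G}^\k$ are supported on intersections of at least $r$ upper faces with entries in the appropriate commutator subgroups of $G$, and it is this combined combinatorial--algebraic filtration (rather than a restriction on the generators themselves) that forces $\mathcal{G}^\k$ to be $k$-step nilpotent. With that adjustment your outline matches the cited proof.
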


\begin{Definition}[Nilpotent Cubes]
For an ergodic system $(X,\mu,T)$ with topological pronilfactors, we define the set of \define{$k$-dimensional nilcubes} to be
\[
\nilc^\k(X) = (\pi_k^\k)^{-1} (\cube^\k(Z_k))
\]
where $(Z_k,m_k,T)$ is the $k$-step pronilfactor of $(X,\mu,T)$ and $\pi_k\colon X\to Z_k$ is the associated continuous factor map.
\end{Definition}

It is the set $\nilc^\k(X)$ that plays the role of the domain for the map $x \mapsto \lambda^\k_x$ we introduce in \cref{sec:cont_erg_decomp}.
The following basic properties of $\nilc^\k(X)$ are used frequently.

\begin{lemma}
\label{lem:transitive_qone_full}
Assume the ergodic system $(X,\mu,T)$ has topological pronilfactors.
Then $\nilc^\kone(X) = X^\kone$.
\end{lemma}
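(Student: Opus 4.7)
The plan is to unpack the definition $\nilc^\kone(X) = (\pi_1^\kone)^{-1}(\cube^\kone(Z_1))$ and reduce the claim to showing that $\cube^\kone(Z_1) = Z_1 \times Z_1$, where $(Z_1,m_1,T)$ is the $1$-step pronilfactor of $(X,\mu,T)$. Since the preimage under a surjective continuous factor map of the full space is the full space, this reduction is immediate.

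To handle the reduced statement, I would exploit the structural description of $Z_1$ given by \cref{thm:pronilfactors}: $Z_1$ is an inverse limit of ergodic $1$-step nilsystems, and an ergodic $1$-step nilsystem is simply an ergodic rotation on a compact abelian group. An inverse limit of ergodic group rotations is itself an ergodic group rotation on the inverse limit (compact abelian) group, so we may write $T(z)=z+\alpha$ on the compact abelian group $Z_1$, with $\{n\alpha:n\in\Z\}$ dense in $Z_1$ by ergodicity.

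Next I would verify that $\cube^\kone(Z_1)=Z_1\times Z_1$ directly from \cref{def_dyn_cubes}. For each fixed $z\in Z_1$, the set $\{(z,T^n z):n\in\Z\}=\{z\}\times(z+\langle\alpha\rangle)$ is dense in $\{z\}\times Z_1$ by denseness of $\langle\alpha\rangle$. Taking the union over $z\in Z_1$ and closing gives all of $Z_1\times Z_1$, and the reverse inclusion $\cube^\kone(Z_1)\subseteq Z_1\times Z_1$ is automatic.

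There is no real obstacle here; the statement is essentially a sanity check that the $k=1$ case is trivial. The only point needing a moment's care is that the pronilfactor is an inverse limit rather than a single nilsystem, but the argument above only uses that $Z_1$ is an ergodic compact abelian group rotation, which is preserved under inverse limits.
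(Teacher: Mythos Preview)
Your proposal is correct and follows essentially the same approach as the paper: reduce to showing $\cube^\kone(Z_1)=Z_1\times Z_1$, and verify this using that the $1$-step pronilfactor is an ergodic group rotation so every orbit is dense. The paper's proof is slightly terser (it simply identifies $Z_1$ with the Kronecker factor and invokes density of orbits), while you spell out the inverse-limit step, but the argument is the same.
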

\begin{proof}
Fix a continuous factor map $\pi$ from $(X,\mu,T)$ to its Kronecker factor $(Z,m,T)$.
Since the Kronecker factor is an ergodic group rotation every point therein has dense orbit, and $\cube^\kone(Z)  = Z^\kone$. Thus $\nilc^\kone(X)=(\pi^\kone)^{-1}(\cube^\kone(Z_k)=(\pi^\kone)^{-1}(Z^\kone)=X^\kone$.
\end{proof}

\begin{lemma}
\label{lem_nilcprops}
Assume the ergodic system $(X,\mu,T)$ has topological pronilfactors. Then $\nilc^\k(X)$ is closed,
\begin{equation}
\label{eq_cubescontainedinnilcubes}
\cube^\k(X) \subset \nilc^\k(X)
\end{equation}
for all $k \in \N$, and $\mu^\k(\nilc^\k(X)) = 1$.
\end{lemma}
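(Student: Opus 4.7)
The plan is to prove the three assertions in order, treating each separately.

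For closedness, I would use that $\pi_k\colon X\to Z_k$ is continuous by the assumption of topological pronilfactors, so the product map $\pi_k^\k\colon X^\k\to Z_k^\k$ is continuous as well. Since $\cube^\k(Z_k)$ is by definition the closure of an orbit in $Z_k^\k$, it is a closed subset of $Z_k^\k$, and hence its preimage $\nilc^\k(X)=(\pi_k^\k)^{-1}(\cube^\k(Z_k))$ is closed in $X^\k$.

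For the inclusion $\cube^\k(X)\subset\nilc^\k(X)$, I would exploit that $\pi_k\circ T=T\circ\pi_k$ (as $\pi_k$ is a continuous factor map). Thus for any $y\in X$ and $n\in\Z^k$,
\[
\pi_k^\k\bigl(T^{\epsilon\cdot n}y : \epsilon\in\k\bigr)=\bigl(T^{\epsilon\cdot n}\pi_k(y) : \epsilon\in\k\bigr),
\]
which lies in $\cube^\k(Z_k)$. This shows $\pi_k^\k$ maps the dense set of orbit-cube points in $X^\k$ into $\cube^\k(Z_k)$. Using that $\pi_k^\k$ is continuous and $\cube^\k(Z_k)$ is closed, the image of the closure $\cube^\k(X)$ is still contained in $\cube^\k(Z_k)$, giving $\cube^\k(X)\subset\nilc^\k(X)$.

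For the full measure statement, the claim reduces to showing $(\pi_k^\k)\mu^\k(\cube^\k(Z_k))=m_k^\k(\cube^\k(Z_k))=1$. This in turn breaks into two sub-claims: first, that $(\pi_k^\k)\mu^\k=m_k^\k$, which is the standard fact that cubic measures are natural with respect to factor maps (it follows by induction on $k$ from the definition \eqref{eqn_def_cubic_measures} combined with the fact that factor maps take ergodic decompositions to ergodic decompositions); and second, that $m_k^\k(\cube^\k(Z_k))=1$. The second sub-claim is where the main obstacle lies, because $(Z_k,m_k,T)$ is only an inverse limit of ergodic $k$-step nilsystems, and \cref{thm:nil_cubes_is_nil} applies only to honest nilsystems, not to inverse limits. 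I would handle this by writing $(Z_k,m_k,T)$ as the inverse limit of a sequence $(Y_j,\nu_j,T)$ of ergodic $k$-step nilsystems with continuous factor maps $\psi_j\colon Z_k\to Y_j$. For each fixed $j$, \cref{thm:nil_cubes_is_nil} gives $\nu_j^\k(\cube^\k(Y_j))=1$, and using the naturality of cubic measures one has $\psi_j^\k m_k^\k=\nu_j^\k$, so
\[
m_k^\k\bigl((\psi_j^\k)^{-1}(\cube^\k(Y_j))\bigr)=1
\]
for every $j$. The intersection $\bigcap_{j\in\N}(\psi_j^\k)^{-1}(\cube^\k(Y_j))$ therefore has full $m_k^\k$-measure, and the key geometric point is that this intersection equals $\cube^\k(Z_k)$: a point of $Z_k^\k$ lies in $\cube^\k(Z_k)$ if and only if each of its projections to $Y_j^\k$ lies in $\cube^\k(Y_j)$, since the $\bigcup_j C(Y_j)\circ\psi_j$ is dense in $C(Z_k)$ so one can approximate any orbit-cube test function. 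Combining the three parts completes the proof.
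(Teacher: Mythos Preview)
Your argument is correct and follows the same approach as the paper for the first two assertions: closedness via continuity of $\pi_k^\k$ and closedness of $\cube^\k(Z_k)$, and the inclusion via the fact that continuous factor maps send orbit cubes to orbit cubes.

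For the full measure claim, you are in fact more careful than the paper. The paper simply writes $m_k^\k(\cube^\k(Z_k))=1$ and cites \cref{thm:nil_cubes_is_nil}, but as you observe, that theorem is stated only for honest nilsystems, whereas $(Z_k,m_k,T)$ is an \emph{inverse limit} of ergodic $k$-step nilsystems. Your reduction to the building blocks $Y_j$ via naturality of cubic measures under factor maps is the right way to close this gap. The identification
\[
\cube^\k(Z_k)=\bigcap_{j\in\N}(\psi_j^\k)^{-1}\bigl(\cube^\k(Y_j)\bigr)
\]
is correct; your justification via density of $\bigcup_j C(Y_j)\circ\psi_j$ is a little imprecise, but the cleaner argument is topological: since the $\psi_j$ generate the topology on $Z_k$, any basic neighborhood of a point $z$ in the right-hand side has the form $(\psi_j^\k)^{-1}(U)$ with $U\ni\psi_j^\k(z)$ open, and as $\psi_j^\k(z)\in\cube^\k(Y_j)$ there is an orbit cube $(T^{\epsilon\cdot n}y:\epsilon\in\k)\in U$; lifting $y$ to $w\in Z_k$ via surjectivity of $\psi_j$ produces an orbit cube in the given neighborhood of $z$. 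So your extra care is well placed and the argument goes through.
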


\begin{proof}
Fix $k \in \N$.
Write $\pi$ for the continuous factor map from $(X,\mu,T)$ to its $k$-step pronilfactor $(Z,m,T)$.
The first property is immediate, as $\pi^\k$ is continuous and $\cube^\k(Z)$ is closed.
The containment \eqref{eq_cubescontainedinnilcubes} holds for the following reason: if $x \in \cube^\k(X)$ is a limit point of the sequence
\[
n \mapsto (x_n, T^{a(n)}x_n, T^{b(n)} x_n, T^{a(n) + b(n)} x_n)
\]
then $\pi(x)$ is a limit point of $n \mapsto \bigl(\pi(x_n), T^{a(n)}(\pi (x_n)), T^{b(n)}(\pi (x_n)), T^{a(n) + b(n)}(\pi (x_n))\bigr)$.
The calculation
\[
\mu^\k(\nilc^\k(X)) = \mu^\k(( \pi^\k)^{-1} \cube^\k(Z)) = m^\k(\cube^\k(Z)) = 1
\]
gives the last property by \cref{thm:nil_cubes_is_nil}.
\end{proof}

\subsection{Comparing cubes}
\label{subsec:comparing_cubes}

We compare \Erdos{} cubes, nilcubes, and dynamical cubes.
The material of this section is not needed later on, but is included to clarify the main objects studied.

\begin{Definition}
\label{def_erdos_cubes}
For a topological system $(X, T)$, we define the \define{set of $k$-dimensional \Erdos{} cubes} $\erd^\k(X)$ to be the collection of all $k$-dimensional \Erdos{} cubes $x\in X^\k$   (see \cref{def:erdos_cubes}). 
\end{Definition}

\begin{Lemma}
Assume that $(X,T)$ is a topological system.
Then  for all $k\in\N$, we have that $\erd^\k(X) \subset \cube^\k(X)$. 
\end{Lemma}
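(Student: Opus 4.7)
The plan is to prove this by induction on $k$. The base case $k=1$ is direct: if $(x_0,x_1)\in\erd^\kone(X)$ then there is a strictly increasing sequence $c\colon\N\to\N$ with $T^{c(n)}x_0\to x_1$, so $(x_0,x_1)=\lim_{n\to\infty}(x_0,T^{c(n)}x_0)$, a limit of points of the form $(T^{\epsilon\cdot n}x_0:\epsilon\in\kone)$, and hence lies in $\cube^\kone(X)$.

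For the inductive step, fix $x\in\erd^\k(X)$. Using \cref{lem:erdos_face_is_erdos}, the lower face $F_1 x\in X^\kdown$ is a $(k-1)$-dimensional \Erdos{} cube, so by the induction hypothesis $F_1x\in\cube^\kdown(X)$. Hence I can choose $y_m\in X$ and $(n_{m,2},\dots,n_{m,k})\in\Z^{k-1}$ such that
\[
\lim_{m\to\infty}T^{\epsilon_2 n_{m,2}+\cdots+\epsilon_k n_{m,k}}y_m = x_{0\epsilon_2\cdots\epsilon_k}
\]
for every $(\epsilon_2,\dots,\epsilon_k)\in\kdown$. On the other hand, since $x$ is an \Erdos{} cube, there is a strictly increasing sequence $c_1\colon\N\to\N$ with $(T^\kdown)^{c_1(j)}(F_1x)\to F^1x$ as $j\to\infty$, i.e., $T^{c_1(j)}x_{0\epsilon_2\cdots\epsilon_k}\to x_{1\epsilon_2\cdots\epsilon_k}$ for every $(\epsilon_2,\dots,\epsilon_k)\in\kdown$.

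The task is now to merge these two approximations via a diagonal argument. For each $j\in\N$, fix $j$ large enough that
\[
d\bigl(T^{c_1(j)}x_{0\epsilon_2\cdots\epsilon_k},\,x_{1\epsilon_2\cdots\epsilon_k}\bigr)<1/j
\]
holds uniformly in $(\epsilon_2,\dots,\epsilon_k)\in\kdown$. Then, using continuity of $T^{c_1(j)}$ on $X$, choose $m(j)$ large so that the approximation $T^{\epsilon_2 n_{m(j),2}+\cdots+\epsilon_k n_{m(j),k}}y_{m(j)}$ is not only within $1/j$ of $x_{0\epsilon_2\cdots\epsilon_k}$ but also moves to within $1/j$ of $T^{c_1(j)}x_{0\epsilon_2\cdots\epsilon_k}$ after applying $T^{c_1(j)}$. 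Setting $N_j=(c_1(j),n_{m(j),2},\dots,n_{m(j),k})\in\Z^k$, a triangle inequality then gives $T^{\epsilon\cdot N_j}y_{m(j)}\to x_\epsilon$ as $j\to\infty$ simultaneously for every $\epsilon\in\k$, both those with $\epsilon_1=0$ (handled by the induction hypothesis approximation) and those with $\epsilon_1=1$ (handled by combining the induction approximation with the $c_1$ approximation). This exhibits $x$ as an element of $\cube^\k(X)$.

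The only nontrivial step is the diagonal argument, which is routine once continuity of the finitely many maps $T^{c_1(j)}$ involved is invoked; no subtlety beyond uniform continuity on the relevant compact neighborhoods is needed. The key conceptual input is simply that the $\Erdos{}$-cube condition on $x$ recursively builds a dynamical cube whose first $k-1$ directions come from $F_1x$ and whose $k$th direction comes from the sequence $c_1$ witnessing $F^1x\in\omega(F_1x,T^\kdown)$.
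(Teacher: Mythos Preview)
Your proof is correct and follows the same inductive strategy as the paper: use \cref{lem:erdos_face_is_erdos} to place $F_1x$ in $\cube^\kdown(X)$, then use the \Erdos{} cube condition $F^1x\in\omega(F_1x,T^\kdown)$ to pass to $\cube^\k(X)$. The only difference is packaging: the paper invokes in one line the standard fact that $w\in\cube^\kdown(X)$ implies $(w,(T^\kdown)^n w)\in\cube^\k(X)$ for every $n$, and then just takes a limit in $n$ using closedness of $\cube^\k(X)$; your explicit diagonal argument with the approximants $y_m$ and the sequence $c_1$ is exactly what underlies that fact, so your version is simply more unpacked. (One writing quibble: the sentence ``For each $j\in\N$, fix $j$ large enough that\dots'' is garbled; you mean to pass to a subsequence of $c_1$ so that the displayed bound holds for all $j$.)
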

\begin{proof}
We proceed by induction on $k$.
When $k = 1$, an \Erdos{} cube is of the form $(x,y)$ with $T^{c(n)} x \to y$ as $n \to \infty$ for an increasing sequence $c\colon \N\to\N$.
Since $(x,T^{c(n)} x) \in \cube^\kone(X)$ for every $n \in \N$, it is immediate that $(x,y) \in \cube^\kone(X)$.

For the inductive step, fix an \Erdos{} cube $x \in X^\k$.  
 Recall that $F_1x\in X^\kdown$ is the projection of $x$ onto its first lower face. Since $F_1 x$ is an \Erdos{} cube (see \cref{lem:erdos_face_is_erdos}), we have that $F_1 x \in \cube^\kdown(X)$ and therefore $(F_1 x,(T^\kdown)^n F_1 x) \in \cube^\k(X)$ for all $n \in \N$.
Since  $F^1 x$ is a limit point of $\big\{(T^{\kdown})^nF_1 x:n\in\N\big\}$ and $\cube^\k(X)$ is closed, we are done.
\end{proof}

Note that it does not follow from the  definition that the collection of \Erdos{} cubes is closed, while it is incorporated into the definition of dynamical cubes that this collection is closed. 
Moreover, an \Erdos{} cube is defined in terms of forward orbits under the appropriate cubic transformations, while the definition of a dynamical cube allows both forward and backward orbits.  This difference is motivated by the intended application of the existence of \Erdos{} cubes to find patterns in a set of positive upper density in the positive (and not all) integers.  However, even were we to extend the definition of an \Erdos{} cube to allow two sided orbits, they would still not define the same object as the dynamical cubes.  We include some examples to illustrate these differences.
\begin{example}
Set $X=\{0,1\}^\Z$ endowed with the shift map $T$. 
There exists $x \in X$ such that
\begin{equation}
\label{eqn:not_erdos_cube}
(x, \one_{2\Z}, \one_{3\Z}, \one_{5\Z})
\end{equation}
belongs to $\cube^\two(X)$ but not to $\erd^\two(X)$.
Take $a(n) = 8^{10n}$ and $b(n) = 9^{10n}$.
Let
\[
x(i)=\begin{cases}
1_{2\Z}(i)& \text{ if }i\in[a(n),a(n)+n]\text{ for some }n\\
1_{3\Z}(i)& \text{ if }i\in[b(n),b(n)+n]\text{ for some }n\\
1_{5\Z}(i)& \text{ if }i\in[a(n)+b(n),a(n)+b(n)+n]\text{ for some }n\\
0& \text{otherwise}\\
\end{cases}
\]
Then $(x,1_{2\Z},1_{3\Z},1_{5\Z})$ is a limit of $(x,T^{a(n)}x, T^{b(n)} x, T^{a(n) + b(n)} x)$.
However \eqref{eqn:not_erdos_cube} is not an \Erdos{} cube, as $\one_{5\Z}$ does not lie in the orbit closure of $\one_{2\Z}$.
\end{example}

Even in well-behaved, familiar systems, such as an algebraic skew product, we may not have equality between the two notions of cubes.
\begin{example}
Consider the skew-product $T\colon (x,y)\mapsto(x+\alpha,y+x)$ on $X=\T^2$, where $\alpha$ is irrational.
Then $\big((x_1,y_1),(x_2,y_2),(x_3,y_3),(x_4,y_4)\big)\in X^4$ belongs to $\cube^\two(X)$ if and only if $x_1+x_4=x_2+x_3$ (this can be seen by computing the orbit of a point $(x',y_1)$ with $x'$ close to $x_1$ and rationally independent from $\alpha$). 
However, the point $\big((0,0),(0,0),(0,0),(0,1/2)\big)\in X^4$ is not in $\erd^\two(X)$.
\end{example}

We have the containments
\[
\erd^\k(X) \subset \cube^\k(X) \subset \nilc^\k(X)
\]
for all $k \in \N$ whenever $(X,\mu,T)$ is an ergodic system with topological pronilfactors.
The preceding examples show that the first containment is proper in general, even if we were to adjust the definition to make use of the  full (backwards and forwards) orbit. 
For ergodic nilsystems, and more generally for distal systems, it turns out that the closure of the $k$-dimensional \Erdos{} cubes equals the set of $k$-dimensional dynamical cubes.

\begin{Theorem}
\label{th:erdos-and-dynamical}
Let $(X,T)$ be a distal topological system and let $k\in\N$. 
Then $\cube^\k(X)=\overline{\erd^\k(X)}$.  
\end{Theorem}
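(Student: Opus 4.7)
The containment $\overline{\erd^\k(X)} \subseteq \cube^\k(X)$ is immediate: the preceding lemma gives $\erd^\k(X) \subseteq \cube^\k(X)$, and $\cube^\k(X)$ is closed by definition, so taking closures yields the inclusion.

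For the reverse containment, the plan is to show that every ``basic'' cube $\xi = (T^{\epsilon \cdot n} y)_{\epsilon \in \k}$, with $y \in X$ and $n \in \Z^k$, is itself a $k$-dimensional \Erdos{} cube; since by definition such points are dense in $\cube^\k(X)$, this immediately yields $\cube^\k(X) \subseteq \overline{\erd^\k(X)}$. Fix such a $\xi$ and an index $1 \leq i \leq k$. A short computation gives
\[
F^i \xi = (T^\kdown)^{n_i}(F_i \xi),
\]
where $n_i$ denotes the $i$th coordinate of $n$, so the task reduces to verifying $(T^\kdown)^{n_i}(F_i \xi) \in \omega(F_i \xi, T^\kdown)$.

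The key input is that distality passes to products, so the system $(X^\kdown, T^\kdown)$ is distal, and consequently every point of $X^\kdown$ is uniformly recurrent -- the classical fact that in a distal topological system every point has syndetic return times to each of its neighborhoods. Applied to $F_i\xi$, this gives that for any neighborhood $U$ of $F^i\xi$, the preimage $V = (T^\kdown)^{-n_i}(U)$ is a neighborhood of $F_i\xi$, and uniform recurrence produces infinitely many $\ell \in \N$ with $(T^\kdown)^\ell F_i\xi \in V$, equivalently $(T^\kdown)^{\ell + n_i} F_i\xi \in U$. Discarding the finitely many $\ell$ with $\ell + n_i < 0$ leaves infinitely many $m \in \N$ with $(T^\kdown)^m F_i\xi \in U$, so $F^i\xi \in \omega(F_i\xi, T^\kdown)$. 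Since this holds for every $i$, the point $\xi$ is an \Erdos{} cube.

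I do not anticipate a substantial obstacle beyond invoking the classical characterization of distal systems via uniform recurrence; everything else is a short calculation using the product structure of the basic cubes and the fact that $T^\kdown$ is a homeomorphism. Note in particular that no ``transfer of limits'' (i.e., closedness of the orbit closure relation) is needed: we prove the stronger statement that every \emph{basic} cube is already an \Erdos{} cube, and only appeal to closure of $\erd^\k(X)$ when passing to arbitrary elements of $\cube^\k(X)$.
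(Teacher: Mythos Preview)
Your proof is correct and follows essentially the same approach as the paper's: both show that every basic cube $(T^{\epsilon\cdot n}y)_{\epsilon\in\k}$ is already an \Erdos{} cube, using that products of distal systems are distal and that distal systems are pointwise recurrent (the paper phrases this via Furstenberg's theorem that a distal system is a disjoint union of minimal systems, you via uniform recurrence---these are equivalent). Your presentation is arguably cleaner, since it avoids the paper's detour through the sets $\phi_i(P)$ and the slightly delicate passage from $\bigcap_i\overline{\phi_i(P)}$ to $\overline{\erd^\k(X)}$.
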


\begin{proof}
Define
\[
P = \{ (x,y) :  x \in \cube^\kdown(X), y \in \omg(x,T^\kdown) \}
\]
and note that \cref{lemma_erdos_cubes_symmetries} implies
\[
\erd^\k(X) \supset \bigcap_{i=1}^k \phi_i( P )
\]
where $\phi_i$ is the permutation of $\{1,\dots,k\}$ that exchanges $1$ and $i$.

We use induction to check  that for any $x \in X$ and $n\in\Z^k$, the point $(T^{\epsilon \cdot n} x : \epsilon\in\k)$ is in $P$.
Indeed, when $k=1$, since $(X,T)$ is distal it is a disjoint union of minimal systems by \cite[Theorem 3.2]{furstenberg-distal}.  Restricting to one of these minimal systems, we have that  $(a,T^n a) \in P$ for all $a \in X$ and all $n \in \Z$, as the forward orbit of any $a$ is dense.
The inductive step follows because each power $(X^\k,T^\k)$ of a distal system is also distal.
It follows that $\cube^\k(X) \subset \overline{P}$ and hence  $\cube^\k(X) \subset \overline{\phi_i(P)}$ for any $i\in\{1, \ldots, k\}$, as $\cube^\k(X)$ is invariant under $\phi_i$.  
Therefore
\[
\cube^\k(X)
\subset
\bigcap_{i=1}^k\overline{\phi_i(P)}
\subset
\overline{\erd^\k(X)}
\subset
\cube^\k(X)
\]
and we conclude that $\cube^\k(X)=\overline{\erd^\k(X)}$.
\end{proof}

In general, an ergodic system $(X, \mu, T)$ having topological nilfactors need not have $\cube^\k(X) = \nilc^\k(X)$. Examples illustrating this are given in~\cite[Example 3.6]{TuYe2013} and~\cite[Section 3.2]{CaiShao2019}.

\section{A continuous ergodic decomposition}
\label{sec:cont_erg_decomp}
The rest of the paper is dedicated to the proof of \cref{thm:cubes_exist_pronil}.
The first step
is to generalize \cref{prop_cont_kronecker_implies_cont_erg_decomp} to higher dimensions by describing an ergodic decomposition for the cube systems $(X^\k,\mu^\k,T^\k)$ that is compatible with the topology. 

\begin{Definition}
Given a system $(X,\mu,T)$, we say that a map $x\mapsto\mu_x$ from $X$ to ${\mathcal M}(X)$ is a \emph{continuous ergodic decomposition} if it is continuous and is an ergodic decomposition in the sense of \cref{def_ergdec}.
\end{Definition}
Note that, in view of \cref{thm:erg_decomp}, a continuous ergodic decomposition, if it exists, is unique on $\supp(\mu)$.
However, a continuous ergodic decomposition can be extended from $\supp(\mu)$ to the whole space $X$ in an arbitrary manner (as long as continuity is preserved).
\begin{Lemma}
\label{remark_continuous_ergodic_decomposition}
A system $(X,\mu,T)$ admits a continuous ergodic decomposition if and only if for every $f\in C(X)$, the conditional expectation $\E(f\mid{\mathcal I})$ on the $\sigma$-algebra of $T$-invariant sets agrees $\mu$-almost everywhere with a continuous function.
\end{Lemma}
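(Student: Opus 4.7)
The forward direction is immediate: if $x \mapsto \mu_x$ is a continuous ergodic decomposition, then for each $f \in C(X)$ the function $x \mapsto \int_X f \intd \mu_x$ is the composition of the continuous map $x \mapsto \mu_x$ with the weak-$*$ continuous evaluation $\nu \mapsto \int f \intd \nu$, and is therefore continuous; by the defining property of an ergodic decomposition it agrees $\mu$-almost everywhere with $\E(f \mid \inv)$.

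For the converse, my plan is to build the decomposition first on $\supp(\mu)$ and then extend it to $X$. For each $f \in C(X)$ fix a continuous representative $P_0 f$ of $\E(f \mid \inv)$ as supplied by the hypothesis. Any two such representatives agree $\mu$-almost everywhere and hence, being continuous, agree on the closed set $\supp(\mu)$; restricting to $\supp(\mu)$ therefore yields a well-defined linear map $P \colon C(X) \to C(\supp(\mu))$ which is positive (if $f \ge 0$ then $\E(f \mid \inv) \ge 0$ almost everywhere, so $P_0 f \ge 0$ on $\supp(\mu)$) and sends the constant function $1$ to $1$. For each $x \in \supp(\mu)$ the functional $f \mapsto (Pf)(x)$ is thus a state on $C(X)$, and the Riesz representation theorem produces a unique probability measure $\mu_x$ on $X$ with $\int f \intd \mu_x = (Pf)(x)$. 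Since $x \mapsto (Pf)(x)$ is continuous for each $f \in C(X)$, the map $x \mapsto \mu_x$ is weak-$*$ continuous on $\supp(\mu)$.

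To obtain a map defined on all of $X$ I would invoke Dugundji's extension theorem: $\meas(X)$ with the weak-$*$ topology is a compact, convex, metrizable subset of the locally convex space $C(X)^*$, and $\supp(\mu)$ is closed in the compact metric space $X$, so the continuous map $\supp(\mu) \to \meas(X)$ extends to a continuous map $X \to \meas(X)$. The values assigned to points outside $\supp(\mu)$ play no role in the ergodic decomposition identity, which I would verify as follows. For $f \in C(X)$ the equality $\int f \intd \mu_x = \E(f \mid \inv)(x)$ holds by construction on $\supp(\mu)$, hence $\mu$-almost everywhere, and integrating gives the disintegration formula $\int f \intd \mu = \int_X \int_X f \intd \mu_x \intd \mu(x)$. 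Both identities then pass to bounded measurable $f$ by approximating in $L^2(X,\mu)$ by continuous functions and using the disintegration formula along a subsequence to transfer $L^2$-convergence to the fiber measures $\mu_x$.

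The main obstacle is the extension step: once the decomposition has been defined only on $\supp(\mu)$, it must be promoted to a continuous map on all of $X$. Recognizing $\meas(X)$ as a convex subset of a locally convex space makes this straightforward via Dugundji, and the freedom in the extension outside $\supp(\mu)$ is harmless because the ergodic decomposition identity only constrains $\mu_x$ on a set of full $\mu$-measure.
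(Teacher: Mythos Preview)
Your argument is correct and follows the same idea as the paper's proof, which simply declares $\int_X f\intd\mu_x := g(x)$ for a continuous representative $g$ of $\E(f\mid\inv)$ without further comment. You supply the details the paper omits: uniqueness of the representative on $\supp(\mu)$ so that Riesz yields a well-defined measure there, and the Dugundji extension to all of $X$ (which the paper acknowledges only in the remark preceding the lemma); your final step passing from continuous to bounded measurable $f$ is a bit sketchy but can be made rigorous most cleanly by comparing your $\mu_x$ with the abstractly existing ergodic decomposition from \cref{thm:erg_decomp} on a countable dense subset of $C(X)$.
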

\begin{proof}
If $x\mapsto\mu_x$ is a continuous ergodic decomposition, then for any $f\in C(X)$ the function 
\[
x\mapsto\int_Xf\d\mu_x
\]
is continuous, and
\[
\int_X f \intd \mu_x = \E(f\mid{\mathcal I})(x)
\]
for $\mu$-almost every $x\in X$, by definition.
Conversely, if for every $f\in C(X)$ there exists $g\in C(X)$ such that $\E(f\mid{\mathcal I})$ and $g$ agree $\mu$-almost everywhere, then the measures $\mu_x$ defined by $\int_Xf\d\mu_x:=g(x)$ define a continuous ergodic decomposition.
\end{proof}

We now give some examples to clarify the notion of a continuous ergodic decomposition.

\begin{example}
Take $X = \R^2 / \Z^2$ and define $T$ by $T(x,y) = (x,y+x)$.
The Haar measure $\mu$ on $X$ is $T$-invariant but is not ergodic.
In this system, every point is generic for some ergodic measure, but the function that associates to each point the ergodic measure it is generic for is not a continuous function. 
To remedy this issue, let $m$ denote the Lebesgue measure on $[0,1)$ and, for each $(x,y)\in X$, define the measure $\mu_{(x,y)}=\delta_x\ m$, where $\delta_x$ denotes as usual the Dirac point mass at $x$.

The map $(x,y) \mapsto \mu_{(x,y)}$ from $X$ to the space $\meas(X)$ of Borel probability measures on $X$ is continuous.
Furthermore, $\mu_{(x,y)}$ is ergodic for $T$ whenever $x$ is irrational, in particular it is ergodic for $\mu$-almost every $(x,y)\in X$.
Finally, since
$\int_X\mu_{(x,y)}\d\mu(x,y)=\mu$, the assignment $(x,y)\mapsto\mu_{(x,y)}$ is a continuous ergodic decomposition of $\mu$.
\end{example}

\begin{example}
For our second example fix $\alpha \in \R \setminus \Q$ and define $T$ on $X = \R^4 / \Z^4$ by
\[
T(x,y,w,z) = (x + \alpha, y + x, w + \alpha, z + w)
\]
for all $(x,y,w,z) \in X$.
The Haar measure $\mu$ on $\T^4$ is $T$-invariant but not ergodic, as the set
\[
X_\beta = \{ (x,y,w,z) \in X : w-x = \beta \}
\]
is $T$-invariant for every $\beta \in \R / \Z$.
Note that $\psi_\beta \colon \R^3 / \Z^3 \to X_\beta$ by
\[
(x,y,z) \mapsto (x, y , x + \beta, z)
\]
defines an isomorphism between $(X_\beta,T)$ and $(\T^3,S_\beta)$, where
\[
S_\beta(x,y,z) = (x + \alpha, y + x, z + x+\beta)
\]
on $\R^3 / \Z^3$.
It can be checked that the Haar measure $m$ on $\T^3$ is ergodic for $S_\beta$ for almost every $\beta\in\R/\Z$.
The map $(x,y,w,z)\mapsto\mu_{(x,y,w,z)} :=  \psi_{w-x}(m)$ assigning to $(x,y,w,z)$ the push forward of $m$ under $\psi_{w-z}$ is a continuous map from $X$ to $\meas(X)$.
Moreover, $\mu_{(x,y,w,z)}$ is $T$-ergodic for $\mu$-almost every $(x,y,w,z)\in X$ and forms a disintegration of $\mu$, showing that $(x,y,w,z)\mapsto\mu_{(x,y,w,z)}$ is a continuous ergodic decomposition of $\mu$.

Note that $T$ is a self-product of the map $(x,y)\mapsto (x+\alpha,y+x)$ on $\R^2 / \Z^2$ and that the Haar measure $\nu$ on $\T^2$ is ergodic for $T$.
This disintegration is therefore an example of a continuous ergodic decomposition of $\nu^\kone = \nu\times\nu$.
\end{example}

Recall that, given a system $(X,\mu,T)$ with topological pronilfactors, we denote by $Z_k$ the $k$-step pronilfactor and by $\pi_k\colon X\to Z_k$ the (continuous) factor map. 
Recall also that $\nilc^\k(X) = (\pi_k^\k)^{-1} (\cube^\k(Z_k))$ is closed and $T^\k$-invariant, and that $\mu^\k(\nilc^\k(X))=1$, so that $(\nilc^\k(X),\mu^\k,T^\k)$ is a system.
In particular, the support of $\mu^\k$ is contained in $\nilc^\k(X)$.

\begin{Theorem}
\label{thm:muk_cont_erg_decomp}
Assume that $(X,\mu, T)$ is an ergodic system with topological pronilfactors.
For every $k \in \N$ there is a unique continuous ergodic decomposition $x\mapsto\lambda^\k_x$
of the system $(\nilc^\k(X),\mu^\k,T^\k)$ such that
\begin{equation}
\label{lambda:equality}
\pi_k^\k(x) = \pi_k^\k(y) \Rightarrow \lambda^\k_x = \lambda^\k_y
\end{equation}
for all $x,y \in \nilc^\k(X)$.
\end{Theorem}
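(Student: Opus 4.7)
The plan is to mimic the explicit construction in the proof of \cref{prop_cont_kronecker_implies_cont_erg_decomp}, replacing the Kronecker factor with the $k$-step pronilfactor $(Z_k, m_k, T)$. The essential new algebraic input is \cref{thm:nil_cubes_is_nil}, which asserts that $(\cube^\k(Z_k), m_k^\k, T^\k)$ is an inverse limit of $k$-step nilsystems. On each such nilsystem, classical results on nilsystem translations (Parry, Ratner) tell us that the orbit closure of any point under $T^\k$ is a sub-nilmanifold carrying a unique invariant probability measure, and that the assignment $z \mapsto \theta^\k_z$ sending $z$ to that measure is continuous, because nilsystems are distal and orbit closures vary continuously with the base point. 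Propagating through the inverse limit preserves this continuity, giving a continuous ergodic decomposition $z \mapsto \theta^\k_z$ of the system $(\cube^\k(Z_k), m_k^\k, T^\k)$.

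With $\theta^\k$ in hand, I would apply \cref{thm:disintegration} to the continuous factor map $\pi_k^\k \colon (\nilc^\k(X), \mu^\k, T^\k) \to (\cube^\k(Z_k), m_k^\k, T^\k)$ to obtain a measurable disintegration $z \mapsto (\mu^\k)_z$, and then define
\[
\lambda^\k_x = \int_{\cube^\k(Z_k)} (\mu^\k)_z \intd \theta^\k_{\pi_k^\k(x)}(z)
\]
for every $x \in \nilc^\k(X)$. Property \eqref{lambda:equality} is immediate from this formula, and a Fubini calculation using the identity $m_k^\k = \int \theta^\k_{z'} \intd m_k^\k(z')$ verifies $\int \lambda^\k_x \intd \mu^\k(x) = \mu^\k$. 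Ergodicity of each $\lambda^\k_x$ together with its $T^\k$-invariance descend directly from the corresponding properties of $\theta^\k_{\pi_k^\k(x)}$, while uniqueness, once continuity is established, follows from \cref{thm:erg_decomp} on $\supp(\mu^\k)$ combined with the rigidity imposed by the fiber condition \eqref{lambda:equality}.

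The main obstacle is establishing continuity of $x \mapsto \lambda^\k_x$. Because $z \mapsto (\mu^\k)_z$ is only a measurable disintegration, for $F \in C(\nilc^\k(X))$ the function $g_F(z) = \int F \intd (\mu^\k)_z$ coincides with $\E(F \mid \pi_k^\k)(z)$ only as an element of $L^2(\cube^\k(Z_k), m_k^\k)$, not as a continuous function. I must nevertheless show that $z' \mapsto \int g_F \intd \theta^\k_{z'}$ depends continuously on $z'$, which via continuity of $\pi_k^\k$ yields continuity of $x \mapsto \int F \intd \lambda^\k_x$. The route I would pursue is to exploit the algebraic structure of pronilfactors: on each finite-step nilsystem factor of $Z_k$, the family $\{\theta^\k_{z'}\}$ of Haar measures on orbit closures of the diagonal translation $T^\k$ is related by translations of a nilpotent group acting by isometries on $L^2$, which permits $L^2$-equicontinuous approximation of $g_F$ by continuous functions uniformly in $z'$. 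Verifying this equicontinuity for each finite-step nilsystem factor of $Z_k$ and carefully propagating it through the inverse limit is the technical heart of the argument, and is the step where the specific pronilpotent structure of the factor maps -- rather than merely their continuity -- is genuinely indispensable.
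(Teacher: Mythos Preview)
Your overall architecture---build a continuous ergodic decomposition $z\mapsto\theta^\k_z$ on $\cube^\k(Z_k)$ and lift it through $\pi_k^\k$---is exactly the paper's, and your formula for $\lambda^\k_x$ agrees with the paper's definition on tensor products (the disintegration of $\mu^\k$ over $\pi_k^\k$ is the coordinatewise product of the disintegrations of $\mu$ over $Z_k$). However, two essential steps are either glossed over or incorrect.

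First, your assertion that ergodicity of $\lambda^\k_x$ ``descends directly'' from ergodicity of $\theta^\k_{\pi_k^\k(x)}$ hides the nontrivial fact that the $T^\k$-invariant $\sigma$-algebra of $(X^\k,\mu^\k)$ coincides mod $\mu^\k$ with the pullback under $\pi_k^\k$ of the $T^\k$-invariant $\sigma$-algebra of $(\cube^\k(Z_k),m_k^\k)$. This is precisely where the choice of the $k$-step pronilfactor matters---no smaller factor would do---and the paper proves it via the Cauchy--Schwarz--Gowers inequality together with the seminorm characterization of $Z_k$ in \cref{thm:pronilfactors}. Without this input you obtain only a $T^\k$-invariant disintegration, not an ergodic decomposition.

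Second, your continuity argument does not work. Weak* continuity of $z'\mapsto\theta^\k_{z'}$ only controls integrals of continuous functions, whereas $g_F=\E(F\mid\pi_k^\k)$ is merely in $L^\infty(m_k^\k)$. Your proposed remedy---that the measures $\theta^\k_{z'}$ are related by nilpotent translations acting isometrically on $L^2$---would need the map $\phi\mapsto[z'\mapsto\int\phi\intd\theta^\k_{z'}]$ to be bounded from $L^2$ (or even $L^\infty$) into $C(\cube^\k(Z_k))$, and this fails: the $\theta^\k_{z'}$ are mutually singular Haar measures on sub-nilmanifolds, so approximating $g_F$ in any $L^p(m_k^\k)$ norm gives no uniform control over $\int g_F\intd\theta^\k_{z'}$. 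The paper sidesteps this entirely by working only with tensor products $\bigotimes_\epsilon f_\epsilon$ and exploiting that every one-dimensional marginal of $\xi^\k_z$ equals $m_k$ (via \cref{lemma_ErgodicDecompositionMarginal} plus \cref{lem:supp_to_cont_magic}): then H\"older's inequality shows $(g_\epsilon)\mapsto[z\mapsto\int\bigotimes g_\epsilon\intd\xi^\k_z]$ is continuous from $L^{2^k}(Z_k,m_k)^\k$ to $L^\infty$, and since continuous inputs give continuous outputs and $C(Z_k)$ is dense in $L^{2^k}$, the output is always continuous. This H\"older-via-marginals device, applied to the coordinatewise conditional expectations $\E(f_\epsilon\mid Z_k)$, is the key idea your proposal is missing.
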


We noted that it follows from \cref{thm:erg_decomp} that continuous ergodic decompositions  are uniquely defined on the support of the measure being decomposed. Yet in the most interesting situations, our point $a$ is outside the support of $\mu$ and therefore all points $x \in X^\k$ whose first coordinate equals $a$ are outside $\supp(\mu^\k)$. 
However, the ergodic decomposition in \cref{thm:muk_cont_erg_decomp} is defined on $\nilc^\k(X)$, which contains points with any prescribed first coordinate (this follows from the fact that our system has topological pronilfactors).

The remainder of this section is devoted to proving \cref{thm:muk_cont_erg_decomp}.  
We start by applying a theorem of Leibman to
establish \cref{thm:muk_cont_erg_decomp} for ergodic nilsystems, and then make use of 
inverse limits and the Host-Kra structure theorem to prove the general case.

\begin{Theorem}[Leibman {\cite[Theorem~2.2]{leibman-orbits}}]
\label{thm:leibman}
Let $(Z,m,T)$ be a nilsystem with $Z = G/\Gamma$ and $\pi\colon G\to Z$ the natural map.
Let $V$ be a connected subnilmanifold of $Z$ and let $K$ be a connected component of $\pi^{-1}(V)$.
There is a subnilmanifold $Y$ of $Z$ with the following property: for $m$-almost every $z \in V$ we have $\overline{ \{ T^n z : n \in \Z \} } = aY$ whenever $\pi(a) = z$ and $a \in K$.
\end{Theorem}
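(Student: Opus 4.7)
The plan is to reduce the statement to the structure theory of orbit closures on nilmanifolds, and then to promote pointwise information to an almost-sure uniform statement over the component $K$. Writing $Tz = gz$ for a fixed $g\in G$, the first step is to invoke the foundational result that for every $a\in G$ the orbit closure $\overline{\{g^n(a\Gamma):n\in\Z\}}$ takes the form $L_a\cdot a\Gamma/\Gamma$ for some closed connected subgroup $L_a\leq G$ containing a positive iterate of $g$ in its closure. Granting this, the theorem reduces to showing that $L_a$ is almost surely constant as $a$ varies over $K$, since then one can set $Y = L\Gamma/\Gamma$ for the common value $L$ and obtain $\overline{\{T^n z:n\in\Z\}} = L\cdot a\Gamma/\Gamma = aY$ on a full measure subset of $K$.

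For this uniformity, I would observe that because $\Gamma$ is cocompact and discrete in $G$, only countably many closed connected subgroups $L\leq G$ can arise as $L_a$: these are precisely those $L$ such that $L\cap a\Gamma a^{-1}$ is cocompact in $L$, a rationality condition involving finitely many generators. The assignment $a\mapsto L_a$ is measurable (for instance, by expressing $L_a$ through the Haar measure on the orbit closure, which depends continuously on $a$ in the weak* topology modulo passage to ergodic components), and $K$, being a connected component of the preimage of a connected subnilmanifold, is itself a connected subnilmanifold carrying a natural measure. A measurable function with countable range on such a connected set must be constant on a full-measure subset, yielding the desired $L$.

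The main obstacle is the foundational structure result that orbit closures on a nilmanifold are translates of connected subnilmanifolds and that the associated subgroup depends on the base point in a tame, finitely parameterizable way. This is established by induction on the nilpotency step: the base case is the classical fact from Kronecker's theorem that the orbit closure of $n\mapsto n\alpha$ on a torus is a coset of a subtorus determined by the rational relations among the coordinates of $\alpha$; the inductive step lifts this through the maximal torus factor $Z\to Z/[G,G]\Gamma$ by a careful analysis of how polynomial sequences in $G$ project to lower-step quotients, together with an orbit-closure computation in the fiber, which is itself a nilmanifold of strictly smaller step. As the full argument is quite technical and is the content of \cite{leibman-orbits}, I would invoke the cited theorem as a black box rather than reproduce the induction here.
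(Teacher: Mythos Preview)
The paper does not prove this theorem; it is quoted verbatim as \cite[Theorem~2.2]{leibman-orbits} and used as a black box in the proof of \cref{thm:nil_cont_erg_decomp}. There is therefore no ``paper's own proof'' to compare your proposal against, and your final sentence --- invoking the cited theorem rather than reproducing the induction --- is in fact exactly what the paper does.

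That said, your sketch of how one might prove the result contains a genuine gap. The step ``a measurable function with countable range on such a connected set must be constant on a full-measure subset'' is false as stated: take $K$ to be the circle with Lebesgue measure and consider the indicator function of an arc of length $1/2$. Connectedness constrains \emph{continuous} functions with discrete range, not merely measurable ones. Your earlier parenthetical hints at something closer to the truth --- that $a\mapsto L_a$ has semicontinuity or rigidity properties beyond bare measurability --- but the argument as written does not use this and so does not go through. Leibman's actual mechanism for the generic constancy of the orbit-closure subgroup is more delicate, relying on the algebraic structure of polynomial sequences and the fact that the ``exceptional'' set where the orbit closure drops in dimension is a proper subvariety (hence null), not on a general measurability principle.
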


Using \cref{thm:leibman} we deduce that every nilsystem  admits a continuous ergodic decomposition.
\begin{Lemma}
\label{thm:nil_cont_erg_decomp}
Every nilsystem $(Z,m,T)$ admits a continuous ergodic decomposition $\xi \colon Z \to \meas(Z)$.
\end{Lemma}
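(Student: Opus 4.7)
The plan is to use Leibman's theorem (\cref{thm:leibman}) to identify a universal ``shape'' for the orbit closures in $Z$ and then to build the continuous ergodic decomposition by assigning to each point the canonical invariant measure on its orbit closure. The underlying principle is that every orbit closure in a nilsystem is itself a uniquely ergodic nilsystem, so each carries an unambiguous invariant probability measure.

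First, I would reduce to the case where $Z$ is connected, since $T$ permutes the finitely many connected components of $Z$ and a continuous ergodic decomposition for $(Z,m,T)$ can be assembled from continuous ergodic decompositions for the return systems on the individual components. Assuming $Z$ is connected, I would apply \cref{thm:leibman} with $V = Z$ and $K$ a connected component of $\pi^{-1}(Z) = G$, obtaining a subnilmanifold $Y \subseteq Z$ and a full-measure subset $Z_0 \subseteq Z$ such that, for every $z \in Z_0$ and every lift $a \in K$ of $z$, one has $\overline{\{T^n z : n \in \Z\}} = aY$. The restriction of $T$ to any such $aY$ is minimal, hence uniquely ergodic. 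Writing $m_Y$ for the Haar measure on the nilmanifold $Y$, I would define, for every $z \in Z$,
\[
\int_Z f \intd \xi_z = \int_Y f(ay) \intd m_Y(y), \qquad f \in C(Z),
\]
using any lift $a \in K$ of $z$, and verify that for $z \in Z_0$ this agrees with the unique $T$-invariant probability measure on $aY$.

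The remaining work then splits into three checks. Independence of the lift: two lifts in $K$ over the same $z$ differ by some $\gamma \in \Gamma$, and both $\gamma Y = Y$ and $\gamma_* m_Y = m_Y$ will follow from their validity on the $m$-full set $Z_0$ combined with the closedness of these conditions in $\gamma$. Continuity of $z \mapsto \xi_z$ is a local property and follows by choosing continuous local lifts $z \mapsto a(z) \in K$ and invoking uniform continuity of $f \in C(Z)$ on the compact set $K \cdot Y$. The ergodic decomposition identity $\int f \intd \xi_z = \E(f \mid \inv)(z)$ for $m$-almost every $z$ follows from the pointwise ergodic theorem together with unique ergodicity on each orbit closure in $Z_0$. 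The main obstacle I anticipate is the independence-of-lift step, which requires tracking precisely which ratios $a^{-1}a' \in \Gamma$ arise between two lifts in $K$ and showing that they stabilize both $Y$ and its Haar measure $m_Y$, without assuming that $a$ sits over $Z_0$.
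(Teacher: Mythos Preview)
Your proposal follows essentially the same route as the paper: reduce to connected $Z$, apply \cref{thm:leibman} with $V=Z$ and $K$ the identity component of $G$, define $\xi_z$ as the Haar measure on $aY$ for a lift $a\in K$, and then verify well-definedness, continuity, and the ergodic decomposition property. Two small points where the paper is more careful: for the independence-of-lift step, the relevant ratios $\gamma$ lie in the discrete group $\Gamma^\circ = \Gamma\cap G^\circ$, so ``closedness in $\gamma$'' is not the right mechanism; instead one uses continuity of $g\mapsto \mu_{gY}$ in $g\in G^\circ$ together with density of $W\cap W\gamma^{-1}$ for each fixed $\gamma\in\Gamma^\circ$. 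For the non-connected case, the paper explicitly passes to $T^{r!}$ on each of the $r$ components, applies the connected case there, and then averages the resulting measures over a $T$-orbit of length $r!$ to obtain $T$-invariant (rather than merely $T^{r!}$-invariant) ergodic components.
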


\begin{proof}
Let us first assume that $Z$ is connected. Once we have established the claim for connected $Z$, we then derive the general case using basic properties of nilsystems.   

Write $Z = G/\Gamma$ where $G$ is a nilpotent Lie group and $\Gamma$ a discrete and co-compact subgroup of $G$, and let $\pi\colon G\to Z$ denote the natural factor map from $G$ onto $Z$.
Let $G^\circ$ denote the identity component of $G$ and note that $G^\circ\Gamma=G$ because $Z=G/\Gamma$ is connected. 
Applying \cref{thm:leibman} with $V=Z$ and $K=G^\circ$ gives a subnilmanifold $Y$ of $Z$ and a set $W \subset G^\circ$ of full measure in $G^\circ$ with the following property: 
for every $g \in W$ we have
\[
\overline{ \{ T^n \pi(g) : n \in \Z \} } = gY.
\]
In particular, we have that $g_1Y=g_2Y$ whenever $g_1,g_2\in W$ with $\pi(g_1)=\pi(g_2)$. 
Define the map $\tilde{\xi}\colon G^\circ\to \meas(Z)$ as $\tilde{\xi}(g)=\mu_{gY}$, where $\mu_{gY}$ denotes the Haar measure of the subnilmanifold $gY$. 
Note that $(gY,T)$ is a transitive nilsystem whenever $g\in W$, and hence $\mu_{gY}$ is ergodic.
Moreover $\pi(g)$ is generic for $\tilde\xi(g)$.
We claim that
\begin{equation}
\label{eqn_xi_inv_1}
\tilde{\xi}(g\gamma)=\tilde{\xi}(g),\qquad\text{ for all } g\in G^\circ, \text{ for all } \gamma\in\Gamma^\circ,
\end{equation}
where $\Gamma^\circ=G^\circ\cap \Gamma$.
By the conclusion of \cref{thm:leibman} it follows that $\tilde{\xi}(g\gamma)=\tilde{\xi}(g)$ for any $g\in W$ and any $\gamma\in \Gamma^\circ$ with $g\gamma\in W$, because 
$\pi(g)=\pi(g\gamma)$ and hence $gY=g\gamma Y$. In other words, we have that \eqref{eqn_xi_inv_1} holds for all $\gamma\in \Gamma^\circ$ and all $g\in W\cap W\gamma^{-1}$. Since $\tilde{\xi}\colon G^\circ\to \meas(Z)$ is a continuous map and $W\cap W\gamma^{-1}$ is a dense subset of $G^\circ$ (because $W\cap W\gamma^{-1}$ has full measure), we conclude that \eqref{eqn_xi_inv_1} holds for all $g\in G^\circ$ and all $\gamma\in \Gamma^\circ$ as desired. Finally, \eqref{eqn_xi_inv_1} implies that the map $\tilde{\xi}$ descents to a map $\xi$ on the quotient $G^\circ/\Gamma^\circ$. Since $G^\circ/\Gamma^\circ$ and $G/\Gamma$ are isomorphic, we can view $\xi$ as a map on $Z=G/\Gamma$ and we are done.

We are left to deal with the case when $Z$ is not connected. In this case, by compactness, the nilmanifold $Z$ has finitely many connected components, which we denote by $Z_1,\dots,Z_{r}$. Let $m_i$ denote the Haar measure on $Z_i$ and observe that $m=\frac{1}{r}(m_1+\cdots+m_{r})$.

Since $T$ maps connected components to connected components, there exists a permutation $\tau\colon\{1,\ldots,r\}\to\{1,\ldots,r\}$ such that $T Z_i=Z_{\tau(i)}$. Since any permutation on $r$ symbols satisfies $\tau^{r!}= \operatorname{Id}$, we have $T^{r!} Z_i=Z_i$ for every $i\in\{1,\ldots,r\}$.
In particular, the triple $(Z_i,m_i, T^{r!})$ is a connected nilsystem.
Hence, there exists a continuous map $\xi_i \colon Z_i \to \meas(Z_i)$ such that $z \mapsto \xi_{i,z}$ is an ergodic decomposition of $m_i$ with respect to the transformation $T^{r!}$, and $m_i$-almost every  $z\in Z_i$ is generic for $\xi_{i,z}$ for all  $i\in\{1,\ldots,r\}$.
Let $U_i$ denote the set of points in $Z_i$ that are generic for $\xi_{i,z}$ (under $T^{r!}$) and let $U=U_1\cup\cdots\cup U_r$; then let $Y\subset Z$ be the set of points whose entire orbit is contained in $U$ and note that $m(Y)=1$.
For $z\in Y\cap Z_i$ we have an explicit description of $\xi_{i,z}$ as
$$\xi_{i,z}=\lim_{N\to\infty}\frac1N\sum_{n=1}^N\delta_{T^{nr!}z}$$
and in particular $T(\xi_{i,z})=\xi_{\tau(i),Tz}$.

For every $z\in Z$, let $i_z$ be the unique number in $\{1,\ldots,r\}$ such that $z\in Z_{i_z}$.
Consider the map $z\mapsto \xi_z$ given by
\[
\xi_z=\frac{1}{r!}\sum_{j=1}^{r!} T^j(\xi_{i_z,z})
\]
and note that it is continuous.
Moreover, each $z\in Y$ is generic for $\xi_z$ under $T$, which immediately implies that $\xi_z$ is $T$-invariant, ergodic, and that $\int_Z\xi_z\d m=m$.
\end{proof}

It now follows that ergodic nilsystems satisfy the conclusion of \cref{thm:muk_cont_erg_decomp}.

\begin{lemma}
\label{lem:nil_muk_cont_erg_decomp}
Let $(Z,m,T)$ be an ergodic nilsystem.
For every $k \in \N$ there is a continuous ergodic decomposition $z\mapsto\xi^\k_z$ of the system $(\cube^\k(Z),m^\k,T^\k)$.
\end{lemma}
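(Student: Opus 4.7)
The plan is to derive this lemma as an immediate combination of two results already available in the excerpt. First, by \cref{thm:nil_cubes_is_nil} (which says that for an ergodic $k$-step nilsystem $(Z,m,T)$, the triple $(\cube^\k(Z),m^\k,T^\k)$ is itself a $k$-step nilsystem and in particular $m^\k(\cube^\k(Z))=1$), we know that the system whose continuous ergodic decomposition we need to produce is itself a nilsystem. Second, \cref{thm:nil_cont_erg_decomp} (the previous lemma) asserts that every nilsystem admits a continuous ergodic decomposition.

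Concretely, I would proceed as follows. Fix $k\in\N$ and apply \cref{thm:nil_cubes_is_nil} to the ergodic nilsystem $(Z,m,T)$; this provides a presentation of $(\cube^\k(Z), m^\k, T^\k)$ as a nilsystem, say $\cube^\k(Z) = H/\Lambda$ for an appropriate nilpotent Lie group $H$ and a cocompact discrete subgroup $\Lambda$, with $m^\k$ the Haar measure and $T^\k$ translation by some element of $H$. Then apply \cref{thm:nil_cont_erg_decomp} to this nilsystem to obtain the desired continuous map $z \mapsto \xi^\k_z$ from $\cube^\k(Z)$ to $\meas(\cube^\k(Z))$ that constitutes an ergodic decomposition of $m^\k$ with respect to $T^\k$.

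There is essentially no obstacle here, since the bulk of the technical work, handling both the connected case via Leibman's theorem and the finitely-many-components case via the averaging argument, was already carried out in the proof of \cref{thm:nil_cont_erg_decomp}. The only thing to verify is that the hypotheses of that lemma are genuinely met, which is exactly the content of \cref{thm:nil_cubes_is_nil}: the fact that $(\cube^\k(Z), m^\k, T^\k)$ is a $k$-step nilsystem (as opposed to merely a factor or subsystem thereof) is what permits the direct invocation. The lemma thus requires no new ideas beyond its two inputs.
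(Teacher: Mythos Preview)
Your proposal is correct and matches the paper's proof essentially verbatim: fix $k$, invoke \cref{thm:nil_cubes_is_nil} to see that $(\cube^\k(Z),m^\k,T^\k)$ is itself a nilsystem with $m^\k$ its Haar measure, and then apply \cref{thm:nil_cont_erg_decomp} directly.
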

\begin{proof}
Fix $k \in \N$.
By \cref{thm:nil_cubes_is_nil}, it follows that $(\cube^\k(Z),T^\k)$ is a nilsystem and that the measure $m^\k$ is equal to the Haar measure on $\cube^\k(Z)$.  Thus the conclusion follows from \cref{thm:nil_cont_erg_decomp}.
\end{proof}

Next, we need a version of \cref{lem:nil_muk_cont_erg_decomp} for topological inverse limits of nilsystems.

\begin{lemma}\label{lemma_InvLimErgDec}
Suppose for each $j\in\N$ we have a system $(X_j,\mu_j,T_j)$ admitting a continuous ergodic decomposition, and suppose there are continuous factor maps $\pi_j\colon X_{j+1}\to X_j$.
Then the inverse limit $(X,\mu,T)$ admits a continuous ergodic decomposition.
\end{lemma}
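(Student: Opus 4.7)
The plan is to verify the criterion of \cref{remark_continuous_ergodic_decomposition}: for every $f\in C(X)$, the conditional expectation $\E(f\mid \inv)$ has a continuous representative on $X$. Let $S\subset C(X)$ denote the set of $f$ with this property. Then $S$ is a linear subspace of $C(X)$, and I would establish the conclusion by showing (a) $S$ contains $C(X_j)\circ\psi_j$ for every $j$, and (b) $S$ is closed under uniform convergence in $C(X)$. Since $\bigcup_{j\in\N}C(X_j)\circ\psi_j$ is dense in $C(X)$ by the defining property of the inverse limit, this forces $S=C(X)$.

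For (a), let $x_j\mapsto\mu_{j,x_j}$ be the continuous ergodic decomposition of $\mu_j$, so that for each $g\in C(X_j)$ the function $\tilde g(x_j)=\int g\intd \mu_{j,x_j}$ is continuous on $X_j$ and equals $\E(g\mid \inv_{X_j})$ $\mu_j$-almost everywhere. Applying the pointwise ergodic theorem both in $X$ and in $X_j$ together with $\psi_j\circ T = T_j\circ \psi_j$ gives
\[
\E(g\circ\psi_j\mid \inv)(x) \;=\; \lim_{N\to\infty}\frac1N\sum_{n=1}^N g(T_j^n\psi_j(x)) \;=\; \E(g\mid \inv_{X_j})(\psi_j(x)) \;=\; \tilde g(\psi_j(x))
\]
for $\mu$-almost every $x\in X$. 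The right-hand side is continuous on $X$, so $g\circ\psi_j\in S$.

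For (b), suppose $f_n\to f$ uniformly in $C(X)$ with each $f_n\in S$ having continuous representative $h_n\in C(X)$. For every $n,m$, the continuous function $h_n-h_m$ agrees $\mu$-a.e.\ with $\E(f_n-f_m\mid \inv)$, and so $|h_n-h_m|\leq \|f_n-f_m\|_{C(X)}$ holds $\mu$-a.e.\ and therefore on $\supp(\mu)$ by continuity. Thus $(h_n|_{\supp(\mu)})$ is Cauchy in $C(\supp(\mu))$, and it converges uniformly to some $h\in C(\supp(\mu))$. By the Tietze extension theorem, extend $h$ to some $\tilde h\in C(X)$. Since conditional expectation is a contraction on $L^\infty$, we have $\E(f_n\mid \inv)\to \E(f\mid \inv)$ in $L^\infty(\mu)$, and combining this with $h_n\to \tilde h$ uniformly on $\supp(\mu)$ yields $\tilde h=\E(f\mid \inv)$ $\mu$-almost everywhere. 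Hence $f\in S$.

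The main subtlety in this argument is the mismatch between the kind of continuity that falls out of the $L^\infty$ Cauchy estimate (namely continuity on $\supp(\mu)$) and the continuity on all of $X$ required by the definition of a continuous ergodic decomposition; Tietze extension bridges this gap, while the compatibility of conditional expectations with factor maps via the pointwise ergodic theorem handles the transfer of continuity from each finite stage $X_j$ to the inverse limit.
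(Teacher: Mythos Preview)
Your proof is correct and follows essentially the same approach as the paper's: both verify the criterion of \cref{remark_continuous_ergodic_decomposition} by first checking it on functions pulled back from each $X_j$ via the identity $\E(g\circ\psi_j\mid\inv)=\E(g\mid\inv_{X_j})\circ\psi_j$, and then using density of $\bigcup_j C(X_j)\circ\psi_j$ in $C(X)$. The paper leaves the passage from a dense subalgebra to all of $C(X)$ implicit, whereas you spell out the uniform-limit closure argument (including the Tietze extension to go from $\supp(\mu)$ back to $X$); this is a welcome clarification rather than a different method.
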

\begin{proof}
Recall that we have continuous factor maps $\psi_j\colon X\to X_j$  satisfying $\pi_j\circ\psi_j=\psi_{j-1}$.
In view of \cref{remark_continuous_ergodic_decomposition}, for each $j\in\N$ and $f\in C(X_j)$, the function $\E(f\mid{\mathcal I}_j)$ is $\mu_j$-almost everywhere  continuous, and hence the function $\E(f\circ\psi_j\mid{\mathcal I})=\E(f\mid{\mathcal I}_j)\circ\psi_j$ is $\mu$-almost everywhere continuous.
Since
\[
\bigcup_{j\in\N}C(X_j)\circ\psi_j
\]
is dense in $C(X)$, using the converse direction of \cref{remark_continuous_ergodic_decomposition} we conclude that $(X,\mu,T)$ admits a continuous ergodic decomposition.
\end{proof}

\begin{Corollary}
\label{lem:pro_nil_muk_cont_erg_decomp}
Let $k\in\N$ and let $(Z,\mu,T)$ be an inverse limit of ergodic $k$-step nilsystems.
Then the system $(\cube^\k(Z),\mu^\k,T^\k)$ admits a continuous ergodic decomposition.
\end{Corollary}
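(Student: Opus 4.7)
My strategy is to realize $(\cube^\k(Z), \mu^\k, T^\k)$ as a topological inverse limit of systems of the form $(\cube^\k(Z_j), \mu_j^\k, T^\k)$ arising from a presentation of $Z$ as an inverse limit of ergodic $k$-step nilsystems $(Z_j, \mu_j, T)$, and then to combine \cref{lem:nil_muk_cont_erg_decomp} (which supplies a continuous ergodic decomposition at each layer) with \cref{lemma_InvLimErgDec} (which lifts such decompositions through an inverse limit). Fix continuous factor maps $\psi_j \colon Z \to Z_j$ and $\pi_j \colon Z_{j+1} \to Z_j$ satisfying $\pi_j \circ \psi_{j+1} = \psi_j$, with $\bigcup_{j \in \N} C(Z_j) \circ \psi_j$ dense in $C(Z)$.

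The coordinate-wise maps $\psi_j^\k \colon Z^\k \to Z_j^\k$ and $\pi_j^\k \colon Z_{j+1}^\k \to Z_j^\k$ are continuous and intertwine $T^\k$. They send diagonal cubes $(T^{\epsilon \cdot n} z : \epsilon \in \k)$ to diagonal cubes, so by continuity and closedness of $\cube^\k(Z_j)$ they restrict to continuous surjections $\cube^\k(Z) \to \cube^\k(Z_j)$ and $\cube^\k(Z_{j+1}) \to \cube^\k(Z_j)$ (surjectivity using density of diagonal cubes together with surjectivity of $\psi_j$ and $\pi_j$). To see that these maps are measure-preserving, I would prove, by induction on $k$, that every continuous factor $\phi \colon (X,\nu,T) \to (Y,\eta,T)$ between ergodic systems satisfies $\phi^\k \nu^\k = \eta^\k$. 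The base case $k=0$ is the definition; for the inductive step apply~\eqref{eqn_def_cubic_measures}, noting that $\phi^\k(\nu^\k)_x$ is $T^\k$-invariant and ergodic for $\nu^\k$-a.e.\ $x$, so uniqueness of ergodic decompositions combined with the inductive hypothesis $\phi^\k \nu^\k = \eta^\k$ forces $\phi^\k(\nu^\k)_x = (\eta^\k)_{\phi^\k(x)}$ for $\nu^\k$-a.e.\ $x$, giving
\[
\phi^\kup \nu^\kup = \int \phi^\k(\nu^\k)_x \times \phi^\k(\nu^\k)_x \intd \nu^\k(x) = \int (\eta^\k)_y \times (\eta^\k)_y \intd \eta^\k(y) = \eta^\kup.
\]

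It remains to verify the density condition required by \cref{lemma_InvLimErgDec}, namely that $\bigcup_{j \in \N} C(\cube^\k(Z_j)) \circ \psi_j^\k$ is dense in $C(\cube^\k(Z))$. By Stone–Weierstrass it suffices to show this self-adjoint unital subalgebra separates points: given $x \ne y$ in $\cube^\k(Z)$, pick a coordinate $\epsilon \in \k$ with $x_\epsilon \ne y_\epsilon$, and using density of $\bigcup_j C(Z_j) \circ \psi_j$ in $C(Z)$ choose $j$ and $f \in C(Z_j)$ with $f(\psi_j(x_\epsilon)) \ne f(\psi_j(y_\epsilon))$; then the lift of $f$ to the $\epsilon$-th coordinate separates $x$ and $y$. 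Applying \cref{lemma_InvLimErgDec} to the inverse system with transition maps $\pi_j^\k$ completes the proof. The main obstacle is the measure-theoretic compatibility $\psi_j^\k \mu^\k = \mu_j^\k$ established in the second paragraph; everything else is purely topological and follows routinely from the inverse limit hypothesis.
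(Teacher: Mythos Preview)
Your proposal is correct and follows exactly the paper's approach: the paper simply asserts that ``a routine check shows that $(\cube^\k(Z),\mu^\k,T^\k)$ is the inverse limit of the sequence $(\cube^\k(Z_j),m_j^\k,T_j^\k)$'' and then invokes \cref{lem:nil_muk_cont_erg_decomp} and \cref{lemma_InvLimErgDec}, whereas you have actually carried out that routine check. One small remark: your justification that $\phi^\k(\nu^\k)_x = (\eta^\k)_{\phi^\k(x)}$ via ``uniqueness of ergodic decompositions'' is slightly loose since the two maps have different domains; the cleanest way to close this is via the pointwise ergodic theorem, which gives $\E_{\nu^\k}(F\circ\phi^\k\mid\inv)(x)=\E_{\eta^\k}(F\mid\inv)(\phi^\k(x))$ directly.
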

\begin{proof}
Suppose that $(Z,\mu,T)$ is the inverse limit of a sequence of ergodic, $k$-step nilsystems $(Z_j,m_j,T_j)$.
Then a routine check shows that  $(\cube^\k(Z),\mu^\k,T^\k)$ is the inverse limit of the sequence $(\cube^\k(Z_j),m_j^\k,T_j^\k)$.
The conclusion now follows by combining Lemmas \ref{lem:nil_muk_cont_erg_decomp} and \ref{lemma_InvLimErgDec}.
\end{proof}

\begin{lemma}\label{lemma_ErgodicDecompositionMarginal}
Let $(X,\mu,T)$ and $(Y,\nu,S)$ be systems and suppose $\chi\colon X\to Y$ is a  factor map.
If $\nu$ is ergodic and $x\mapsto\mu_x$ is an ergodic decomposition of $\mu$, then $\chi(\mu_x)$ is equal to $\nu$ for $\mu$-almost every $x\in X$.
\end{lemma}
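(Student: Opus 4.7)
My plan is to push the ergodic decomposition of $\mu$ forward through $\chi$, note that the resulting measures average to $\nu$, and then use the ergodicity of $\nu$ together with the pointwise ergodic theorem to force the equality $\chi(\mu_x)=\nu$ for $\mu$-almost every $x$.

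First, I would check that for $\mu$-almost every $x\in X$ the measure $\chi(\mu_x)$ is $S$-invariant and $S$-ergodic. Invariance follows from the intertwining relation $S\circ\chi=\chi\circ T$ (valid $\mu$-a.e., hence $\mu_x$-a.e.\ for $\mu$-a.e.\ $x$). For ergodicity, note that if $B\subset Y$ is $S$-invariant Borel, then $\chi^{-1}(B)$ is $T$-invariant, so by the ergodicity of $\mu_x$ (provided by \cref{thm:erg_decomp}) we have $\mu_x(\chi^{-1}B)\in\{0,1\}$, i.e.\ $\chi(\mu_x)(B)\in\{0,1\}$.

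Next, combining the ergodic decomposition identity $\mu=\int \mu_x\intd\mu(x)$ with the factor condition $\chi\mu=\nu$ yields
\[
\nu \;=\; \int_X \chi(\mu_x)\intd\mu(x),
\]
which realizes $\nu$ as an average of ergodic $S$-invariant probability measures.

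To conclude that $\chi(\mu_x)=\nu$ for $\mu$-almost every $x$, I would fix a countable family $\{g_i\}\subset C(Y)$ that is dense in $C(Y)$ (and hence separates measures on $Y$). By the pointwise ergodic theorem applied to the ergodic system $(Y,\nu,S)$, there is a Borel set $G\subset Y$ with $\nu(G)=1$ such that
\[
\lim_{N\to\infty}\dfrac{1}{N}\sum_{n=1}^N g_i(S^n y) = \int_Y g_i\intd\nu
\]
for every $y\in G$ and every $i$. Since $\mu(\chi^{-1}G)=\nu(G)=1$, the ergodic decomposition formula forces $\mu_x(\chi^{-1}G)=1$, and hence $\chi(\mu_x)(G)=1$, for $\mu$-a.e.\ $x$. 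On the other hand, applying the pointwise ergodic theorem to the ergodic measure $\chi(\mu_x)$, the set of $y\in Y$ generic for $\chi(\mu_x)$ has full $\chi(\mu_x)$-measure. Picking any $y$ in the intersection (which is nonempty), the Birkhoff averages at $y$ simultaneously compute $\int g_i\intd\chi(\mu_x)$ and $\int g_i\intd\nu$ for every $i$, so these integrals agree and $\chi(\mu_x)=\nu$ follows.

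I do not anticipate a serious obstacle here: every step is a standard consequence of the ergodic decomposition theorem and the pointwise ergodic theorem, and the argument is essentially the uniqueness of ergodic decomposition translated through $\chi$.
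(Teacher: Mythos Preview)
Your argument is correct, and the opening moves---showing $\chi(\mu_x)$ is $S$-invariant and that $\int_X \chi(\mu_x)\intd\mu(x)=\nu$---match the paper's proof exactly. The paper then finishes in one line: since $\nu$ is ergodic, it is an extreme point of the simplex of $S$-invariant measures, so any representation of $\nu$ as an integral of invariant measures must be trivial, forcing $\chi(\mu_x)=\nu$ almost everywhere. You instead take a slightly longer detour, first establishing that each $\chi(\mu_x)$ is itself ergodic and then using a generic-point argument via the pointwise ergodic theorem. This works, but the ergodicity of $\chi(\mu_x)$ is not needed---invariance alone suffices once you invoke the extreme-point characterization of ergodic measures.
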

\begin{proof}
As $\chi$ is a factor map and almost-every $\mu_x$ is $T$-invariant, almost every $\chi(\mu_x)$ is $S$-invariant.
The fact that
\[
\int_X\mu_x\d\mu(x)=\mu
\]
implies
\[
\int_X\chi(\mu_x)\d\mu(x)=\nu
\]
is an ergodic decomposition of $\mu$. Ergodicity of $\nu$ then implies that $\chi(\mu_x)=\nu$ for $\mu$-almost every $x\in X$.
\end{proof}

We have now assembled the tools to prove \cref{thm:muk_cont_erg_decomp}. 

\begin{proof}[Proof of \cref{thm:muk_cont_erg_decomp}]
Fix a system $(X,\mu, T)$ with topological pronilfactors and fix $k \in \N$.
To keep notation simple, and just throughout this proof, we denote by $(Z,m,T)$ the $k$-step pronilfactor of $(X,\mu, T)$, and let $\pi \colon X \to Z$ be the continuous factor map.
We also write $\eta=\pi^\k=\pi_k^\k$ for the map $\eta \colon X^\k \to Z^\k$ that applies $\pi$ coordinatewise.

We begin by noting that if a continuous ergodic decompositions of the system $(\nilc^\k(X),\mu^\k,T^\k)$ satisfying \eqref{lambda:equality} exists, then by \cref{lem:supp_to_cont_magic} applied to the factor map $\eta$, it must be unique.

Apply \cref{lem:pro_nil_muk_cont_erg_decomp} to get a continuous ergodic decomposition $z \mapsto \xi^\k_z$ from $\cube^\k(Z)$ to $\meas(\cube^\k(Z))$.
For each $x\in\nilc^\k(X)$ define a measure $\lambda^\k_x$ on $X^\k$ by
\begin{equation}
\label{eqn:lambda_definition}
\int_{X^\k} \bigotimes_{\epsilon\in\k}f_\epsilon\intd\lambda^\k_x 
=
\int_{Z^\k} \bigotimes_{\epsilon\in\k}\E(f_\epsilon\mid Z) \intd \xi^\k_{\eta(x)}
\end{equation}
whenever $f=(f_\epsilon)_{\epsilon\in\k} \in \cont(X)^\k$.
Since the right hand side of \eqref{eqn:lambda_definition} depends on $x$ only through $\eta(x)$, it is clear that~\eqref{lambda:equality} holds. 

We are left with showing that $x \mapsto \lambda^\k_x$ is a continuous ergodic decomposition.

Write $\inv^\k$ for the $\sigma$-algebra of Borel measurable and $T^\k$ invariant subsets of $X^\k$.

\begin{Claim}\label{claim_1}
For any $f \in \cont(X)^\k$ we have
\begin{equation}
\label{eqn:ergodic_to_nilfactor}
\E \Bigl( \bigotimes_{\epsilon \in \k} f_\epsilon \mid \inv^\k \Bigr)(x)
=
\E \bigg( \bigotimes_{\epsilon \in \k} \E(f_\epsilon\mid Z) \circ \pi \mid \inv^\k \bigg)(x)
\end{equation}
for $\mu^\k$-almost every $x\in X^\k$.
\end{Claim}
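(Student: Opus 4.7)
My plan for proving this claim is to reduce it, by a telescoping argument, to the following: if $g_\epsilon \in \lp^\infty(X,\mu)$ for each $\epsilon\in\k$ and $\E(g_{\epsilon_0}\mid Z)=0$ for some $\epsilon_0\in\k$, then $\E(\bigotimes_\epsilon g_\epsilon\mid\inv^\k)=0$ in $\lp^2(\mu^\k)$. The reduction proceeds by writing each $f_\epsilon$ as $\E(f_\epsilon\mid Z)\circ\pi + \bigl(f_\epsilon-\E(f_\epsilon\mid Z)\circ\pi\bigr)$, expanding the tensor product $\bigotimes_\epsilon f_\epsilon$, and noting that the term with every factor projected is precisely the right-hand side of~\eqref{eqn:ergodic_to_nilfactor}, while every other term contains at least one factor orthogonal to $\lp^\infty(Z)\circ\pi$.

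Once in this reduced setting, I would apply the mean ergodic theorem to the system $(X^\k,\mu^\k,T^\k)$ to express $\E(\bigotimes_\epsilon g_\epsilon\mid\inv^\k)$ as the $\lp^2(\mu^\k)$-limit of the \Cesaro{} averages $\frac{1}{N}\sum_{n=1}^N \bigotimes_\epsilon T^n g_\epsilon$. Expanding the squared $\lp^2$-norm and using the $T^\k$-invariance of $\mu^\k$ to change variables, the problem reduces to showing that the \Cesaro{} average over $h$ of
\[
\Big|\int_{X^\k} \bigotimes_\epsilon (g_\epsilon \cdot T^h \bar g_\epsilon)\intd\mu^\k\Big|
\]
tends to $0$.

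Here the Cauchy--Schwarz--Gowers inequality for $\mu^\k$ (see~\cite[Chapter~8]{HK-book}) provides a bound of the form $C\cdot\ghk g_{\epsilon_0}\cdot T^h\bar g_{\epsilon_0}\ghk_k$ on each integrand, for a constant $C$ depending only on the $\lp^\infty$-norms of the $g_\epsilon$. Combining this with the standard recursive identity
\[
\ghk g_{\epsilon_0}\ghk_{k+1}^{2^{k+1}} = \lim_{N\to\infty}\frac{1}{N}\sum_{h=1}^N \ghk g_{\epsilon_0}\cdot T^h\bar g_{\epsilon_0}\ghk_k^{2^k}
\]
together with \cref{thm:pronilfactors}---which identifies $\ghk\cdot\ghk_{k+1}=0$ with orthogonality to $\lp^\infty(Z_k)\circ\pi$, hence with $\E(\cdot\mid Z)=0$---a single application of Jensen's inequality then shows that the \Cesaro{} averages of $\ghk g_{\epsilon_0}\cdot T^h\bar g_{\epsilon_0}\ghk_k$ themselves tend to $0$, completing the reduction.

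The main obstacle in carrying out this plan is properly invoking the Host--Kra machinery: the Cauchy--Schwarz--Gowers inequality and the recursive seminorm identity are the two deep inputs, and together they encode the fact that $Z_k$ is a characteristic factor for the cubic measure $\mu^\k$. Once these tools are in hand, the rest of the argument amounts to a mechanical assembly of telescoping, the mean ergodic theorem, and Jensen's inequality.
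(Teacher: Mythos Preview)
Your proposal is correct and shares with the paper both the telescoping reduction and the ultimate reliance on the Cauchy--Schwarz--Gowers inequality together with the characterization $\ghk g\ghk_{k+1}=0\iff\E(g\mid Z)=0$ from \cref{thm:pronilfactors}. The difference lies in how the reduced case is dispatched. The paper observes directly that
\[
\Bigl\|\E\Bigl(\bigotimes_{\epsilon\in\k} g_\epsilon\mid\inv^\k\Bigr)\Bigr\|_{\mu^\k}^2
=\int_{X^\kup}\bigotimes_{\epsilon\in\k} g_\epsilon\otimes\bigotimes_{\epsilon\in\k}\overline{g_\epsilon}\intd\mu^\kup,
\]
which is immediate from the definition \eqref{eqn_def_cubic_measures} of $\mu^\kup$, and then applies the Cauchy--Schwarz--Gowers inequality at level $k+1$ to bound this by $\prod_\epsilon\ghk g_\epsilon\ghk_{k+1}^2$; a single vanishing factor finishes. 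Your route through the mean ergodic theorem, the expansion of the squared norm as an average over shifts $h$, the Cauchy--Schwarz--Gowers inequality at level $k$, the recursive seminorm identity, and Jensen is essentially a hands-on reconstruction of that same identity: the recursive formula for $\ghk\cdot\ghk_{k+1}$ encodes precisely the passage from $\mu^\k$ to $\mu^\kup$. So the paper's argument is a one-line shortcut of yours, buying brevity; yours has the minor advantage of never naming $\mu^\kup$ explicitly, but at the cost of several extra steps.
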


To see this, note that the difference between the two sides of \eqref{eqn:ergodic_to_nilfactor} can be written as a sum of $2^k$ terms of the form $\E(\bigotimes_{\epsilon\in\k} g_\epsilon\mid{\mathcal I}^\k)$ where at least one of the $g_\epsilon$ satisfies $\E(g_\epsilon\mid Z)=0$.

Applying the Cauchy-Schwarz-Gowers inequality \cite[Theorem 13, Chapter 8]{HK-book} gives
\begin{align*}
\left\| \E \Bigl( \bigotimes_{\epsilon \in \k} g_\epsilon \mid \inv^\k \Bigr) \right\|^2_{\mu^\k}
& =
\left|
\int_{X^\k}
\E \Bigl( \bigotimes_{\epsilon \in \k} g_\epsilon \mid \inv^\k \Bigr)
\cdot
\E \Bigl( \bigotimes_{\epsilon \in \k} \overline{g_\epsilon} \mid \inv^\k \Bigr)
\intd \mu^\k
\right|
\\
& =
\left|
\int_{X^\kup} \bigotimes_{\epsilon \in \k} g_\epsilon \otimes \bigotimes_{\epsilon \in \k} \overline{g_\epsilon} \intd \mu^\kup
\right|
\le
\prod_{\epsilon \in \k} \ghk g_\epsilon \ghk_{k+1}^2
\end{align*}
and note that $\ghk g \ghk_{k+1} = 0$ if and only if $\condex(g\mid Z) = 0$ by \cref{thm:pronilfactors} because $(Z,m,T)$ is the $k$-step pronilfactor of $(X,\mu,T)$.
This completes the proof of \cref{claim_1}. 

\begin{Claim}\label{claim_3}
Letting  $\inv^\k_Z$ denote the $\sigma$-algebra of $T^\k$-invariant subsets of $\cube^\k(Z)$, we have that 
$$\E \bigg( \bigotimes_{\epsilon \in \k} \E(f_\epsilon\mid Z) \circ \pi \mid \inv^\k \bigg)
=
\E \bigg( \bigotimes_{\epsilon \in \k} \E(f_\epsilon\mid Z)  \mid \inv^\k_Z \bigg)\circ\eta. 
$$
\end{Claim}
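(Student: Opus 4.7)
The plan is to recognise both sides of the claim as being obtained from a single function on $Z^\k$ that factors through $\eta=\pi^\k$, and then to read off the identity from the pointwise ergodic theorem applied in each system. Concretely, set
\[
F \;=\; \bigotimes_{\epsilon \in \k} \E(f_\epsilon \mid Z),
\]
which is a bounded Borel function on $Z^\k$. Then
\[
\bigotimes_{\epsilon \in \k} \E(f_\epsilon \mid Z)\circ \pi \;=\; F \circ \eta
\]
on $X^\k$, so the claim reduces to the identity $\E(F\circ\eta\mid\inv^\k) = \E(F\mid\inv_Z^\k)\circ\eta$ holding $\mu^\k$-a.e., which is just the statement that conditional expectation on the invariant $\sigma$-algebra commutes with factor maps.

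To prove this identity, I would invoke the pointwise ergodic theorem twice. The map $\eta$ is a continuous factor map from $(X^\k,\mu^\k,T^\k)$ onto $(\cube^\k(Z),m^\k,T^\k)$: continuity and equivariance with $T^\k$ are immediate from those of $\pi$, while $\eta\mu^\k=m^\k$ and $m^\k(\cube^\k(Z))=1$ follow from \cref{lem_nilcprops}. Applying the pointwise ergodic theorem to $F\circ\eta$ in $(X^\k,\mu^\k,T^\k)$ and using $\eta\circ T^\k = T^\k\circ\eta$ gives
\[
\E(F\circ\eta\mid\inv^\k)(x) \;=\; \lim_{N\to\infty} \frac{1}{N}\sum_{n=1}^N F\bigl((T^\k)^n\eta(x)\bigr)
\]
for $\mu^\k$-a.e.\ $x\in X^\k$. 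Applying the pointwise ergodic theorem to $F$ in $(\cube^\k(Z),m^\k,T^\k)$ gives
\[
\E(F\mid\inv_Z^\k)(z) \;=\; \lim_{N\to\infty} \frac{1}{N}\sum_{n=1}^N F\bigl((T^\k)^n z\bigr)
\]
for $m^\k$-a.e.\ $z\in \cube^\k(Z)$; because $\eta\mu^\k=m^\k$, the exceptional set pulls back to a $\mu^\k$-null set, so the convergence holds with $z=\eta(x)$ for $\mu^\k$-a.e.\ $x$. Comparing the two limits yields the claim.

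This step is routine and I do not expect any serious obstacle: the whole content is just that $\eta$ is a factor map intertwining $T^\k$ and that both systems satisfy the pointwise ergodic theorem. The only point to be careful about is keeping track of which measures live on which space and invoking $\eta\mu^\k=m^\k$ when transferring an $m^\k$-a.e.\ statement on $\cube^\k(Z)$ to a $\mu^\k$-a.e.\ statement on $X^\k$.
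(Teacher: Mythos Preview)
Your proposal is correct and follows essentially the same approach as the paper: both arguments reduce the claim to the general fact that $\E(g\circ\eta\mid\inv^\k)=\E(g\mid\inv_Z^\k)\circ\eta$ for $g\in L^2(\cube^\k(Z),m^\k)$, and both prove this via the pointwise ergodic theorem together with the intertwining $\eta\circ T^\k=T^\k\circ\eta$. The paper presents the argument as a single chain of equalities rather than two separate invocations, but the content is identical; your explicit remark about transferring the $m^\k$-null exceptional set through $\eta\mu^\k=m^\k$ is a nice clarification the paper leaves implicit.
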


To check this, note that for any $g\in L^2(\cube^\k(Z),m^\k)$, the pointwise ergodic theorem and the fact that $\eta$ is a factor map imply that
$$\E(g\mid\inv^\k_Z)\circ\eta
=
\lim_{N\to\infty}\frac1N\sum_{n=1}^N g\circ T^n\circ\eta
=\lim_{N\to\infty}\frac1N\sum_{n=1}^N g\circ\eta\circ T^n
=
\E(g\circ\eta\mid\inv^\k).
$$
If
\[
g=\bigotimes_{\epsilon \in \k} \E(f_\epsilon\mid Z)
\]
then
\[
g\circ\eta=\bigotimes_{\epsilon \in \k} \E(f_\epsilon\mid Z) \circ \pi, 
\]
completing the proof of \cref{claim_3}. 

Since $z\mapsto\xi^\k_z$ is an ergodic decomposition of $m^\k$, combining \cref{claim_3} with \cref{claim_1}, it follows that the definition in \eqref{eqn:lambda_definition} of $\lambda^\k_x$ defines an ergodic decomposition of $\mu^\k$. 

We are left with showing that the mapping $x\mapsto\lambda^\k_x$ is continuous, and to do so it 
suffices to show that for any $f\in C(X)^\k$ the function
\[
x\mapsto\int_{X^\k}\bigotimes_{\epsilon \in \k} f_\epsilon\d\lambda_x^\k
\]
is continuous.

\begin{Claim}
\label{claim2}
For  $p=2^k$, the function $\theta\colon \lp^p(Z,m)^\k\to\lp^\infty(Z,m)$ given by \begin{equation}\label{eq_thetadefined}
    \theta(g)(z)=\int_{Z^\k}\bigotimes_{\epsilon \in \k} g_\epsilon\d\xi_z^\k
\end{equation} is continuous.
\end{Claim}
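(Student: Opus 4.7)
The plan is to deduce the claim by showing that $\theta$ is bounded as a multilinear map from $L^{2^k}(Z,m)^\k$ to $L^\infty$, since a bounded multilinear operator between normed spaces is automatically continuous. Concretely, I will show
\[
|\theta(g)(z)| \le \prod_{\epsilon \in \k} \|g_\epsilon\|_{L^{2^k}(Z,m)}
\]
for every $z \in \cube^\k(Z)$, which is uniform in $z$ and hence gives an $L^\infty$ bound.

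The key step is a \emph{marginal identification}: for every $z \in \cube^\k(Z)$ and every $\epsilon \in \k$, the pushforward $(p_\epsilon)_*\xi_z^\k$ of $\xi_z^\k$ under the coordinate projection $p_\epsilon \colon \cube^\k(Z) \to Z$ is equal to $m$. Indeed, since $z \mapsto \xi_z^\k$ is a continuous ergodic decomposition, each $\xi_z^\k$ is a $T^\k$-invariant Borel probability measure on $\cube^\k(Z)$, and therefore $(p_\epsilon)_*\xi_z^\k$ is a $T$-invariant Borel probability measure on $Z$. Because $(Z,m,T)$ is an inverse limit of ergodic $k$-step nilsystems and each ergodic nilsystem is uniquely ergodic, the inverse limit $(Z,T)$ is itself uniquely ergodic, with unique invariant measure $m$. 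This forces $(p_\epsilon)_*\xi_z^\k = m$ pointwise in $z$.

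With this in hand, applying the generalized H\"older inequality to $\theta(g)(z) = \int \prod_{\epsilon} (g_\epsilon \circ p_\epsilon)\, d\xi_z^\k$ with $2^k$ factors of exponent $2^k$ yields
\[
|\theta(g)(z)| \le \prod_{\epsilon \in \k} \left( \int_{\cube^\k(Z)} |g_\epsilon \circ p_\epsilon|^{2^k} \, d\xi_z^\k \right)^{1/2^k} = \prod_{\epsilon \in \k} \|g_\epsilon\|_{L^{2^k}(Z,m)},
\]
where the equality uses the marginal identification. Taking the essential supremum over $z$ establishes the desired boundedness and hence continuity of $\theta$.

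I do not anticipate any serious obstacle here: the argument reduces to unique ergodicity of the inverse limit $(Z,T)$ (classical for ergodic nilsystems and preserved by inverse limits) together with the generalized H\"older inequality. The mildly delicate point is the need for the marginal identification to hold for \emph{every} $z \in \cube^\k(Z)$, rather than merely $m^\k$-almost every $z$; this is precisely what unique ergodicity buys us, and without it one would only obtain a bound in $L^\infty(\cube^\k(Z),m^\k)$ through a limiting argument.
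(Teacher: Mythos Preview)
Your argument is correct and follows essentially the same route as the paper: both establish that every one-dimensional marginal of $\xi_z^\k$ equals $m$ for \emph{every} $z\in\cube^\k(Z)$, and then apply the generalised H\"older inequality (the paper writes out the resulting telescoping estimate explicitly, which is exactly the standard proof that a bounded multilinear map is continuous).

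One small point deserves comment. You assert that each $\xi_z^\k$ is $T^\k$-invariant because $z\mapsto\xi_z^\k$ is a continuous ergodic decomposition. The definition only gives $T^\k$-invariance for $m^\k$-almost every $z$; to upgrade this to every $z\in\cube^\k(Z)$ one uses continuity together with $\supp(m^\k)=\cube^\k(Z)$ (which holds because $m_j^\k$ is Haar measure on the nilmanifold $\cube^\k(Z_j)$ for each $j$, hence fully supported, and this passes to the inverse limit). The paper handles the analogous ``almost-every to everywhere'' step by invoking \cref{lemma_ErgodicDecompositionMarginal} and \cref{lem:supp_to_cont_magic}; your use of unique ergodicity of $(Z,T)$ is an equally valid shortcut once the everywhere $T^\k$-invariance is in hand.
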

To see this, since $z\mapsto\xi_z^\k$ is an ergodic decomposition of $m^\k$, \cref{lemma_ErgodicDecompositionMarginal} (applied to the projections $Z^\k\to Z$) together with \cref{lem:supp_to_cont_magic} implies that every projection of $\xi_z^\k$ equals $m$ for every $z\in\cube^\k(Z)$.

Now given $g,\tilde g\in\lp^p(Z,m)^\k$, if there exists $\omega\in\k$ such that $g_\epsilon=\tilde g_\epsilon$ for all $\epsilon\in\k\setminus\{\omega\}$, then by Holder's inequality
\[
\|\theta(g)-\theta(\tilde g)\|_{L^\infty(m)}\leq \prod_{\epsilon\neq\omega}\|g_\epsilon\|_{L^p(m)}\cdot\|g_\omega-\tilde g_\omega\|_{L^p(m)}.
\] 
This implies that $\theta$ is continuous, 
completing the proof of \cref{claim2}. 

Using the definition \eqref{eqn:lambda_definition}, the fact that $\E(f_\epsilon\mid Z)\in\lp^\infty(Z,m)\subset\lp^p(Z,m)$ and that $\eta\colon \nilc^\k(X)\to\cube^\k(Z)$ is continuous, it suffices to show that for any $g\in \lp^p(Z,m)^\k$ the function $\theta(g)$ defined by \eqref{eq_thetadefined} 
is continuous.

Note that for $g\in C(Z)^\k$ we have $\theta(g)\in C(Z)$, since the map $z\mapsto\xi_z^\k$ is continuous.
Therefore, continuity of $\theta$, together with the fact that $C(Z)$ is a closed subset of $\lp^\infty(Z,m)$ and the fact that $C(Z)^\k$ is dense $\lp^p(Z,m)^\k$, together imply that $\theta(g)\in C(Z)$ for any $g\in\lp^p(Z,m)^\k$.
\end{proof}

\begin{Corollary}
\label{cor:lambda_invariance}
Let $(X,\mu, T)$ be an ergodic system with topological pronilfactors, let $k\in\N$ and let $x\mapsto\lambda^\k_x$ be the continuous ergodic decomposition of $(\nilc^\k(X),\mu^\k,T^\k)$ provided by \cref{thm:muk_cont_erg_decomp}.
Then
\[
\lambda_{T^\k x}^\k=\lambda_{x}^\k=T^\k(\lambda_x^\k)
\]
for every $x\in\nilc^\k(X)$.
\end{Corollary}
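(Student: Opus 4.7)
The plan is to establish both equalities by showing that the loci where they hold are closed, of full $\mu^\k$-measure, and saturated under the factor map $\pi_k^\k$, and then to invoke \cref{lem:supp_to_cont_magic} exactly as in the uniqueness argument within the proof of \cref{thm:muk_cont_erg_decomp}. Concretely, I would define
\[
A=\{x\in\nilc^\k(X):T^\k\lambda^\k_x=\lambda^\k_x\}\quad\text{and}\quad B=\{x\in\nilc^\k(X):\lambda^\k_{T^\k x}=\lambda^\k_x\}
\]
and aim to prove $A=B=\nilc^\k(X)$.

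First I would verify that $A$ and $B$ are closed. Since $x\mapsto \lambda_x^\k$ is continuous by \cref{thm:muk_cont_erg_decomp}, since the pushforward map $\nu\mapsto T^\k\nu$ is continuous on $\meas(X^\k)$, and since $T^\k$ itself is a homeomorphism, both conditions cut out closed subsets of $\nilc^\k(X)$. Next, both sets have full $\mu^\k$-measure: by \cref{thm:erg_decomp} almost every $\lambda_x^\k$ is $T^\k$-invariant, giving $\mu^\k(A)=1$; and since an ergodic decomposition is constant along orbits of the invariant $\sigma$-algebra, $\lambda_{T^\k x}^\k=\lambda_x^\k$ for $\mu^\k$-almost every $x$, giving $\mu^\k(B)=1$.

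The crucial observation is that $A$ and $B$ are $\pi_k^\k$-saturated, by virtue of property \eqref{lambda:equality}. If $x\in A$ and $\pi_k^\k(y)=\pi_k^\k(x)$, then $\lambda_y^\k=\lambda_x^\k$, whence $T^\k\lambda_y^\k=T^\k\lambda_x^\k=\lambda_x^\k=\lambda_y^\k$, so $y\in A$. For $B$, note that $\pi_k$ is a factor map, so $\pi_k^\k\circ T^\k=T^\k\circ\pi_k^\k$; hence $\pi_k^\k(x)=\pi_k^\k(y)$ forces $\pi_k^\k(T^\k x)=\pi_k^\k(T^\k y)$ and therefore $\lambda_{T^\k y}^\k=\lambda_{T^\k x}^\k=\lambda_x^\k=\lambda_y^\k$, so $y\in B$.

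Applying \cref{lem:supp_to_cont_magic} to the continuous factor map $\pi_k^\k\colon (\nilc^\k(X),\mu^\k,T^\k)\to(\cube^\k(Z_k),m_k^\k,T^\k)$, whose codomain has full support (as implicitly used in the uniqueness step in the proof of \cref{thm:muk_cont_erg_decomp}), we conclude $A=B=\nilc^\k(X)$, which is precisely the desired chain of equalities. I do not anticipate a serious obstacle; the only subtle point is that the argument is executed on $\nilc^\k(X)$ rather than on all of $X^\k$, which is exactly the set on which $x\mapsto \lambda_x^\k$ is defined and which is $T^\k$-invariant, ensuring that the sets $A$ and $B$ and the relevant compositions are all well-defined.
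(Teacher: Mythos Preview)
Your proposal is correct and follows essentially the same approach as the paper. The paper phrases it as showing that $x\mapsto\lambda_{T^\k x}^\k$ and $x\mapsto T^\k(\lambda_x^\k)$ are themselves continuous ergodic decompositions satisfying \eqref{lambda:equality} and then invokes the uniqueness clause of \cref{thm:muk_cont_erg_decomp}, whereas you unpack that uniqueness directly into the closed/full-measure/$\pi_k^\k$-saturated argument and apply \cref{lem:supp_to_cont_magic}; the content is the same.
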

\begin{proof}
We claim that both assignments $x\mapsto\lambda_{T^\k x}^\k$ and $x\mapsto T^\k(\lambda_x^\k)$ are continuous ergodic decompositions of $(\nilc^\k(X),\mu^\k,T^\k)$ satisfying~\eqref{lambda:equality}. 
In view of the almost everywhere uniqueness of ergodic decompositions we can then invoke \cref{lem:supp_to_cont_magic} to conclude that $\lambda_{T^\k x}^\k=\lambda_{x}^\k=T^\k(\lambda_x^\k)$ for any $x\in\nilc^\k(X)$.

To prove the claim, note that for any continuous function $f\in C(\nilc^\k(X))$ we have
$$\int_{\nilc^\k(X)}f\d\lambda^\k_{T^\k x}=\E(f\mid{\mathcal I}^\k)(T^\k x)=\E(f\mid{\mathcal I}^\k)(x)$$
for $\mu^\k$-almost every $x\in X^\k$, where the last equality follows from the fact that ${\mathcal I}^\k$ is the $\sigma$-algebra of $T^\k$-invariant sets.
This implies that $x\mapsto\lambda_{T^\k x}^\k$ is an ergodic decomposition of $\mu^\k$.
Similarly,
$$\int_{\nilc^\k(X)}f\d T^\k(\lambda_x^\k)=\E(T^\k f\mid{\mathcal I}^\k)(x)=\E(f\mid{\mathcal I}^\k)(x),$$
showing that $x\mapsto T^\k(\lambda_x^\k)$ is also an ergodic decomposition. Continuity follows from the fact that $x\mapsto\lambda_{x}^\k$ is continuous and~\eqref{lambda:equality} follows from the fact that $\pi\colon X\to Z$ is a factor map.
This finishes the proof.
\end{proof}

\begin{Corollary}\label{lambda:marginals}
For every $x\in\nilc^\k(X)$ and every coordinate projection $X^\k\to X$, the push forward of $\lambda^\k_x$ is the measure $\mu$.
\end{Corollary}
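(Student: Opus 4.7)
Fix $\epsilon \in \k$ and write $p_\epsilon \colon X^\k \to X$ for the projection onto the $\epsilon$-th coordinate. My plan is to prove the stronger statement that the set
\[
F_\epsilon = \{ x \in \nilc^\k(X) : p_\epsilon \lambda^\k_x = \mu \}
\]
coincides with all of $\nilc^\k(X)$, by verifying the hypotheses of \cref{lem:supp_to_cont_magic} applied to the continuous surjection $\eta = \pi_k^\k \colon \nilc^\k(X) \to \cube^\k(Z_k)$.

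First I would observe that $F_\epsilon$ is closed: by continuity of $x \mapsto \lambda^\k_x$ from \cref{thm:muk_cont_erg_decomp}, together with weak-$*$ continuity of pushforward under the continuous map $p_\epsilon$, the assignment $x \mapsto p_\epsilon \lambda^\k_x$ is continuous from $\nilc^\k(X)$ to $\meas(X)$, so $F_\epsilon$ is the preimage of the closed point $\{\mu\}$. Next, $p_\epsilon$ is a continuous factor map from $(X^\k,\mu^\k,T^\k)$ to $(X,\mu,T)$: equivariance is immediate, while $p_\epsilon \mu^\k = \mu$ follows by an easy induction on $k$ from the defining relation \eqref{eqn_def_cubic_measures}. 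Since $(X,\mu,T)$ is ergodic, \cref{lemma_ErgodicDecompositionMarginal} applied to the ergodic decomposition $x \mapsto \lambda^\k_x$ then yields $p_\epsilon \lambda^\k_x = \mu$ for $\mu^\k$-almost every $x$, so $\mu^\k(F_\epsilon) = 1$. Property \eqref{lambda:equality} in \cref{thm:muk_cont_erg_decomp} moreover tells us that $\lambda^\k_x$ depends on $x$ only through $\eta(x)$, which gives $\eta$-saturation of $F_\epsilon$.

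To conclude via \cref{lem:supp_to_cont_magic}, I still need that $\eta$ is a continuous factor map between systems with $\supp(m_k^\k) = \cube^\k(Z_k)$. Continuity of $\eta$ is clear, surjectivity follows from the definition of $\nilc^\k(X)$ together with surjectivity of $\pi_k$, and $\eta \mu^\k = m_k^\k$ by functoriality of the cube measures under the factor map $\pi_k$. The one point that requires an extra remark is the support statement: when $Z_k$ is a single ergodic nilsystem, \cref{thm:nil_cubes_is_nil} identifies $(\cube^\k(Z_k), m_k^\k, T^\k)$ with a nilsystem whose invariant measure is Haar, which has full support; the general inverse-limit case follows by writing $\cube^\k(Z_k)$ and $m_k^\k$ as the inverse limits of the corresponding objects at each finite stage, where full support is preserved. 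Applying \cref{lem:supp_to_cont_magic} then gives $F_\epsilon = \nilc^\k(X)$, which is the desired conclusion. I do not foresee any genuine obstacle: every ingredient is either built into the construction of $\lambda^\k_x$ in \cref{thm:muk_cont_erg_decomp} or is a standard property of cube measures.
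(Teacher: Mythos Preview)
Your proposal is correct and follows essentially the same route as the paper: apply \cref{lemma_ErgodicDecompositionMarginal} to get the marginal statement $\mu^\k$-almost everywhere, then upgrade to every point of $\nilc^\k(X)$ using the $\eta$-saturation from \eqref{lambda:equality} together with \cref{lem:supp_to_cont_magic}. You are more explicit than the paper in verifying the full-support hypothesis $\supp(m_k^\k)=\cube^\k(Z_k)$ needed to invoke \cref{lem:supp_to_cont_magic}, which the paper leaves implicit (it is used several times already in the proof of \cref{thm:muk_cont_erg_decomp}).
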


In other words, \cref{lambda:marginals} says that for every $x\in\nilc^\k(X)$ the measure $\lambda_x^\k$ is a $2^k$-fold self-joining of $\mu$.

\begin{proof}
By \cref{thm:muk_cont_erg_decomp}, we have that $x \mapsto \lambda^\k_x$ is an ergodic decomposition, and so \cref{lemma_ErgodicDecompositionMarginal} (applied to the projections $X^\k\to X$) gives us the statement for $\mu^\k$-almost every $x\in X^\k$. 
Combining this fact with~\eqref{lambda:equality} and using \cref{lem:supp_to_cont_magic} it follows that the conclusion holds for every $x\in \nilc^\k(X)$.
\end{proof}

\section{Proof of \cref{thm:cubes_exist_pronil}}
\label{sec:proof_cubes_exist_pronil}

Throughout this section, whenever we are given an ergodic system $(X,\mu,T)$ with topological pronilfactors, we use $\lambda_x^\k$ to denote the measures constructed in \cref{thm:muk_cont_erg_decomp}.

\subsection{An explicit disintegration over the first coordinate}
\label{sec:sigmaka}

In this section we use the continuous ergodic decomposition obtained in the previous section to construct a concrete disintegration of $\mu^\k$ with respect to the projection $x \mapsto x_{\vec 0}$ from $X^\k$ to the first coordinate.
The results in this section constitute general versions of \cref{prop_2-dim_sigma_rep} and \cref{thm:sigmaka_symm_k2}.
Recall the map $F^*\colon X^\k\to X^{\k\setminus\{\vec0\}}$ defined in~\eqref{eqn_fstar} that forgets the $\vec 0$ coordinate, in the sense that $x=(x_{\vec0},F^*x)$ for every $x\in X^\k=X\times X^{\k\setminus\{\vec0\}}$.

\begin{Theorem}
\label{thm:sigmaka_exist}
Assume that $(X,\mu, T)$ is an ergodic system with topological pronilfactors.
For every $k\in\N$ and every $t \in X$, there exists a measure $\sigma_t^\k$ on $X^\k$ such that the following hold. 
\begin{enumerate}[label={\textbf{S\arabic*}.},ref={\textbf{S\arabic*}}]
\item
\label{sigma:defined}
For all $t\in X$, we have that $\supp(\sigma_t^\k)\subset\nilc^\k(X)$.
\item
\label{sigma:equations}
The measures satisfy
\begin{equation}
\label{eqn:sigmaka_measures}
\sigma^\kone_t 
=
\delta_t \times \mu; 
\qquad
\sigma^\kup_t 
=
\int_{\nilc^\k(X)} \delta_x \times \lambda^\k_x \intd \sigma^\k_t(x)
\end{equation}
for all $t \in X$.
\item
\label{sigma:cont_disint}
The map $t \mapsto \sigma^\k_t$ is both continuous and a disintegration of the measure $\mu^\k$.
\item
\label{sigma:first_coord}
For all $t \in X$, we have that $\sigma^\k_t( \{ x \in X^\k : x_{\vec 0} = t \} ) = 1$.
\item
\label{sigma:other_coords}
For all $t,s \in X$, whenever $\pi_k(t) = \pi_k(s)$ then we have that $F^* \sigma^\k_t = F^* \sigma^\k_s$.
\end{enumerate}
\end{Theorem}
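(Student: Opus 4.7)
The plan is to take the formulas \eqref{eqn:sigmaka_measures} of \textbf{S2} as a recursive definition of $\sigma^\k_t$, with base case $\sigma^\kone_t = \delta_t \times \mu$, and to verify \textbf{S1}--\textbf{S5} simultaneously by induction on $k$. The case $k=1$ is immediate: $\supp(\delta_t \times \mu) \subset X^\kone = \nilc^\kone(X)$ by \cref{lem:transitive_qone_full}, the map $t \mapsto \delta_t \times \mu$ is weak-$*$ continuous and $\int_X (\delta_t \times \mu)\intd\mu(t) = \mu^\kone$, \textbf{S4} is obvious, and $F^*\sigma^\kone_t = \mu$ is independent of $t$, so \textbf{S5} is trivial.

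For the easy parts of the inductive step, property \textbf{S4} at level $k+1$ holds because a $\sigma^\kup_t$-typical point is $(x,y)$ with $x$ drawn from $\sigma^\k_t$, whose $\vec 0$ coordinate equals $t$ by the inductive \textbf{S4}. For continuity of $t \mapsto \sigma^\kup_t$, fix $f \in C(X^\kup)$ and let $G(x) = \int f(x,y)\intd\lambda^\k_x(y)$; this is continuous on $\nilc^\k(X)$ by continuity of $x \mapsto \lambda^\k_x$ from \cref{thm:muk_cont_erg_decomp} together with uniform continuity of $f$ on the compact set $X^\kup$. Extending $G$ to a continuous function on $X^\k$ by Tietze and using the inductive \textbf{S1} to know $\sigma^\k_t$ lives on $\nilc^\k(X)$, one has $\int f \intd\sigma^\kup_t = \int G \intd \sigma^\k_t$, which is continuous in $t$ by the inductive continuity. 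The disintegration half of \textbf{S3} combines Fubini with \cref{lem:dont_know_if_obvious}:
\[
\int_X \sigma^\kup_t \intd\mu(t) = \int_{X^\k} \delta_x \times \lambda^\k_x \intd \mu^\k(x) = \mu^\kup.
\]
For \textbf{S5} at level $k+1$, writing $u$ for the coordinates other than $\vec 0$ and using \textbf{S4} gives
\[
F^*\sigma^\kup_t = \int \delta_u \times \lambda^\k_{(t,u)} \intd (F^*\sigma^\k_t)(u).
\]
By \cref{thm:muk_cont_erg_decomp}, $\lambda^\k_{(t,u)}$ depends on $t$ only through $\pi_k(t)$, and by the inductive \textbf{S5}, $F^*\sigma^\k_t$ also depends on $t$ only through $\pi_k(t)$. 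Since $Z_k$ is a factor of $Z_{k+1}$, the hypothesis $\pi_{k+1}(t)=\pi_{k+1}(s)$ forces $\pi_k(t)=\pi_k(s)$, so $F^*\sigma^\kup_t = F^*\sigma^\kup_s$.

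The main obstacle is \textbf{S1} at level $k+1$, which must hold for every $t$ and not just almost every $t$. The disintegration in \textbf{S3} together with $\mu^\kup(\nilc^\kup(X))=1$ from \cref{lem_nilcprops} yields $\sigma^\kup_t(\nilc^\kup(X)) = 1$ for $\mu$-almost every $t$. To promote this to every $t$, consider $H = \{t \in X : \sigma^\kup_t(\nilc^\kup(X)) = 1\}$. This set is closed: $\nilc^\kup(X)$ is closed and $t \mapsto \sigma^\kup_t$ is weak-$*$ continuous, so by Portmanteau the function $t \mapsto \sigma^\kup_t(\nilc^\kup(X))$ is upper semicontinuous. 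The set $H$ is also saturated by $\pi_{k+1}$, because \textbf{S4} lets us write $\sigma^\kup_t = \delta_t \times F^*\sigma^\kup_t$, hence
\[
\sigma^\kup_t(\nilc^\kup(X)) = F^*\sigma^\kup_t\bigl(\{y : (t,y)\in \nilc^\kup(X)\}\bigr),
\]
and both the set $\{y : (t,y)\in\nilc^\kup(X)\}$ (since $\nilc^\kup(X) = (\pi_{k+1}^\kup)^{-1}(\cube^\kup(Z_{k+1}))$) and the measure $F^*\sigma^\kup_t$ (by the \textbf{S5} just proved) depend on $t$ only through $\pi_{k+1}(t)$. Applying \cref{lem:supp_to_cont_magic} to the continuous factor map $\pi_{k+1}\colon X \to Z_{k+1}$, whose push-forward measure $m_{k+1}$ has full support as an inverse limit of Haar measures on ergodic nilsystems, forces $H=X$. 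Uniqueness of $\sigma^\k_t$ (implicit in the disintegration half of \textbf{S3}) falls out of the same closed-plus-saturated argument applied to the difference of two candidate families. I expect this transfer of almost-everywhere statements to every point to be the delicate step, but the continuous ergodic decomposition of \cref{sec:cont_erg_decomp} provides exactly the rigidity needed to make it work.
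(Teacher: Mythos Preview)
Your proposal is correct and follows essentially the same inductive scheme as the paper's proof: define $\sigma^\k_t$ recursively via \eqref{eqn:sigmaka_measures}, verify \textbf{S3}--\textbf{S5} directly from the inductive hypotheses and the properties of $\lambda^\k$, and then upgrade the almost-everywhere version of \textbf{S1} to an everywhere statement using \cref{lem:supp_to_cont_magic}. Your treatment is in fact slightly more explicit than the paper's in two places --- you spell out via Tietze why continuity of $x\mapsto\lambda^\k_x$ on $\nilc^\k(X)$ suffices for continuity of $t\mapsto\sigma^\kup_t$, and you verify the $\pi_{k+1}$-saturation of the set $H$ needed for \cref{lem:supp_to_cont_magic}, whereas the paper leaves these as one-line assertions --- but the argument is the same.
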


For the remainder of the section, whenever $(X,\mu,T)$ is an ergodic system with topological pronilfactors, we let $\sigma_t^\k$ denote the measures that are guaranteed to exist by this theorem. 
\begin{proof}

The proof is by induction on $k$.
We begin by defining $\sigma^\kone_t = \delta_t \times \mu$ for all $t \in X$.
It is straightforward to verify $t \mapsto \sigma^\kone_t$ is a continuous map from $X$ to $\meas(X^\kone)$ and a disintegration of $\mu^\kone = \mu \times \mu$.
We also have $x_{0} = t$ for $\sigma^\kone_t$-almost every point and $F^*\sigma^\kone_t= \mu$ for all $t \in X$.
It follows from \cref{lem:transitive_qone_full} that $\nilc^\kone(X)=X^\kone$ for all $t \in X$.
Thus \ref{sigma:defined} through \ref{sigma:other_coords} hold when $k = 1$.

Suppose by induction that for some $k \ge 1$ we have a measure $\sigma^\k_t \in \meas(X^\k)$ for every $t \in X$ satisfying properties \ref{sigma:defined} through \ref{sigma:other_coords}.
We may then define
\[
\sigma^\kup_t 
=
\int_{\nilc^\k(X)} \delta_x \times \lambda^\k_x \intd \sigma^\k_t(x)
\]
for every $t \in X$. This establishes \ref{sigma:equations} for $k+1$, and we are left with verifying the other properties.

Continuity of $x \mapsto \lambda^\k_x$ and $t \mapsto \sigma^\k_t$ gives continuity of $t \mapsto \sigma^\kup_t$.
It is a disintegration of $\mu^\kup$ by the calculation
\begin{align*}
\int_X \sigma^\kup_t \intd \mu(t)
&
=
\int_X\int_{\nilc^\k(X)} \delta_{x} \times \lambda^\k_{x} \intd \sigma^\k_t(x) \intd \mu(t)
\\
&
=
\int_{\nilc^\k(X)} \delta_{x} \times \lambda^\k_{x} \intd \mu^\k(x)
\\
&
=
\mu^\kup
\end{align*}
which uses induction for the second equality and \cref{lem:dont_know_if_obvious} for the third.
This shows that \ref{sigma:cont_disint} holds.

To prove \ref{sigma:first_coord} note that, since $\sigma^\k_t$-almost every point $x$ satisfies $x_{\vec 0} = t$, we can write
\[
\sigma^\kup_t 
=
\delta_t \times \int_{\nilc^\k(X)} \delta_{F^*x} \times \lambda^\k_x \intd \sigma^\k_t(x)
\]
which implies $\sigma^\kup_t$-almost every point $x$ satisfies $x_{\vec 0} = t$.

Next we establish \ref{sigma:other_coords}.
Since $\sigma^\k_t$-almost every point $x\in\nilc^\k(X)$ can be written as $x=(t,F^*x)$, we have
\[
F^* \sigma^\kup_t
=
\int_{\nilc^\k(X)} \delta_{F^*x} \times \lambda^\k_{(t,F^* x)} \intd \sigma^\k_t(x)
=
\int_{F^*(\nilc^\k(X))} \delta_{y} \times \lambda^\k_{(t,y)} \intd (F^* \sigma^\k_t)(y).
\]
Using~\eqref{lambda:equality}, it follows that if $\pi_k(t) = \pi_k(s)$ then $\lambda^\k_{(t,y)}=\lambda^\k_{(s,y)}$ whenever $y\in X^{\k\setminus\{\vec0\}}$ and $(t,y)\in\nilc^\k(X)$.
Using the induction hypothesis it follows that
$F^* \sigma^\kup_t = F^* \sigma^\kup_s$, establishing \ref{sigma:other_coords}.

Finally, we need to verify that \ref{sigma:defined} holds.
Since $\mu^\kup\big(\nilc^\kup(X)\big) = 1$ by \cref{lem_nilcprops}, the fact that $t \mapsto \sigma^\kup_t$ is a disintegration of $\mu^\kup$ gives that $\sigma_t^\k(\nilc^\kup(X))=1$ for $\mu$-almost every $t$.
Now \cref{lem_nilcprops} also states $\nilc^\kup(X)$ is closed so 
\[
\{ \nu \in \meas(X^\kup) : \nu(\nilc^\kup(X)) = 1 \}
\]
is closed. 
Continuity of $t \mapsto \sigma^\kup_t$ then gives $\sigma_t^\k(\nilc^\kup(X))=1$ for every $t \in \supp(\mu)$.
We conclude that the set
\[
\{ t \in X : \sigma^\kup_t(\nilc^\kup(X)) = 1 \}
\]
satisfies the hypothesis of \cref{lem:supp_to_cont_magic} and is therefore equal to $X$.
This establishes \ref{sigma:defined}.
\end{proof}

\begin{Corollary}
\label{cor_lastpointgeneric}
Assume $(X,\mu,T)$ is an ergodic system with topological pronilfactors.
For every $t\in X$ and every coordinate projection $X^\k\to X$ other than projection to $\vec{0}$, the  push forward of $\sigma_t^\k$ equals $\mu$.
\end{Corollary}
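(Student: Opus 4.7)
The plan is to show that the set $H = \{t \in X : p_\epsilon \sigma_t^\k = \mu\}$ equals all of $X$, where $\epsilon \in \k \setminus \{\vec 0\}$ is fixed and $p_\epsilon \colon X^\k \to X$ is the projection onto the $\epsilon$-th coordinate. I will deduce $H = X$ by applying \cref{lem:supp_to_cont_magic} to the continuous factor map $\pi_k\colon (X,\mu,T)\to(Z_k,m_k,T)$ onto the $k$-step pronilfactor. The required full-support hypothesis $\supp(m_k) = Z_k$ holds because $(Z_k,m_k,T)$ is an inverse limit of ergodic (hence minimal and uniquely ergodic) nilsystems, whose Haar measures have full support, and full support of an invariant measure transfers to an inverse limit.

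First I will verify that $H$ is closed. By property \ref{sigma:cont_disint} of \cref{thm:sigmaka_exist}, the map $t\mapsto\sigma_t^\k$ is continuous, the pushforward $\nu \mapsto p_\epsilon\nu$ is weak* continuous, and $\{\mu\}$ is closed in $\meas(X)$; composing these yields closedness of $H$.

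Next I will show that $H$ has full $\mu$-measure. Since $t\mapsto\sigma_t^\k$ is a disintegration of $\mu^\k$, one has $p_\epsilon\mu^\k = \int_X p_\epsilon\sigma_t^\k \intd\mu(t)$. Combining the decomposition $\mu^\k = \int_{\nilc^\k(X)}\lambda_x^\k\intd\mu^\k(x)$ supplied by \cref{thm:muk_cont_erg_decomp} with \cref{lambda:marginals} -- which asserts that every coordinate projection of $\lambda_x^\k$ equals $\mu$ -- yields $p_\epsilon\mu^\k = \mu$. The almost-everywhere uniqueness of disintegrations provided by \cref{thm:disintegration} then gives $p_\epsilon\sigma_t^\k = \mu$ for $\mu$-almost every $t \in X$.

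Finally, I will argue that $H$ is $\pi_k$-saturated. Because $\epsilon \neq \vec 0$, the projection $p_\epsilon$ factors through $F^*\colon X^\k\to X^{\k\setminus\{\vec 0\}}$, and property \ref{sigma:other_coords} says $F^*\sigma_t^\k$ depends only on $\pi_k(t)$. Hence $p_\epsilon\sigma_t^\k = p_\epsilon\sigma_s^\k$ whenever $\pi_k(t)=\pi_k(s)$, so $H$ is $\pi_k$-saturated. Applying \cref{lem:supp_to_cont_magic} now delivers $H = X$. I do not anticipate a serious obstacle: the argument is essentially an assembly of properties already catalogued in \cref{thm:sigmaka_exist} and \cref{thm:muk_cont_erg_decomp}, together with the routine observation that inverse limits of ergodic nilsystems carry an invariant measure of full support.
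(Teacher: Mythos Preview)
Your overall architecture (closed set, full measure, $\pi_k$-saturated, then \cref{lem:supp_to_cont_magic}) is sound, but Step~2 has a genuine gap. From $\int_X p_\epsilon\sigma_t^\k\,\mathrm d\mu(t)=\mu$ you invoke ``almost-everywhere uniqueness of disintegrations'' to conclude $p_\epsilon\sigma_t^\k=\mu$ for $\mu$-a.e.\ $t$. But \cref{thm:disintegration} does not say that any two maps $t\mapsto\nu_t$ with $\int\nu_t\,\mathrm d\mu=\mu$ must coincide; its uniqueness clause applies only to disintegrations over a fixed factor map $\pi$ that additionally satisfy $\E(f\mid Y)(y)=\int f\,\mathrm d\mu_y$. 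Neither $t\mapsto p_\epsilon\sigma_t^\k$ nor the constant map $t\mapsto\mu$ is verified to satisfy such a conditional-expectation identity for any factor map, so the uniqueness statement simply does not apply. (Indeed, over the identity factor $X\to X$ the canonical disintegration is $t\mapsto\delta_t$, which equals neither candidate.)

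The paper sidesteps this entirely: it just reads off the marginals directly from the recursive formula \eqref{eqn:sigmaka_measures} together with \cref{lambda:marginals}, by induction on $k$. For $k=1$ one has $\sigma_t^\kone=\delta_t\times\mu$, so the non-$\vec 0$ marginal is $\mu$. For the inductive step, write $\sigma_t^\kup=\int\delta_x\times\lambda_x^\k\,\mathrm d\sigma_t^\k(x)$; any coordinate in the second $X^\k$ factor pushes forward to $\int p_{\epsilon'}\lambda_x^\k\,\mathrm d\sigma_t^\k=\mu$ by \cref{lambda:marginals}, while any non-$\vec 0$ coordinate in the first factor pushes forward to $p_{\epsilon'}\sigma_t^\k=\mu$ by induction. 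This gives the conclusion for \emph{every} $t$ at once, with no need for the closed/full-measure/saturated machinery.
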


\begin{proof}
This follows by combining the  definition \eqref{eqn:sigmaka_measures} of $\sigma_t^\k$ with \cref{lambda:marginals}.
\end{proof}

\begin{Corollary}
\label{cor_permutationinvarianceofsigma}
Assume $(X,\mu,T)$ is an ergodic system with topological pronilfactors.
For every $k\in\N$, every permutation $\phi$ on $\{1,\dots,k\}$, and every $t\in X$ we have $\phi\sigma_t^\k=\sigma_t^\k$ where $\phi\sigma_t^\k$ is  the  push forward of $\sigma_t^\k$ under the map defined in \eqref{eqn:perm_on_measures}.
\end{Corollary}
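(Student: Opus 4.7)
The plan is to mimic the argument for \cref{thm:sigmaka_symm_k2}, using uniqueness of disintegrations on $\supp(\mu)$ together with \cref{lem:supp_to_cont_magic} to upgrade an almost-everywhere identity to a pointwise identity.

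First I would record the basic observation that every permutation $\phi$ of $\{1,\dots,k\}$ fixes $\vec 0$, since $(\phi x)_{\vec 0}=x_{\phi(\vec 0)}=x_{\vec 0}$. Hence the induced map $\phi\colon X^\k\to X^\k$ commutes with the projection $x\mapsto x_{\vec 0}$ onto the first coordinate, and descends to a map on $X^{\k\setminus\{\vec 0\}}$ that commutes with $F^*$. Combining this with the permutation symmetry $\phi\mu^\k=\mu^\k$ from \cref{lemma_muk_permutation_invariant} and property \ref{sigma:cont_disint}, I would deduce that $t\mapsto\phi\sigma_t^\k$ is also a disintegration of $\mu^\k$ over the first coordinate projection. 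By the uniqueness clause in \cref{thm:disintegration}, this gives $\phi\sigma_t^\k=\sigma_t^\k$ for $\mu$-almost every $t\in X$.

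The main obstacle is that the relevant points $t=a$ (first coordinate of the \Erdos{} cube we ultimately want) need not lie in $\supp(\mu)$, so the above almost-everywhere statement is not yet enough. I would handle this exactly as in the proof of \cref{thm:sigmaka_symm_k2}: consider the set
\[
H=\{t\in X:F^*(\phi\sigma_t^\k)=F^*\sigma_t^\k\},
\]
which is closed by the continuity part of \ref{sigma:cont_disint} and has full $\mu$-measure by the previous paragraph. Because $\phi$ commutes with $F^*$, property \ref{sigma:other_coords} guarantees that $H$ is saturated by the fibres of $\pi_k$, that is, $t\in H$ and $\pi_k(t)=\pi_k(s)$ imply $s\in H$. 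Applying \cref{lem:supp_to_cont_magic} to the continuous factor map $\pi_k\colon X\to Z_k$ (whose target, being ergodic, has full support) then forces $H=X$.

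Finally, property \ref{sigma:first_coord} gives $\sigma_t^\k=\delta_t\times F^*\sigma_t^\k$, and since $\phi$ preserves the $\vec 0$ coordinate one also has $\phi\sigma_t^\k=\delta_t\times F^*(\phi\sigma_t^\k)$. The identity $F^*(\phi\sigma_t^\k)=F^*\sigma_t^\k$ on all of $X$ therefore upgrades to $\phi\sigma_t^\k=\sigma_t^\k$ for every $t\in X$, completing the proof.
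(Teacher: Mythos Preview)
Your proof is correct and follows essentially the same approach as the paper's. Both arguments show that $t\mapsto\phi\sigma_t^\k$ is another disintegration of $\mu^\k$ over the first coordinate (using \cref{lemma_muk_permutation_invariant} and that $\phi$ fixes $\vec 0$), invoke uniqueness in \cref{thm:disintegration} to get equality $\mu$-almost everywhere, and then upgrade to all $t\in X$ via \cref{lem:supp_to_cont_magic} using the $\pi_k$-saturation coming from \ref{sigma:other_coords} and the fact that $\phi$ commutes with $F^*$; your version is slightly more explicit in spelling out the set $H$ and the final step $\sigma_t^\k=\delta_t\times F^*\sigma_t^\k$, but the logic is the same.
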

\begin{proof}
Fix $k\in\N$ and fix a permutation $\phi$ of $k$ digits.
Let $\tilde\sigma_t^\k:=\phi\sigma_t^\k$ for every $t\in X$.
We claim that the measures $\tilde\sigma_t^\k$ satisfy properties \ref{sigma:cont_disint} through \ref{sigma:other_coords} of \cref{thm:sigmaka_exist}.

Recall from \cref{lemma_muk_permutation_invariant} that $\phi\mu^\k=\mu^\k$; therefore $t\mapsto\tilde\sigma_t^\k$ is a disintegration of $\mu^\k$ and we obtain \ref{sigma:cont_disint} for $\tilde\sigma_t^\k$.
Since the set $\{x\in\nilc^\k(X):x_{\vec0}=t\}$ is invariant under $\phi$, we immediately obtain \ref{sigma:first_coord} for $\tilde\sigma_t^\k$ as well.
Finally, since the map $\phi\colon X^\k\to X^\k$ does not involve the first coordinate, we have that $F^*\rho=F^*\tilde\rho$ if and only if $F^*\phi \rho= F^*\phi \tilde\rho$ for any measures $\rho$ and $\tilde\rho$ on $X^\k$.  Thus in particular we deduce that the measures $\tilde\sigma_t^\k$ also satisfy \ref{sigma:other_coords}.

In view of conditions \ref{sigma:cont_disint} and \ref{sigma:first_coord}, both $t\mapsto\sigma_t^\k$ and  $t\mapsto\tilde\sigma_t^\k$ satisfy \cref{thm:disintegration}.
From the uniqueness in that theorem it follows that $\sigma_t^\k=\tilde\sigma_t^\k$ for $\mu$-almost every $t\in X$.
Using continuity from \ref{sigma:cont_disint} and combining it with \ref{sigma:other_coords}, we can now apply \cref{lem:supp_to_cont_magic} to conclude that $\sigma_t^\k=\tilde\sigma_t^\k$ for all $t\in X$.
\end{proof}

We also need equivariance of the disintegration $t \mapsto \sigma^\k_t$ with respect to  $T^\k$.

\begin{Corollary}
\label{lem:sigma_equivariance}
Assume $(X,\mu,T)$ is an ergodic system with topological pronilfactors.
For every $k \in \N$ and every $t \in X$, we have $T^\k \sigma^\k_t = \sigma^\k_{T(t)}$.
\end{Corollary}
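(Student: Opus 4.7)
The plan is to mimic the structure of the proof of \cref{cor_permutationinvarianceofsigma}: define an auxiliary family of measures $t\mapsto\tilde\sigma_t^\k := T^\k\sigma_{T^{-1}(t)}^\k$, verify that it satisfies properties \ref{sigma:defined} through \ref{sigma:other_coords} of \cref{thm:sigmaka_exist}, and then use uniqueness of disintegrations together with \cref{lem:supp_to_cont_magic} to conclude $\tilde\sigma_t^\k=\sigma_t^\k$ for every $t\in X$. Setting $t\mapsto T(t)$ then gives the desired identity.

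For the verification of the properties, the key ingredients are: the set $\nilc^\k(X)=(\pi_k^\k)^{-1}(\cube^\k(Z_k))$ is $T^\k$-invariant because $\cube^\k(Z_k)$ is $T^\k$-invariant and $\pi_k$ intertwines $T$ with itself, so $\supp(T^\k\sigma_{T^{-1}(t)}^\k)\subset T^\k\nilc^\k(X)=\nilc^\k(X)$, giving \ref{sigma:defined}. Continuity of $t\mapsto\tilde\sigma_t^\k$ is immediate from the continuity of $T^{-1}$, $T^\k$ and $t\mapsto\sigma_t^\k$. The disintegration identity
\[
\int_X \tilde\sigma_t^\k\intd\mu(t) = T^\k\int_X \sigma_{T^{-1}(t)}^\k\intd\mu(t) = T^\k\int_X \sigma_s^\k\intd\mu(s) = T^\k\mu^\k = \mu^\k
\]
follows from $T$-invariance of $\mu$ together with $T^\k$-invariance of $\mu^\k$, giving \ref{sigma:cont_disint}. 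For \ref{sigma:first_coord}, if $\sigma_{T^{-1}(t)}^\k$ is concentrated on $\{x:x_{\vec 0}=T^{-1}(t)\}$ then $T^\k\sigma_{T^{-1}(t)}^\k$ is concentrated on $\{x:x_{\vec 0}=t\}$. For \ref{sigma:other_coords}, the hypothesis $\pi_k(t)=\pi_k(s)$ gives $\pi_k(T^{-1}(t))=\pi_k(T^{-1}(s))$ by equivariance of $\pi_k$, so the inductive assumption yields $F^*\sigma_{T^{-1}(t)}^\k=F^*\sigma_{T^{-1}(s)}^\k$, and one concludes by noting that $F^*$ commutes with $T^\k$ in the sense that $F^*\circ T^\k=T^\kdown\circ F^*$.

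With those five properties in hand, the measures $\tilde\sigma_t^\k$ satisfy the defining conditions of the disintegration given by \cref{thm:disintegration} applied to the projection onto the first coordinate, so $\tilde\sigma_t^\k=\sigma_t^\k$ for $\mu$-almost every $t\in X$. To upgrade this from $\mu$-almost every $t$ to every $t\in X$, note that both maps $t\mapsto\sigma_t^\k$ and $t\mapsto\tilde\sigma_t^\k$ are continuous by \ref{sigma:cont_disint}, and both satisfy property \ref{sigma:other_coords} (i.e., $F^*\sigma_t^\k=F^*\sigma_s^\k$ whenever $\pi_k(t)=\pi_k(s)$). Hence the closed set $\{t\in X: F^*\tilde\sigma_t^\k=F^*\sigma_t^\k\}$ has full $\mu$-measure and is $\pi_k$-saturated, so \cref{lem:supp_to_cont_magic} forces it to equal $X$. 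Combining this with $\sigma_t^\k=\delta_t\times F^*\sigma_t^\k$ (which holds for both families by \ref{sigma:first_coord}) gives $\tilde\sigma_t^\k=\sigma_t^\k$ for every $t$, i.e., $T^\k\sigma_{T^{-1}(t)}^\k=\sigma_t^\k$, equivalent to the desired identity.

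I do not anticipate a serious obstacle; the argument is entirely parallel to the one for permutation symmetry, with $T^\k$ playing the role of the digit permutation $\phi$. The only subtle point worth double-checking is the preservation of \ref{sigma:other_coords}, which hinges on the elementary fact that $\pi_k$ intertwines $T$ on $X$ with $T$ on $Z_k$ and that $F^*$ commutes with the diagonal action of $T^\k$.
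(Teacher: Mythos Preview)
Your argument is correct but takes a different route from the paper. The paper proves the identity by direct induction on $k$: the base case $k=1$ is immediate from $\sigma^\kone_t = \delta_t \times \mu$, and the inductive step unwinds the recursive definition \eqref{eqn:sigmaka_measures} together with the invariance $T^\k\lambda^\k_x = \lambda^\k_{T^\k x} = \lambda^\k_x$ from \cref{cor:lambda_invariance} to compute $T^\kup\sigma^\kup_t$ explicitly. Your approach instead reuses the ``uniqueness of disintegration plus $\pi_k$-saturation'' pattern from \cref{cor_permutationinvarianceofsigma}, treating $T^\k$ exactly as that proof treats the digit permutation $\phi$. This is perfectly valid and arguably more conceptual, since it makes clear that any continuous, $\pi_k$-saturated disintegration of $\mu^\k$ over the first coordinate must agree with $\sigma^\k$; the paper's inductive computation is shorter but less transferable.

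Two minor points of housekeeping. First, you never actually verify property \ref{sigma:equations} for $\tilde\sigma^\k_t$, nor do you need it: only \ref{sigma:cont_disint}, \ref{sigma:first_coord} and \ref{sigma:other_coords} are used in your conclusion, so it would be cleaner to say so rather than claim ``five properties''. Second, the phrase ``inductive assumption'' in your verification of \ref{sigma:other_coords} is a slip --- there is no induction in your argument; you are simply invoking \ref{sigma:other_coords} for the already-constructed family $\sigma^\k$. Also, in the commutation $F^* \circ T^\k$ the target map is the diagonal $T$-action on $X^{\k\setminus\{\vec 0\}}$, not $T^\kdown$ (which acts on $X^\kdown$), though the intended meaning is clear.
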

\begin{proof}
The case $k = 1$ is immediate as
\[
T^\kone \sigma^\kone_t = T^\kone (\delta_t \times \mu) = \delta_{T(t)} \times \mu = \sigma^\kone_{T(t)}
\]
for all $t \in X$.
The proof in general is by induction, using the calculation
\begin{align*}
T^\kup \sigma^\kup_t
&
=
\int_{\nilc^\kup(X)} T^\k \delta_x \times T^\k \lambda^\k_x \intd \sigma^\k_t(x)
\\
&
=
\int_{\nilc^\kup(X)} \delta_{T^\k(x)} \times \lambda^\k_{T^\k (x)} \intd \sigma^\k_t(x)
\\
&
=
\int_{\nilc^\kup(X)} \delta_{x} \times \lambda^\k_{x} \intd (T^\k \sigma^\k_t)(x)
=
\sigma^\kup_{T(t)}
\end{align*}
which employs \cref{cor:lambda_invariance}.
\end{proof}

\subsection{The Existence of \Erdos{} Cubes}
\label{sec:cubes_exist}

We start by reformulating \cref{thm:cubes_exist_pronil} in terms of the measure $\sigma^\k_a$.

\begin{Theorem}
\label{thm:sigmas_live_on_cubes}
Assume that $(X,\mu, T)$ is an ergodic system with topological pronilfactors and that $a \in \gen(\mu,\Phi)$ for some \Folner{} sequence $\Phi$.
For every $k \in \N$, $\sigma^\k_a$-almost every $x \in \nilc^\k(X) $ is a $k$-dimensional \Erdos{} cube.
\end{Theorem}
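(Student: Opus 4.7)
The plan is to reduce this theorem to a single genericity lemma and then establish that lemma by combining the pointwise ergodic theorem with the continuity and equivariance properties of the disintegration $t\mapsto\sigma^\k_t$. The base case $k=1$ is immediate: $\sigma^\kone_a=\delta_a\times\mu$, so a $\sigma^\kone_a$-typical point is $(a,y)$ with $y\sim\mu$; the \Erdos{} $1$-cube condition $y\in\omg(a,T)$ holds for every $y\in\supp(\mu)$ by $a\in\gen(\mu,\Phi)$ and \cref{lem:gen_approximates_support}, together with $\mu(\supp(\mu))=1$.

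For $k\geq 2$, using $\phi\sigma^\k_a=\sigma^\k_a$ for all digit permutations $\phi$ (\cref{cor_permutationinvarianceofsigma}) together with \cref{lemma_erdos_cubes_symmetries}, it suffices to verify the single face condition $F^1 z\in\omg(F_1 z,T^\kdown)$ for $\sigma^\k_a$-almost every $z\in X^\k$. The recursive formula from \cref{thm:sigmaka_exist} says a $\sigma^\k_a$-distributed point $z$ factors, under the identification $X^\k=X^\kdown\times X^\kdown$ (first lower face times first upper face), as $(x,y)$ with $x=F_1 z$ distributed according to $\sigma^\kdown_a$ and, conditionally on $x$, $y=F^1 z$ distributed as $\lambda^\kdown_x$. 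Applying \cref{lem:gen_approximates_support} to the system $(X^\kdown,\lambda^\kdown_x,T^\kdown)$, the required condition $y\in\omg(x,T^\kdown)$ will hold $\lambda^\kdown_x$-a.s.\ in $y$ once $x$ is generic for $\lambda^\kdown_x$ along some \Folner{} sequence. The theorem thus reduces to the following genericity lemma: \emph{for every $k\in\N$ and $\sigma^\k_a$-almost every $x\in\nilc^\k(X)$, there is a \Folner{} sequence $\Psi$ with $x\in\gen(\lambda^\k_x,\Psi)$.}

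My proof of the genericity lemma would proceed in two stages. First, applying the pointwise ergodic theorem to the cube system $(X^\k,\mu^\k,T^\k)$ along a subsequence of $\Phi$ and performing a diagonal argument over a countable dense subset of $\cont(X^\k)$, together with \cref{cor:ae_generic} and the continuous ergodic decomposition from \cref{thm:muk_cont_erg_decomp}, I would produce a single \Folner{} sequence $\Psi$ along which $\mu^\k$-almost every $x$ is generic for $\lambda^\k_x$. The second stage transfers this from a $\mu^\k$-a.s.\ statement to a $\sigma^\k_a$-a.s.\ one, using continuity of $t\mapsto\sigma^\k_t$ (\cref{thm:sigmaka_exist}), the equivariance $(T^\k)^n\sigma^\k_a=\sigma^\k_{T^n a}$ (\cref{lem:sigma_equivariance}), the resulting weak$^*$ convergence $\frac{1}{|\Phi_N|}\sum_{n\in\Phi_N}\sigma^\k_{T^n a}\to\mu^\k$ (following from $a\in\gen(\mu,\Phi)$), and the continuity of $x\mapsto\lambda^\k_x$.

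I expect the main obstacle to be precisely this transfer step. In the most interesting situations $a\notin\supp(\mu)$, so $\sigma^\k_a$ is typically singular with respect to $\mu^\k$, and a $\mu^\k$-a.s.\ property does not automatically descend to $\sigma^\k_a$-a.s. I would navigate around this by using the continuity and equivariance properties above to control the quantities $\int\bigl|\frac{1}{|\Psi_N|}\sum_{n\in\Psi_N}F((T^\k)^n x)-\int F\intd\lambda^\k_x\bigr|\intd\sigma^\k_a(x)$ for $F\in\cont(X^\k)$, then extract a further subsequence along which they vanish for $F$ ranging over a countable dense family, and conclude by uniform approximation in $\cont(X^\k)$.
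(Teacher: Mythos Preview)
Your reduction is exactly the paper's: use \cref{cor_permutationinvarianceofsigma} and \cref{lemma_erdos_cubes_symmetries} to reduce to the single face condition, unpack the recursion \eqref{eqn:sigmaka_measures}, and reduce everything to the genericity lemma. That lemma is precisely the paper's \cref{thm:sigmaka_generics}. (Minor point: in Stage~1 you want \cref{lem:cont_erg_generics}, not \cref{cor:ae_generic}, since $\mu^\k$ is not ergodic.)

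The gap is in Stage~2. You propose to control
\[
I_N(F)=\int_{X^\k}\Bigl|\tfrac{1}{|\Psi_N|}\sum_{n\in\Psi_N}F((T^\k)^n x)-\int F\intd\lambda^\k_x\Bigr|\intd\sigma^\k_a(x)
\]
and then pass to a subsequence of $\Psi$ along which $I_N(F)\to 0$. But none of your listed ingredients gives you any handle on $I_N(F)$ itself. The Ces\`aro convergence $\tfrac{1}{|\Phi_M|}\sum_{m\in\Phi_M}\sigma^\k_{T^m a}\to\mu^\k$ only controls integrals of a fixed continuous function against the averaged measure; translated via equivariance, it controls $\tfrac{1}{|\Phi_M|}\sum_{m\in\Phi_M}\int |H_{F,N}\circ(T^\k)^m|\intd\sigma^\k_a$, not $\int|H_{F,N}|\intd\sigma^\k_a$. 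And $|H_{F,N}\circ(T^\k)^m|$ differs from $|H_{F,N}|$ by an error of order $|\Psi_N\triangle(\Psi_N+m)|/|\Psi_N|$, which blows up with $m$ for fixed $N$. So a subsequence of the original $\Psi$ is not what comes out.

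What the argument actually produces is this: for each $\epsilon$ there exist $N$ and a shift $m$ with $\int|H_{F,N}\circ(T^\k)^m|\intd\sigma^\k_a<\epsilon$, i.e.\ the average of $F$ over the \emph{shifted} block $\Psi_N+m$ is $\epsilon$-close to $\int F\intd\lambda^\k_x$ for $\sigma^\k_a$-most $x$. The paper implements exactly this: it picks a single $b\in\supp(\mu)$ with $\sigma^\k_b(\{x\in\gen(\lambda^\k_x)\})=1$, finds $s(m)$ with $T^{s(m)}a\to b$, uses Urysohn and the weak$^*$ convergence $\sigma^\k_{T^{s(m)}a}\to\sigma^\k_b$ to get $\sigma^\k_a\bigl((T^\k)^{-s(m)}A_m\bigr)\geq 1-2^{-(m-1)}$ for suitable ``almost-generic'' sets $A_m$, and then applies Borel--Cantelli. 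The \Folner{} sequence one ends up with is $\Psi'_m=\{s(m)+1,\dots,s(m)+N(m)\}$, a sequence of shifted intervals depending on $a$, not a subsequence of your original $\Psi$. Your ingredients are all correct; it is this last synthesis --- build a new \Folner{} sequence out of shifts rather than extract a subsequence --- that your sketch is missing.
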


\begin{proof}[Proof that \cref{thm:sigmas_live_on_cubes} implies \cref{thm:cubes_exist_pronil}]
Fix an ergodic system $(X,\mu,T)$ with topological pronilfactors, let $a \in \gen(\mu,\Phi)$ for some \Folner{} sequence $\Phi$, let $E \subset X$ with $\mu(E) > 0$ and let $k \in \N$.
By \cref{thm:sigmas_live_on_cubes}, we have that that  $\sigma^\k_a$-almost every $x \in \nilc^\k(X)$ is a $k$-dimensional \Erdos{} cube.
Since $\sigma^\k_a$-almost every point satisfies $x_{\vec 0} = a$ by \ref{sigma:first_coord} and $\sigma^\k_a$ pushes forward to $\mu$ on the last coordinate by \cref{cor_lastpointgeneric}, we can in particular choose a $k$-dimensional \Erdos{} cube with $x_{\vec 0} = a$ and $x_{\vec 1} \in E$.
\end{proof}

The remainder of this section is devoted to the proof of  \cref{thm:sigmas_live_on_cubes}, and for this we need one last ingredient: that $\sigma^\k_a$-almost every point $x$ is generic for $\lambda^\k_x$ along some \Folner{} sequence. 
This is the only place in the proof of \cref{thm:cubes_exist} where the assumption $a\in\gen(\mu,\Phi)$ is used.

\begin{Theorem}
\label{thm:sigmaka_generics}
Assume that $(X,\mu, T)$ is an ergodic system with topological pronilfactors and that $a \in \gen(\mu,\Phi)$ for some \Folner{} sequence $\Phi$.
For each $k \in \N$, there exists a \Folner{} sequence $\Psi$ such that
\[
\sigma^\k_a ( \{ x \in \nilc^\k(X)  : x \in \gen(\lambda^\k_x,\Psi) \}) = 1
\]
holds.
\end{Theorem}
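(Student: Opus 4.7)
The plan is to use the pointwise ergodic theorem on $(\nilc^\k(X),\mu^\k,T^\k)$ together with the equivariance and continuity of $t\mapsto\sigma^\k_t$ to transfer a $\mu$-almost-everywhere genericity statement to the specific point $t=a$, which need not lie in $\supp(\mu)$. Fix a countable dense family $\{f_i\}\subset C(\nilc^\k(X))$; by diagonalizing over this family, it suffices to prove, for each continuous $f$, that some \Folner{} sequence $\Psi$ (taken to be a subsequence of $\Phi$) satisfies
\[
\frac{1}{|\Psi_N|}\sum_{n\in\Psi_N}f(T^\k_n x)\longrightarrow g_f(x):=\int f\intd\lambda^\k_x
\]
for $\sigma^\k_a$-almost every $x$.

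The starting point is that, by \cref{thm:muk_cont_erg_decomp}, $x\mapsto\lambda^\k_x$ is the ergodic decomposition of $\mu^\k$, so $\E(f\mid\inv^\k)(x)=g_f(x)$ for $\mu^\k$-almost every $x$. The pointwise ergodic theorem for \Folner{} sequences then provides a subsequence $\Psi$ of $\Phi$ along which the Birkhoff averages of $f$ converge to $g_f$ for $\mu^\k$-almost every $x$. Combined with the disintegration $\mu^\k=\int\sigma^\k_t\intd\mu(t)$ from \ref{sigma:cont_disint}, this yields $\sigma^\k_t$-almost-everywhere convergence for $\mu$-almost every $t$, but provides no information at $t=a$.

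To bridge this gap, set $V_N(x):=\bigl|\tfrac{1}{|\Psi_N|}\sum_{n\in\Psi_N}f(T^\k_n x)-g_f(x)\bigr|^2$ and $W_N(t):=\int V_N\intd\sigma^\k_t$. Continuity of $t\mapsto\sigma^\k_t$ makes each $W_N$ continuous on $X$, while the $L^2$ mean ergodic theorem gives $\int_X W_N\intd\mu\to 0$. The equivariance $\sigma^\k_{Tt}=T^\k\sigma^\k_t$ from \cref{lem:sigma_equivariance} and the $T^\k$-invariance of $g_f$ from \cref{cor:lambda_invariance} together yield
\[
W_N(T^m a)=\int_{\nilc^\k(X)}\Bigl|\tfrac{1}{|\Psi_N|}\sum_{n\in\Psi_N}f(T^\k_{n+m}x)-g_f(x)\Bigr|^2\intd\sigma^\k_a(x).
\]
Since $a\in\gen(\mu,\Phi)$ and $W_N$ is continuous, $\tfrac{1}{|\Phi_M|}\sum_{m\in\Phi_M}W_N(T^m a)\to\int_X W_N\intd\mu$ as $M\to\infty$ for each fixed $N$. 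Jensen's inequality applied under $\intd\sigma^\k_a$ bounds the left-hand side from below by
\[
\int_{\nilc^\k(X)}\Bigl|\tfrac{1}{|\Phi_M||\Psi_N|}\sum_{m\in\Phi_M}\sum_{n\in\Psi_N}f(T^\k_{n+m}x)-g_f(x)\Bigr|^2\intd\sigma^\k_a(x),
\]
the squared $L^2(\sigma^\k_a)$-norm of a convolved Birkhoff average supported on $\Psi_N+\Phi_M$. A careful diagonal choice $M=M(N)$, together with the \Folner{} property of $\Phi$ (used to replace the convolved non-uniform weighting by uniform counting on a \Folner{} sequence $\Theta$ built from $\Psi_N+\Phi_{M(N)}$), then gives $L^2(\sigma^\k_a)$-convergence of the averages of $f$ along $\Theta$; extracting a further subsequence upgrades this to $\sigma^\k_a$-almost-everywhere convergence, and a final diagonalization over $\{f_i\}$ produces a single \Folner{} sequence that works for every test function.

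The main obstacle is precisely this transfer from $\mu$-almost every $t$ to the specific $t=a$. Because $a$ need not lie in $\supp(\mu)$, and because the set of $t$ for which the $\sigma^\k_t$-almost-everywhere convergence holds need not be closed, $T$-invariant, or $\pi_k$-saturated, the continuity of $t\mapsto\sigma^\k_t$ combined with \cref{lem:supp_to_cont_magic} cannot be invoked directly. The essential new ingredient is the interplay between the equivariance of $t\mapsto\sigma^\k_t$ under the $\Z$-action generated by $T^\k$ and the genericity of $a$ along $\Phi$: it is this joint use, implemented via the Jensen and convolution bookkeeping above, that converts the $\mu$-averaged vanishing $\int W_N\intd\mu\to 0$ into pointwise vanishing, at $a$ itself, of the $L^2(\sigma^\k_a)$-norms of the relevant Birkhoff averages.
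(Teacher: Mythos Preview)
Your approach is correct and genuinely different from the paper's. The paper proceeds by choosing a single point $b\in\supp(\mu)$ for which $\sigma^\k_b$-almost every $x$ is generic for $\lambda^\k_x$ along the standard \Folner{} sequence, then uses $a\in\gen(\mu,\Phi)$ only to find integers $s(n)$ with $T^{s(n)}a\to b$; continuity and equivariance give $(T^\k)^{s(n)}\sigma^\k_a\to\sigma^\k_b$, and a Urysohn/Borel--Cantelli argument manufactures the \Folner{} sequence $\Psi_m=\{s(m)+1,\dots,s(m)+N(m)\}$. By contrast, you work entirely in $L^2$: the mean ergodic theorem, equivariance, and Jensen combine to show that for \emph{every} $f\in C(\nilc^\k(X))$ one in fact has
\[
\lim_{M\to\infty}\Bigl\|\tfrac{1}{|\Phi_M|}\sum_{m\in\Phi_M}f\circ(T^\k)^m-g_f\Bigr\|_{L^2(\sigma^\k_a)}=0
\]
along the \emph{original} sequence $\Phi$ (the convolution by $\Psi_N$ is absorbed by the \Folner{} property of $\Phi$), after which a standard diagonal subsequence gives $\sigma^\k_a$-a.e.\ convergence for a countable dense family. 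Your route avoids Urysohn and Borel--Cantelli and yields the extra information that $L^2(\sigma^\k_a)$-convergence already holds along $\Phi$; the paper's route is more constructive about the shape of $\Psi$. Two minor remarks: the initial appeal to the \emph{pointwise} ergodic theorem to produce $\Psi$ is unnecessary, since only the mean ergodic theorem (valid along any \Folner{} sequence) is used for $\int W_N\,d\mu\to 0$; and the continuity of $W_N$ relies on $g_f$ being continuous on $\nilc^\k(X)$, which is exactly what \cref{thm:muk_cont_erg_decomp} provides.
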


\begin{proof} 
Using \cref{lem:cont_erg_generics} and Property \ref{sigma:cont_disint} of~\cref{thm:sigmaka_exist}, we get
\begin{equation}
\label{eq:full-measure}
\sigma_b^\k( \{ x \in \nilc^\k(X)  : x \in \gen(\lambda^\k_{x}) \}) = 1.
\end{equation}
for $\mu$-almost every $b\in X$.
In particular, we can fix $b\in\supp(\mu)$ satisfying \eqref{eq:full-measure}.
By \cref{lem:gen_approximates_support}, we can choose a sequence $(s(n))_{n\in\N}$ of integers such that $T^{s(n)} a \to b$ as $n \to \infty$. 
As the map $t \mapsto \sigma_t^\k$ is continuous, it follows that we  have the convergence of the measures 
\begin{equation}
\label{eq:measures-converge}
(T^\k)^{s(n)} \sigma_a^\k \to \sigma_b^\k \quad\text{ as } n\to\infty
\end{equation}
via \cref{lem:sigma_equivariance}.

Fix a sequence of functions $G_1,G_2,\ldots\in \cont(X^\k)$ that is dense in $\cont(X^\k)$ with respect to the supremum norm.
From \eqref{eq:full-measure} the set of points $x \in \nilc^\k(X)$ that are generic for $\lambda^\k_x$ has full  
$\sigma_b^\k$ measure.
By the monotone convergence theorem, for every $m\in\N$ there exists $N(m) \in\N$ such that the set
\[
A_m^{(1)}
=
\biggl\{ x\in \nilc^\k(X)  :
\max_{1\leq j\leq m} \biggl|\frac{1}{N(m)}\sum_{n=1}^{N(m)} G_j(  (T^\k)^n x)
- \int_{X^\k} G_j \intd\lambda^\k_{x} \biggr| \leq \dfrac{1}{m} \biggr\}
\]
satisfies $\sigma_b^\k(A_m^{(1)}) \geq 1-1/2^m$.  

Let $\varepsilon_m>0$ and take $A_m^{(2)}$ to be the set of 
points in $X^\k$ whose distance from some point in $A_k^{(1)}$ is strictly less than $\varepsilon_m$. 
Choosing $\varepsilon_m$ sufficiently small, it follows that 
\[
\max_{1\leq j\leq m} \left|\frac{1}{N(m)}\sum_{n =1}^{N(m)} G_j\bigl((T^\k)^nx\bigr) - \int_{X^\k} G_j \intd\lambda^\k_{x}\right|\leq \dfrac{2}{m} 
\]
for all $x\in A_m^{(2)}$.
Since $A_m^{(1)}$ is closed and $A_m^{(2)}$ is open (both as subsets of $X^\k$), by Urysohn's lemma there exists a continuous function $\phi_m\colon X^\k\to[0,1]$ satisfying 
\[
\phi_m(x) = 
\begin{cases} 
1 & \text{  for all } x\in A_m^{(1)} \\ 
0 & \text{ for all } x\notin A_m^{(2)}.
\end{cases}
\]
Passing to a subsequence of $(s(n))_{n \in \N}$ if needed and using the convergence of the measures in~\eqref{eq:measures-converge}, we can further assume that
\[
\left|\int_{X^\k} (T^\k)^{s(n)} \phi_m\intd \sigma_a^\k - \int_{X^\k}  \phi_m\intd \sigma_b^\k \right|\leq \frac{1}{2^m}. 
\]
It follows that  
\begin{align*}
\sigma_a^\k\bigl((T^\k)^{-s(m)} A_m^{(2)} \bigr) & \geq \int_{X^\k} (T^\k)^{s(m)} \phi_m \intd \sigma_a^\k 
\geq
\int_{X^\k}  \phi_m\intd \sigma_b^\k - \frac{1}{2^m}
\\
&
\geq \sigma_b^\k(A_m^{(1)})- \frac{1}{2^m} \geq 1-\frac{1}{2^{m-1}}.
\end{align*}

Let $Y$ denote the set of all points in the support of $\sigma_a^\k$  
that belong to all but finitely many $(T^\k)^{-s(m)} A_m^{(2)}$, $m\in\N$. 
By the Borel-Cantelli lemma, $Y$ has full $\sigma_a^\k$-measure.  
For any $x\in Y$ and cofinitely many $m\in\N$, we have
\[
\max_{1\leq j\leq m} \biggl|\frac{1}{N(m)} \sum_{n =1}^{N(m)}  G_j \bigl((T^\k)^{n+s(m)}x\bigr)
- \int_{X^\k} G_j \intd\lambda^\k_{(T^\k)^{s(m)}x}\biggr|\leq \frac{3}{m}.
\]
Since
$
\lambda^\k_{(T^\k)^{s(m)}x}=\lambda^\k_{x}$ by \cref{cor:lambda_invariance},
the conclusion follows with the \Folner{} sequence $\Psi_m = \{ s(m), s(m) + 1,\dots, s(m) + N(m) \}$.
\end{proof}

We can now prove \cref{thm:sigmas_live_on_cubes}. In the language of \cref{subsec:comparing_cubes}, we show that $\sigma^\k_a(\erd^\k(X)) = 1$.
Although this may appear similar to \ref{sigma:defined} in \cref{thm:sigmaka_exist}, we cannot prove it so simply because the set $\erd^\k(X)$ is in general not closed.

\begin{proof}[Proof of \cref{thm:sigmas_live_on_cubes}]
We must show that $\sigma^\k_a$-almost every point $x$ is a $k$-dimensional \Erdos{} cube.
In view of \cref{lemma_erdos_cubes_symmetries} and \cref{cor_permutationinvarianceofsigma}, it suffices to show that for $\sigma_a^\k$-almost every  $(x,y)\in\nilc^\k(X)$ we have $y\in\omega(x,T^\kdown)$.
Using \cref{lem:gen_approximates_support}, the proof is finished if we show that  $\sigma_a^\k$-almost every  $(x,y)\in\nilc^\k(X)$ satisfies both $y\in\supp(\lambda_x^\kdown)$ and $x\in\gen(\lambda_x^\kdown,\Psi)$ for a \Folner{} sequence $\Psi$.

Since $\lambda^\kdown_x\big(\supp(\lambda_x^\kdown)\big)=1$ for every $x\in\nilc^{\kdown}$ it follows that the set
\[
A_1 = \big\{(w,y) \in \nilc^\k(X) : y \in \supp(\lambda_w^\kdown)\big\}
\]
satisfies $\big(\delta_x\times \lambda_x^\kdown\big)(A_1)=1$ for every $x\in\nilc^{\kdown}(X)$.
From the construction in \eqref{eqn:sigmaka_measures} of $\sigma_a^\k$ we conclude that $\sigma_a^\k(A_1)=1$, and hence that $\sigma_a^\k$-almost every  $(x,y)\in\nilc^\k(X)$ satisfies $y\in\supp(\lambda_x^\kdown)$.

Finally, we apply \cref{thm:sigmaka_generics} (with $k-1$ in the place of $k$) to find a \Folner{} sequence $\Psi$ such that the set
\[
A_2 = \big\{x \in \nilc^\kdown(X)  : x \in \gen(\lambda^\kdown_x,\Psi)\big\}
\]
has $\sigma_a^\kdown(A_2)=1$.
Using again \eqref{eqn:sigmaka_measures} we conclude that $\sigma_a^\k(A_2\times X^\kdown)=1$, and hence that $\sigma_a^\k$-almost every  $(x,y)\in\nilc^\k(X)$ satisfies $x\in\gen(\lambda_x^\kdown,\Psi)$.
This concludes the proof.
\end{proof}

\section{Open Questions}
\label{sec:questions}

\subsection{Combinatorics}

Our first question is a natural extension of our main result.
\begin{Question}
Given $A\subset\N$ with positive upper Banach density, are there infinite sets $B_1,B_2,B_3,\dots$ such that for every $k\in\N$ we have $B_1+\cdots+B_k\subset A$? 
\end{Question}
We note that the same question with each $B_i$ being finite (even if arbitrarily large) is easy: simply iterate the fact that whenever $d(A)>0$, for any $m\in\N$ we can find integers $n_1,\dots,n_m$ such that $d\big((A-n_1)\cap\cdots\cap (A-n_m)\big)>0$.

On the other hand, it is important that we only consider the initial sums $B_1+\cdots+B_k$, as parity issues prevent more general sums, such as $B_i+\cdots+B_k$, to being all contained in $A$ (this can be readily seen by taking $A$ to be the set of odd numbers and each $B_j$ to be a singleton).
While this simple example illustrates this phenomenon convincingly, one can also fabricate examples which are aperiodic, and in general it is not clear what are the exact obstructions.
The answer may depend on the precise pattern we are trying to find, so we formulate only a simple case of this general question.

\begin{Question}
Which sets $A\subset\N$ with $d_\Phi(A)>0$ for some F\o lner sequence $\Phi$ contain $B_1\cup B_2\cup(B_1+B_2)$ for some infinite sets $B_1,B_2\subset\N$?
\end{Question}

For example, does it suffice that $1_A-d_\Phi(A)$ is a weak mixing function along $\Phi$ in the sense of \cite[Definition 3.17]{MRR}?

Under the assumption of the Hardy-Littlewood $k$-tuplets prime conjecture, Granville \cite{Granville90} shows that the primes contain $B_1+B_2$ for infinite sets $B_1,B_2\subset\N$, but an unconditional proof is still unknown (see \cite[Question 6.4]{MRR}).
Keeping in mind the heuristic that the primes behave pseudo-randomly, we record the following.

\begin{Question}
Given $A\subset\N$, which notions of pseudo-randomness suffice for $A$ to contain $B_1+B_2$ for infinite sets $B_1,B_2\subset\N$?
\end{Question}

More generally, which other notions of largeness on a set $A$ imply that it contains a sumset $B+C$?

The next question is inspired by the main result from \cite{Moreira}.
\begin{Question}
Given a finite partition $\N=A_1\cup\cdots\cup A_r$, are there infinite sets $B_1,B_2\subset\N$ and a color $A\in\{A_1,\dots,A_r\}$ such that $(B_1+B_2)\cup(B_1B_2)\subset A$?
\end{Question}

This question was also asked by \Erdos{} in \cite[Page 58]{erdos-1977}.
An affirmative answer to the significantly weaker question where $|B_1|=|B_2|=1$ is given in \cite{Moreira}.
The case $|B_1|=2$ and $|B_2|=1$ also follows from the results in that paper, but already the case $|B_1|=|B_2|=2$ seems to be out of reach.

\subsection{Dynamics}
\cref{thm:nil_cont_erg_decomp} asserts that any nilsystems possesses a continuous ergodic decomposition. This naturally leads to the following question. 
\begin{Question}
\label{q_cont_erg_decomp_distal}
What other classes of systems always possesses a continuous ergodic decomposition?
\end{Question}
A natural class to consider, which extends the class of nilsystems, are distal systems. However, for distal systems the answer to \cref{q_cont_erg_decomp_distal} turns out to be negative, as Furstenberg~\cite{Furstenberg61}  exhibited a skew-product $(x,y)\mapsto (x+\alpha,y+h(x))$ which is topologically minimal but not uniquely ergodic.
Taking a non-ergodic measure on such system, there can be no continuous ergodic decomposition, since the only continuous invariant functions are constant, contradicting \cref{remark_continuous_ergodic_decomposition}. 

Another natural class to consider, which extends the class of nilsystems, is the class of horocycle flows on homogeneous spaces. 
Given their algebraic origin, it is possible that a continuous ergodic decomposition always exists.

\subsection{Amenable Groups}
\label{subsec:amenable}

The proof given in~\cite{MRR} for the two-fold sumset holds in an arbitrary amenable group. 
While our proof of \cref{thm:main_theorem} does not hold in this generality due to the reliance on the structure theory for cube systems, the new proof of the case $k=2$ in \cref{sec_B+C} does, and may lead to the answers of some questions in this direction that have previously been out of reach.

Recall that a countable group $G$ is \define{amenable} if there is a sequence $(\Phi_N)_{N \in \N}$ of finite subsets of $G$ such that
\begin{gather}
\label{eqn:r_folner}
\lim_{N \to \infty} \dfrac{|\Phi_N g \cap \Phi_N|}{|\Phi_N|}
=
1
\\
\label{eqn:l_folner}
\lim_{N \to \infty} \dfrac{|\Phi_N \cap g \Phi_N|}{|\Phi_N|}
=
1
\end{gather}
both hold for all $g \in G$.
Any such sequence is called a \define{two-sided \Folner{} sequence} on $G$.
In \cite[Theorem~1.3]{MRR} it was proved that if $A\subset G$ satisfies
\begin{equation}
\label{eqn:amenable_pos_dens}
\limsup_{N \to \infty} \dfrac{|A \cap \Phi_N|}{|\Phi_N|} > 0
\end{equation}
for a two-sided \Folner{} sequence $(\Phi_N)_{N \in \N}$, then $A$ contains the product $BC = \{ bc : b \in B, c \in C \}$ of two infinite sets $B,C \subset G$.
It is natural to ask whether the same is true for $k$-fold products.

\begin{Question}
Let $G$ be an amenable group, let $k\in\N$ and let $A\subset G$ satisfy \eqref{eqn:amenable_pos_dens} with respect to some two-sided \Folner{} sequence.
Do there always exist infinite sets $B_1,\dots,B_k\subset G$ such that $B_1\cdots B_k\subset A$?
\end{Question}

In non-commutative groups we are interested in two-sided versions of the above question, and which of the assumptions \eqref{eqn:r_folner}, \eqref{eqn:l_folner} are necessary for various types of product sets to be found in all positive density sets.

\begin{Question}
Let $G$ be an amenable group and let $A\subset G$ satisfy \eqref{eqn:amenable_pos_dens} with respect to some two-sided \Folner{} sequence.
Does $A$ necessarily contain $BC \cup CB$ for infinite sets $B,C\subset G$?
\end{Question}

\begin{Question}
Let $G$ be an amenable group.
Which of the assumptions \eqref{eqn:r_folner}, \eqref{eqn:l_folner} on a sequence $(\Phi_N)_{N \in \Phi}$ of finite subsets of $G$ guarantee every $A \subset G$ satisfying \eqref{eqn:amenable_pos_dens} contains a product set $BC$ for infinite sets $B,C\subset G$?
\end{Question}

\subsection{Ultrafilters}
Ultrafilters on the natural numbers were a key tool in~\cite{MRR} and so we enquire here about a strengthening of \cref{thm:main_theorem_k2} in those terms.
One can use ultrafilters on $\N$ to characterize when a set $A \subset \N$ contains $B+C$ with $B,C \subset \N$ infinite in terms of the associative operation
\[
\ultra{p} + \ultra{q} = \{ A \subset \N : \{ n \in \N : A - n \in \ultra{q} \} \in \ultra{p} \}
\]
on the set of all ultrafilters. Indeed, the set $A \subset \N$ contains a sumset if and only if one can find non-principal ultrafilters $\ultra{p}$ and $\ultra{q}$ with $A$ belonging to both $\ultra{p} + \ultra{q}$ and $\ultra{q} + \ultra{p}$.
We remark that, using ultrafilter limits, the notion of an \Erdos{} cube arises naturally from this characterization.

\begin{Question}
Is it true that for any $A\subset \N$ with positive upper Banach density there exist non-principal ultrafilters $\ultra{p}$ and $\ultra{q}$ with $\ultra{p} + \ultra{q} = \ultra{q} + \ultra{p}$ containing $A$?
\end{Question}

One could generalize this question to ask about a strengthening of \cref{thm:main_theorem} in terms of ultrafilters, and even further to generalizations of the questions in \cref{subsec:amenable}.

\bibliographystyle{amsplain}
\bibliography{bcd.bib}

\providecommand{\bysame}{\leavevmode\hbox to3em{\hrulefill}\thinspace}
\providecommand{\MR}{\relax\ifhmode\unskip\space\fi MR }
\providecommand{\MRhref}[2]{%
  \href{http://www.ams.org/mathscinet-getitem?mr=#1}{#2}
}
\providecommand{\href}[2]{#2}
\begin{thebibliography}{10}

\bibitem{Bergelson-Leibman}
V.~Bergelson and A.~Leibman, \emph{Polynomial extensions of van der {W}aerden's
  and {S}zemer\'{e}di's theorems}, J. Amer. Math. Soc. \textbf{9} (1996),
  no.~3, 725--753. \MR{1325795}

\bibitem{bergelson}
Vitaly Bergelson, \emph{Sets of recurrence of {${\bf Z}^m$}-actions and
  properties of sets of differences in {${\bf Z}^m$}}, J. London Math. Soc. (2)
  \textbf{31} (1985), no.~2, 295--304. \MR{809951}

\bibitem{BHK05}
Vitaly Bergelson, Bernard Host, and Bryna Kra, \emph{Multiple recurrence and
  nilsequences}, Invent. Math. \textbf{160} (2005), no.~2, 261--303, With an
  appendix by Imre Ruzsa. \MR{2138068}

\bibitem{CaiShao2019}
Fangzhou Cai and Song Shao, \emph{Topological characteristic factors along
  cubes of minimal systems}, Discrete Contin. Dyn. Syst. \textbf{39} (2019),
  no.~9, 5301--5317. \MR{3986331}

\bibitem{DGJLLM15}
Mauro Di~Nasso, Isaac Goldbring, Renling Jin, Steven Leth, Martino Lupini, and
  Karl Mahlburg, \emph{On a sumset conjecture of {E}rd{\H{o}}s}, Canad. J.
  Math. \textbf{67} (2015), no.~4, 795--809. \MR{3361013}

\bibitem{DFM}
Fabien Durand, Alexander Frank, and Alejandro Maass, \emph{Eigenvalues of
  minimal {C}antor systems}, J. Eur. Math. Soc. (JEMS) \textbf{21} (2019),
  no.~3, 727--775. \MR{3908764}

\bibitem{EW11}
Manfred Einsiedler and Thomas Ward, \emph{Ergodic theory with a view towards
  number theory}, Graduate Texts in Mathematics, vol. 259, Springer-Verlag
  London, Ltd., London, 2011. \MR{2723325}

\bibitem{EG}
P.~Erd\H{o}s and R.~L. Graham, \emph{Old and new problems and results in
  combinatorial number theory}, Monographies de L'Enseignement Math\'{e}matique
  [Monographs of L'Enseignement Math\'{e}matique], vol.~28, Universit\'{e} de
  Gen\`eve, L'Enseignement Math\'{e}matique, Geneva, 1980. \MR{592420}

\bibitem{erdos-1977}
Paul Erd\H{o}s, \emph{Problems and results on combinatorial number theory.
  {III}}, Number theory day ({P}roc. {C}onf., {R}ockefeller {U}niv., {N}ew
  {Y}ork, 1976), Lecture Notes in Math., Vol. 626, Springer, Berlin, 1977,
  pp.~43--72. \MR{0472752}

\bibitem{Furstenberg61}
H.~Furstenberg, \emph{Strict ergodicity and transformation of the torus}, Amer.
  J. Math. \textbf{83} (1961), 573--601. \MR{133429}

\bibitem{furstenberg-distal}
\bysame, \emph{The structure of distal flows}, Amer. J. Math. \textbf{85}
  (1963), 477--515. \MR{157368}

\bibitem{Furstenberg-book}
\bysame, \emph{Recurrence in ergodic theory and combinatorial number theory},
  Princeton University Press, Princeton, N.J., 1981, M. B. Porter Lectures.
  \MR{603625}

\bibitem{Furstenberg-Katznelson}
H.~Furstenberg and Y.~Katznelson, \emph{An ergodic {S}zemer\'{e}di theorem for
  commuting transformations}, J. Analyse Math. \textbf{34} (1978), 275--291
  (1979). \MR{531279}

\bibitem{Furstenberg-1977}
Harry Furstenberg, \emph{Ergodic behavior of diagonal measures and a theorem of
  {S}zemer\'{e}di on arithmetic progressions}, J. Analyse Math. \textbf{31}
  (1977), 204--256. \MR{498471}

\bibitem{Granville90}
Andrew Granville, \emph{A note on sums of primes}, Canad. Math. Bull.
  \textbf{33} (1990), no.~4, 452--454. \MR{1091350}

\bibitem{Green_Tao08}
Ben Green and Terence Tao, \emph{The primes contain arbitrarily long arithmetic
  progressions}, Ann. of Math. (2) \textbf{167} (2008), no.~2, 481--547.
  \MR{2415379}

\bibitem{Host}
Bernard Host, \emph{A short proof of a conjecture of {E}rd{\H{o}}s proved by
  {M}oreira, {R}ichter and {R}obertson}, Discrete Anal. (2019), Paper No. 19,
  10. \MR{4042162}

\bibitem{HK-05}
Bernard Host and Bryna Kra, \emph{Nonconventional ergodic averages and
  nilmanifolds}, Ann. of Math. (2) \textbf{161} (2005), no.~1, 397--488.
  \MR{2150389}

\bibitem{HK-uniformity}
\bysame, \emph{Uniformity seminorms on {$\ell^\infty$} and applications}, J.
  Anal. Math. \textbf{108} (2009), 219--276. \MR{2544760}

\bibitem{HK-book}
\bysame, \emph{Nilpotent structures in ergodic theory}, Mathematical Surveys
  and Monographs, vol. 236, American Mathematical Society, Providence, RI,
  2018. \MR{3839640}

\bibitem{HKM}
Bernard Host, Bryna Kra, and Alejandro Maass, \emph{Nilsequences and a
  structure theorem for topological dynamical systems}, Adv. Math. \textbf{224}
  (2010), no.~1, 103--129. \MR{2600993}

\bibitem{Lehrer}
Ehud Lehrer, \emph{Topological mixing and uniquely ergodic systems}, Israel J.
  Math. \textbf{57} (1987), no.~2, 239--255. \MR{890422}

\bibitem{leibman-orbits}
A.~Leibman, \emph{Orbits on a nilmanifold under the action of a polynomial
  sequence of translations}, Ergodic Theory Dynam. Systems \textbf{27} (2007),
  no.~4, 1239--1252. \MR{2342974}

\bibitem{lindenstrauss}
Elon Lindenstrauss, \emph{Pointwise theorems for amenable groups}, Invent.
  Math. \textbf{146} (2001), no.~2, 259--295. \MR{1865397}

\bibitem{Moreira}
Joel Moreira, \emph{Monochromatic sums and products in {$\mathbb N$}}, Ann. of
  Math. (2) \textbf{185} (2017), no.~3, 1069--1090. \MR{3664819}

\bibitem{MRR}
Joel Moreira, Florian~K. Richter, and Donald Robertson, \emph{A proof of a
  sumset conjecture of {E}rd{\H{o}}s}, Ann. of Math. (2) \textbf{189} (2019),
  no.~2, 605--652. \MR{3919363}

\bibitem{Nathanson80}
Melvyn~B. Nathanson, \emph{Sumsets contained in infinite sets of integers}, J.
  Combin. Theory Ser. A \textbf{28} (1980), no.~2, 150--155. \MR{563552}

\bibitem{S}
E.~Szemer\'{e}di, \emph{On sets of integers containing no {$k$} elements in
  arithmetic progression}, Acta Arith. \textbf{27} (1975), 199--245.
  \MR{369312}

\bibitem{TuYe2013}
Siming Tu and Xiangdong Ye, \emph{Dynamical parallelepipeds in minimal
  systems}, J. Dynam. Differential Equations \textbf{25} (2013), no.~3,
  765--776. \MR{3095274}

\end{thebibliography}

\end{document}